\theoremstyle{plain}
\newtheorem{thm}{Theorem}[section]
\newtheorem{prop}[thm]{Proposition}
\newtheorem{lem}[thm]{Lemma}		
\newtheorem{cor}[thm]{Corollary}
\theoremstyle{definition}		
\newtheorem{df}[thm]{Definition}
\newtheorem{ex}[thm]{Example}	
\newtheorem{rem}[thm]{Remark}
\theoremstyle{remark}
\DeclareMathOperator{\diam}{diam}
\DeclareMathOperator{\length}{length}
\newcommand{\cX}{\dot{X}}
\newcommand{\qX}{\overline{X}}
\newcommand{\qG}{\overline{G}}
\DeclareMathOperator{\Fix}{Fix}
\newcommand{\q}[1]{\overline{#1}}
\newcommand{\stab}[1]{\operatorname{Stab}(#1)}
\renewcommand{\leq}{\leqslant}
\renewcommand{\geq}{\geqslant}
\title{Product set growth in Burnside groups}
\date{}
\author{R\'emi Coulon}
\address{IRMAR\\ Univ Rennes et CNRS \\  35000 Rennes \\ France}
\email{remi.coulon@univ-rennes1.fr}
\author{Markus Steenbock }
\address{Fakult\"at f\"ur Mathematik \\ Universit\"at Wien \\  1090 Wien  \\ Austria}
\email{markus.steenbock@univie.ac.at}
\date{\today}
\subjclass[2020]{
	20F65, 
	20F67, 
	20F50, 
	20F06, 
	20F69
}
\keywords{product sets, growth, hyperbolic groups, acylindrical actions, small cancellation,  infinite periodic groups, Burnside problem}
\begin{document}

\begin{abstract}
Given a periodic quotient of a torsion-free hyperbolic group, we provide a fine lower estimate of the growth function of any sub-semi-group.
This generalizes results of Razborov and Safin for free groups.
\end{abstract}

\maketitle

\section{Introduction}
If $V$ is a subset in a group $G$, we denote by $V^r\subset G$ the set of all group elements that are represented by a product of exactly $r$ elements of $V$.  
In this paper we are interested in the growth of $V^r$.
Such a problem has a long history which goes back (at least) to the study of additive combinatorics. See for instance \cite{nathanson_additive_1996, tao_additive_2006}.
In the context of non-abelian groups, it yields to the theory of approximate subgroups, see \cite{terence_product_2008,breuillard_structure_2012}, and relates to spectral gaps in linear groups, see 
\cite{helfgott_growth_2008,bourgain_spectral_2008,bourgain_spectral_2012}, as well as exponential growth rates of negatively curved groups, \cite{koubi_croissance_1998,arzhantseva_lower_2006,breuillard_joint_2018,fujiwara_rates_2020}. 

 If $G$ is a free group, Safin \cite{safin_powers_2011}, improving former results by Chang \cite{chang_product_2008} and Razborov \cite{razborov_product_2014}, proves that there exists $c > 0$ such that for every finite subset $V \subset G$, either $V$ is contained in a cyclic subgroup, or for every $r \in \mathbb N$, we have
 \begin{equation*}
 	|V^r| \geq (c |V|)^{[(r+1)/2]}.
 \end{equation*}
This estimate can be thought of as a quantified version of the Tits alternative in $G$.
 A similar statement holds for ${\rm SL}_2(\mathbb{Z})$ \cite{chang_product_2008}, free products, limit groups \cite{button_explicit_2013} and groups acting on $\delta$-hyperbolic spaces \cite{delzant_product_2020}. All these groups display strong features of negative curvature, inherited from a non-elementary acylindrical action on a hyperbolic space.
Some results are also available for solvable groups \cite{tao_freiman_2010,button_explicit_2013}, as well as mapping class groups and right-angled Artin groups \cite{kerr_product_2021}.

By contrast, in this work, we focus on a class of groups which do not admit any non-elementary action on a hyperbolic space, namely the set of infinite groups with finite exponent, often referred to as of Burnside groups.

\subsection{Burnside groups of odd exponent}
Given a group $G$ and an integer $n$, we denote by $G^n$ the subgroup of $G$ generated by all its $n$-th powers.
We are interested in quotients of the form $G/G^n$ which we call \emph{Burnside groups} of exponent $n$.
If $G = \mathbb F_k$ is the free group of rank $k$, then $B_k(n) = G/G^n$ is the \emph{free Burnside group} of rank $k$ and exponent $n$.
The famous Burnside problem asks whether a finitely generated free Burnside group is necessarily finite.

Here, we focus on the case that the exponent $n$ is odd. 
By Novikov's and Adian's solution of the Burnside problem, it is known that $B_k(n)$ is infinite provided $k \geq 2$ and $n$ is a sufficiently large odd integer \cite{adian_burnside_1979}. See also \cite{olshanksii_novikov_1982,delzant_courbure_2008}.
More generally, if $G$ is a non-cyclic, torsion-free, hyperbolic group, then the quotient $G/G^n$ is infinite provided $n$ is a sufficiently large odd exponent \cite{olshanskii_periodic_1991,delzant_courbure_2008}.
Our main theorem extends Safin's result to this class of Burnside groups of odd exponents.

\begin{rem}
	Note that free Burnside groups of sufficiently large even exponents are also infinite.
	This was independently proved by Ivanov \cite{Ivanov:1994kh} and Lysenok \cite{Lysenok:1996kw}.
	Moreover any non-elementary hyperbolic group admits infinite Burnside quotients, see \cite{Ivanov:1996va,coulon_infinite_2018}.
	Nevertheless in the remainder of this article we will focus on torsion-free hyperbolic groups and odd exponents.
	In Section~\ref{sec: discussion-even-exp} we discuss the difficulties to extend our results to the case of even exponents.
\end{rem}

\begin{thm}\label{T: intro - large powers free burnside}
Let $G$ be a non-cyclic, torsion-free hyperbolic group.
There are numbers $n_0>0$ and $c >0$  such that for all odd integer $n\geqslant n_0$ the following holds.
Given a finite subset $V \subset G/G^n$, either $V$ is contained in a finite cyclic subgroup, or for all $r\in \mathbb N$, we have
$$ |V^r|\geqslant \left( c |V| \right)^{[(r+1)/2]}.$$
\end{thm}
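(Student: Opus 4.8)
The plan is to deduce the statement from a \emph{uniform} product set growth estimate for the hyperbolic groups approximating $\bar G := G/G^n$, and then to pass to the direct limit. Recall that, for $n$ a sufficiently large odd integer, $\bar G$ is obtained as a direct limit of surjections $G = G_0 \twoheadrightarrow G_1 \twoheadrightarrow G_2 \twoheadrightarrow \cdots$, with $\bar G = \varinjlim_i G_i$, where each $G_i$ is built from $G_{i-1}$ by a geometric small cancellation (rotating family) construction that kills a suitably chosen family of large powers; see \cite{olshanskii_periodic_1991,delzant_courbure_2008,coulon_infinite_2018}. Each $G_i$ is a non-cyclic, non-elementary hyperbolic group, it acts acylindrically on a hyperbolic space $X_i$, and---crucially---the hyperbolicity constant of $X_i$, the acylindricity function of $G_i \curvearrowright X_i$, and the lower bound on the translation lengths of loxodromic isometries can all be chosen independently of $i$ and of $n \geq n_0$. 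Moreover, in this regime the finite subgroups of $\bar G$ and of each $G_i$ are cyclic of order dividing $n$, so that the elementary subgroups of $\bar G$ are exactly its finite cyclic subgroups, and the elementary subgroups of $G_i$ are virtually cyclic.

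The geometric core, which I would prove first, is a product set growth theorem for a group $\Gamma$ acting acylindrically on a $\delta$-hyperbolic space $Y$: for every finite $V \subset \Gamma$, either $\langle V\rangle$ is elementary, or $|V^r| \geq (c\,|V|)^{[(r+1)/2]}$ for all $r$, where---and this is what matters---$c$ depends only on the geometric parameters of the action (the hyperbolicity constant, the acylindricity function, the injectivity radius) and not on $\Gamma$. This builds on the arguments of Razborov--Safin and Delzant--Steenbock \cite{safin_powers_2011,delzant_product_2020}. The heart is the base case $r = 2$: inside the finitely many products of a bounded number of elements of $V$ one locates a ping-pong configuration of loxodromic isometries with uniformly good constants, from which $|V\cdot V| \geq c|V|$ follows; the general $r$ is then obtained by a geometric Schottky-type bootstrap that roughly doubles the effective dimension every two multiplications. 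I expect this step to be the main obstacle, for two reasons: first, the groups $G_i$ carry abundant torsion, so one cannot replace an element by a large power to make it loxodromic with a long axis, and the whole argument has to be run with elliptic elements and finite subgroups present, controlling translation lengths and using the classification of elementary subgroups rather than manipulating individual elements; second, one has to verify that the cone-off tower realizing $\bar G$ does enjoy the claimed uniform control of its geometry (independently of the stage $i$ \emph{and} of the exponent $n$), which is the quantitatively delicate part of the Burnside construction. Applying this theorem to each $G_i \curvearrowright X_i$ produces a single $c$ that works for all $i$ at once.

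The passage to the limit is then soft. Let $V \subset \bar G$ be finite and not contained in a finite cyclic subgroup, and fix $r \in \mathbb N$. Lift $V$ to a set $V_0 \subset G$ with $|V_0| = |V|$ mapping bijectively onto $V$, and let $V_i \subset G_i$ be the image of $V_0$. Since $\bar G$ is a quotient of every $G_i$ one gets $|V_i| = |V|$, and the image of $V_i^r$ in $\bar G$ is $V^r$, so $|V^r| \leq |V_i^r|$. Now $V_0^r$ is finite, only finitely many pairs of its elements coincide in $\bar G$, and each such coincidence already happens at some finite stage; hence for all $i$ large enough the projection $G_i \to \bar G$ is injective on $V_i^r$, i.e.\ $|V^r| = |V_i^r|$. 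Similarly, if $V_i$ were contained in an elementary subgroup $E_i < G_i$ for infinitely many $i$, then---the image of $E_i$ in $\bar G$ being a finitely generated virtually cyclic group of finite exponent, hence finite, hence cyclic---$V$ would be contained in a finite cyclic subgroup of $\bar G$, contrary to hypothesis; so $\langle V_i\rangle$ is non-elementary for all large $i$. Applying the geometric theorem above inside $G_i$ then gives $|V_i^r| \geq (c\,|V|)^{[(r+1)/2]}$, whence $|V^r| \geq (c\,|V|)^{[(r+1)/2]}$, where $n_0$ is chosen large enough for the Burnside construction, the cyclicity of finite subgroups, and the geometric theorem all to be available.
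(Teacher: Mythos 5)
There is a genuine gap, and it sits exactly where you flagged ``the main obstacle'': the geometric core you postulate does not exist in the form you need, and the soft limiting argument built on top of it cannot be repaired without the paper's two key devices. The uniform product growth theorem you invoke --- $|V^r|\geq (c|V|)^{[(r+1)/2]}$ for every finite set generating a non-elementary subgroup of a group acting acylindrically, with $c$ depending \emph{only} on the hyperbolicity constant, acylindricity function and injectivity radius --- is not known and is not what the paper proves. Theorem~\ref{T: main acylindrical} and Corollary~\ref{T: acylindrical} require the hypothesis $\lambda(U)>100\kappa$ on the $\ell^\infty$-energy, and the resulting base of the exponential is $\frac{1}{10^6N}\frac{\delta}{\lambda(U)}|U|$, i.e.\ it degrades with the energy of the set; the predecessor \cite{delzant_product_2020} likewise carries a correction of order $\log|U|$. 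A constant depending only on the action cannot be extracted by a Schottky bootstrap alone, which is precisely why both papers keep a correction term.

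This becomes fatal in your passage to the limit, because the two properties you need at a single stage $i$ pull in opposite directions. The maps $X_i\to X_{i+1}$ are uniformly contracting ($\varepsilon$-Lipschitz with $\varepsilon<1$), so the energy of the images $V_i$ of your fixed lift $V_0$ tends to $0$; for the large $i$ at which $G_i\to \bar G$ becomes injective on $V_i^r$, one has $\lambda(V_i)\leq 100A_0$ and Theorem~\ref{T: main acylindrical} no longer applies (and no re-lift of $V$ at level $i$ retains the injectivity property, which was derived from the direct-limit argument for that particular lift). Conversely, the paper works at the critical level $j-1$ defined via \eqref{eqn: choose j}, where a preimage satisfies $100A_0<\lambda(U_{j-1})\leq\lambda_0$ --- this two-sided energy calibration is what turns the correction factor $\delta/\lambda(U)$ into a uniform constant --- but there the projection to $G/G^n$ is certainly not injective on $U_{j-1}^r$, and one must instead produce a strongly reduced $W\subset U_{j-1}v$ and count the power-free elements of $W^r$ that survive in the quotient (Sections~\ref{sec: periodic and aperiodic words} and \ref{sec: power-free elements}, Corollary~\ref{T: main acylindrical power free}, and the uniform non-triviality criterion of Theorem~\ref{T: uniform non-triviality of Burnside groups}). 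Your proposal has no substitute for either the energy-calibrated choice of level or the aperiodic-word counting; these are not technical conveniences but the content of the proof.
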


Observe that the constant $c$ only depends on $G$ and not on the exponent $n$. 
Recall that Burnside groups do not act, at least in any useful way, on a hyperbolic space. 
Indeed, any such action is either elliptic or parabolic. 
On the other hand, it is well-known that any linear representation of a finitely generated Burnside group has finite image.
Thus our main theorem is not a direct application of previously known results.

Let us mention some consequences of Theorem~\ref{T: intro - large powers free burnside}.
If $V$ is a finite subset of a group $G$, one defines its \emph{entropy} by 
\begin{equation*}
	h(V)= \limsup_{r \to \infty} \frac{1}{r}\log |V^r|.
\end{equation*}
The group $G$ has \emph{uniform exponential growth} if there exists $\varepsilon > 0$ such that for every finite symmetric generating subset $V$ of $G$, $h(V) > \varepsilon$.
In addition, $G$ has \emph{uniform uniform exponential growth} if there exists $\varepsilon > 0$ such that for every finite symmetric subset $V \subset G$, either $V$ generates a virtually nilpotent group, or $h(V) > \varepsilon$.

\begin{cor}
\label{C: intro - large powers free burnside} 
Let $G$ be a non-cyclic, torsion-free hyperbolic group.
There are numbers $n_0>0$ and $\alpha>0$ such that for all odd integer $n \geqslant n_0$, the following holds.
Given a finite subset $V \subset G/G^n$ containing the identity, either $V$ is contained in a finite cyclic subgroup, or
$$ h(V) \geq \alpha \ln |V| \geq \alpha \ln 3.$$
In particular, $G/G^n$ has uniform uniform exponential growth.
\end{cor}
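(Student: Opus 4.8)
The plan is to derive Corollary~\ref{C: intro - large powers free burnside} directly from Theorem~\ref{T: intro - large powers free burnside} by taking logarithms and passing to the limit. Fix $G$ a non-cyclic, torsion-free hyperbolic group, let $n_0$ and $c$ be the constants from Theorem~\ref{T: intro - large powers free burnside}, and let $V \subset G/G^n$ be a finite subset containing the identity that is not contained in a finite cyclic subgroup. Since $1 \in V$, we have $V^r \subseteq V^{r+1}$ for all $r$, so the sequence $|V^r|$ is nondecreasing; this is the reason for including the identity in the hypothesis, and it lets us control $\limsup$ by a clean subsequence argument. Applying Theorem~\ref{T: intro - large powers free burnside} we get $|V^r| \geq (c|V|)^{[(r+1)/2]}$ for every $r \in \mathbb N$.

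Next I would take logarithms and compute the entropy. From $|V^r| \geq (c|V|)^{[(r+1)/2]}$ we obtain
\begin{equation*}
	\frac{1}{r}\log|V^r| \geq \frac{[(r+1)/2]}{r}\log\left(c|V|\right).
\end{equation*}
As $r \to \infty$ the ratio $[(r+1)/2]/r$ tends to $1/2$, so passing to the $\limsup$ yields
\begin{equation*}
	h(V) = \limsup_{r\to\infty}\frac{1}{r}\log|V^r| \geq \frac12 \log\left(c|V|\right).
\end{equation*}
Here one must be slightly careful about the sign of $\log(c|V|)$: a priori $c$ could be less than $1$, so I would first dispose of the case where $c|V|$ is not bounded below by a fixed constant greater than $1$. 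Since $V$ is not contained in a finite cyclic subgroup, it has at least three elements (two elements together with the identity that do not all lie in a common cyclic subgroup), so $|V| \geq 3$; shrinking $c$ if necessary at the outset we may assume $c|V| \geq c \cdot 3 > 1$, and in fact after possibly replacing $c$ we can arrange $\tfrac12\log(c|V|) \geq \alpha \log|V|$ for a suitable $\alpha>0$ depending only on $G$ — this is the elementary inequality $\tfrac12\log(cx) \geq \alpha \log x$ valid for all $x \geq 3$ once $\alpha$ is chosen small enough in terms of $c$. Setting $\alpha = \alpha(G)$ accordingly gives $h(V) \geq \alpha\log|V| \geq \alpha\log 3$.

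Finally, for the uniform uniform exponential growth statement, I would observe that any finite symmetric subset $W$ of $G/G^n$ that generates a group which is not virtually nilpotent cannot be contained in a finite cyclic subgroup: a finite cyclic group is nilpotent. Passing from $W$ to $V = W \cup \{1\}$ changes neither the generated subgroup nor the entropy, and $|V| \geq |W|$, so the previous paragraph applies and gives $h(W) = h(V) \geq \alpha\log 3 > 0$ with $\alpha\log 3$ independent of $W$ and of $n$. Hence $G/G^n$ has uniform uniform exponential growth with $\varepsilon = \alpha\log 3$. I do not expect any serious obstacle here — the only point requiring a little care is the bookkeeping of the constant $c$ versus $\alpha$ so that the bound $h(V)\geq \alpha\log|V|$ holds uniformly for all admissible $V$ — the real content is entirely in Theorem~\ref{T: intro - large powers free burnside}.
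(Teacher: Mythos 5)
Your reduction to Theorem~\ref{T: intro - large powers free burnside} and the limsup computation giving $h(V) \geq \tfrac12 \log\bigl(c|V|\bigr)$ are fine, but the step where you dispose of the constant is a genuine gap. You write that ``shrinking $c$ if necessary'' one may assume $3c>1$; shrinking $c$ only weakens the hypothesis of the theorem, it can never force $c>1/3$, and the constant produced by the proof of Theorem~\ref{T: intro - large powers free burnside} is in fact $a=1/M$ with $M$ enormous. Consequently the ``elementary inequality'' $\tfrac12\log(cx)\geq \alpha\log x$ for all $x\geq 3$ is false for $c\leq 1/3$: at $x=3$ the left-hand side is $\leq 0$ while the right-hand side is $\alpha\log 3>0$ for any $\alpha>0$. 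Your bound $h(V)\geq\tfrac12\log(c|V|)$ is therefore vacuous (possibly negative) precisely in the range $3\leq |V|\lesssim 1/c$, and no choice of $\alpha$ depending only on $G$ rescues the inequality $h(V)\geq\alpha\log|V|$ there.

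The paper closes exactly this gap with a bootstrapping argument that you are missing. Since $1\in V$ and $V$ generates an infinite subgroup, the sequence $|V^k|$ is \emph{strictly} increasing, so $|V^N|>N$; choosing $N$ with $c^3N>1$ guarantees $c^3|V^N|>1$. One then applies the theorem twice, to $V^{3N}$ and to $V^N$, obtaining
\begin{equation*}
	\bigl|V^{3rN}\bigr| \geq \Bigl(c\bigl(c|V^N|\bigr)^2\Bigr)^{[(r+1)/2]} = \Bigl(c^3|V^N|\cdot|V^N|\Bigr)^{[(r+1)/2]} \geq |V^N|^{[(r+1)/2]} \geq |V|^{[(r+1)/2]},
\end{equation*}
in which the small constant has been absorbed by the factor $c^3|V^N|>1$. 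Passing to the limit along $r$ then gives $h(V)\geq \tfrac1{6N}\ln|V|$ with $N$ depending only on $G$. Your final paragraph on uniform uniform exponential growth is unaffected, but without this (or some equivalent) device the main inequality is not established.
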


It was already known that free Burnside groups of sufficiently large odd exponent have uniform exponential growth, see Osin~\cite[Corollary 1.4]{osin_uniform_2007} and Atabekyan~\cite[Corollary 3]{atabekyan_uniform_2009}.
Note that Theorem~2.7 in \cite{osin_uniform_2007} actually shows that free Burnside groups have uniform uniform exponential growth.
Nevertheless, to the best of our knowledge, the result was not proved for Burnside quotients of hyperbolic groups.
We shall also stress the fact that, unlike in Corollary~\ref{C: intro - large powers free burnside}, the growth estimates provided in \cite{osin_uniform_2007, atabekyan_uniform_2009} depend on the exponent $n$.
The reason is that the parameter $M$ given for instance by \cite[Theorem~2.7]{osin_uniform_2007} is a quadratic function of $n$.

Given a group $G$ with uniform exponential growth, a natural question is whether or not there exists a finite generating set that realizes the minimal growth rate.
The first inequality is a statement \emph{\`a la} Arzhantseva-Lysenok for torsion groups, see \cite[Theorem~1]{arzhantseva_lower_2006}.
The philosophy is the following: if the set $V$ has a small entropy, then it cannot have a large cardinality.
In particular, if we expect the minimal growth rate to be achieved, we can restrict our investigation to generating sets with fixed cardinality.
Note that this is exactly the starting point of the work of Fujiwara and Sela in the context of hyperbolic groups, \cite{fujiwara_rates_2020}.

Let us discuss now the power arising in Theorem \ref{T: intro - large powers free burnside}.
We claim that, as our estimate is independent of the exponent $n$, the power $(r+1)/2$ is optimal.
For this purpose we adapt an example of \cite{safin_powers_2011}. 

\begin{ex} \label{E: optimal} 
Let $g$ and $h$ be two elements in $B_2(n)$ such that $g$ generates a group of order $n$, that does not contain $h$. 
Consider the set 
\begin{equation*}
	V_N=\left\{1, g,  g^2 ,\ldots , g^N, h\right\}.
\end{equation*}
Whenever the exponent $n$ is sufficiently large compared to $N$, we have $|V_N^r|\sim N^{[(r+1)/2]}$ while $|V_N|=N+1$. 
\end{ex}

Button observed the following fact.
Assume that there is $c>0$ and $\varepsilon>0$ with the following property: for all finite subsets $V$ in a group $G$ that are not contained in a virtually nilpotent subgroup, we have $|V^3|\geqslant c |V|^{2+\varepsilon}$.
Then $G$ is either virtually nilpotent, or of bounded exponent \cite[Proposition 4.1]{button_explicit_2013}.
We do not know if such a non-virtually nilpotent group exists.

\subsection{Groups acting on hyperbolic spaces} 
 In the first part of our paper, we revisit product set growth for a group $G$ acting on a hyperbolic space $X$, see \cite[Theorem 1.14]{delzant_product_2020}. For this purpose, we use the notion of an acylindrical action, see \cite{sela_acylindricity_1997,bowditch_tight_2008}. 
Given a subset $U$ of $G$, we exploit its \emph{$\ell^\infty$-energy} defined as
\begin{equation*}
	\lambda(U) = \inf_{x \in X}\sup_{u \in U} |ux - x|.
\end{equation*}

\begin{rem}
	Unlike in \cite{delzant_product_2020}, we will not make use of the $\ell^1$-energy.
	Our motivation is mostly technical.
	We explain this choice in Section~\ref{sec: strategy for burnside groups}.
\end{rem}

 \begin{thm}[see Theorem~\ref{T: main acylindrical}]
 \label{T: main acylindrical - intro}
	Let $G$ be group acting acylindrically on a hyperbolic length space $X$.
	There exists a constant $C > 0$ such that for every finite subset $U\subset G$ with $\lambda(U)> C$, 
	\begin{enumerate}
		\item either $\displaystyle |U| \leq  C$,
		\item or there is subset $W\subset U^2$ freely generating a free sub-semigroup of cardinality 
		\begin{equation*}
			|W| \geq \frac 1C\frac 1{\lambda(U)} |U|.
		\end{equation*} 
	\end{enumerate}
\end{thm}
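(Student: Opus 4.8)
The plan is to run a ping-pong argument at a point $x\in X$ almost realizing the energy $\lambda:=\lambda(U)$, and to produce the free sub-semigroup as a family of products $av$ with $v$ ranging over a large subset of $U$ and a \emph{single} fixed pivot $a\in U$, so that every generator automatically lies in $U^2$. The factor $1/\lambda$ will be the price of a pigeonhole argument: all orbit points $ux$ lie in the ball of radius $\lambda+O(\delta)$ about $x$, and the geodesics $[x,ux]$ can spoil one another only along one of the $O(\lambda/L)$ sub-segments of length $L$ into which such a geodesic is cut, on each of which acylindricity limits the number of coincidences.

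\emph{Setup and the pivot.} Let $\delta$ be a hyperbolicity constant of $X$ and fix the acylindricity constants $L,N$ at scale $100\delta$. Choose $x$ with $\sup_{u\in U}|ux-x|\le\lambda+\delta$, so every $u\in U$ moves $x$ by at most $\lambda+\delta$ while some $a=u_*\in U$ moves it by at least $\lambda-\delta$; we take this $a$ as the pivot. The element $a$ itself need not be loxodromic, but for most partners $v$ the product $av\in U^2$ is: running the four-point inequality along the broken path $[(av)^{-1}x,x]\cup[x,ax]\cup[ax,avx]\cup\dots$ shows that $av$ is loxodromic of translation length at least $\lambda/2$, with an axis passing $O(\delta)$-close to $x$, provided the Gromov products $(a^{-1}x\mid vx)_x$ and $(v^{-1}x\mid ax)_x$ are both bounded by a fixed multiple of $\delta$. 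If no element of $U$ moves $x$ by more than $C$ we are already in alternative~(1), so assume henceforth $|U|>C$ and $\lambda>C$, with $C$ depending only on $\delta,L,N$.

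\emph{Selecting the partners and concluding.} Fixing a suitable constant $K$, put $V_0=\{v\in U : (a^{-1}x\mid vx)_x\le K,\ (v^{-1}x\mid ax)_x\le K,\ \text{and } av\text{ is loxodromic as above}\}$. An element of $U\setminus V_0$ has $[x,vx]$ or $[x,v^{-1}x]$ fellow-travelling $[x,ax]$ past scale $K$; binning such a $v$ by the length of this fellow-travelling yields $O(\lambda/L)$ bins, and within one bin a finer analysis — showing that a suitable ratio element coarsely stabilises a segment of length $\ge L$ — lets acylindricity bound the count by $N$. Hence $|U\setminus V_0|$ is controlled and one keeps $|V_0|\ge|U|/(C\lambda)$. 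For $v\in V_0$ the loxodromic $g_v=av$ has, in a visual metric based at $x$, attracting fixed point clustered $e^{-\lambda}$-close to the direction of $ax$ and repelling fixed point in the transverse direction of $v^{-1}x$; choosing a shadow $Z$ around the direction of $ax$ of intermediate radius $\rho$ with $e^{-\lambda}\ll\rho\ll e^{-K}$, north--south dynamics gives pairwise disjoint sets $g_v(Z)\subset Z$. The semigroup version of the ping-pong lemma then shows that $W:=\{g_v : v\in V_0\}\subset U^2$ freely generates a free sub-semigroup with $|W|\ge|U|/(C\lambda)$.

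\emph{Main obstacle.} The sharpest point is precisely the pigeonhole step above, intertwined with the treatment of the elements moving $x$ by a bounded amount. Since the energy only controls $U$ at the single point $x$, such an element is essentially uncontrolled everywhere else, so one must rule out — or absorb, e.g.\ by pairing with the pivot — a large ``coarse stabiliser of $x$'' sitting inside $U$, and one must check that the ping-pong estimates survive across arbitrary words in $W$, not just single letters. Arranging the constants so that the loss is exactly one factor of $\lambda(U)$, rather than the $\lambda(U)^2$ that a naive double count over the two endpoints of each geodesic would produce, is the technical heart of the proof.
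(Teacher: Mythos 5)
Your overall architecture is close in spirit to the paper's (a pivot element, generators of the form $(\text{element of }U)\cdot(\text{pivot})\in U^2$, and acylindricity plus a pigeonhole along a geodesic of length $O(\lambda)$ to produce the $1/\lambda(U)$ loss), but there are genuine gaps. The most serious is that your set $W=\{av: v\in V_0\}$ need not freely generate a free sub-semigroup: membership in $V_0$ only controls how each $v$ interacts with the pivot $a$, not how two partners $v,v'$ interact with each other. If, say, $v'=vh$ with $|hx-x|=O(\delta)$, then $(avx,av'x)_x\geq \min\{|avx-x|,|av'x-x|\}-O(\delta)$, the shadows $g_v(Z)$ and $g_{v'}(Z)$ are not disjoint, and the ping-pong collapses. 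One must further prune $V_0$ so that distinct generators are pairwise separated (in the paper: the greedy covering of $U_1v$ by the sets $A_w$ of elements lying ``behind'' $wp$ on the way to $wp$), and it is this second pruning --- not the removal of $U\setminus V_0$ --- that actually costs the factor $1/\lambda(U)$: each $A_w$ has cardinality $O(N\lambda(U)/\delta)$ because its elements are spread along a quasi-geodesic of length $O(\lambda(U))$ and, by acylindricity, at most $O(N)$ of them can cluster at scale $O(\delta)$. Your ``Main obstacle'' paragraph names this issue but does not resolve it; in the paper the removal of the badly-interacting elements costs only a constant factor, via the reduction lemma.

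The second gap is the bound $|U\setminus V_0|=O(N\lambda/L)$. Two elements $v,v'$ whose geodesics $[x,vx]$ fellow-travel $[x,ax]$ for comparable lengths do not yield any evident ratio element coarsely stabilising a segment of length $\geq L$: after the branch point, $vx$ and $v'x$ are unconstrained relative to each other, so neither $v^{-1}v'$ nor $v(v')^{-1}$ need move two far-apart points by a small amount. The paper's corresponding acylindricity step requires the two translates to be $O(\delta)$-close (not merely in the same bin) and uses the pivot translate, at distance $\geq\kappa$ from the basepoint, as the second quasi-fixed point. Relatedly, the paper does not work at an energy-almost-minimising point: it first walks (a distance up to $\lambda(U)$) to a \emph{quasi-centre}, precisely because at an $\ell^\infty$-energy minimiser essentially all of $U$ could translate $x$ in a single direction, in which case no constant-proportion subset with pairwise small products $(u_1^{-1}x,u_2x)_x$ exists and your selection of $V_0$ would be empty while your complement bound fails. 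Finally, your loxodromicity claim for $av$ breaks when $|vx-x|$ is small; the paper devotes a separate case (``concentrated energy'') to the situation where a definite proportion of $U$ barely moves the basepoint.
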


\begin{rem}
	For simplicity we stated here a weakened form of Theorem~\ref{T: main acylindrical}.
	Actually we prove that the constant $C$ only depends on the hyperbolicity constant of the space $X$ and the acylindricity parameters of the action of $G$.
	The set $W$ is also what we called \emph{strongly reduced}, see Definition~\ref{def: reduced set}.
	Roughly speaking this means that the orbit map from the free semi-group $W^*$ to $X$ is a quasi-isometric embedding.
\end{rem}

There is quite some literature on finding free sub-semigroups in powers of symmetric subsets $U$ in groups of negative curvature, see \cite{koubi_croissance_1998,arzhantseva_lower_2006,breuillard_joint_2018}. 
We can for example compare Theorem \ref{T: main acylindrical} to Theorem 1.13 of \cite{breuillard_joint_2018}. In this theorem, under the additional assumption that $U$ is symmetric, the authors construct a $2$-element set in $U^r$ that generates a free sub-semigroup, where the exponent $r$ does only depend on the doubling constant of the space. 
Let us highlight two important differences.
First we do not assume that the set $U$ is symmetric. 
In particular, we cannot build the generators of a free sub-semigroup by conjugating a given hyperbolic element.
Hence the proofs require different techniques.
Moreover, for our purpose, it is important that the cardinality of $W$ grows linearly with the one of $U$. For the optimality of our estimates discussed in the previous paragraph, we require that it is contained in $U^2$. 
The prize that we pay for this is the correction term of the order of the $\ell^{\infty}$-energy of $U$.

As the set $W$ constructed in Theorem  \ref{T: main acylindrical - intro} freely generates a free sub-semigroup, we obtain the following estimate on the growth of $U^r$.

\begin{cor}[see Corollary~\ref{T: acylindrical}]
\label{T: acylindrical - intro}
	Let $G$ be a group acting acylindrically on a hyperbolic length space $X$.
	There exists a constant $C > 0$ such that for every finite $U\subset G$ with $\lambda(U)> C$,  and for all integers $r\geqslant 0$, we have
 $$ |U^r|\geqslant \left(\frac 1{C\lambda(U)} |U| \right)^{[(r+1)/2]} .$$
\end{cor}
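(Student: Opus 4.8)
The plan is to read the estimate off Theorem~\ref{T: main acylindrical - intro}; the only point requiring a new idea is the case of odd exponents. Throughout write $\beta=\tfrac1{C\lambda(U)}$, so the claimed bound is $|U^r|\geq(\beta|U|)^{[(r+1)/2]}$. First I would enlarge $C$ so that $C\geq 1$ and apply Theorem~\ref{T: main acylindrical - intro} with this value. If alternative~(1) holds, i.e.\ $|U|\leq C$, then $\beta|U|=\tfrac{|U|}{C\lambda(U)}\leq\tfrac{C}{C\cdot C}=\tfrac1C\leq 1$ because $\lambda(U)>C$, so the right-hand side is $\leq 1\leq|U^r|$ for all $r\geq 0$ and there is nothing to prove.

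So assume alternative~(2): there is $W\subseteq U^2$ that freely generates a free sub-semigroup $W^\ast$, with $|W|\geq\beta|U|$, and — invoking the sharper Theorem~\ref{T: main acylindrical} — which is strongly reduced (Definition~\ref{def: reduced set}), in the sense that the orbit map $W^\ast\to X$ is a quasi-isometric embedding. Two elementary reductions remain. Fixing $u_0\in U$, right translation by $u_0$ injects $U^r$ into $U^{r+1}$, so $r\mapsto|U^r|$ is non-decreasing; since $[(r+1)/2]$ equals a given $m$ precisely for $r\in\{2m-1,2m\}$, it therefore suffices to prove $|U^{2m-1}|\geq(\beta|U|)^m$ for every $m\geq 1$ (the value $m=0$ being trivial). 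Moreover, as $W\subseteq U^2$ and $W^\ast$ is free, for every $k\geq 0$ one has $W^k\subseteq U^{2k}$ and $|W^k|=|W|^k$; in particular
\[
U^{2m-1}=U^{2(m-1)}\cdot U\supseteq W^{m-1}\cdot U,\qquad |W^{m-1}|=|W|^{m-1}\geq(\beta|U|)^{m-1}.
\]
Thus the whole statement is reduced to the single inequality $|W^{m-1}\cdot U|\geq(\beta|U|)^{m-1}\cdot\beta|U|$: multiplying the free-semigroup ball $W^{m-1}$ by $U$ must multiply its cardinality by at least $\beta|U|=\tfrac{|U|}{C\lambda(U)}$. (For $m=1$ this is trivial since $\beta\leq 1$; the content is $m\geq 2$.)

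The hard part is exactly this last estimate, i.e.\ controlling the fibres of the multiplication map $\mu\colon W^{m-1}\times U\to U^{2m-1}$, $(v,u)\mapsto vu$. A coincidence $vu=v'u'$ gives $v^{-1}v'=u(u')^{-1}$, hence $|v'x-vx|=|v^{-1}v'x-x|\leq 2\lambda(U)+O(\delta)$ whenever $x$ almost realises $\lambda(U)$; writing $v=pws$, $v'=pw's'$ with $p$ the longest common prefix, $w\neq w'$ in $W$ and $s,s'\in W^\ast$, one has $v^{-1}v'=(ws)^{-1}(w's')$ with $ws,w's'$ distinct words of length $m-1-|p|$, so the quasi-isometric embedding $W^\ast\to X$ forces $m-1-|p|$ to be bounded in terms of $\lambda(U)$ alone. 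Feeding this separation back, together with the constraint $u'=(v')^{-1}(vu)\in U$, should bound the fibres of $\mu$ and yield $|W^{m-1}\cdot U|\geq\tfrac1{C\lambda(U)}\,|W^{m-1}|\cdot|U|$, whence $|U^{2m-1}|\geq\tfrac1{C\lambda(U)}(\beta|U|)^{m-1}|U|=(\beta|U|)^m$ after enlarging $C$ to absorb the auxiliary constants; rewinding, $|U^r|\geq|U^{2m-1}|\geq(\beta|U|)^m=\bigl(\tfrac1{C\lambda(U)}|U|\bigr)^{[(r+1)/2]}$ for every $r$. Turning the qualitative statement ``long $W$-words have far-apart orbit points'' into such a quantitative fibre bound, uniformly in $U$ and using only the hyperbolicity constant and the acylindricity parameters, is the step I expect to be the main obstacle; checking that all constants collapse into a single $C$ of the advertised form is then routine.
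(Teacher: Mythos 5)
Your reduction to the odd case is fine (monotonicity of $r\mapsto|U^r|$ plus $W^{m-1}\cdot U\subseteq U^{2m-1}$), but the step you yourself flag as the main obstacle --- the fibre bound $|W^{m-1}\cdot U|\geq\tfrac{1}{C\lambda(U)}|W|^{m-1}|U|$ --- is a genuine gap, and the sketch you give does not close it. What your argument shows is that two pairs $(v,u)$, $(v',u')$ in the same fibre force $v$ and $v'$ to share a prefix of length $m-1-\ell$ with $\ell=O(\lambda(U)/\delta)$ (each letter of $W$ displaces the base point by at least $298\delta$, so a suffix moved by $O(\lambda(U))$ has length $O(\lambda(U)/\delta)$). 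That only bounds the fibre by $\min\bigl\{|U|,\,|W|^{O(\lambda(U)/\delta)}\bigr\}$, which is nowhere near the required $O(\lambda(U))$: the constraint that the suffix $s'$ satisfies $s'u'=q^{-1}g$ with $u'\in U$ is an injectivity statement ($s'$ determines $u'$), not a counting one, and nothing in the strongly-reduced property prevents exponentially many short $W$-words from having orbit points in a ball of radius $4\lambda(U)$. Making this quantitative would require a new acylindricity argument that you have not supplied.

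The reason this difficulty is avoidable is that you are working with the weakened intro statement $W\subseteq U^2$. The full Theorem~\ref{T: main acylindrical} (which you already invoke for the strongly-reduced property) produces $W\subseteq Uv$ for a \emph{single} element $v\in U$. This is exactly what the paper exploits: $(Uv)^s\subseteq U^{2s}$ handles even $r$, and for odd $r$ one has $(Uv)^sU\subseteq U^{2s+1}$ together with
\begin{equation*}
	|(Uv)^sU| = |(Uv)^sUv| = |(Uv)^{s+1}| \geq |W|^{s+1},
\end{equation*}
since right multiplication by $v$ is a bijection of $G$. No fibre counting is needed at all, and the constant $C$ comes straight from the lower bound on $|W|$. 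Replace your multiplication-map analysis by this translation trick and the proof closes.
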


As in the previous statement, the constant $C$ actually only depends on the parameters of the action of $G$ on $X$.
Corollary \ref{T: acylindrical - intro} is a variant of \cite[Theorem 1.14]{delzant_product_2020}, where the correction term of the order of $\log|U|$ in this theorem is replaced by a geometric quantity, the $\ell^{\infty}$-energy of $U$. 
Note that the conclusion is void whenever $|U| \leq C \lambda(U)$.
This is typically the case if $U$ generates an elementary subgroup. 
Indeed, since $\lambda(U)$ is assumed to be large, $U$ cannot generate an elliptic subgroup.
Thus $\left< U \right>$ is virtually cyclic and its cardinality is bounded from above linearly in terms of the energy.
This can be compared with Theorem~\ref{T: intro - large powers free burnside} which is not relevant for small subsets $V$.

\subsection{Strategy for Burnside groups}
\label{sec: strategy for burnside groups}
Let us explain the main idea behind the proof of Theorem~\ref{T: intro - large powers free burnside}.
For simplicity we restrict ourselves to the case of free Burnside groups of rank $2$.
Let $n$ be a sufficiently large odd exponent.
Any known strategy to prove the infiniteness of $B_2(n)$ starts in the same way.
One produces a sequence of groups
\begin{equation}
\label{eqn: approx sequence}
	\mathbb F_2 = G_0 \to G_1 \to G_2 \to \dots \to G_i \to G_{i +1} \to \dots
\end{equation}
that converges to $B_2(n)$ where each $G_i$ is a hyperbolic group obtained from $G_{i-1}$ by means of small cancellation.
The approach provided by Delzant and Gromov associates to each group $G_i$ a hyperbolic space $X_i$ on which it acts properly co-compactly.
An important point is that the geometry of $X_i$ is somewhat ``finer'' than the one of the Cayley graph of $G_i$.
In particular, one controls uniformly along the sequence $(G_i, X_i)$, the hyperbolicity constant of $X_i$ as well as the acylindricity parameters of the action of $G_i$, see Proposition~\ref{P: induction step}.
As we stressed before the constant $C$ involved in Theorem~\ref{T: main acylindrical - intro} only depends on those parameters.
Thus it holds, with the same constant $C$, for each group $G_i$ acting on $X_i$.

Consider now a subset $V\subset B_2(n)$ that is not contained in a finite subgroup. 
Our idea is to choose a suitable step $j$ and a pre-image $U_j$ in $G_j$ such that the $\ell^\infty$-energy $\lambda(U_j)$ is greater than $C$ and at the same time bounded from above by a constant $C'$ that does not depend on $j$.
The strategy for choosing $j$ is the following.
The metric spaces $X_i$ defined above come with uniformly contracting maps $X_i \to X_{i+1}$.
Hence if $\tilde V$ stands for a finite pre-image of $V$ in $\mathbb F_2$, then the energy of its image $V_i$ in $G_i$ is a decreasing sequence converging to zero.
Hence there is a smallest index $j$ such that $V$ admits a pre-image $U_{j+1}$ in $G_{j+1}$ whose energy is at most $C$.
Working with the $\ell^\infty$-energy plays now an important role.
Indeed we have a control of the length of \emph{every} element in $U_j$.
This allow us to lift $U_{j+1}$ to a finite subset $U_j \subset G_j$ whose energy is controlled (i.e. bounded above by some $C'$).
It follows from the minimality of $j$ that the energy of $U_j$ is also bounded from below by $C$.
The details of the construction are given in Section~\ref{sec: growth estimates}.
By Theorem~\ref{T: main acylindrical - intro}, we find a ``large'' subset $W \subset U_j^2$ that freely generates a free sub-semigroup.
By large we mean that the cardinality of $W$ is linearly bounded from below by the cardinality of $U_j$ (hence of $V$).

At this point we get an estimate for the cardinality of $W^r$, hence for the one of $U_j^r\subset G_j$, see Corollary~\ref{T: acylindrical - intro}.
However the map $G_j \to G/G^n$ is not one-to-one.
Nevertheless there is a sufficient condition to check whether two elements $g$ and $g'$ in $G_j$ have distinct images in $G/G^n$: roughly speaking, if none of them ``contains a subword'' of the form $u^m$, with $m \geq n/3$, then $g$ and $g'$ have distinct images in $G/G^n$.
This formulation is purposely vague here. 
We refer the reader to Definition~\ref{def: power-free} for a rigorous definition of power-free elements in $G_j$.
In particular, the projection $G_j \to G/G^n$ is injective when restricted to a suitable set of power-free elements.
Hence it suffices to count the number of power-free elements in $W^r$.
This is the purpose of Sections~\ref{sec: periodic and aperiodic words} and \ref{sec: power-free elements}.
The computation is done by induction on $r$ following the strategy of the first author from \cite{coulon_growth_2013}.

Again, we would like to draw the attention of the reader to the fact that in this procedure, we took great care to make sure that all the involved parameters do not depend on $j$.

\subsection{Burnside groups of even exponent} 
\label{sec: discussion-even-exp}
Burnside groups of even exponent have a considerably different algebraic structure.
For instance it turns out that the approximation groups $G_j$ in the sequence (\ref{eqn: approx sequence}) contain elementary subgroups of the form $D_\infty \times F$ where $F$ is a finite subgroup with arbitrary large cardinality that embeds in a product of dihedral groups.
In particular one cannot control acylindricity parameters along the sequence $(G_i)$, which means that our strategy fails here.
It is very plausible that Burnside groups of large even exponents have uniform uniform exponential growth.
Nevertheless we wonder if Theorem~\ref{T: intro - large powers free burnside} still holds for such groups.
 
\subsection{Acknowledgments}
The first author acknowledges support from the \emph{Centre Henri  Le-\\ besgue} ANR-11-LABX-0020-01 and the \emph{Agence Nationale de la Recherche} under Grant \emph{Dagger} ANR-16-CE40-0006-01. The second author was supported in parts by the \emph{Austrian Science Fund (FWF)} project J 4270-N35 and the \emph{Austrian Science Fund (FWF)} project P 35079-N, and thanks the Universit\'e de Rennes 1 for hospitality during his stay in Rennes.  The second author thanks Thomas Delzant for related discussion during his stay in Strasbourg.
We thank the coffeeshop \emph{Bourbon d'Arsel} for welcoming us when the university was closed down during the pandemic, and for serving a wonderful orange cake.
We thank the referees of their careful reading and helpful comments.

\section{Hyperbolic geometry}

We collect some facts on hyperbolic geometry in the sense of Gromov \cite{gromov_hyperbolic_1987}, see also \cite{coornaert_geometry_1990,ghys_hyperboliques_1990}.

\subsection{Hyperbolic spaces} 
\label{sec: Hyperbolic spaces}

Let $X$ be a metric length space. The distance of two points $x$ and $y$ in $X$ is denoted by $|x-y|$, or $|x-y|_X$ if we want to indicate that we measure the distance in $X$. If $A\subset X$ is a set   and $x$ a point, we write $d(x,A)=\inf_{a\in A}|x-a|$ to denote the distance from $x$ to $A$.
Let $A^{+\alpha}=\{x\in X\mid d(x,A) \leqslant \alpha\}$ be the \emph{$\alpha$-neighborhood} of $A$. 
Given $x, y \in X$, we write $[x,y]$ for a geodesic from $x$ to $y$ (provided that such a path exists). 
Recall that there may be multiple geodesics joining two points.
We recall that the Gromov product of $y$ and $z$ at $x$ is defined by 
$$(y,z)_x=\frac{1}{2}\left( |y-x| + |z-x| - |y-z|\right).$$
We will often use the following facts each of which is equivalent to the triangle inequality: for every $x,y,z,t \in X$,
\begin{equation*}
	(x,y)_t \leq (x,z)_t + |y - z|
	\quad \text{and} \quad
	(x,y)_t \leq (x,y)_z + |t-z|.
\end{equation*}
A similar useful inequality is
\begin{equation}
\label{eqn: triangle inequality gromov product}
	(x,y)_t \leq  (x,y)_z + (x,z)_t, \quad \forall x,y,z,t \in X.
\end{equation}
Indeed, after unwrapping the definition of Gromov's products, it boils down to the triangle inequality.

\begin{df}
Let $\delta\geqslant 0$. The space $X$ is $\delta$-hyperbolic if for every $x$, $y$, $z$ and $t\in X$, the \emph{four point inequality} holds, that is
\begin{equation}
\label{eqn: four point inequality}
	(x,z)_t\geqslant \min \left\{ (x,y)_t,(y,z)_t\right\}- \delta.
\end{equation}
\end{df}

If $\delta=0$ and $X$ is geodesic, then $X$ is an $\mathbb{R}$-tree. 
From now on, we assume that $\delta>0$ and that $X$ is a $\delta$-hyperbolic metric length space.
We denote by $\partial X$ the boundary at infinity of $X$. 
Hyperbolicity has the following consequences.
\begin{lem}[{\cite[Lemma~2.2]{coulon_geometry_2014}}]
\label{res: metric inequalities}
	Let $x$, $y$, $z$, $s$ and $t$ be five points of $X$.
	\begin{enumerate}
		\item \label{enu: metric inequalities - thin triangle}
		\begin{math}
			(x,y)_t \leq \max \left\{ |x - t| - (y,z)_x ,  (x,z)_t  \right\} + \delta,
		\end{math}
		\item \label{enu: metric inequalities - two points close to a geodesic}
		\begin{math}
			|s-t| \leq \left| |x-s| - |x-t| \right| + 2\max\left\{ (x,y)_s, (x,y)_t\right\} + 2\delta,
		\end{math}
		\item \label{enu: metric inequalities - comparison tripod} The distance $|s-t|$ is bounded above by 
	\end{enumerate}
	\begin{displaymath}
		\max\left\{| |x-s| - |x-t|| + 2\max \left\{ (x,y)_s,(x,z)_t\right\}, |x-s| + |x-t| - 2 (y,z)_x \right\} + 4\delta.
	\end{displaymath}
\end{lem}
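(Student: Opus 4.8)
The plan is to derive all three inequalities from a single structural input, the four point inequality \eqref{eqn: four point inequality}, together with the elementary identity
\begin{equation*}
	(a,b)_c + (b,c)_a = |a-c|,
	\qquad\text{equivalently}\qquad
	(a,b)_c = |a-c| - (b,c)_a,
\end{equation*}
valid for any three points $a,b,c$ and immediate from expanding the definition of the Gromov product (it is pure triangle-inequality bookkeeping). Everything beyond that is a short case analysis together with careful tracking of the additive $\delta$-terms, so that the errors come out to exactly $\delta$, $2\delta$ and $4\delta$ respectively.

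First I would prove \eqref{enu: metric inequalities - thin triangle}. Using the identity twice, write $(x,y)_t = |x-t| - (y,t)_x$ and $(x,z)_t = |x-t| - (z,t)_x$. Applying the four point inequality at the base point $x$ to the triple $y,z,t$ gives $(y,t)_x \geq \min\{(y,z)_x,(z,t)_x\} - \delta$, hence
\begin{equation*}
	(x,y)_t = |x-t| - (y,t)_x \leq \max\bigl\{\,|x-t| - (y,z)_x,\ |x-t| - (z,t)_x\,\bigr\} + \delta,
\end{equation*}
and the second entry is exactly $(x,z)_t$. This is the core estimate; the other two inequalities are bootstrapped from it.

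For \eqref{enu: metric inequalities - two points close to a geodesic}, assume without loss of generality $|x-s| \geq |x-t|$ (the asserted bound is symmetric in $s$ and $t$). From the definition of the Gromov product together with the identity one gets $|s-t| = \bigl| |x-s| - |x-t| \bigr| + 2(x,s)_t$, so it suffices to show $(x,s)_t \leq \max\{(x,y)_s,(x,y)_t\} + \delta$. Apply \eqref{enu: metric inequalities - thin triangle} with $y$ replaced by $s$ and $z$ replaced by $y$ to obtain $(x,s)_t \leq \max\{\,|x-t| - (s,y)_x,\ (x,y)_t\,\} + \delta$; then $(x,y)_s = |x-s| - (s,y)_x$ yields $|x-t| - (s,y)_x = (x,y)_s - \bigl(|x-s| - |x-t|\bigr) \leq (x,y)_s$, which closes the argument.

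Inequality \eqref{enu: metric inequalities - comparison tripod} follows the same template with one extra use of the four point inequality. Still assuming $|x-s| \geq |x-t|$, so that $|s-t| = \bigl||x-s|-|x-t|\bigr| + 2(x,s)_t$, apply \eqref{enu: metric inequalities - thin triangle} as $(x,s)_t \leq \max\{\,|x-t| - (s,z)_x,\ (x,z)_t\,\} + \delta$, then split the first term by the four point inequality at $x$: $(s,z)_x \geq \min\{(s,y)_x,(y,z)_x\} - \delta$, so $|x-t| - (s,z)_x \leq \max\{\,|x-t| - (s,y)_x,\ |x-t| - (y,z)_x\,\} + \delta$, and as before the first entry is at most $(x,y)_s$. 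Collecting, $(x,s)_t \leq \max\{(x,y)_s,\ (x,z)_t,\ |x-t| - (y,z)_x\} + 2\delta$; doubling, re-adding $\bigl||x-s|-|x-t|\bigr|$, and using $|x-s| + |x-t| - \bigl||x-s|-|x-t|\bigr| = 2|x-t|$ reproduces the claimed bound with total error $4\delta$. In the opposite case $|x-t| \geq |x-s|$ one runs the identical computation with base point $s$ in place of $t$ and intermediate point $y$ in place of $z$ — which is why the statement genuinely involves both $(x,y)_s$ and $(x,z)_t$. I expect the only real difficulty to be organisational: arranging the two cases in the proof of \eqref{enu: metric inequalities - comparison tripod} so they assemble into the single symmetric right-hand side, and checking that the nested maxima collapse correctly and the $\delta$-count is exactly right. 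No individual step is deep; the lemma is a careful repackaging of the four point inequality.
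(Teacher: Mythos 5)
Your proof is correct: all three inequalities do follow from the four point inequality plus the identity $(a,b)_c+(b,c)_a=|a-c|$ exactly as you describe, and the $\delta$-bookkeeping ($\delta$, $2\delta$, $4\delta$) checks out, including the non-symmetric two-case treatment of part (3). The paper itself gives no proof (the lemma is quoted from \cite{coulon_geometry_2014}), and your argument is the standard derivation used there, so nothing further is needed.
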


\subsection{Quasi-geodesics}
\label{sec: quasi-geodesics}
A rectifiable path $\gamma:[a,b]\to X$ is a \emph{$(k,\ell)$-quasi-geodesic} if for all $[a',b']\subset [a,b]$ the $$\length(\gamma[a',b'])\leqslant k |\gamma(a')-\gamma(b')|+\ell;$$ and $\gamma$ is a \emph{$L$-local $(k,\ell)$-quasi-geodesic} if any subpath of $\gamma$ whose length is at most $L$ is a $(k,\ell)$-quasi-geodesic. 
The next lemma is used to construct (bi-infinite) quasi-geodesics.

\begin{lem}[Discrete quasi-geodesics {\cite[Lemma~1]{arzhantseva_lower_2006}}] \label{L: discrete quasi-geodesics}
Let $n \geq 3$.
Let $x_1,\dots, x_n$ be $n$ points of $X$.
Assume that for every $i \in \{2, \dots, n-2\}$,
\begin{equation*}	
	(x_{i-1},x_{i+1})_{x_i} + (x_i,x_{i+2})_{x_{i+1}} < |x_i - x_{i+1}| - 3\delta.
\end{equation*}
Then the following holds
\begin{enumerate}
	\item \label{enu: discrete quasi-geodesics - dist}
	$\displaystyle |x_1 - x_n | \geq \sum_{i=1}^{n-1} |x_i - x_{i+1}| - 2\sum_{i=2}^{n-1}(x_{i-1}, x_{i+1})_{x_i} - 2(n-3)\delta$.
	\item \label{enu: discrete quasi-geodesics - gromov}
	$\displaystyle(x_1, x_n)_{x_j} \leq (x_{j-1}, x_{j+1})_{x_j} + 2\delta$, for every $j \in \{ 2, \dots, n-1\}$.
	\item \label{enu: discrete quasi-geodesics - geo}
	If, in addition, $X$ is geodesic, then $[x_1,x_n]$ lies in the $5\delta$-neighborhood of the broken geodesic $\gamma = [x_1,x_2] \cup \dots \cup [x_{n-1},x_n]$, while $\gamma$ is contained in the $r$-neighborhood of $[x_1,x_n]$, where
	\begin{equation*}
		r = \sup_{2 \leq i \leq n-1} (x_{i-1}, x_{i+1})_{x_i} + 14 \delta.
	\end{equation*}
	\qed
\end{enumerate}
\end{lem}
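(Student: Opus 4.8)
The plan is to induct on $n$, proving (\ref{enu: discrete quasi-geodesics - dist}) and (\ref{enu: discrete quasi-geodesics - gromov}) together and then deducing (\ref{enu: discrete quasi-geodesics - geo}). For $n=3$ there is no hypothesis to verify: (\ref{enu: discrete quasi-geodesics - dist}) is an equality that merely unwinds the definition of $(x_1,x_3)_{x_2}$, and (\ref{enu: discrete quasi-geodesics - gromov}) holds trivially. For the inductive step I would first apply the inductive hypothesis to the initial segment $x_1,\dots,x_{n-1}$, which satisfies the same assumptions, and then splice in the last point $x_n$. The only genuinely new input is the hypothesis at $i=n-2$, namely $(x_{n-3},x_{n-1})_{x_{n-2}} + (x_{n-2},x_n)_{x_{n-1}} < |x_{n-2}-x_{n-1}|-3\delta$, and it should be read geometrically as saying that, on the edge $[x_{n-2},x_{n-1}]$, the backtracking zone that the concatenation creates near $x_{n-2}$ (of size $\simeq (x_{n-3},x_{n-1})_{x_{n-2}}$) is disjoint, with a definite gap, from the one it will create near $x_{n-1}$ (of size $\simeq(x_{n-2},x_n)_{x_{n-1}}$). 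This is precisely what lets the broken geodesic keep making progress once the new corner at $x_{n-1}$ is cut.

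Concretely, the heart of the step is the estimate $(x_1,x_n)_{x_{n-1}}\leq (x_{n-2},x_n)_{x_{n-1}} + \delta$. By the inductive hypothesis (\ref{enu: discrete quasi-geodesics - gromov}) applied to $x_1,\dots,x_{n-1}$ at the index $n-2$ one has $(x_1,x_{n-1})_{x_{n-2}}\leq (x_{n-3},x_{n-1})_{x_{n-2}} + 2\delta$, hence, using the identity $(x_1,x_{n-2})_{x_{n-1}} + (x_1,x_{n-1})_{x_{n-2}} = |x_{n-1}-x_{n-2}|$ (immediate from the definition) and the hypothesis above,
\begin{equation*}
	(x_1,x_{n-2})_{x_{n-1}} \;\geq\; |x_{n-1}-x_{n-2}| - (x_{n-3},x_{n-1})_{x_{n-2}} - 2\delta \;>\; (x_{n-2},x_n)_{x_{n-1}} + \delta .
\end{equation*}
Now, among any three Gromov products based at a common point the two smallest differ by at most $\delta$ (a restatement of the four point inequality (\ref{eqn: four point inequality})); applied to $(x_1,x_{n-2})_{x_{n-1}}$, $(x_{n-2},x_n)_{x_{n-1}}$ and $(x_1,x_n)_{x_{n-1}}$, and since the first strictly exceeds the second by more than $\delta$ and therefore cannot be one of those two, this forces $(x_1,x_n)_{x_{n-1}}\leq (x_{n-2},x_n)_{x_{n-1}}+\delta$. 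Part (\ref{enu: discrete quasi-geodesics - dist}) then drops out of $|x_1-x_n| = |x_1-x_{n-1}| + |x_{n-1}-x_n| - 2(x_1,x_n)_{x_{n-1}}$ together with the inductive lower bound on $|x_1-x_{n-1}|$ (the sharper $+\delta$ above, rather than $+2\delta$, is exactly what yields the constant $2(n-3)\delta$). The remaining cases $j\leq n-2$ of (\ref{enu: discrete quasi-geodesics - gromov}) are treated in the same spirit, comparing $(x_1,x_n)_{x_j}$ with the inductively controlled $(x_1,x_{n-1})_{x_j}$ through the four point inequality, and (\ref{enu: discrete quasi-geodesics - geo}) is then a standard consequence of (\ref{enu: discrete quasi-geodesics - dist})--(\ref{enu: discrete quasi-geodesics - gromov}): each $x_i$ lies within $(x_{i-1},x_{i+1})_{x_i}+O(\delta)$ of $[x_1,x_n]$ by (\ref{enu: discrete quasi-geodesics - gromov}), so each edge of the broken geodesic $\gamma=[x_1,x_2]\cup\dots\cup[x_{n-1},x_n]$ stays in a bounded neighbourhood of $[x_1,x_n]$, while conversely $[x_1,x_n]$ is trapped near $\gamma$ by cutting it at the nearest point projections of the $x_i$ and using thinness of the resulting quadrilaterals.

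The step I expect to be the main obstacle is keeping the additive $\delta$'s from accumulating with $n$: a naive telescoping only gives $(x_1,x_n)_{x_j}\leq (x_{j-1},x_{j+1})_{x_j} + O(n\delta)$, which is useless, so one must track the constants exactly as above, case by case according to whether the newly added vertex sits ``behind'' or ``ahead''. It is precisely for this that the hypothesis couples each pair of consecutive defects $(x_{i-1},x_{i+1})_{x_i}$ and $(x_i,x_{i+2})_{x_{i+1}}$ with the single edge $[x_i,x_{i+1}]$ between them, instead of bounding the individual defects; a slightly more geometric but equivalent route is to pass to the (geodesic) metric completion, mark on each edge the two points just past these backtracking zones, check that consecutive marked points are $O(\delta)$-close, and invoke the local to global principle for quasi-geodesics.
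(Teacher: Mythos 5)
The paper does not actually prove this lemma: it is imported from \cite[Lemma~1]{arzhantseva_lower_2006} and stated with a \qed, so there is no internal argument to compare yours against. Judged on its own, your induction is the right mechanism and its core is correct: the base case $n=3$ is as you say, the identity $(x_1,x_{n-2})_{x_{n-1}}+(x_1,x_{n-1})_{x_{n-2}}=|x_{n-1}-x_{n-2}|$ is exact, and combining it with the inductive bound at the index $n-2$ and the hypothesis at $i=n-2$ does force, via the ``two smallest Gromov products differ by at most $\delta$'' form of (\ref{eqn: four point inequality}), the key estimate $(x_1,x_n)_{x_{n-1}}\leq (x_{n-2},x_n)_{x_{n-1}}+\delta$; feeding this into $|x_1-x_n|=|x_1-x_{n-1}|+|x_{n-1}-x_n|-2(x_1,x_n)_{x_{n-1}}$ gives (\ref{enu: discrete quasi-geodesics - dist}) with exactly the constant $2(n-3)\delta$, and (\ref{enu: discrete quasi-geodesics - geo}) is indeed a standard consequence.

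The one genuine gap is your treatment of (\ref{enu: discrete quasi-geodesics - gromov}) for $j\leq n-2$. Comparing $(x_1,x_n)_{x_j}$ with the inductively controlled $(x_1,x_{n-1})_{x_j}$ costs at least one further $\delta$: the best such a comparison can yield is $(x_1,x_n)_{x_j}\leq (x_1,x_{n-1})_{x_j}+\delta\leq (x_{j-1},x_{j+1})_{x_j}+3\delta$, which misses the stated bound (and naive telescoping is worse, as you note). The repair is to use the symmetry of the hypothesis under reversal of the chain and run your key estimate from both ends rather than only from the left. Applied to $x_1,\dots,x_{j+1}$ it gives $(x_1,x_{j+1})_{x_j}\leq (x_{j-1},x_{j+1})_{x_j}+\delta$; applied to the reversed chain $x_n,\dots,x_j$ it gives $(x_n,x_j)_{x_{j+1}}\leq (x_j,x_{j+2})_{x_{j+1}}+\delta$. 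The four point inequality at $x_j$ with the triple $x_1,x_{j+1},x_n$ then yields
\begin{equation*}
	\min\left\{(x_1,x_n)_{x_j},\,(x_n,x_{j+1})_{x_j}\right\}\leq (x_1,x_{j+1})_{x_j}+\delta\leq (x_{j-1},x_{j+1})_{x_j}+2\delta,
\end{equation*}
and the second entry of the minimum is excluded because
\begin{equation*}
	(x_n,x_{j+1})_{x_j}=|x_j-x_{j+1}|-(x_n,x_j)_{x_{j+1}}\geq |x_j-x_{j+1}|-(x_j,x_{j+2})_{x_{j+1}}-\delta>(x_{j-1},x_{j+1})_{x_j}+2\delta
\end{equation*}
by the hypothesis at $i=j$. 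This produces the sharp constant $2\delta$, which matters downstream: the paper invokes (\ref{enu: discrete quasi-geodesics - gromov}) with that exact constant, e.g.\ in the proof of Lemma~\ref{res: free-sub-semigroup}.
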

\begin{rem}
	Note that the result still holds if $n = 1$ or $n = 2$.
	Indeed the statement is mostly void, or follows from the definition of Gromov products.
	One just need to replace the error term $2(n-3)\delta$ in (\ref{enu: discrete quasi-geodesics - dist}) by zero.
	Thus in the remainder of the article, we will invoke Lemma~\ref{L: discrete quasi-geodesics} regardless how points are involved.
\end{rem}

We denote by $L_0$ the smallest positive number larger than $500$ such that
for every $\ell \in [0, 10^5 \delta]$, the Hausdorff distance between any two $L_0\delta$-local $(1,\ell)$-quasi-geodesic with the same endpoints is at most $(2\ell + 5\delta)$. See \cite[Corollaries~2.6 and 2.7]{coulon_geometry_2014}.

\subsection{Quasi-convex subsets}
A subset $Y\subset X$ is \emph{$\alpha$-quasi-convex} if for all two points $x,y\in Y$, and for every point $z\in X$, we have $d(z,Y)\leqslant (x,y)_z + \alpha$. For instance, geodesics are $2\delta$-quasi-convex. 

If $Y\subset X$, we denote by $|\ .\ |_Y$ the length metric induced by the restriction of $|\ . \ |_X$ to $Y$. A subset $Y$ that is connected by rectifiable paths is \emph{strongly-quasi-convex} if it is $2\delta$-quasi-convex and if for all $y,y'\in Y$,
$$|y-y'|_X\leqslant |y-y'|_Y \leqslant |y-y'|_X+8\delta.$$

\subsection{Isometries}

Let $G$ be a group that acts by isometries on $X$. 

Let $g\in G$. The \emph{translation length} of $g$ is $$\|  g \|=\inf_{x\in X} |gx-x|.$$
The \emph{stable translation length} of $g$ is $$\|  g \|^{\infty}=\lim_{n\to \infty} \frac{1}{n} |g^nx-x|.$$
Those two quantities are related by the following inequality:
$\|  g \|^{\infty}\leqslant \|  g \| \leqslant \|  g \|^{\infty}+ 16\delta$. 
See \cite[Chapitre 10, Proposition 6.4]{coornaert_geometry_1990}.
The isometry $g$ is hyperbolic if, and only if, its stable translation length is positive, \cite[Chapitre 10, Proposition 6.3]{coornaert_geometry_1990}. 

\begin{df} Let $d > 0$ and $U\subset G$. The set of \emph{$d$-quasi-fixpoints} of $U$ is defined by 
$$\Fix(U,d)=\{x\in X\mid \hbox{for all $u\in U$ } |ux-x|< d\}.$$
The  \emph{axis of $g\in G$} is the set
$A_g= \Fix(g,\|  g \|+8\delta).$
\end{df}

\begin{lem}[\hspace{-0.15mm}{\cite[Lemma 2.9]{coulon_infinite_2018}}] \label{L: convexity fixpoints} \label{L: midpoint} Let $U\subset G$. If $d>7\delta$, then the set of $d$-quasi-fixpoints of $U$ is $10\delta$-quasi-convex. Moreover, assuming that $\Fix(U,d)$ is non-empty
\begin{enumerate}
\item if $x\in X\setminus \Fix(U,d)$, then 
$$\sup_{u\in U} |ux-x| \geqslant 2d(x,\Fix(U,d)) +d -14\delta.$$
\item given $x \in X$ and $L \geq 0$, if $\sup_{u\in U}|ux-x| \leq d + 2L$, then $x \in \Fix(U,d)^{+L+ 7 \delta}.$\qed
\end{enumerate}
 
\end{lem}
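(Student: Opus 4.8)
The plan is to prove the quasi-convexity statement first, then derive the two numbered items from it together with the four-point inequality. For quasi-convexity, I would fix two points $x, y \in \Fix(U,d)$ and an arbitrary $z \in X$, and aim to show $d(z, \Fix(U,d)) \leq (x,y)_z + 10\delta$. The natural candidate for a point of $\Fix(U,d)$ near $z$ is a point $p$ realizing (approximately) the distance $(x,y)_z$, for instance a near-projection of $z$ onto a geodesic $[x,y]$; equivalently, using only the metric inequalities of Lemma~\ref{res: metric inequalities}, a point $p$ on $[x,y]$ with $|x-p|\approx (x,z)_y$. The key computation is then to bound $|up - p|$ for each $u \in U$ in terms of $|ux-x| < d$, $|uy-y| < d$, and the hyperbolicity constant. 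Since $p$ lies close to a geodesic between $x$ and $y$, and $u$ moves both $x$ and $y$ by less than $d$, one expects $up$ to stay within roughly $\max\{|ux-x|,|uy-y|\} + O(\delta) < d + O(\delta)$ of $p$; the hypothesis $d > 7\delta$ is exactly what is needed to absorb the $O(\delta)$ error into the strict inequality defining $\Fix(U,d)$ (after possibly nudging $p$). I would run this estimate via Lemma~\ref{res: metric inequalities}\eqref{enu: metric inequalities - comparison tripod} applied to the points $p$, $up$ with basepoints among $x,y$ and their translates, using that $(x,y)_p$ is small because $p$ is near $[x,y]$.

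For item (1), suppose $x \notin \Fix(U,d)$ and let $q \in \Fix(U,d)$ realize (up to a controlled additive error, since $\Fix(U,d)$ need not be closed or nonempty-with-minimum) the distance $\rho = d(x,\Fix(U,d))$. Pick $u \in U$. The idea is that the geodesic $[x, ux]$ must pass "near" $\Fix(U,d)$ in a strong sense: both $q$ and $uq$ lie in $\Fix(U,d)$, both are within $\rho$ of $x$ and of $ux$ respectively, and $|q - uq| < d$. Concatenating $[x,q]$, $[q, uq]$, $[uq, ux]$ and using that this broken path is close to $[x,ux]$ (via quasi-convexity of $\Fix(U,d)$, or directly via the four-point inequality), one gets $|ux - x| \geq |x - q| + |ux - uq| - |q - uq| - O(\delta) \geq 2\rho - d - O(\delta)$ — but this is the wrong direction, so instead I would use that any point of $\Fix(U,d)$ is at distance $\geq \rho$ from $x$ to force the Gromov product $(x, q)_{?}$ to be large, and combine with item (2)'s type of reasoning. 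More carefully: let $m$ be a near-midpoint-type point on $[x,q]$; since $\Fix(U,d)$ is $10\delta$-quasi-convex and $q \in \Fix(U,d)$, the point $q$ is "the" closest fixpoint, so points strictly between $x$ and $q$ are not in $\Fix(U,d)$, hence by definition some $u$ moves them far; quantifying "far" using the four-point inequality on $x, ux, q, uq$ and $|ux-x| < |x-q| + |q-uq| + |uq - ux| \leq \rho + d + \rho$ gives exactly $\sup_u |ux-x| \geq 2\rho + d - 14\delta$ after collecting the $\delta$-errors. I would organize this as: bound $|ux-x|$ from below by estimating $(q, uq)_x$ and using $(ux,\cdot)$ relations, which is where the sharp constant $14\delta$ emerges.

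For item (2), given $x$ with $\sup_{u \in U}|ux - x| \leq d + 2L$, I want a point of $\Fix(U,d)$ within $L + 7\delta$ of $x$. If $x \in \Fix(U,d)$ we are done with room to spare; otherwise apply item (1): $d + 2L \geq \sup_u |ux-x| \geq 2\,d(x,\Fix(U,d)) + d - 14\delta$, so $d(x,\Fix(U,d)) \leq L + 7\delta$, which is precisely the claim. This makes (2) a one-line consequence of (1), so the real content is (1) plus quasi-convexity. The main obstacle I anticipate is the bookkeeping of additive $\delta$-constants in proving quasi-convexity and in extracting the exact constant $14\delta$ in item (1): one must apply the three metric inequalities of Lemma~\ref{res: metric inequalities} to the right configurations of points (typically $x$, $ux$, $q$, $uq$, and an auxiliary near-projection point) and be careful that $\Fix(U,d)$ is defined by a strict inequality, so all estimates must either land strictly below $d$ or be stated for the closed version and transferred — this is where the hypothesis $d > 7\delta$ is consumed, and getting a self-consistent set of inequalities here is the delicate part.
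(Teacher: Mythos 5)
First, note that the paper does not prove this lemma at all: it is quoted verbatim from \cite[Lemma 2.9]{coulon_infinite_2018} and closed with \qed, so the only thing to compare against is the mathematics itself. Within your proposal, the one fully correct piece is the deduction of item (2) from item (1): if $x \notin \Fix(U,d)$ then $d+2L \geq 2\,\dist(x,\Fix(U,d)) + d - 14\delta$ gives exactly $\dist(x,\Fix(U,d)) \leq L+7\delta$. (In fact (1) and (2) are equivalent given $\Fix(U,d)\neq\emptyset$.) The two substantive parts, however, each have a genuine gap. For quasi-convexity, the best that the ``displacement function is quasi-convex along geodesics'' estimate can give for a point $p$ near $[x,y]$ is $|up-p| < d + O(\delta)$, which places $p$ in $\Fix(U,d+O(\delta))$, \emph{not} in $\Fix(U,d)$; the hypothesis $d>7\delta$ does not let you absorb an additive error into a strict inequality with the same threshold, and there is no canonical direction in which to ``nudge'' $p$ (different $u\in U$ would want different nudges). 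The standard repair is to prove item (2) \emph{first} and then use it to convert ``$\sup_u|up-p|\leq d+O(\delta)$'' into ``some point within $O(\delta)$ of $p$ lies in $\Fix(U,d)$'' — i.e.\ the logical order is the reverse of the one you propose, so your plan as stated is circular at this point.

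For item (1) the sketch does not work. The displayed chain $|ux-x| < |x-q|+|q-uq|+|uq-ux| \leq 2\rho+d$ is an \emph{upper} bound and cannot ``give'' the lower bound $\sup_u|ux-x|\geq 2\rho+d-14\delta$; and the intended repair — take $m$ on $[x,q]$ just outside $\Fix(U,d)$, find $u$ with $|um-m|\geq d$, and propagate back to $x$ — requires the alignment estimate $|ux-x| \geq |um-m| + 2|x-m| - O(\delta)$, which is false in general for that $u$ (for instance $x$ may itself lie in $\Fix(u,d)$ even though it is far from $\Fix(U,d)$; only the triangle inequality $|ux-x|\leq |um-m|+2|x-m|$ holds unconditionally, and it points the wrong way). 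The missing idea is a per-isometry interpolation estimate along a geodesic $[x,q]$ with $q\in\Fix(U,d)$, of the form $|u\gamma(t)-\gamma(t)| \leq \max\bigl\{|ux-x|-2t,\ |uq-q|,\ \|u\|\bigr\} + O(\delta)$, obtained from Corollary~\ref{C: axis} (and its elliptic analogue) together with the quasi-convexity of the characteristic set of $u$; applied to every $u\in U$ simultaneously it produces a point of $\Fix(U,d)$ at distance $\leq L+7\delta$ from $x$, proving (2), hence (1), and then quasi-convexity. This is where the constants $14\delta$, $7\delta$ and the hypothesis $d>7\delta$ actually come from, and none of it is present in the proposal.
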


\begin{cor}[{\hspace{-0.15mm}\cite[Proposition 2.3.3]{delzant_courbure_2008}}] \label{C: axis} Let $g$ be an isometry of $X$.  Then $A_g$ is $10\delta$-quasi-convex and $g$-invariant. Moreover, for all $x\in X$, 
$$\|  g \|+ 2d(x, A_g) - 6\delta \leqslant |gx-x| \leqslant \|  g \| + 2d(x, A_g) +8\delta.$$ \qed
\end{cor}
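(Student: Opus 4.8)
The plan is to obtain this as a direct consequence of Lemma~\ref{L: convexity fixpoints} applied to the singleton $U=\{g\}$ with parameter $d=\|g\|+8\delta$, so that by definition $\Fix(U,d)=A_g$. Since $\|g\|\geq 0$ we have $d>7\delta$, hence the hypotheses of that lemma are met. First I would check that $A_g$ is non-empty: as $\|g\|$ is the infimum of $|gx-x|$ over $x\in X$, there is a point $x$ with $|gx-x|<\|g\|+8\delta=d$, i.e. $x\in A_g$. With this in hand, the $10\delta$-quasi-convexity of $A_g$ is exactly the first assertion of Lemma~\ref{L: convexity fixpoints}.

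For the $g$-invariance I would argue directly from the fact that $g$ acts by isometries. If $x\in A_g$, then $|g(gx)-gx|=|gx-x|<d$, so $gx\in A_g$; and applying the isometry $g$ to the pair $(g^{-1}x,x)$ gives $|g(g^{-1}x)-g^{-1}x|=|x-gx|=|gx-x|<d$, so $g^{-1}x\in A_g$ as well. Hence $gA_g=A_g$.

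It remains to establish the two-sided estimate. For the upper bound, fix $x\in X$ and an arbitrary $y\in A_g$; the triangle inequality together with $|gx-gy|=|x-y|$ gives
\begin{equation*}
  |gx-x|\leq |gx-gy|+|gy-y|+|y-x| < 2|x-y| + \|g\| + 8\delta,
\end{equation*}
and taking the infimum over $y\in A_g$ yields $|gx-x|\leq \|g\|+2d(x,A_g)+8\delta$. For the lower bound I would split on whether $x$ lies in $A_g$. If $x\in A_g$, then $d(x,A_g)=0$ and the claimed inequality reduces to $\|g\|-6\delta\leq |gx-x|$, which holds because $|gx-x|\geq\|g\|$. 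If $x\notin A_g$, then since $A_g=\Fix(U,d)$ is non-empty, the first item of Lemma~\ref{L: convexity fixpoints} applies and gives
\begin{equation*}
  |gx-x| = \sup_{u\in U}|ux-x| \geq 2d(x,A_g)+d-14\delta = \|g\|+2d(x,A_g)-6\delta.
\end{equation*}

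The argument is entirely routine: it is a bookkeeping of additive constants on top of Lemma~\ref{L: convexity fixpoints}. The only points requiring genuine care are the verification of the hypotheses $d>7\delta$ and $A_g\neq\varnothing$ under which that lemma is stated, and keeping track of the exact multiples of $\delta$; I do not anticipate any conceptual obstacle.
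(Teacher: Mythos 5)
Your proof is correct and is exactly the intended derivation: the corollary is stated as a consequence of Lemma~\ref{L: convexity fixpoints} applied to $U=\{g\}$ with $d=\|g\|+8\delta$, and your bookkeeping of the constants (using $d>7\delta$, the non-emptiness of $A_g$ from the infimum definition of $\|g\|$, item (1) for the lower bound, and the triangle inequality for the upper bound) matches the standard argument. No gaps.
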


\subsection{Acylindricity} We recall the definition of an acylindrical action. 
The action of $G$ on $X$ is \emph{acylindrical} if there exists two functions $N,\kappa \colon \mathbb R_+ \to \mathbb R_+$ such that for every $r \geq 0$, for all points $x$ and $y$ at distance $|x-y|\geqslant \kappa(r)$, there are at most $N(r)$ elements $g\in G$ such that $|x-gx|\leqslant r$ and $|y-gy|\leqslant r$. 

Recall that we assumed $X$ to be $\delta$-hyperbolic, with $\delta > 0$.
In this context, acylindricity satisfies a local-to-global phenomenon: if there exists $N_0,\kappa_0 \in \mathbb R_+$ such that for all points $x$ and $y$ at distance $|x-y|\geqslant \kappa_0$, there are at most $N_0$ elements $g\in G$ such that $|x-gx|\leqslant 100\delta$ and $|y-gy|\leqslant 100\delta$, then the action of $G$ is acylindrical, with the following estimates for the functions $N$ and $\kappa$:
\begin{equation}
\label{eqn: local-global acyl}
	\kappa(r) = \kappa_0 + 4r + 100\delta
	\quad \text{and} \quad
	N(r) = \left( \frac r{5\delta} + 3\right)N_0.
\end{equation}
See \cite[Proposition~5.31]{dahmani_hyperbolically_2017}.
This motivates the next definition.

\begin{df}\label{D: acylindrical} 
Let $N, \kappa \in \mathbb R_+$.
The action of $G$ on $X$ is \emph{$(N,\kappa)$-acylindrical} if for all points $x$ and $y$ at distance $|x-y|\geqslant \kappa$, there are at most $N$ elements $g\in G$ such that $|x-gx|\leqslant 100 \delta$ and $|y-gy|\leqslant 100 \delta$. 
\end{df}

We need the following geometric invariants of the action of $G$ on $X$.
The limit set of $G$ acting on $X$ consists of the accumulation points in the Gromov boundary $\partial X$ of $X$ of the orbit of one (and hence any) point in $X$. By definition, a subgroup $E$ of $G$ is {elementary} if the limit set of $E$ consists of at most two points.

\begin{df} The \emph{injectivity radius} is defined as $$\tau(G,X)=\inf \{\|  g \|^{\infty} \mid g \in G \hbox{ is a hyperbolic isometry}\}.$$
\end{df}

\begin{df} The \emph{acylindricity parameter} is defined as
$$A(G,X)=\sup_{U} \ \diam\left( \Fix(U,2L_0\delta)\right),$$
where $U$ runs over the subsets of $G$ that do not generate an elementary subgroup.
\end{df}

\begin{df} The \emph{$\nu$-invariant} is the smallest natural number $\nu=\nu(G,X)$ such that for every $g\in G$ and every hyperbolic $h\in G$ the following holds: if $g$, $hgh^{-1}$, $\ldots$, $h^{\nu}gh^{-\nu}$ generate an elementary subgroup, then so do $g$ and $h$.
\end{df}

\begin{rem}
\label{rem: convention diam}
	In the above definitions we adopt the following conventions.
	The diameter of the empty set is zero.
	If $G$ does not contain any hyperbolic isometry, then $\tau(G,X) = \infty$.
	If every subgroup of $G$ is elementary, then $A(G,X) = 0$.
\end{rem}

The parameters $A(G,X)$ and $\nu(G,X)$ allow us to state the following version of Margulis' lemma.

\begin{prop}[Proposition~3.5 of \cite{coulon_infinite_2018}]\label{P: margulis 2} Let $U$ be a subset of $G$. 
If $U$ does not generate an elementary subgroup, then, for every $d>0$, we have
$$ \diam\left(\Fix(U,d)\right) \leqslant \left[\nu(G,X) +3\right] d + A(G,X) + 209 \delta.$$ \qed
\end{prop}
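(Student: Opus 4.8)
The plan is to argue by contradiction. The two packaged invariants will be used in complementary ways: $A(G,X)$ caps the diameter of the $2L_0\delta$-quasi-fixpoint set of \emph{any} non-elementary subset, whereas $\nu(G,X)$ lets us replace $U$ — which may itself contain no small non-elementary subset — by a family of at most $\nu(G,X)+1$ conjugates of a single element of $U$ that still generates a non-elementary subgroup. Everything else is a sandwich estimate between quasi-fixpoint sets at different thresholds. First dispose of the trivial situations. If $\Fix(U,d)=\emptyset$ then $\diam\Fix(U,d)=0$ and there is nothing to prove, so assume $\Fix(U,d)\neq\emptyset$. If $\Fix(U,2L_0\delta)\neq\emptyset$, then $\diam\Fix(U,2L_0\delta)\leq A(G,X)$ by definition of $A(G,X)$ (as $U$ is non-elementary), and the first part of Lemma~\ref{L: convexity fixpoints}, applied with threshold $2L_0\delta>7\delta$, shows every $y\in\Fix(U,d)$ lies within $(d-2L_0\delta+14\delta)/2$ of $\Fix(U,2L_0\delta)$, so
\[
	\diam\Fix(U,d)\leq A(G,X)+d-2L_0\delta+14\delta\leq [\nu(G,X)+3]d+A(G,X)+209\delta
\]
since $L_0>500$. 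Hence we may assume from now on that $\Fix(U,2L_0\delta)=\emptyset$; in particular $d>2L_0\delta$.

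Now reduce to the scale of the $\ell^\infty$-energy $\lambda(U)=\inf_x\sup_{u\in U}|ux-x|$. Since $\Fix(U,2L_0\delta)=\emptyset$ we have $\lambda(U)\geq 2L_0\delta$, and since $\Fix(U,d)\neq\emptyset$ we have $\lambda(U)<d$; fix $\varepsilon>0$ small and put $\mu=\lambda(U)+\varepsilon$, so $2L_0\delta<\mu<d$ and $\Fix(U,\mu)\neq\emptyset$. Applying the first part of Lemma~\ref{L: convexity fixpoints} with threshold $\mu>7\delta$, every $y\in\Fix(U,d)$ satisfies $d(y,\Fix(U,\mu))<(d-\mu+14\delta)/2$, so
\[
	\diam\Fix(U,d)\leq\diam\Fix(U,\mu)+(d-\mu+14\delta).
\]
As $\mu\leq d$, it now suffices to bound $\diam\Fix(U,\mu)$ by something of the shape $[\nu(G,X)+3]\mu+A(G,X)+O(\delta)$. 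Suppose it is too large and pick $p,q\in\Fix(U,\mu)$ with $|p-q|$ huge. Because $\Fix(U,\mu)$ is $10\delta$-quasi-convex (Lemma~\ref{L: convexity fixpoints}), a geodesic $[p,q]$ stays $10\delta$-close to it, and therefore \emph{every} $u\in U$ moves \emph{every} point of $[p,q]$ by at most $\mu+O(\delta)$: the subgroup $\langle U\rangle$ almost fixes the long geodesic $[p,q]$.

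Since $\langle U\rangle$ is non-elementary it contains a hyperbolic isometry, and — this is the delicate point — from $[p,q]$ one extracts a hyperbolic isometry $h$, together with some $u\in U$, such that $\langle u,h\rangle$ is non-elementary while $\|h\|=O(\mu)$ and the axis $A_h$ runs alongside $[p,q]$; here Corollary~\ref{C: axis}, which ties $|hx-x|$ to $\|h\|$ and to $d(x,A_h)$, is used both to locate and to control such an $h$, and $\mu<d$ keeps its translation length of order $d$. By definition of $\nu=\nu(G,X)$, the set $W=\{u,\,huh^{-1},\,\dots,\,h^{\nu}uh^{-\nu}\}$ generates a non-elementary subgroup, so $\diam\Fix(W,2L_0\delta)\leq A(G,X)$ by definition of $A(G,X)$. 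On the other hand $\Fix(h^iuh^{-i},\cdot)=h^i\Fix(u,\cdot)$, hence $\Fix(W,\cdot)=\bigcap_{i=0}^{\nu}h^i\Fix(u,\cdot)$; since $h$ displaces points of $[p,q]$ by only $O(\|h\|)=O(\mu)$ (Corollary~\ref{C: axis}) while $\Fix(u,\cdot)\supseteq\Fix(U,\cdot)$ covers $[p,q]$ up to $10\delta$, the $\nu+1$ translates $h^i\Fix(u,\cdot)$ still share a sub-segment of $[p,q]$ of length at least $\diam\Fix(U,\mu)-O(\nu\mu)-O(\delta)$. One last application of the first part of Lemma~\ref{L: convexity fixpoints} (from the relevant threshold down to $2L_0\delta$) bounds this common set by $A(G,X)+O(\mu)+O(\delta)$, which contradicts the assumption that $\diam\Fix(U,\mu)$ is large. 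Carrying the constants carefully through Lemmas~\ref{res: metric inequalities},~\ref{L: discrete quasi-geodesics},~\ref{L: convexity fixpoints} and Corollary~\ref{C: axis} is what produces exactly $[\nu(G,X)+3]d+A(G,X)+209\delta$.

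The formal skeleton — pass to $W$ via $\nu(G,X)$, cap $\diam\Fix(W,2L_0\delta)$ via $A(G,X)$, and sandwich with Lemma~\ref{L: convexity fixpoints} — is routine. The main obstacle is the middle step: extracting the hyperbolic isometry $h$ with $\langle u,h\rangle$ non-elementary, with translation length linearly bounded in $\mu$ (equivalently in $d$), and with $A_h$ fellow-travelling $[p,q]$; and then controlling \emph{uniformly} how much the $\nu(G,X)+1$ conjugation translates erode the common quasi-fixpoint segment. It is precisely this control that is responsible for the coefficient $\nu(G,X)+3$ of $d$ and for the error term $209\delta$.
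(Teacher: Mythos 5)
The paper gives no proof of this proposition: it is imported wholesale from \cite{coulon_infinite_2018} (Proposition~3.5) and closed with a \qed, so the only question is whether your argument stands on its own. Your two preliminary reductions are fine: the empty case is trivial by the convention $\diam \emptyset = 0$, and when $\Fix(U,2L_0\delta)\neq\emptyset$ the sandwich via Lemma~\ref{L: convexity fixpoints}(1) together with the definition of $A(G,X)$ does give $\diam \Fix(U,d)\leq A(G,X)+d$, which is more than enough.

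The remaining case is where the entire content of the proposition lives, and there your sketch has two genuine gaps. First, the existence of the hyperbolic isometry $h$ with $\|h\|=O(\mu)$, axis fellow-travelling $[p,q]$, and $\langle u,h\rangle$ non-elementary for some $u\in U$ is asserted, not proved. The elements of $U$ may all be elliptic, and producing such an $h$ (typically as a product of two elements of $U$, then verifying non-elementarity of the resulting pair and locating its axis along $[p,q]$) is precisely the hard part; it is essentially the content of the axis-overlap propositions in \cite{coulon_partial_2016} that this paper invokes elsewhere (e.g.\ in the proof of Proposition~\ref{res: unicity period}). Second, and more damagingly, your closing move does not close. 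The invariant $A(G,X)$ only bounds $\diam \Fix(W,2L_0\delta)$; to transfer this to the common quasi-fixed segment of $W$ at the larger threshold $\mu+O(\delta)$ you propose to sandwich down to $2L_0\delta$ via Lemma~\ref{L: convexity fixpoints}(1), but that lemma requires $\Fix(W,2L_0\delta)$ to be \emph{non-empty}. In the case you are treating, $\Fix(U,2L_0\delta)=\emptyset$, and there is no reason for $\Fix(W,2L_0\delta)$ to be non-empty either (if $u$ is hyperbolic with $\|u\|>2L_0\delta$, then already $\Fix(u,2L_0\delta)=\emptyset$); when it is empty, the bound by $A(G,X)$ is vacuous and yields no contradiction. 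This is exactly the obstruction you correctly identified and avoided in your second reduction, and it reappears here untreated. The established derivations of such Margulis-type bounds handle long hyperbolic elements through overlap estimates for axes combined with acylindricity, rather than by reducing to the $2L_0\delta$ threshold; the coefficient $\nu+3$ and the constant $209\delta$ come out of that analysis, which your sketch does not supply.
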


If there is no ambiguity we simply write $\tau(G)$, $A(G)$, and $\nu(G)$ for $\tau(G,X)$, $A(G,X)$, and $\nu(G,X)$ respectively.
Sometimes, if the context is clear, we even write $\tau$, $A$, or $\nu$.

If the action of $G$ on $X$ is $(N,\kappa)$-acylindrical, then $\tau\geq \delta/N$, while $A$ and $\nu$ are finite.  
In fact, one could express upper bounds on $A$ and $\nu$ in terms of $N$, $\kappa$, $\delta$, and $L_0$.
See for instance \cite[Section 6]{coulon_partial_2016}. 
However, for our purpose we need a finer control on these invariants.

From now on we assume that $\kappa\geqslant \delta$ and that the action of $G$ on $X$ is $(N,\kappa)$-acylindrical.

\subsection{Loxodromic subgroups}  An elementary subgroup is loxodromic if it contains a hyperbolic element. Equivalently, an elementary subgroup is loxodromic if it has exactly two points in its limit set. 
If $h$ is a hyperbolic isometry, we denote by $E(h)$ the maximal loxodromic subgroup containing $h$. 
Let $E^+(h)$ be the maximal subgroup of $E(h)$ fixing pointwise the limit set of $E(h)$.
It is known that the set $F$ of all elliptic elements of $E^+(h)$ forms a (finite) normal subgroup of $E^+(h)$ and the quotient $E^+(h)/F$ is isomorphic to $\mathbb Z$.
We say that $h$ is \emph{primitive} if its image in $E^+(h)/F$ generates the quotient.

\begin{df}[Invariant cylinder] Let $E$ be a loxodromic subgroup with limit set $\{ \xi,\eta \}$.  The \emph{$E$-invariant cylinder}, denoted by $C_E$, is the $20\delta$-neighborhood of all $L_0 \delta$-local  $(1, \delta)$-quasi-geodesics with endpoints $\xi$ and $\eta$ at infinity. 
\end{df}

\begin{lem}[Invariant cylinder]\label{L: invariant cylinder}
Let $E$ be a loxodromic subgroup. Then
\begin{itemize}
\item $C_E$ is $2\delta$-quasi-convex and invariant under the action of $E$. If, in addition, $X$ is proper and geodesic, then $C_E$ is strongly quasi-convex \cite[Lemma 2.31]{coulon_geometry_2014},

\item if $g\in E$ and $\|  g \|>L_0\delta$, then $A_g\subset C_E$, \cite[Lemma 2.33]{coulon_geometry_2014},

\item  if $g\in E$ is hyperbolic, then $C_E\subset A_g^{+ 52\delta}$. In particular, if $x\in C_E$,  then $|gx-x|\leqslant \|  g \| + 112 \delta $, \cite[Lemma 2.32]{coulon_geometry_2014}.

\end{itemize}     
\end{lem}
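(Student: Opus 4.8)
The plan is to derive the three items from two ingredients already at our disposal: the stability of $L_0\delta$-local $(1,\delta)$-quasi-geodesics with prescribed endpoints (the defining property of $L_0$), and the elementary description of axes given by Corollary~\ref{C: axis} and Lemma~\ref{L: midpoint}. All three assertions also appear in \cite[Lemmas~2.31--2.33]{coulon_geometry_2014}, so in practice one simply cites them; I sketch the shape of the argument. For the first item, write $\{\xi,\eta\}\subset\partial X$ for the limit set of $E$. Taking $\ell=\delta$ in the definition of $L_0$, any two $L_0\delta$-local $(1,\delta)$-quasi-geodesics with endpoints $\xi,\eta$ are at Hausdorff distance at most $7\delta$ from one another; hence $C_E$ lies within Hausdorff distance $27\delta$ of a single such quasi-geodesic $\gamma$. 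Since a local $(1,\delta)$-quasi-geodesic is itself quasi-convex and a metric neighborhood of a quasi-convex set remains quasi-convex with a controlled constant, one obtains quasi-convexity of $C_E$; checking that the $20\delta$-neighborhood in the definition is chosen precisely so that the resulting constant is $2\delta$ is a matter of tracking the error terms. Invariance is formal: each $g\in E$ fixes the unordered pair $\{\xi,\eta\}$, hence permutes the family of $L_0\delta$-local $(1,\delta)$-quasi-geodesics with these endpoints, hence stabilizes the $20\delta$-neighborhood of their union. For strong quasi-convexity when $X$ is proper and geodesic, one must moreover compare the length metric induced on $C_E$ with the ambient one: given a rectifiable path between two points of $C_E$, one reroutes it through nearby points of a geodesic line from $\xi$ to $\eta$ (available by properness) and controls the extra length by a universal multiple of $\delta$; this is \cite[Lemma~2.31]{coulon_geometry_2014}.

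For the second item, fix $x\in A_g=\Fix(g,\|g\|+8\delta)$ and consider the bi-infinite broken path $\gamma=\bigcup_{k\in\mathbb Z}[g^kx,g^{k+1}x]$, whose two ends converge to the fixed points of $g$ at infinity, which are exactly $\xi$ and $\eta$. The heart of the matter is that $\gamma$ is an $L_0\delta$-local $(1,\delta)$-quasi-geodesic. Each vertex $y=g^kx$ lies in $A_g$ by invariance, so $|g^{\pm1}y-y|\leq\|g\|+8\delta$; a standard computation using the hypothesis $\|g\|>L_0\delta$, so that $g$ cannot backtrack, gives $|g^2y-y|\geq 2\|g\|-O(\delta)$, whence the Gromov products $(g^{k-1}x,g^{k+1}x)_{g^kx}$ are at most a small multiple of $\delta$. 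As each segment of $\gamma$ has length $|gx-x|\geq\|g\|>L_0\delta$, Lemma~\ref{L: discrete quasi-geodesics} shows that every subpath of $\gamma$ of length at most $L_0\delta$ is a $(1,\delta)$-quasi-geodesic. Hence $x\in\gamma\subset C_E$, and since $x\in A_g$ was arbitrary, $A_g\subset C_E$.

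For the third item, let $x\in C_E$, so $x$ lies within $20\delta$ of some $L_0\delta$-local $(1,\delta)$-quasi-geodesic $\gamma$ with endpoints $\xi,\eta$, which are precisely the fixed points of the hyperbolic isometry $g$. For every $n$ the path $g^n\gamma$ is again such a quasi-geodesic with the same endpoints, so by stability it remains within $7\delta$ of $\gamma$, uniformly in $n$; a standard coarsening argument then shows that $g$ acts on $\gamma$ as a coarse translation of amplitude $\|g\|^\infty+O(\delta)$, so that $|gx-x|\leq\|g\|+O(\delta)$. Tracking constants this yields $|gx-x|\leq\|g\|+98\delta$, and the lower estimate of Corollary~\ref{C: axis} then forces $d(x,A_g)\leq 52\delta$, i.e. $x\in A_g^{+52\delta}$. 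The concluding displacement bound is immediate from the upper estimate of the same corollary: $|gx-x|\leq\|g\|+2d(x,A_g)+8\delta\leq\|g\|+112\delta$.

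The step I expect to be the main obstacle is the control of the numerical constants rather than the structure of the arguments. The bare quasi-convexity, the invariance, and the two inclusions follow softly from stability of local quasi-geodesics; but the strong-quasi-convexity assertion in the first item genuinely needs properness together with a careful rerouting, with the additive constant $8\delta$ being delicate, and in the third item the passage from a displacement estimate of the form $\|g\|+O(\delta)$ to the explicit bound $52\delta$ requires bookkeeping several stability and Gromov-product error terms at once.
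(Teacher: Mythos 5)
The paper gives no proof of this lemma at all: each item is imported verbatim from \cite{coulon_geometry_2014} (Lemmas~2.31--2.33), which is exactly the fallback your proposal ends up invoking, so at the level the paper operates your approach is the same and nothing more is required. Your sketches are broadly consistent with how those lemmas are actually proved, with one soft spot worth flagging in the second item: for $x\in A_g=\Fix(g,\|g\|+8\delta)$ the Gromov product $(g^{-1}x,gx)_x=\tfrac12\bigl(2|gx-x|-|g^2x-x|\bigr)$ is only bounded by roughly $24\delta$ (using $\|g^2\|\geq 2\|g\|^\infty\geq 2\|g\|-32\delta$), so the orbit path $\bigcup_k[g^kx,g^{k+1}x]$ is an $L_0\delta$-local $(1,\ell)$-quasi-geodesic only for $\ell$ of order $50\delta$, not $\ell=\delta$; since $C_E$ is by definition the $20\delta$-neighborhood of the $(1,\delta)$-local quasi-geodesics, the conclusion $x\in\gamma\subset C_E$ does not follow immediately, and one must pass through the stability estimate comparing $\gamma$ with a genuine $(1,\delta)$-local quasi-geodesic and verify that the resulting Hausdorff distance stays within $20\delta$ --- which is precisely the bookkeeping carried out in the cited reference.
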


\section{Periodic and aperiodic words}
\label{sec: periodic and aperiodic words}
Let $U$ be a finite subset of $G$. 
We denote by $U^*$ the free monoid generated by $U$.
We write $\pi \colon U^* \to G$ for the canonical projection.
In case there is no ambiguity, we make an abuse of notations and still write $w$ for an element in $U^*$ and its image under $\pi$.
We fix a base point $p \in X$.
Recall that the action of $G$ on $X$ is $(N,\kappa)$-acylindrical.

\begin{df}
\label{def: reduced set}
	Let $\alpha > 0$.
	We say that the subset $U$ is \emph{$\alpha$-reduced (at $p$)} if 
	\begin{itemize}
		\item $(u_1^{-1}p, u_2p)_p  \leqslant \alpha$ for every $u_1, u_2 \in U$,
		\item $|up - p| > 2\alpha + 300\delta$ for every $u \in U$.
	\end{itemize}
	The set $U$ is \emph{$\alpha$-strongly reduced (at $p$)} if, in addition, for every distinct $u_1, u_2 \in U$, we have
	\begin{equation*}
		(u_1p,u_2p)_p < \min \left\{ |u_1p - p|, |u_2p - p|\right\} -\alpha - 150\delta.
	\end{equation*}
	We say that $U$ is \emph{reduced at $p$} (respectively \emph{strongly reduced at $p$}) if there exists $\alpha > 0$ such that $U$ is $\alpha$-reduced at $p$ (respectively $\alpha$-strongly reduced at $p$).
\end{df}

In practice, the base point $p$ is fixed once and for all.
Thus we simply say that $U$ is ($\alpha$-)reduced or ($\alpha$-)strongly reduced.

\begin{lem}
\label{res: free-sub-semigroup}
	If $U$ is $\alpha$-strongly reduced, then $U$ freely generates a free sub-semi-group of $G$.
	Moreover $U$ satisfies the \emph{geodesic extension property}, that is if $w, w' \in U^*$ are such that $(p, w'p)_{wp} \leq \alpha +145\delta$, then $w$ is a prefix of $w'$.
\end{lem}

\begin{rem}
	Roughly speaking, the geodesic extension property has the following meaning: if the geodesic $[p, w'p]$ extends $[p, wp]$ as a path in $X$, then $w'$ extends $w$ as a word over $U$.
\end{rem}

\begin{proof}
	We first prove the geodesic extension property. 
	Let $w = u_1\cdots u_m$  and $w' = u'_1\cdots u'_{m'}$ be two words in $U^*$ such that $(p, w'p)_{wp} \leq \alpha +145\delta$. 
	We denote by $r$ the largest integer such that $u_i = u'_i$ for every $i \in \{1, \dots, r-1\}$.
	For simplicity we let 
	\begin{equation*}
		q = u_1\cdots u_{r-1} p = u'_1 \cdots u'_{r-1}p.
	\end{equation*}
	Assume now that contrary to our claim $w$ is not a prefix of $w'$, that is $r - 1 < m$.
	We claim that $(wp, w'p)_q < |u_rp - p| - \alpha - 148\delta$.
	If $r-1 = m'$, then $w'p = q$ and the claim holds.
	Hence we can suppose that  $r-1 < m'$.
	It follows from our choice of $r$ that $u_r \neq u'_r$.
	We let
	\begin{equation*}
		t = u_1\cdots u_rp 
		\quad \text{and} \quad 
		t' = u'_1 \cdots u'_rp.
	\end{equation*}
	Since $U$ is $\alpha$-strongly reduced, we have
	\begin{equation*}
		\left(t,t'\right)_q  = (u_rp, u'_rp)_p < \min \left\{ |u_rp -p|, |u'_rp - p|\right\} - \alpha - 150\delta.
	\end{equation*}
	It follows then from the four point inequality that 
	\begin{equation}	\label{eqn: free-sub-semigroup - 1}
	\begin{split}
		\min\left\{ (t,wp)_q, (wp,w'p)_q, (w'p,t')_q\right\}  & \leq (t,t')_q +2\delta
		\\& < \min \left\{ |u_rp -p|, |u'_rp - p|\right\} - \alpha - 148\delta.
		\end{split}
	\end{equation}
	Applying Lemma~\ref{L: discrete quasi-geodesics}(\ref{enu: discrete quasi-geodesics - gromov}) with the sequence of points 
	\begin{equation*}
		q = u_1\cdots u_{r-1}p, \quad t = u_1\cdots u_rp, \quad u_1\cdots u_{r+1}p, \quad\dots\quad,  wp = u_1\cdots u_mp,
	\end{equation*}
	we get
	\begin{equation*}
		(q,wp)_t \leq (q, u_1\cdots u_{r+1}p)_t + 2 \delta
		= (u_r^{-1}p, u_{r+1}p)_p + 2 \delta
		\leq \alpha + 2\delta.
	\end{equation*}
	(note that the last inequality follows from the fact that $U$ is $\alpha$-reduced).
	Hence
	\begin{equation*}
		(t,wp)_q = |q-t| - (q,wp)_t \geq |u_rp - p| - \alpha - 2\delta.
	\end{equation*}
	Thus the minimum in (\ref{eqn: free-sub-semigroup - 1}) cannot be achieved by $(t,wp)_q$.
	Similarly, it cannot be achieved by $(w'p,t')_q$ either.
	Thus
	\begin{equation*}
		\left(wp,w'p\right)_q <  \min \left\{ |u_rp -p|, |u'_rp - p|\right\} - \alpha - 148\delta \leq |u_rp - p| - \alpha - 148\delta,
	\end{equation*}
	which completes the proof of our claim.

	Using Lemma~\ref{L: discrete quasi-geodesics}(\ref{enu: discrete quasi-geodesics - dist}) with the sequence of points
	\begin{equation*}
		q = u_1\cdots u_{r-1}p, \quad t = u_1\cdots u_rp, \quad u_1\cdots u_{r+1}p, \quad\dots\quad, wp = u_1\cdots u_mp,
	\end{equation*}
	we get
	\begin{equation*}
		|wp -p| \geq \sum_{j = r}^m |u_jp - p| - 2 \sum_{j = r}^{m-1} (u_j^{-1}p,u_{j+1}p)_p - 2\max\{m-r-1,0\}\delta
	\end{equation*}
	Since $U$ is $\alpha$-reduced, we have
	\begin{equation*}
		\sum_{j = r}^m |u_jp - p| > |u_rp - p| + (m-r)(2\alpha + 300\delta),
	\end{equation*}
	while 
	\begin{equation*}
		2 \sum_{j = r}^{m-1} (u_j^{-1}p,u_{j+1}p)_p \leq 2(m-r)\alpha.
	\end{equation*}
	Consequently $|wp - q| \geq |u_rp - p|$.
	Combined with the previous claim, it yields
	\begin{equation*}
		(q, w'p)_{wp} = |wp - q| - (wp,w'p)_q \geq |u_rp - p| - (wp,w'p)_q > \alpha +148\delta.
	\end{equation*}
	Applying again the four point inequality, we get
	\begin{equation}
	\label{eqn: free-sub-semigroup - 2}
		\min \left\{(p,q)_{wp}, (q, w'p)_{wp}\right\}  \leq (p,w'p)_{wp} + \delta \leq \alpha + 146\delta.
	\end{equation}
	It follows from our previous computation that the minimum cannot be achieved by $(q,w'p)_{wp}$.
	We proved previously that $|wp - q| \geq |u_rp - p|$.
	Reasoning as in our first claim, Lemma~\ref{L: discrete quasi-geodesics}(\ref{enu: discrete quasi-geodesics - gromov}) yields $(p,wp)_q \leq \alpha + 2\delta$.
	Since $U$ is $\alpha$-reduced we get
    \begin{equation*}
        		(p,q)_{wp} = |wp-q| - (p,wp)_q \geq |u_rp - p| - \alpha - 2\delta > \alpha+298\delta.
        \end{equation*}
	Hence the minimum in (\ref{eqn: free-sub-semigroup - 2}) cannot be achieved by $(p,q)_{wp}$ either, which is a contradiction.
	Consequently $w$ is a prefix of $w'$.
	
	Let us prove now that $U$ freely generates a free sub-semi-group of $G$.
	Let $w_1, w_2 \in U^*$ whose images in $G$ coincide. 
	In particular $(p, w_1p)_{w_2p} = 0 = (p,w_2p)_{w_1p}$.
	It follows from the geodesic extension property that $w_1$ is a prefix of $w_2$ and conversely. 
	Thus $w_1 = w_2$ as words in $U^*$.
\end{proof}

\subsection{Periodic words} 
From now on, we assume that $U$ is $\alpha$-strongly reduced (in the sense of Definition~\ref{def: reduced set}).  
We let $\lambda = \max_{u\in U}|up-p|$.
We denote by $|w|_U$ the word metric of $w\in U^*$.
Given an  element $w = u_1 \cdots u_m$ in $U^*$, we let 
$$[w]=\{p, u_1p,u_1u_2p,\ldots, wp\}.$$

\begin{df}\label{D: periodic}
Let $m \geq 0$.
Let $E$ be a maximal loxodromic subgroup.
We say that a word $v\in U^*$ is \emph{$m$-periodic with period $E$} if $[v]\subset C_{E}^{+\alpha + 100\delta}$ and $|p-vp| > m \tau(E).$ 
\end{df}

\begin{rem} 
Note that the definition does not require $m$ to be an integer.
Let $E$ be a maximal loxodromic subgroup such that $p$ belongs to the $(\alpha + 100\delta)$-neighborhood of $C_E$.
Let $v \in U^*$ whose image in $G$ is a hyperbolic element of $E$.
Then for every integer $m\geq0$,
the element $v^{m+1}$ is $m$-periodic with period $E$.
The converse is not true; that is, an $m$-periodic word with period $E$ is not necessarily contained in $E$. 
\end{rem}

If $m$ is sufficiently large, then periods are unique in the following sense.

\begin{prop} 
\label{res: unicity period}
There exists $m_0 \geq 0$ which only depends on $\delta$, $A$, $\nu$, $\tau$ and $\alpha$ such that for every $m \geq m_0$ the following holds.
If $v \in U^*$ is $m$-periodic with periods $E_1$ and $E_2$, then $E_1=E_2$. 
\end{prop}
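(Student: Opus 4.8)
The plan is to argue by contradiction: suppose $v$ is $m$-periodic with periods $E_1 \ne E_2$, and show that for $m$ large this forces $E_1$ and $E_2$ to generate an elementary subgroup, contradicting maximality (equivalently, forces the intersection $E_1 \cap E_2$ to be loxodromic, which together with maximality gives $E_1 = E_2$). The geometric mechanism is that $[v]$ fellow-travels both cylinders $C_{E_1}$ and $C_{E_2}$ along its whole length, so a long initial segment of $[v]$ lies in a bounded neighborhood of $C_{E_1} \cap C_{E_2}^{+\text{(stuff)}}$; since $|p - vp| > m\tau(E_i)$ this segment is long in terms of the translation lengths, and one extracts from it an element moving both $p$ and $vp$ (or two far-apart points on $[v]$) a bounded amount, then invokes acylindricity / the Margulis-type bound of Proposition~\ref{P: margulis 2}.

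First I would set up the comparison of cylinders. By hypothesis $[v] \subset C_{E_1}^{+\alpha+100\delta} \cap C_{E_2}^{+\alpha+100\delta}$. Pick hyperbolic generators $g_i \in E_i$ (primitive, say), so that $\|g_i\| $ is comparable to $\tau(E_i)$ up to $16\delta$ and $C_{E_i} \subset A_{g_i}^{+52\delta}$ by Lemma~\ref{L: invariant cylinder}. Thus every point of $[v]$ is within $\alpha + 152\delta + \text{const}\cdot\delta$ of both axes $A_{g_1}$ and $A_{g_2}$. Since $|p - vp| > m\tau(E_1)$ and $[v]$ is (close to) a quasi-geodesic — here I would use Lemma~\ref{L: discrete quasi-geodesics} together with the fact that $U$ is $\alpha$-strongly reduced, as in Lemma~\ref{res: free-sub-semigroup}, to see that the path through the points of $[v]$ is an $L_0\delta$-local $(1,\ell)$-quasi-geodesic — the set $[v]$ contains two points $x, y$ with $|x - y|$ as large as we like (proportional to $m$, after fixing the other parameters) that both lie in $A_{g_1}^{+D} \cap A_{g_2}^{+D}$ for $D = D(\delta,\alpha)$.

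Next I would produce the forbidden elements. Along $A_{g_1}$, the element $g_1$ translates by roughly $\tau(E_1)$; iterating, for a suitable power $k$ the element $g_1^k$ moves $x$ by at most, say, $100\delta$ and — because $x,y$ both lie near $A_{g_1}$ and are far apart — also moves $y$ by at most $100\delta$ (this uses Corollary~\ref{C: axis}: $|g_1^k x - x| \leq \|g_1^k\| + 2d(x,A_{g_1^k}) + 8\delta$, and $\|g_1^k\|$ small can be arranged up to the quantum $\tau(E_1)$ only if $\tau(E_1)$ is itself small — so the cleaner route is to instead compare $g_1$ and $g_2$ directly). The genuinely clean argument: if $E_1 \ne E_2$ then $\langle g_1, g_2 \rangle$ is non-elementary, so by Proposition~\ref{P: margulis 2} the set $\Fix(\{g_1,g_2\}, d)$ has diameter at most $[\nu + 3]d + A + 209\delta$ for every $d$; but taking $d = 2L_0\delta$ (or any fixed value $> 7\delta$), the long segment of $[v]$ lying in $A_{g_1}^{+D} \cap A_{g_2}^{+D}$ must, after pushing $D$ down to the quasi-fixpoint threshold via Lemma~\ref{L: convexity fixpoints}(2) — applied to each $g_i$ separately using that a point within $D$ of $A_{g_i}$ is moved by $g_i$ at most $\|g_i\| + 2D + 8\delta$, which is $\leq (2L_0\delta) + 2L$ with $L$ bounded — lie in $\Fix(\{g_1^{?}, g_2^{?}\}, 2L_0\delta)^{+L+7\delta}$ for appropriate powers; its diameter is then bounded by $[\nu+3]2L_0\delta + A + 209\delta + 2L + 14\delta$, a constant depending only on $\delta, A, \nu, \tau, \alpha$. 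Choosing $m_0$ so that $m_0 \tau(E_1)$ (and $m_0\tau(E_2)$) exceeds this constant plus the fellow-traveling errors forces $|p - vp|$ to be bounded, contradicting $m$-periodicity. Hence $E_1 = E_2$.

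The main obstacle I expect is the bookkeeping around translation lengths: $\|g_i\|$ is only controlled up to one quantum of $\tau(E_i)$, so "$g_i^k$ moves $x$ a bounded amount" is not literally available — one must instead work with $g_i$ itself (translation length $\geq \tau$, which is what we want large) and argue that the segment of $[v]$ near $A_{g_i}$, being a near-geodesic, cannot be longer than a controlled multiple of $\|g_i\| \geq \tau(E_i)$ once we know it embeds in a quasi-fixpoint set of a single well-chosen group element obtained by combining $g_1$ and $g_2$ (for instance, if $E_1 \ne E_2$ then some product or commutator acts loxodromically transverse to both cylinders, capping the overlap). Making this "overlap of two distinct cylinders is bounded" step quantitative — with the constant depending only on $\delta, A, \nu, \tau, \alpha$ and not on $U$ or $v$ — is the heart of the proof, and it is exactly where Proposition~\ref{P: margulis 2} is designed to be applied.
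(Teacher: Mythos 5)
Your overall strategy is the same as the paper's: from $[v]\subset C_{E_1}^{+\alpha+100\delta}\cap C_{E_2}^{+\alpha+100\delta}$ and $|p-vp|>m\tau(E_i)$ one gets a long overlap of bounded neighborhoods of the axes $A_{h_1}$, $A_{h_2}$ of hyperbolic elements $h_i\in E_i$ (via Lemma~\ref{L: invariant cylinder}), and a Margulis-type bound then forces $\langle h_1,h_2\rangle$ to be elementary, whence $E_1=E_2$ since a hyperbolic element lies in a unique maximal loxodromic subgroup. The paper takes $h_i$ realizing $\tau(E_i)$ and invokes a packaged statement (Proposition~3.44 of the reference \texttt{coulon\_partial\_2016}): if $\diam\bigl(A_{h_1}^{+13\delta}\cap A_{h_2}^{+13\delta}\bigr)>(\nu+2)\max\{\|h_1\|,\|h_2\|\}+A+680\delta$, then $h_1,h_2$ generate an elementary subgroup; the point is that the threshold is \emph{linear in $\max\|h_i\|$} with slope controlled by $\nu$, so $m_0$ can be chosen from $\delta,A,\nu,\tau,\alpha$ alone.

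The gap is in your ``clean argument''. You take the fixed threshold $d=2L_0\delta$ and claim that points of the overlap can be pushed into $\Fix(\{g_1,g_2\},2L_0\delta)^{+L+7\delta}$ with $L$ bounded. This fails: by Corollary~\ref{C: axis} a point $x$ near $A_{g_i}$ satisfies $|g_ix-x|\geq \|g_i\|+2d(x,A_{g_i})-6\delta$, and $\|g_i\|$ has \emph{no upper bound} in terms of $\delta,A,\nu,\tau,\alpha$ (only the lower bound $\|g_i\|\geq\tau$). So $\Fix(\{g_1,g_2\},2L_0\delta)$ may be empty and your quantity $L$ is of order $\|g_i\|$, not a constant; the resulting diameter bound would depend on $v$. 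You sense this (your closing paragraph), but the fix you gesture at (``some product or commutator acts loxodromically transverse to both cylinders'') is neither needed nor justified. The correct repair within your framework is to apply Proposition~\ref{P: margulis 2} with the \emph{element-dependent} threshold $d=\max\{\|g_1\|,\|g_2\|\}+2D+8\delta$, where $D=\alpha+152\delta$: if $\langle g_1,g_2\rangle$ were non-elementary this bounds the overlap by $[\nu+3]\bigl(\max\|g_i\|+2D+8\delta\bigr)+A+209\delta$, while the overlap contains $p$ and $vp$ with $|p-vp|>m\max\|g_i\|^\infty\geq m\bigl(\max\|g_i\|-16\delta\bigr)$; since $\|g_i\|\geq\tau$, the additive constants can be absorbed and a contradiction is reached once $m\geq m_0(\delta,A,\nu,\tau,\alpha)$. (A minor simplification: you do not need the quasi-geodesic discussion to find two far-apart points in the overlap; $p$ and $vp$ themselves already do the job.)
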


\begin{proof} 
	Let $h_1\in E_1$ realise $\tau(E_1)$, and $h_2\in E_2$ realise $\tau(E_2)$. 
	If $v$ is $m$-periodic with period $E_1$ and $E_2$, then 
	\begin{align*}
		\diam \left( C_{E_1}^{+ \alpha + 100\delta} \cap C_{E_2}^{+\alpha + 100\delta}\right)
		> m \max \{\|  h_1 \|^\infty,\|  h_2 \|^\infty\}.
	\end{align*}
	Recall that $C_{E_i}\subset A_{h_i}^{+52\delta}$, see Lemma \ref{L: invariant cylinder}.
	By \cite[Lemma 2.13]{coulon_geometry_2014} we have
	\begin{equation*}
		\diam \left( C_{E_1}^{+ \alpha + 100\delta} \cap C_{E_2}^{+\alpha + 100\delta}\right)
		\leq \diam \left(A_{h_1}^{+13\delta} \cap A_{h_2}^{+13\delta} \right) + 2\alpha + 308\delta.
	\end{equation*}
	Hence there exists $m_0 \geq 0$ which only depends on $\delta$, $A$, $\nu$, $\tau$ and $\alpha$ such that if $m \geq m_0$, we have
	\begin{equation*}
		\diam \left(A_{h_1}^{+13\delta} \cap A_{h_2}^{+13\delta} \right)
		>  (\nu +2)\max\{\|  h_1 \|,\|  h_2 \|\} +A + 680\delta.
	\end{equation*}
	It follows from \cite[Proposition 3.44]{coulon_partial_2016} that $h_1$ and $h_2$ generates an elementary subgroup, hence $E_1 = E_2$.
\end{proof}

 \begin{rem} 
 \label{R: growth 1} 
For all $w\in U^*$,  we have $\lambda |w|_U \geqslant |wp-p|.$
In particular, if $w$ is an $m$-periodic word with period $E$, then 
$$ |w|_U > m \tau(E)/\lambda.$$

Consider now a general non-empty word $w = u_1\cdots u_r$ in $U^*$.
We claim that $|wp-p|> 2 \alpha + 298\delta|w|_U$.  
Indeed applying Lemma~\ref{L: discrete quasi-geodesics}(\ref{enu: discrete quasi-geodesics - dist}) with the sequence of points
	\begin{equation*}
		p, \quad u_1p, \quad u_1u_2p, \quad\dots\quad, wp = u_1\cdots u_rp,
	\end{equation*}
	we get
	\begin{equation*}
		|wp -p| \geq \sum_{j = 1}^r |u_jp - p| - 2 \sum_{j = 1}^{r-1} (u_j^{-1}p,u_{j+1}p)_p - 2\max\{r-2,0\}\delta
	\end{equation*}
	Since $U$ is $\alpha$-reduced, we have
	\begin{equation*}
		\sum_{j = 1}^r |u_jp - p| > r(2\alpha + 300\delta),
	\end{equation*}
	while 
	\begin{equation*}
		2 \sum_{j = 1}^{r-1} (u_j^{-1}p,u_{j+1}p)_p \leq 2(r-1)\alpha.
	\end{equation*}
Combining the previous inequalities we get the announced estimate.
Consequently if $[w] \subset  C_E^{+\alpha + 100\delta}$ but $w$ is not $m$-periodic with period $E$, then
$$|w|_U < m  \tau(E)/\delta.$$ 
\end{rem}

\begin{prop}
\label{L: construction semi-group power sensitive}
	Let $E$ be a maximal loxodromic subgroup.
	Let $m \geq 0$.
	There are at most two elements in $U^*$ which are $m$-periodic with period $E$, but whose proper prefixes are not $m$-periodic.
\end{prop}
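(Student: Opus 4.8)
The plan is to argue by contradiction: suppose there were three distinct elements $v_1, v_2, v_3 \in U^*$ which are all $m$-periodic with period $E$, but whose proper prefixes are not $m$-periodic with period $E$. First I would record that, since $U$ is $\alpha$-strongly reduced, $U^*$ freely generates a free sub-semigroup and satisfies the geodesic extension property (Lemma~\ref{res: free-sub-semigroup}); in particular distinct words give distinct points $v_ip$, and a common prefix corresponds to an actual prefix relation among the words. The key structural observation is that if $v$ is $m$-periodic with period $E$ but no proper prefix of $v$ is, then the ``last letter'' of $v$ is forced: writing $v = v' u$ with $u \in U$, the word $v'$ has $[v'] \subset C_E^{+\alpha+100\delta}$ (since removing the last point of $[v]$ keeps it in the neighborhood of the cylinder) but $|p - v'p| \leq m\tau(E)$ by minimality of $v$. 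So each $v_i$ is obtained from a word whose translation length is $\leq m\tau(E)$ by appending exactly one letter of $U$.

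Next I would compare the three words pairwise. For each pair $v_i, v_j$, look at the longest common prefix $w$. Both $v_ip$ and $v_jp$ lie in $C_E^{+\alpha+100\delta}$, and so does $wp$ (it is one of the points of $[v_i]$, hence in the neighborhood of the cylinder). The cylinder $C_E$ is $2\delta$-quasi-convex and lies within $52\delta$ of the axis $A_h$ of a hyperbolic $h \in E$ realizing $\tau(E)$ (Lemma~\ref{L: invariant cylinder}), so all these points are within bounded distance of a single $L_0\delta$-local $(1,\delta)$-quasi-geodesic $\gamma$ with endpoints $\xi, \eta \in \partial X$. One now projects $p$, $wp$, $v_ip$, $v_jp$ to $\gamma$ and uses the quasi-geodesic structure: since $v_i \ne v_j$ and $w$ is their longest common prefix, the geodesic extension property fails past $w$, which forces $(v_ip, v_jp)_{wp}$ to be bounded (of the order of $\alpha + $ a multiple of $\delta$) — the two orbit points branch off the cylinder near $wp$. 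Combined with $|p - wp| \leq $ roughly $m\tau(E) + \lambda$ (since $wp$ is a prefix point of $v_ip$, and either $w$ already has $|p-wp| \le m\tau(E)$, or $w = v'_i$ and one extra letter added at most $\lambda$), this says that along $\gamma$ the three points $v_1p, v_2p, v_3p$ all sit in a window of bounded diameter around the ``depth-$m\tau(E)$'' level of the cylinder.

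Finally I would derive the contradiction from acylindricity, in the spirit of the uniqueness-of-period argument (Proposition~\ref{res: unicity period}) and the $\nu$-invariant machinery. Having three distinct group elements $g_1 = v_1, g_2 = v_2, g_3 = v_3$ (distinct by Lemma~\ref{res: free-sub-semigroup}) that all move $p$ and some far point $q$ on $\gamma$ by a bounded amount would, via the local-to-global acylindricity estimate~\eqref{eqn: local-global acyl} and Proposition~\ref{P: margulis 2}, bound the diameter of a suitable quasi-fixpoint set; more directly, two of the $g_ig_j^{-1}$ would then have to coincide or be elliptic of controlled displacement, contradicting that the $g_i$ are pairwise distinct reduced words once $m$ is large enough — but here we want the bound to be $2$, not to depend on $m$, so the right move is the cleaner combinatorial one: the last letters $u_i \in U$ of the three words, after stripping the common cylinder-prefix, all realize roughly the same branching point on $C_E$, and $\alpha$-strong reducedness of $U$ forces the Gromov products $(u_ip, u_jp)_p$ to be strictly smaller than the side lengths, so at most two of the three branches can be distinguished — wait, that gives nothing better than $|U|$. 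The honest contradiction is: a word and its longest-common-prefix data determine, via the cylinder, a nearest-point projection of $v_ip$ onto $C_E$; two $m$-periodic words with the same minimal property whose branch points on $C_E$ are too close would force $v_i$ and $v_j$ to share a longer common prefix that is itself $m$-periodic (because the shared initial segment already travels distance $> m\tau(E)$ along the cylinder), contradicting minimality — and there are only two "sides" ($\xi$ vs.\ $\eta$) along which one can exit the relevant bounded window, hence at most two such words. The main obstacle I anticipate is making this last dichotomy rigorous: precisely quantifying "the branch point is near the $m\tau(E)$-level of $C_E$" and showing that two words branching on the same side must share an $m$-periodic proper prefix, which requires careful bookkeeping of the constants ($\alpha$, $\delta$, $L_0\delta$, the $52\delta$ from the cylinder lemma) exactly as in the proof of Lemma~\ref{res: free-sub-semigroup}.
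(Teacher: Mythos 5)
Your overall skeleton --- all three endpoints $v_ip$ lie in a bounded ``annulus'' around the $m\tau(E)$-level of the cylinder, this annulus meets $C_E$ in two windows (one towards each of $\xi$ and $\eta$), and each window carries at most one such word --- is exactly the paper's decomposition $\mathcal P_E = \mathcal P_E^+ \cup \mathcal P_E^-$ via the sign of the projection of $v_ip$ on the quasi-geodesic $\gamma$. The gap is in the ``at most one per window'' step. You claim that if $v_ip$ and $v_jp$ land in the same window, they are close, hence share a common prefix $w$ that ``already travels distance $> m\tau(E)$ along the cylinder'' and is therefore $m$-periodic, contradicting minimality. The bookkeeping does not close: since each $v_i$ is one letter longer than a non-periodic prefix, one only gets $m\tau(E) < |v_ip - p| \leq m\tau(E) + \lambda$, so the window has diameter of order $\lambda + \alpha + \delta$; and closeness of the endpoints only forces the common prefix to reach depth about $(v_ip, v_jp)_p - \lambda \geq m\tau(E) - O(\lambda + \alpha + \delta)$, which is strictly \emph{less} than $m\tau(E)$. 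So $w$ need not be $m$-periodic and no contradiction with minimality follows. (Your abandoned acylindricity route is indeed a dead end for the stated bound: it could at best give a count depending on $N$, not $2$.)

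The fix, which is what the paper does, is asymmetric rather than symmetric: order the two words on a given side by the positions $t_1 \leq t_2$ of the projections $q_1, q_2$ of $w_1p, w_2p$ on $\gamma$. The ordering $q, q_1, q_2$ along the quasi-convex $\gamma$ gives $(w_2p, p)_{q_1} \leq 25\delta$, hence $(p, w_2p)_{w_1p} \leq |w_1p - q_1| + 25\delta \leq \alpha + 145\delta$, and the geodesic extension property of Lemma~\ref{res: free-sub-semigroup} then says that the \emph{entire word} $w_1$ is a prefix of $w_2$ --- not merely that the two words share a long common prefix. Since $w_1$ is $m$-periodic and no proper prefix of $w_2$ is, $w_1 = w_2$. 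You quote the geodesic extension property at the outset but never deploy it in this one-sided form at the decisive moment; replacing your ``common prefix is itself periodic'' claim by this prefix statement is the missing idea.
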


\begin{proof} Let $E$ be a maximal loxodromic subgroup.
Let $\mathcal P_E$ be the set of $m$-periodic words $w \in U^*$ with period $E$.
Assume that $\mathcal P_E$ is non-empty, otherwise the statement is void.
Let $\eta^-$ and $\eta^+$ be the points of $\partial X$ fixed by $E$ and $\gamma \colon \mathbb R \to X$ be an $L_0\delta$-local $(1, \delta)$-quasi-geodesic from $\eta^-$ to $\eta^+$.
For any $w \in \mathcal P_E$, the points $p$ and $wp$ lie in the $(\alpha + 100\delta)$-neighborhood of $C_E$, hence in the $(\alpha + 120\delta)$-neighborhood of $\gamma$.
Without loss of generality, we can assume that $q = \gamma(0)$ is a projection of $p$ on $\gamma$.
We decompose $\mathcal P_E$ in two parts as follows: an element $w \in \mathcal P_E$ belongs to $\mathcal P_E^+$ (respectively $\mathcal P_E^-$) if there is a projection $\gamma(t)$ of $wp$ on $\gamma$ with $t \geq 0$ (respectively $t \leq 0$).
Observe that a priori $\mathcal P_E^-$ and $\mathcal P_E^+$ are not disjoint, but that will not be an issue for the rest of the proof.

We are going to prove that $\mathcal P^+_E \cap U^*$ contains at most one word satisfying the proposition.
Let $w_1$ and $w_2$ be two words in $\mathcal P^+_E \cap U^*$ which are $m$-periodic with period $E$, and whose proper prefixes are not $m$-periodic.
We write $q_1 = \gamma(t_1)$ and $q_2 = \gamma(t_2)$ for the respective projections of $w_1p$ and $w_2p$ on $\gamma$.
Without loss of generality we can assume that $t_1 \leq t_2$.
We are going to prove that $(p, w_2p)_{w_1p} \leq \alpha + 145\delta$.
As a quasi-geodesic, $\gamma$ is $9\delta$-quasi-convex \cite[Corollary~2.7(2)]{coulon_geometry_2014}.
According to Remark~\ref{R: growth 1}, the word $w_2$ is not empty and $|w_2p - p| > 2\alpha + 298\delta$.
Applying the triangle inequality we get $|q_2-q| > 19\delta$.
Recall that $q$ and $q_2$ are respective projections of $p$ and $w_2p$ on the quasi-convex $\gamma$.
Hence 
\begin{equation*}
	|w_2p - p| \geq |w_2p - q_2| + |q_2 - q| + |q - p| - 38\delta,
\end{equation*}
see \cite[Corollary~2.12(2)]{coulon_geometry_2014}.
Since $q_1$ lies on $\gamma$ between $q$ and $q_2$ we also have
\begin{equation*}
	|q_2 - q| = |q_2 - q_1| + |q_1 - q| - 2(q_2,q)_{q_1}
	\geq |q_2 - q_1| + |q_1 - q| - 12\delta,
\end{equation*}
see \cite[Corollary~2.7(1)]{coulon_geometry_2014}.
Combining the previous two inequalities, we get
\begin{align*}
	|w_2p - p| & \geq |w_2p - q_2| + |q_2 - q_1| + |q_1 - q| + |q - p| - 50\delta \\
	& \geq |w_2p -  q_1| + |q_1 - p| - 50\delta
\end{align*}
Thus $(w_2p,p)_{q_1} \leq 25\delta$.
According to the triangle inequality, we get
\begin{equation*}
	(p, w_2p)_{w_1p} \leq |w_1p - q_1| + (w_2p,p)_{q_1} \leq \alpha + 145\delta,
\end{equation*}
which completes the proof of our claim.

Applying the geodesic extension property (see Lemma~\ref{res: free-sub-semigroup}) we get that $w_1$ is a prefix of $w_2$.
As $w_1$ is $m$-periodic, it cannot be a proper prefix, hence $w_1 = w_2$.
Similarly, $\mathcal P^-_E \cap U^*$ has at most one element satisfying the statement.
\end{proof}

\subsection{The growth of aperiodic words}

\begin{df} Let $w\in U^*$ and let $E$ be a maximal loxodromic subgroup. 
We say that the word $w$ \emph{contains an $m$-period of $E$} if $w$ splits as $w=w_0w_1w_2$, where the word $w_1$ is $m$-periodic with period $E$.
If the word $w$ does not  contain any $m$-period, we say that $w$ is \emph{$m$-aperiodic}.
\end{df} 

Observe that containing a period is a property of the word $w \in U^*$ and not of its image $\pi(w)$ in $G$: 
one could find two words $w_1$ and $w_2$, where $w_1$ is $m$-aperiodic while $w_2$ is not, and that have the same image in $G$.
However since $U$ is strongly reduced, it freely generates a free sub-semigroup of $G$.
Hence this pathology does not arise in our context.

We denote by $U^*_m$ the set of $m$-aperiodic words in $U^*$.
Recall that $p$ is a base point of $X$ and the parameter $\lambda$ is defined by 
\begin{equation*}
	\lambda = \max_{u \in U} | up - p|.
\end{equation*}

\begin{ex}\label{E: proper power} If $m \geq \lambda/\tau$, then $U\subseteq U^*_m$. 
Indeed, for all $u\in U$ and loxodromic subgroups $E$, 
$$ |u|_U \leq 1 \leq m\tau/\lambda \leq m \tau(E)/\lambda.$$
So, by Remark~\ref{R: growth 1}, $u$ cannot be $m$-periodic. 
\end{ex}

We denote by $S(r)$ the \emph{sphere} of radius $r$ in $U^*$.
Similarly $B(r) \subset U^*$ stands for the \emph{ball} of radius $r$, that is the subset of elements $w\in U^*$ of word length $|w|_U \leqslant r$. 
We note that $|B(r)|\leqslant |U|^{r+1}$, whenever $|U| \geq 2$.

\begin{prop}\label{P: aperiodic counting} 
Let $U$ be a $\alpha$-strongly reduced subset of $G$, with at least two elements.
There exists $m_1$ which only depends on $\lambda$, $\alpha$, $A$, $\nu$, $\tau$, and $\delta$ with the following property.
For all $m \geq m_1$, and $r>0$, we have
$$|U^*_m \cap B(r+1)| \geqslant \frac{|U|}2 |U^*_m \cap B(r)|.$$
\end{prop}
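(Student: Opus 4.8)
The plan is to prove the estimate $|U^*_m \cap B(r+1)| \geq \frac{|U|}{2}|U^*_m \cap B(r)|$ by a counting argument that compares, for each $m$-aperiodic word $w$ of length $r$, how many of its $|U|$ one-letter extensions $wu$ (with $u \in U$) remain $m$-aperiodic. Since any proper prefix of an $m$-aperiodic word is itself $m$-aperiodic, every element of $U^*_m \cap B(r+1)$ of length $r+1$ is obtained from a unique element of $U^*_m \cap B(r)$ of length $r$ by appending one letter; thus it suffices to show that for each fixed $m$-aperiodic $w$ with $|w|_U = r$, at least $|U|/2$ of the extensions $wu$ are again $m$-aperiodic. (One must also fold in the elements of $U^*_m \cap B(r)$ of length $<r$, but those embed trivially, so the bound on length-$(r+1)$ words is the crux.)

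The heart of the matter is therefore: how many letters $u \in U$ can fail, i.e. satisfy that $wu$ contains an $m$-period while $w$ does not? If $wu$ contains an $m$-period with some period $E$, then because $w$ itself is $m$-aperiodic, that periodic subword $w_1$ must be a \emph{suffix} of $wu$ ending in the final letter $u$; so $wu = w_0 w_1$ with $w_1$ $m$-periodic with period $E$ and $w_1$ ending in $u$. I would like to say that the "bad" set of such $u$ is controlled by a bounded number of loxodromic subgroups $E$, and that each such $E$ contributes only a bounded number of bad letters. For the first point: the periodic suffix $w_1$ has $[w_1] \subset C_E^{+\alpha+100\delta}$ and $|p - w_1 p| > m\tau(E)$; using Proposition~\ref{res: unicity period}, once $m \geq m_0$ the period $E$ attached to a given such suffix is unique, and a van Kampen / uniqueness-of-long-periodic-pieces argument (two long periodic subwords of $w$ or $wu$ with different periods would force their cylinders to have large overlap, contradicting Proposition~\ref{res: unicity period} or \ref{P: margulis 2}) shows that only \emph{one} loxodromic subgroup $E$ can be responsible — namely the one whose cylinder $w$ already "runs alongside" near its right end. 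For the second point, I would invoke Proposition~\ref{L: construction semi-group power sensitive}: the bad letters $u$ correspond to $m$-periodic words $w_1$ with period $E$ whose proper prefixes are not $m$-periodic (after shrinking $w_1$ to its minimal periodic prefix), and there are at most two such words; hence at most two endings, i.e. at most two bad letters $u$.

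Putting this together: with $m_1 := \max\{m_0, \lambda/\tau, \dots\}$ chosen large enough (depending only on $\lambda, \alpha, A, \nu, \tau, \delta$), for every $m \geq m_1$ and every $m$-aperiodic $w$ of length $r$, at most $2$ of the $|U|$ extensions $wu$ are bad; since $|U| \geq 2$ we have $|U| - 2 \geq |U|/2$ is \emph{not} quite right in general — rather we need $|U|-2 \geq \frac{|U|}{2}$, which holds once $|U| \geq 4$; for $|U| \in \{2,3\}$ one argues directly (e.g. at most one bad letter survives, or the constant in $m_1$ is tuned so the bad count is $\leq 1$). This is the step I expect to be fiddly: getting the bound on bad letters down from a crude $O(1)$ to something that beats $|U|/2$ uniformly, including the small-$|U|$ cases, and making sure the "only one period $E$ is relevant" claim is rigorously pinned to the right end of $w$ rather than merely asserted. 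Modulo that, one concludes
\begin{equation*}
	|U^*_m \cap S(r+1)| \geq (|U| - 2)\,|U^*_m \cap S(r)| \geq \frac{|U|}{2}\,|U^*_m \cap S(r)|,
\end{equation*}
and summing the analogous inequalities over all radii $\leq r$ (using that shorter aperiodic words persist in $B(r+1)$) yields $|U^*_m \cap B(r+1)| \geq \frac{|U|}{2}|U^*_m \cap B(r)|$.

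The main obstacle, to restate it, is the combinatorial-geometric claim that the set of "dangerous" letters — those $u$ for which $wu$ acquires a fresh $m$-period — is pinned to a single loxodromic subgroup $E$ (the one alongside which the tail of $w$ already travels) and is bounded by the $\leq 2$ of Proposition~\ref{L: construction semi-group power sensitive}; the uniqueness input of Proposition~\ref{res: unicity period} together with the Margulis-type bound Proposition~\ref{P: margulis 2} should do it, but assembling these into a clean bound valid for all $|U| \geq 2$ is where the real work lies.
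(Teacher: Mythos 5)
Your global accounting (append one letter to each aperiodic word of length $r$, control the failures, sum over radii) is fine as far as it goes, but the step you yourself flag as ``fiddly'' is a genuine gap, and it is not the route the paper takes. The claim that for a fixed $m$-aperiodic $w$ at most two (or $O(1)$, uniformly in $r$) letters $u$ can make $wu$ acquire an $m$-period is not established by the tools you cite, and there is no reason to believe it. Proposition~\ref{L: construction semi-group power sensitive} bounds by $2$ the number of minimal $m$-periodic words \emph{for a fixed period $E$}; your argument therefore needs that only one (or boundedly many) maximal loxodromic subgroups $E$ can be ``responsible'' at the right end of $w$. But different bad letters $u$ may come with periodic suffixes $s_u u$ of very different lengths, hence with periods $E_u$ of very different injectivity radii $\tau(E_u)$: the constraint is only $|s_up-p|\in(m\tau(E_u)-\lambda,\,m\tau(E_u)]$, so suffixes at different ``scales'' $\tau(E)$ can simultaneously sit in the neighborhoods of distinct cylinders without violating Proposition~\ref{res: unicity period} (uniqueness of the period only applies when the \emph{same} word is $m_0$-periodic for both subgroups, and a short suffix need not be $m_0$-periodic for the period attached to a much longer suffix, since $m\tau(E)$ can be far smaller than $m_0\tau(E')$ when $\tau(E')\gg\tau(E)$). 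The number of active scales, hence of bad letters, can a priori grow with $|w|_U=r$, so no bound of the form $|U|-O(1)\geq|U|/2$ survives for all $r$. The small-$|U|$ cases you defer are a symptom of the same problem.

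The paper avoids a per-word bound entirely. It sets $Z=\{w_0u: w_0\in U^*_m,\ u\in U\}$ and, for each $E$, bounds the \emph{total} number of words of $Z\cap B(r)$ that acquire a period with period $E$ by $2\,|U^*_m\cap B(r-m\tau(E)/\lambda)|$: the factor $2$ is your Proposition~\ref{L: construction semi-group power sensitive}, but the real saving is that the aperiodic prefix of such a word lives in a ball whose radius is smaller by $m\tau(E)/\lambda$. The sum over $E$ is then controlled by injecting the periods at scale $j$ into a ball of radius $O(jm_0\tau/\delta)$ (so their number is at most $|U|^{O(jm_0\tau/\delta)}$), and an induction on $r$ establishing $c(r)\geq(1-\epsilon)|U|\,c(r-1)$ shows the subtracted geometric series is at most $\epsilon|U|c(r)$ once $m$ is large enough. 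In other words, the correct mechanism is ``bad words have short aperiodic prefixes, and there are few aperiodic words in small balls,'' not ``each word has few bad extensions.'' To repair your proof you would essentially have to reorganize it along these lines; the per-word counting cannot be closed with Propositions~\ref{res: unicity period}, \ref{L: construction semi-group power sensitive} and \ref{P: margulis 2} alone.
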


\begin{proof}
We adapt the counting arguments of \cite{coulon_growth_2013}. 
We firstly fix some notations.
Let $m_0$ be the parameter given by Proposition~\ref{res: unicity period}.
Recall that $m_0$ only depends on $\alpha$, $A$, $\nu$, $\tau$, and $\delta$.
Let $U \subset G$ be an $\alpha$-strongly reduced subset, with at least two elements.
Let $m > m_0 + 5 \lambda/\tau$.
We let 
$$Z=\{ w\in U^* \mid w= w_0u ,\, w_0\in U^*_m ,\, u\in U\}.$$
We denote by $\mathcal{E}$ the set of all maximal loxodromic subgroups in $G$.
For each $E \in \mathcal E$, let $Z_E\subset Z$ be the subset of all $w\in Z$ that split as a product $w=w_1w_2$, where $w_1 \in U^*_m$ and $w_2\in U^*$ is an $m$-periodic word with period $E$.

\begin{lem}\label{L: growth 2}
The set $Z\setminus \bigcup_{E\in \mathcal{E}}Z_E $ is contained in $U^*_m$. 
\end{lem}
\begin{proof}
Let $w\in Z$ contain an $m$-period of a loxodromic subgroup $E \in \mathcal E$. 
By definition of $Z$, we have $w=w_0u$, where $u \in U$ and the prefix $w_0\in U^*$ does not contain any $m$-period.
On the other hand $w$ contains a subword $w_2$ which is an $m$-period with period $E$.
Since $w_2$ cannot be a subword of $w_0$, it is a suffix of $w$.
\end{proof}

Recall that if $W \subset U^*$, then $|W|$ stands for the cardinality of \emph{the image} of $W$ in $G$.
However, since $U$ freely generates a free sub-semi-group (Lemma~\ref{res: free-sub-semigroup}), we can safely identify the elements of $U^*$ with their images in $G$.
It follows from Lemma \ref{L: growth 2}, that for all natural numbers $r$, 
\begin{align}
 \label{I: growth 1}
|U^*_m\cap B(r)| \geqslant |Z\cap B(r)| - \sum_{E\in \mathcal{E}} |Z_E \cap B(r)| .
\end{align}
The next step is to estimate each term in the above inequality.

\begin{lem} \label{I: growth 2}
For all real numbers $r$, 
\begin{align*}
|Z\cap B(r+1)| \geqslant |U| |U^*_m \cap B(r)| .
\end{align*} 
\end{lem}

\begin{proof}
	It is a direct consequence of the fact that $U$ freely generates a free sub-semi-group.
\end{proof}

\begin{lem}\label{L: growth 3} Let $E\in \mathcal{E}$. 
For all real numbers $r$, 
\begin{equation*}
	|Z_E\cap B(r)| \leqslant 2  |\, U^*_m \cap B\left(r-m \tau(E)/\lambda\right)|.
\end{equation*}
\end{lem}
\begin{proof} 
Let $w\in Z_E\cap B(r)$. By definition, $w$ splits as a product $w=w_1w_2$, where $w_1 \in U^*_m$ and $w_2\in U^*$ is $m$-periodic with period $E$. 
By Remark \ref{R: growth 1}, $|w_2|_U> m  \tau(E)/\lambda$, so that $w_1 \in U^*_m \cap B(r-m  \tau(E)/\lambda)$. 

Since $w$ also belongs to $Z$, the prefix consisting of all but the last letter does not contain $m$-periods.
Thus every proper prefix of $w_2$ cannot be $m$-periodic.
It follows from Lemma~\ref{L: construction semi-group power sensitive} that there are at most two possible choices for $w_2$.
Hence the result.
\end{proof}

\begin{lem}\label{I: growth 3} For all real numbers $r$, the following inequality holds:
\begin{align*}
	\sum_{E\in \mathcal{E}} |Z_E\cap B(r)| \leqslant
	2|U|^{m_0\tau/\delta +2}
	\sum_{j\geqslant 1} |\, U^*_m \cap B\left(r -jm\tau/\lambda\right)|\; |U|^{ jm_0\tau/\delta}.
\end{align*}
\end{lem}

\begin{rem}
	Note that the terms in the series on the right hand side are all non-negative.
	Hence if the series diverges, the statement is void.
	Later we will apply this lemma in a setting where the series actually converges.
\end{rem}

\begin{proof}  
Given $j \geq 1$, we define $\mathcal E_j$ as the set of all maximal loxodromic subgroups $E \in \mathcal E$, such that $j\tau \leq \tau(E) < (j+1)\tau$ and $U^*$ contains a word that is $m$-periodic with period $E$.
We split the left-hand sum as follows
\begin{equation*}
	\sum_{E\in \mathcal{E}} |Z_E\cap B(r)|
	= \sum_{j \geq 1} \sum_{E\in \mathcal{E}_j} |Z_E\cap B(r)|
\end{equation*}
Indeed if $U^*$ does not contain a word that is $m$-periodic with period $E$, then the set $Z_E$ is empty.
Observe that for every $E \in \mathcal E_j$ we have by Lemma~\ref{L: growth 3}
\begin{equation*}
	|Z_E\cap B(r)| \leqslant 2 |\, U^*_m \cap B\left(r -jm\tau/\lambda\right)|.
\end{equation*}
Thus it suffices to bound the cardinality of $\mathcal E_j$ for every $j \geq 1$.

Let $j \geq 1$.
For simplicity we let $d_j = (j+1)m_0\tau/\delta + 1$.
We claim that $| \mathcal E_j | \leq |U|^{d_j +1}$.
To that end we are going to build a one-to-one map from $\chi\colon\mathcal E_j  \to B(d_j)$.
Indeed the cardinality of the ball $B(d_j)$ is at most $|U|^{d_j+1}$.
Let $E\in \mathcal{E}_j$.
By definition there exists $w \in U^*$ which is $m$-periodic with period $E$.
Let $w'$ be the shortest prefix of $w$ that is $m_0$-periodic with period $E$.
Note that such prefix always exists since $m\geq m_0$.
By Remark~\ref{R: growth 1}, $w'$ belongs to $B(m_0  \tau(E)/\delta+1)$ hence to $B(d_j)$.
We define $\chi(E)$ to be $w'$.
Observe that there is at most one $E$ such that $w'$ is $m_0$-periodic with period $E$ (Proposition~\ref{res: unicity period}). 
Hence $\chi$ is one-to-one.
This completes the proof of our claim and the lemma. 
\end{proof}

We now complete the proof of Proposition~\ref{P: aperiodic counting}.
Let us define first some auxiliary parameters.
We fix once for all an arbitrary number $\epsilon \in (0,1/2)$.
In addition we let
\begin{equation*}
	\mu = (1-\epsilon) |U|, \,
	\gamma = |U|^{m_0\tau/\delta}, \,
	\xi = 2 |U|^{m_0\tau/\delta + 2}, \,
	\sigma = \frac{ \epsilon}{2(1-\epsilon) \xi},
	\, \text{and }
	M = \left\lfloor \frac{m\tau}\lambda \right\rfloor.
\end{equation*}
Since $|U| \geq 2$, we observe that $\sigma \leq 1/2$.
We claim that there exists $m_1 \geq m_0$ which only depends on $\lambda$, $\alpha$, $A$, $\nu$, $\tau$,  and $\delta$ such that 
\begin{equation*}
	\frac \gamma{\mu^M} \leq \sigma,
\end{equation*}
provided that $m \geq  m_1$.
The computation shows that 
\begin{equation*}
	\ln \left( \frac \gamma{\sigma \mu^M} \right)
	\leq \left(\frac {2 m_0\tau}\delta  + 3 - \frac {m\tau}\lambda\right) \ln |U| - \ln\left(\frac \epsilon {4(1-\epsilon)}\right) - \frac {m\tau}\lambda \ln (1-\epsilon).
\end{equation*}
Recall that $|U| \geq 2$.
Hence if 
\begin{equation*}
	m \geq \frac {2m_0\lambda}\delta  +  \frac{3\lambda}{\tau}
\end{equation*}
then the previous inequality yields
\begin{equation}
\label{eqn: aperiodic counting}
	\ln \left( \frac \gamma{\sigma \mu^M} \right)
	\leq - \frac{m\tau}\lambda \left[ \ln 2 + \ln (1 - \epsilon)\right] +
	\left(\frac {2 m_0\tau}\delta + 3 \right) \ln 2 -  \ln\left(\frac \epsilon {4(1-\epsilon)}\right).
\end{equation}
We can see from there, that there exists $m_1 \geq m_0$ which only depends on $\lambda$, $m_0$, $\tau$, and $\delta$,  such that as soon as $m \geq m_1$ the right hand side of Inequality~(\ref{eqn: aperiodic counting}) is non-positive, which completes the proof of our claim.
Up to increasing the value of $m_1$, we can assume that $M \geq 1$, provided $m \geq m_1$.

Let us now estimate the number of aperiodic words in $U^*$.
From now on we assume that $m \geq m_1$.
For every integer $r$, we let 
\begin{equation*}
	c(r) = | U^*_m \cap B(r)|.
\end{equation*}
We claim that for every integer $r$, we have $c(r) \geq \mu c(r-1)$.
The proof goes by induction on $r$.
In view of Example~\ref{E: proper power}, the inequality holds true for $r = 1$. 
Assume that our claim holds for every $s \leq r$.
In particular for every integer $t \geq 0$, we get $c(r -t) \leq \mu^{-t}c(r)$.
It follows from (\ref{I: growth 1}) that 
\begin{equation*}
	c(r+1) \geq | Z \cap B(r+1)| - \sum_{E \in \mathcal E} |Z_E\cap B(r+1)|.
\end{equation*}
Applying Lemmas~\ref{I: growth 2} and \ref{I: growth 3}, we get
\begin{equation*}
	c(r+1) \geq |U| c(r) - \xi
	\sum_{j\geqslant 1} c(r+1 -jM)\gamma^j.
\end{equation*}
Note that $jM - 1 \geq 0$, for every $j \geq 1$.
Thus applying the induction hypothesis we get
\begin{equation*}
	c(r+1)
	\geq \left( 1 -\frac {\xi\mu} {|U|}\sum_{j \geq 1} \left(\frac{\gamma}{\mu^M}\right)^j\right) |U| c(r).
\end{equation*}
We defined $\mu$ as $\mu = (1 - \epsilon)|U|$, hence it suffices to prove that 
\begin{equation*}
	\frac {\xi\mu} {|U|}\sum_{j \geq 1} \left(\frac{\gamma}{\mu^M}\right)^j \leq \epsilon.
\end{equation*}
Recall that $\gamma/\mu^M \leq \sigma \leq  1/2$.
Hence the series converges. 
Moreover
\begin{equation*}
	\frac {\xi\mu} {|U|}\sum_{j \geq 1} \left(\frac{\gamma}{\mu^M}\right)^j
	\leq \frac {\xi\mu} {|U|} \frac {\sigma}{1 - \sigma}
	\leq \frac {2\xi\mu\sigma} {|U|}
	\leq \epsilon.
\end{equation*}
This completes the proof of our claim for $r+1$.
\end{proof}


\section{Power-free elements}
\label{sec: power-free elements}

Let $G$ be a group that acts $(N,\kappa)$-acylindrically on a $\delta$-hyperbolic geodesic space $X$.
We fix a basepoint $p \in X$.
Recall our convention: the diameter of the empty set is zero, see Remark~\ref{rem: convention diam}.

\begin{df} 
\label{def: power-free}
	Let $m \geq 0$. 
	An element $g\in G$ \emph{contains an $m$-power} if there is a maximal loxodromic subgroup $E$ and a geodesic $[p,gp]$ such that 
	\begin{equation*}
	\diam \left( [p,gp]^{+5\delta} \cap C_E^{+5\delta}\right) > m \tau(E).
	\end{equation*}
	If $g\in G$ does not contain any $m$-power, we say that $g$ is \emph{$m$-power-free}.
\end{df}

Let $U\subset G$ be a finite subset. 
We recall that $\lambda = \max_{u\in U}|up-p|$ and that $U^*$ is the set of all words over the alphabet $U$.
The idea of the next statement is the following.
Take a word $w \in U^*$.
If $w$, seen as an element of $G$, contains a sufficiently large power, then the \emph{word} $w$ already contains a large period.

\begin{prop}
\label{P: proper power vs strong period}
	Let $m \geq (2\lambda + 20\delta)/\tau$.
	Let $U\subset G$ be a finite $\alpha$-reduced subset.
	Let $w\in U^*$. 
	If $w$ contains an $m$-power (as an element of $G$), then $w$ contains an $m'$-period (as a word over $U$), where $m' = m - (2\lambda + 20\delta)/\tau$.
\end{prop}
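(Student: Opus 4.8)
The plan is to translate the geometric hypothesis (the word $w$ contains a large power, witnessed by a long overlap of a geodesic $[p,wp]$ with a cylinder $C_E$) into the combinatorial conclusion (a large subword of $w$ is $m'$-periodic with period $E$). Write $w = u_1\cdots u_r$ and recall $[w] = \{p, u_1p, u_1u_2p, \dots, wp\}$. The first step is to observe that, by Remark~\ref{R: growth 1} and Lemma~\ref{L: discrete quasi-geodesics}, the broken geodesic joining the consecutive points of $[w]$ is a $(1,\ell)$-quasi-geodesic for a controlled $\ell$ (indeed each $(u_i^{-1}p, u_{i+1}p)_p \leq \alpha$ and each $|u_ip - p| > 2\alpha + 300\delta$, which is exactly the input of Lemma~\ref{L: discrete quasi-geodesics}), so this broken path stays uniformly close to the actual geodesic $[p,wp]$; in particular the portion of $[p,wp]$ lying in $C_E^{+5\delta}$ is, up to a bounded Hausdorff error, a concatenation of segments $[u_1\cdots u_{i-1}p, u_1\cdots u_ip]$ whose endpoints belong to $[w] \cap C_E^{+\alpha + 100\delta}$.

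Concretely, let $[p,gp]$ and $E$ realize the $m$-power, so $\diam\bigl([p,wp]^{+5\delta} \cap C_E^{+5\delta}\bigr) > m\tau(E)$. Let $a$ and $b$ be the first and last vertices of $[w]$ that lie within, say, $\alpha + 100\delta$ of $C_E$; since $[w]$ fellow-travels $[p,wp]$ and $C_E$ is $2\delta$-quasi-convex, every vertex of $[w]$ strictly between $a$ and $b$ also lies in $C_E^{+\alpha+100\delta}$ — here one uses that the $C_E^{+5\delta}$-portion of the geodesic is connected and that a vertex far from $C_E$ would force a corresponding point of $[p,wp]$ far from $C_E$, contradicting the diameter lower bound on the overlap. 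Writing $a = u_1\cdots u_{s}p$ and $b = u_1\cdots u_{t}p$, set $w_1 = u_{s+1}\cdots u_t$ (so $w = w_0 w_1 w_2$ with $w_0 = u_1\cdots u_s$, $w_2 = u_{t+1}\cdots u_r$). Then $[w_1]$, which is the translate $a^{-1}\cdot\{a, \dots, b\}$, is contained in $C_E^{+\alpha+100\delta}$ after translating back by $(u_1\cdots u_s)^{-1}$ — using that $C_E$ is $E$-invariant is not needed, only that the relevant vertices are already in the neighborhood of $C_E$ in $G$'s action, which follows since $w_0$ is an honest group element and $[w]$ is $[w_0]$ followed by $w_0\cdot[w_1]$.

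The remaining step is the length estimate: one must check $|p - w_1 p| > m'\tau(E)$. The overlap $[p,wp]^{+5\delta}\cap C_E^{+5\delta}$ has diameter $> m\tau(E)$, and its extreme points are within a bounded distance of $a$ and $b$ respectively — the loss here is at most $2\lambda + O(\delta)$, because moving from an arbitrary point of $[p,wp]$ near $C_E$ to the nearest vertex of $[w]$ costs at most one step $\le \lambda$ plus the fellow-traveling constant (this is where the hypothesis $m \geq (2\lambda + 20\delta)/\tau$ and the precise value $m' = m - (2\lambda+20\delta)/\tau$ come from). Hence $|a - b| > m\tau(E) - (2\lambda + 20\delta) \geq m'\tau(E)$, i.e. $|p - w_1p| > m'\tau(E)$, so by Definition~\ref{D: periodic} the word $w_1$ is $m'$-periodic with period $E$, and therefore $w$ contains an $m'$-period. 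The main obstacle I expect is the bookkeeping in this last step: carefully propagating the $5\delta$-neighborhoods, the $\alpha + 100\delta$ versus $\alpha + 120\delta$ (via $C_E$ vs.\ the local quasi-geodesic $\gamma$ as in the proof of Proposition~\ref{L: construction semi-group power sensitive}), and the quasi-convexity slack so that the net length lost is genuinely bounded by $2\lambda + 20\delta$ and not something larger — everything else is a routine application of Lemma~\ref{L: discrete quasi-geodesics} and the fellow-traveling of local quasi-geodesics.
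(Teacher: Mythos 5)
Your proposal follows essentially the same route as the paper's proof: both use Lemma~\ref{L: discrete quasi-geodesics} to make the broken geodesic through $[w]$ fellow-travel $[p,wp]$, carve out the subword $w_1$ whose vertex set tracks the overlap of $[p,wp]$ with $C_E$, and absorb a loss of at most $2\lambda + 20\delta$ into $m'$. Two bookkeeping points where the paper's specific choices are what make this close up: the period of $w_1$ is the conjugate $w_0^{-1}Ew_0$ rather than $E$ itself (harmless, since $\tau$ is conjugation-invariant and containing a period quantifies over all maximal loxodromic subgroups), and to place the intermediate vertices in $C_E^{+\alpha+100\delta}$ one should anchor at the projections of $x_1,x_2$ onto the broken geodesic --- these are only $15\delta$ from $C_E$, leaving room for the $\alpha+14\delta$ fellow-traveling loss --- rather than at vertices that are already $\alpha+100\delta$ away (your stated justification via ``connectivity of the overlap'' is not the right one; quasi-convexity of $C_E$ is what does the work).
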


\begin{proof} 
Let $w=u_1\cdots u_l$. 
As $w$ contains a $m$-power, there is a loxodromic subgroup $E$ and a geodesic $[p,wp]$ such that 
$$\diam\left([p,wp]^{+5\delta} \cap C_E^{+5\delta}\right) > m  \tau(E).$$ 
Let $x_1, x_2$ in $[p,wp]^{+5\delta} \cap C_E^{+5\delta}$ such that $|x_1 - x_2| > m  \tau(E)$.
Let $\gamma_w= [p,u_1p] \cup u_1[p,u_2p] \cup \ldots \cup (u_1\cdots u_{l-1})[p,u_lp]$ be a broken geodesic joining $p$ to $wp$.  
Let $p_1$ and $p_2$ be the respective projections of $x_1$ and $x_2$ on $\gamma_w$.
By Lemma~\ref{L: discrete quasi-geodesics}, the geodesic $[p,wp]$ is contained in the $5\delta$-neighborhood of $\gamma_w$.
Hence $p_1$ and $p_2$ are $15\delta$-close to $C_E$.
Moreover,
$$|p_1 -p_2| \geqslant |x_1-x_2| - 20\delta > m  \tau(E) - 20\delta.$$
Up to permuting $x_1$ and $x_2$ we can assume that $p$, $p_1$, $p_2$ and $wp$ are ordered in this way along $\gamma_w$.
In particular, there is $i \leqslant l-1$ such that $p_1 \in (u_1\cdots u_i) \cdot [ p, u_{i+1} p]$, and $j\leqslant l-1$ such that $p_2 \in (u_1\cdots u_j) \cdot [ p, u_{j+1} p]$. 
Since $p_1$ comes before $p_2$ on $\gamma_w$, we have $i \leq j$.
Note that actually $i < j$.
Indeed if $i=j$,  we would have
\begin{equation*}
	\lambda \geq |u_{i+1}p -p| \geq |p_1-p_2| > m \tau(E) - 20\delta \geq m\tau - 20\delta,
\end{equation*}
which contradicts our assumption.
Let us set $w_0=u_1\cdots u_{i+1}$ and take the word $w_1$ such that  $u_1\cdots u_j = w_0w_1 $.
At this stage $w_1$ could be the empty word.
But we will see that this is not the case.
Indeed
$$|p_1 -p_2| \leqslant  |p_1 - w_0p| + |w_0p-w_0w_1p| + |w_0w_1p-p_2| \leqslant |p-w_1p|+2\lambda.$$ 
Thus, 
$$|p-w_1p|>
 	m  \tau(E) -2 \lambda - 20\delta
 	\geq m'  \tau(E)$$
Applying Lemma~\ref{L: discrete quasi-geodesics} to the subpath $\gamma'$ of $\gamma_w$ bounded by $p_1$ and $p_2$, we get that $\gamma'$ lies in the $(\alpha + 14\delta)$-neighborhood of the geodesic $[p_1, p_2]$.
However $p_1$ and $p_2$ are in the $15\delta$-neighborhood of $C_E$ which is $2\delta$-quasi-convex.
Thus $\gamma'$ is contained in the $(\alpha +31\delta)$-neighborhood of $C_E$.
We conclude that $w_1$ is  $m'$-periodic with period $w_0^{-1}Ew_0$.  
\end{proof}

\section{Energy and quasi-center}

Let $G$ be a group acting by isometries on a $\delta$-hyperbolic length space $X$.  
Recall that we assume for simplicity that $\delta > 0$.
In next sections, we denote by $S(x,r)$ the sphere in $X$ of radius $r$ centered at $x$.
(This should not be confused with the spheres in $U^*$ used in the previous section.)
Let $U\subset G$ be a finite subset.
In order to apply the counting results from Section~\ref{sec: periodic and aperiodic words}, we explain in this section and the followings how to build a strongly reduced subset of $U^2$.
To that end we define the notion of energy of $U$.

\begin{df} 
The \emph{$\ell^{\infty}$-energy $\lambda(U,x)$ of $U$ at $x$} is defined by 
$\lambda(U,x) = \max_{u\in U} |ux-x|$.
The \emph{$\ell^{\infty}$-energy} of $U$ is given by  $$\lambda(U)=\inf_{x\in X} \lambda(U,x).$$
A point $q\in X$ is \emph{almost-minimising the $ \ell^{\infty}$-energy} if $\lambda(U,q)\leqslant \lambda(U)+\delta$. 
\end{df}

\label{sec: quasi-centre}
Let $x \in X$ and $A, B \subseteq X$.
Define $U_x(A,B)$ to be the set of elements $u \in U$ satisfying the following conditions
\begin{itemize}
	\item $|x - ux| \geq  4 \cdot 10^3\delta$,
	\item there exists $a \in A \cap S(x,10^3\delta)$, such that $(x,ux)_{a}\leqslant  \delta$,
	\item there exists $b \in B \cap S(x,10^3\delta)$, such that $(u^{-1}x,x)_b\leqslant \delta$.
\end{itemize}
We write $U_x(A)=U_x(A,A)$, and, if there is no ambiguity, $U(A,B)=U_x(A,B)$ for short.

\begin{df}[Quasi-centre]
A point $x\in X$ is a \emph{quasi-centre for $U$} if, for all $y\in S(x,10^3\delta)$, we have
$$\left|U_x\left(y^{+100\delta}\right)\right|\leqslant \frac{3}{4} |U|.$$
\end{df}

\begin{prop}\label{P: quasi-centre}
Let $q$ be a point that almost-minimises the $\ell^{\infty}$-energy of $U$.
There exists a quasi-centre $p$ for $U$ such that $| p - q | \leqslant \lambda (U)$. 
\end{prop}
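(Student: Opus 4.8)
The plan is to produce $p$ by a finite \emph{descent} that starts at $q$ and walks towards a quasi-centre. Set $q_0 = q$. Given $q_i$: if $q_i$ is already a quasi-centre, stop and put $p = q_i$; otherwise, since $q_i$ is not a quasi-centre, there is $y_i \in S(q_i, 10^3\delta)$ with $|U_{q_i}(y_i^{+100\delta})| > \tfrac34|U|$, and we let $q_{i+1}$ be a point on a geodesic from $q_i$ to $y_i$ at distance $d_i \in (0, 10^3\delta]$ from $q_i$ (one may simply take $q_{i+1} = y_i$).

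The technical heart is a displacement estimate for the two groups of generators. If $u \in U_{q_i}(y_i^{+100\delta})$, the definition of $U_x(A,B)$ provides points $a, b \in y_i^{+100\delta} \cap S(q_i, 10^3\delta)$ with $(q_i, uq_i)_a \le \delta$ and $(u^{-1}q_i, q_i)_b \le \delta$; together with the defining inequality $|q_i - uq_i| \ge 4\cdot10^3\delta$, a few applications of the four-point inequality and of the quasi-convexity of geodesics show that a geodesic $[q_i, uq_i]$ passes $2\delta$-close to $a$ at arc-length $\approx 10^3\delta$ from $q_i$ and $2\delta$-close to $u(b)$ at arc-length $\approx 10^3\delta$ from $uq_i$, while $a$ lies within $\approx 100\delta$ of $y_i$ and $u(b)$ within $\approx 100\delta$ of $uy_i = uq_{i+1}$. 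Since $|q_i - uq_i|$ exceeds $2\cdot10^3\delta$ by a definite margin, these two arc-length parameters are well separated, and one obtains $|uq_{i+1} - q_{i+1}| \le |uq_i - q_i| - 2d_i + O(\delta)$. For the other generators $u$, only the trivial bound $|uq_{i+1} - q_{i+1}| \le |uq_i - q_i| + 2d_i$ is available, since $u$ is an isometry and $|q_i - q_{i+1}| = |uq_i - uq_{i+1}| = d_i$.

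Now run the Lyapunov function $\Phi(x) = \sum_{u \in U} |ux - x|$. More than $\tfrac34|U|$ generators obey the first bound and fewer than $\tfrac14|U|$ obey only the second; summing, and using that the first group outnumbers the second by more than $\tfrac12|U|$,
\begin{align*}
\Phi(q_{i+1}) &\le \Phi(q_i) - 2d_i\left(|U_{q_i}(y_i^{+100\delta})| - |U\setminus U_{q_i}(y_i^{+100\delta})|\right) + O(\delta)|U| \\
&\le \Phi(q_i) - |U|d_i + O(\delta)|U|.
\end{align*}
Because $\Phi \ge 0$ and $d_i$ is bounded below by a fixed multiple of $\delta$, the descent terminates after finitely many steps, at a quasi-centre $p = q_k$. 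Summing the decrements,
\[
|p - q| \le \sum_i d_i \le \frac{\Phi(q_0)}{|U|} \le \lambda(U, q) \le \lambda(U) + \delta,
\]
where we used $\Phi(q) \le |U|\,\lambda(U, q)$ and the hypothesis that $q$ almost-minimises the $\ell^{\infty}$-energy. The constants $10^3\delta$, $100\delta$ and $4\cdot10^3\delta$ built into the definitions of $U_x(A,B)$ and of a quasi-centre are tuned precisely so that the $O(\delta)$-errors accumulated along the walk are absorbed and the clean bound $|p - q| \le \lambda(U)$ comes out.

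The main obstacle is exactly this last point: the displacement estimate for $u \in U_{q_i}(y_i^{+100\delta})$ must be proved with explicit control of how far $q_{i+1}$ and $uq_{i+1}$ lie from a geodesic $[q_i, uq_i]$, of the relevant arc-length parameters, and of every additive hyperbolicity error, so that after the $\tfrac34$-versus-$\tfrac14$ weighting the total walk has length at most $\lambda(U)$, not merely a multiple of it. A lesser point, when $X$ is only a length space, is to make sense of the geodesics and infima involved (pass to the completion, or use approximate realisers, as already done in the definition of an almost-minimising point).
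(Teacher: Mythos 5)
Your descent architecture (walk from $q$ in steps of length $10^3\delta$ until a quasi-centre is reached) matches the paper's, and your per-generator displacement estimate for $u \in U_{q_i}(y_i^{+100\delta})$ is essentially correct. The genuine gap is in the final accounting with the $\ell^1$-Lyapunov function $\Phi$, and it cannot be repaired by more careful bookkeeping. Your per-step decrement is $\Phi(q_{i+1}) \leq \Phi(q_i) - 2d_i\bigl(|U_1| - |U\setminus U_1|\bigr) + E|U|$ with $|U_1| > \tfrac34|U|$; even if the error $E$ were zero, the $\tfrac34$-versus-$\tfrac14$ weighting gives a decrement of at most $|U|d_i$ per step, so the ceiling of this method is $\sum_i d_i \leq \Phi(q)/|U| \leq \lambda(U,q) \leq \lambda(U) + \delta$ --- already not $\lambda(U)$. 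In reality $E$ is bounded below by a fixed positive fraction of $d_i$: the witnesses $a$ and $b$ in the definition of $U_x(A,B)$ are only guaranteed to lie within $100\delta$ of $y_i$, which alone costs $200\delta$ out of the potential gain of $2d_i = 2000\delta$ for each good generator, before any hyperbolicity errors. Since $d_i$ is pinned at $10^3\delta$ by the definition of a quasi-centre, the ratio $E/d_i$ does not tend to zero, and the method yields $|p-q| \leq c\,(\lambda(U)+\delta)$ with $c \approx 1.2$, not $|p-q| \leq \lambda(U)$. Your closing hope that the constants are ``tuned precisely so that the $O(\delta)$-errors are absorbed'' is therefore not realized by the averaging route: the loss is structural, not an artifact of sloppiness. (The weaker conclusion $|p-q|\leq c\lambda(U)$ would in fact suffice for the later applications after adjusting constants, but it does not prove the proposition as stated.)

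The paper avoids averaging entirely. It uses the fraction $\tfrac34$ only to guarantee that $U_{x_{i-1}}(x_i^{+100\delta}) \cap U_{x_i}(x_{i+1}^{+100\delta})$ contains more than $|U|/2$ elements, which forces $(x_{i-1},x_{i+1})_{x_i} \leq 103\delta$; hence for a \emph{single} $u \in U_{x_i}(x_{i+1}^{+100\delta})$ the doubled chain $x_0, \dots, x_{i+1}, ux_{i+1}, \dots, ux_0$ satisfies the hypotheses of Lemma~\ref{L: discrete quasi-geodesics}, giving
\begin{equation*}
	|uq - q| \;\geq\; |x_{i+1} - ux_{i+1}| + 10^3(i+1)\delta \;\geq\; 10^3(i+5)\delta .
\end{equation*}
Since $|uq-q| \leq \lambda(U) + \delta$, this both terminates the walk and bounds its length $10^3 i\delta$ by $\lambda(U)$ with $4\cdot 10^3\delta$ to spare (the middle segment $|x_{i+1}-ux_{i+1}| \geq 4\cdot 10^3\delta$ absorbs the $+\delta$ from almost-minimisation and all accumulated Gromov-product errors). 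In effect, one good generator moves $q$ by roughly \emph{twice} the distance travelled, whereas your averaged decrement only recovers it once; that factor of two is exactly the slack needed to get the constant $1$. I would redo the termination and length estimate along these lines rather than via $\Phi$.
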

\begin{rem} The existence of a quasi-centre is already known by  \cite{delzant_product_2020}.
The authors prove there that any point almost-minimising the $\ell^1$-energy is a quasi-centre.
However such a point could be very far from any point almost-minimising the $\ell^\infty$-energy.
\end{rem}

\begin{proof} 
We describe a recursive procedure to find a quasi-centre $p$. 
The idea is to construct a quasi-geodesic from $q$ to a quasi-centre $p$.
Let $x_0=q$ and suppose that  $x_0$, $\ldots$, $x_{i-1}$, $x_i\in X$ are already defined.
If $x_i$ is a quasi-centre for $U$, we let $p=x_i$ and stop the induction. 
Otherwise, there is a point $x_{i+1} \in S(x_i,10^3\delta)$ such that $|U_{x_i}(x_{i+1}^{+ 100 \delta})| > \frac{3}{4} |U|$.

Our idea is to apply Lemma \ref{L: discrete quasi-geodesics} to the sequence of points $x_0,$ $x_1,$ $\dots,$ $x_i,$ $x_{i+1},$ $ux_{i+1}$, $ ux_i,$ $\dots,$ $ux_1,$ $ux_0$ for some $u \in U_{x_i}(x_{i+1}^{+ 100 \delta})$. Like this we can write the distance from $x_0$ to $ux_0$ as a function of the index $i$.
We will observe that this function diverges to infinity, which forces the procedure to stop.
To do this, we collect the following observations. By construction, we have:

\begin{lem}\label{L: stable points 1} 
For all $u \in U_{x_{i-1}}(x_{i}^{+100\delta})$, the following holds
\begin{enumerate}
	\item $(x_{i-1},ux_{i-1})_{x_i}\leqslant 101\delta$ and $(x_{i-1},u{x_{i-1}})_{ux_i}\leqslant 101\delta$
	\item $(x_{i-1},ux_i)_{x_i} \leq 102\delta$ and $(x_i, ux_{i-1})_{ux_i} \leq 102\delta$.
\end{enumerate}
\end{lem}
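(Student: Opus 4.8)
The plan is to unwind the definition of $U_{x_{i-1}}(x_i^{+100\delta})$ and then feed the resulting data into the elementary hyperbolic inequalities recalled in Section~\ref{sec: Hyperbolic spaces}. So first I would fix $u \in U_{x_{i-1}}(x_i^{+100\delta})$ and record the three facts that come for free from the definition of $U_{x}(A,B)$: there are points $a,b \in S(x_{i-1},10^3\delta)$, each at distance at most $100\delta$ from $x_i$, with $(x_{i-1},ux_{i-1})_a \leq \delta$ and $(u^{-1}x_{i-1},x_{i-1})_b \leq \delta$; and moreover $|x_{i-1}-ux_{i-1}| \geq 4\cdot 10^3\delta$. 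Also $|x_{i-1}-x_i| = 10^3\delta$ since $x_i \in S(x_{i-1},10^3\delta)$. These are the only inputs.

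Item~(1) is then essentially immediate from the inequality $(x,y)_t \leq (x,y)_z + |t-z|$. Taking $z = a$ gives $(x_{i-1},ux_{i-1})_{x_i} \leq (x_{i-1},ux_{i-1})_a + |x_i - a| \leq \delta + 100\delta = 101\delta$. For the second estimate in~(1) I would first push the hypothesis on $b$ forward by $u$: since $u$ acts by isometries, $(x_{i-1},ux_{i-1})_{ub} = (u^{-1}x_{i-1},x_{i-1})_b \leq \delta$ and $|ux_i - ub| = |x_i - b| \leq 100\delta$, so the same inequality with $z = ub$ yields $(x_{i-1},ux_{i-1})_{ux_i} \leq 101\delta$.

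For item~(2) the naive comparison — replacing $ux_i$ by $a$ (resp.\ $ub$) — loses a factor of two ($\approx 200\delta$ instead of $102\delta$), so instead I would invoke the thin-triangle inequality, Lemma~\ref{res: metric inequalities}(\ref{enu: metric inequalities - thin triangle}): $(x,y)_t \leq \max\{|x-t| - (y,z)_x,\ (x,z)_t\} + \delta$. Applying it with $x = x_{i-1}$, $y = ux_i$, $t = x_i$, $z = ux_{i-1}$, the key observation is that $ux_i$ and $ux_{i-1}$ are at distance $10^3\delta$ from each other while both lie at distance $\geq 4\cdot 10^3\delta$ from $x_{i-1}$; hence $(ux_i, ux_{i-1})_{x_{i-1}} \geq |x_{i-1}-ux_{i-1}| - 10^3\delta \geq 3\cdot 10^3\delta$, so the first argument of the maximum, $|x_{i-1}-x_i| - (ux_i,ux_{i-1})_{x_{i-1}} \leq 10^3\delta - 3\cdot 10^3\delta$, is negative and the bound collapses to $(x_{i-1},ux_{i-1})_{x_i} + \delta \leq 102\delta$ by item~(1). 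The second estimate in~(2) is the mirror image: apply the same lemma with $x = ux_{i-1}$, $y = x_i$, $t = ux_i$, $z = x_{i-1}$, using this time that $(x_i,x_{i-1})_{ux_{i-1}} \geq |x_{i-1}-ux_{i-1}| - 10^3\delta \geq 3\cdot 10^3\delta$ together with $(x_{i-1},ux_{i-1})_{ux_i} \leq 101\delta$ from item~(1).

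The only real subtlety is the one just flagged: recognizing that the obvious route through $a$ and $b$ is too lossy for~(2), and that one should instead spend the lower bound $|x_{i-1}-ux_{i-1}| \geq 4\cdot 10^3\delta$ to make the correction term in the thin-triangle inequality negative. Everything else is routine bookkeeping with the triangle inequality and the fact that $u$ is an isometry.
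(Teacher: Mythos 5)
Your proof is correct and follows essentially the same route as the paper: item (1) is obtained from the definition of $U_{x_{i-1}}(x_i^{+100\delta})$ via the basepoint-change inequality $(x,y)_t \leq (x,y)_z + |t-z|$, and item (2) is obtained from Lemma~\ref{res: metric inequalities}(\ref{enu: metric inequalities - thin triangle}) with exactly the same choice of points, using the lower bound $|ux_{i-1}-x_{i-1}| \geq 4\cdot 10^3\delta$ to make the first argument of the maximum negative. The only difference is that you spell out the details the paper leaves implicit (the $100\delta$ correction for $a$ and $b$, and the symmetric case of (2)).
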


\begin{rem}
Roughly speaking, this lemma tells us that  $x_{i-1},$ $x_i,$ $ux_i$ and $ux_{i-1}$ are aligned in the order of their listing along the neigbourhood of the geodesic $[x_{i-1},ux_{i-1}]$.
\end{rem}

\begin{proof}
	The first point is just a reformulation of the definition of the set $U_{x_{i-1}}(x_{i}^{+100\delta})$.
	Let $u \in U_{x_{i-1}}(x_{i}^{+100\delta})$.
	By Lemma~\ref{res: metric inequalities}~(\ref{enu: metric inequalities - thin triangle}) we have
	\begin{equation}
	\label{eqn: stable points 1}
		(x_{i-1},ux_i)_{x_i}\leq \max \left\{|x_{i-1}- x_i| - (ux_i, ux_{i-1})_{x_{i-1}}, (x_{i-1},ux_{i-1})_{x_i}\right\} + \delta.
	\end{equation}
	According to the triangle inequality we have
	\begin{equation*}
		(ux_i, ux_{i-1})_{x_{i-1}} \geq |ux_{i-1} - x_{i-1}| - |x_{i-1} - x_i|.
	\end{equation*}
	However, by construction $|ux_{i-1} - x_{i-1}| > 2|x_{i-1} - x_i| + 2\delta$.
	Hence the maximum in (\ref{eqn: stable points 1}) has to be achieved by $(x_{i-1},ux_{i-1})_{x_i}$.
	The same argument works for $(x_i, ux_{i-1})_{ux_i}$.
\end{proof}

\begin{lem}\label{L: stable points 2}
If $x_i$ is not a quasi-centre for $U$, then $(x_{i-1},x_{i+1})_{x_i}\leqslant 103\delta$.
\end{lem}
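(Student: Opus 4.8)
The plan is to manufacture a \emph{single} element $u\in U$ that simultaneously witnesses the failure of the quasi-centre condition at $x_{i-1}$ (relative to $x_i$) and at $x_i$ (relative to $x_{i+1}$), and then to read off from this $u$ that, seen from $x_i$, the points $x_{i-1}$ and $x_{i+1}$ lie in essentially opposite directions, which is precisely what a small Gromov product $(x_{i-1},x_{i+1})_{x_i}$ encodes.

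First, I would recall how the sequence is built. Since $x_i$ is defined, $x_{i-1}$ was not a quasi-centre, so $x_i\in S(x_{i-1},10^3\delta)$ was chosen with $|U_{x_{i-1}}(x_i^{+100\delta})|>\frac{3}{4}|U|$; and since $x_i$ is not a quasi-centre, there is $x_{i+1}\in S(x_i,10^3\delta)$ with $|U_{x_i}(x_{i+1}^{+100\delta})|>\frac{3}{4}|U|$. Two subsets of $U$ each of cardinality more than $\frac{3}{4}|U|$ must intersect (their intersection has cardinality more than $\frac{1}{2}|U|$), so I can pick $u\in U_{x_{i-1}}(x_i^{+100\delta})\cap U_{x_i}(x_{i+1}^{+100\delta})$.

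Next I would extract two alignment estimates for this $u$. From $u\in U_{x_{i-1}}(x_i^{+100\delta})$, Lemma~\ref{L: stable points 1}(2) gives directly $(x_{i-1},ux_i)_{x_i}\leq 102\delta$, i.e. $x_i$ lies essentially on a geodesic from $x_{i-1}$ to $ux_i$. From $u\in U_{x_i}(x_{i+1}^{+100\delta})$ I get a point $a\in x_{i+1}^{+100\delta}\cap S(x_i,10^3\delta)$ with $(x_i,ux_i)_a\leq\delta$; the Gromov-product triangle inequality then yields $(x_i,ux_i)_{x_{i+1}}\leq(x_i,ux_i)_a+|x_{i+1}-a|\leq 101\delta$, and since $|x_i-x_{i+1}|=10^3\delta$ the identity $(x_{i+1},ux_i)_{x_i}=|x_i-x_{i+1}|-(x_i,ux_i)_{x_{i+1}}$ forces $(x_{i+1},ux_i)_{x_i}\geq 899\delta$. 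In words, $x_{i+1}$ sits far beyond $x_i$ in the direction of $ux_i$, while $x_{i-1}$ sits on the opposite side of $x_i$.

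Finally I would combine these with the four point inequality~(\ref{eqn: four point inequality}) at $x_i$ applied to $x_{i-1}$, $x_{i+1}$, $ux_i$, giving $(x_{i-1},ux_i)_{x_i}\geq\min\{(x_{i-1},x_{i+1})_{x_i},(x_{i+1},ux_i)_{x_i}\}-\delta$; hence that minimum is at most $103\delta$, and since $(x_{i+1},ux_i)_{x_i}\geq 899\delta>103\delta$ the minimum is realised by $(x_{i-1},x_{i+1})_{x_i}$, yielding $(x_{i-1},x_{i+1})_{x_i}\leq 103\delta$. I do not expect any real obstacle: the only points needing care are checking that the two ``large'' subsets of $U$ genuinely intersect (automatic from the $\frac{3}{4}$ threshold, which is exactly why that constant appears in the definition of a quasi-centre) and invoking Lemma~\ref{L: stable points 1} with the correct indexing; the rest is the same routine Gromov-product bookkeeping already carried out in the proof of that lemma.
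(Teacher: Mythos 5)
Your proof is correct and follows the same route as the paper: pick $u$ in the intersection of the two large subsets (guaranteed by the $\tfrac34$ threshold), use Lemma~\ref{L: stable points 1} to get $(x_{i-1},ux_i)_{x_i}\leq 102\delta$ and $(x_i,ux_i)_{x_{i+1}}\leq 101\delta$, deduce $(x_{i+1},ux_i)_{x_i}\geq 899\delta$ from $|x_i-x_{i+1}|=10^3\delta$, and conclude with the four point inequality. No gaps.
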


\begin{proof}
We note that $ |U_{x_{i-1}}(x_{i}^{+100\delta}) \cap U_{x_{i}}(x_{i+1}^{+100\delta})|>|U|/2.$
Let us fix an element $u$ in this intersection. 
By Lemma \ref{L: stable points 1}, $(x_{i-1},ux_i)_{x_i} \leq 102\delta$ and $(x_{i},ux_{i})_{x_{i+1}}\leq 101\delta$.
According to the four point inequality we have
\begin{equation*}
	102 \delta 
	\geq (x_{i-1},ux_i)_{x_i}
	\geq \min \left\{ (x_{i-1},x_{i+1})_{x_i}, (x_{i+1},ux_i)_{x_i}\right\} - \delta.
\end{equation*}
Observe that 
\begin{equation*}
	|x_i - x_{i+1}| 
	= (x_{i+1}, ux_i)_{x_i}  +(x_i,ux_i)_{x_{i+1}} 
	\leq (x_{i+1}, ux_i)_{x_i} + 101\delta.
\end{equation*}
Since $|x_i - x_{i+1}| = 10^3\delta$, the minimum cannot be achieved by $(x_{i+1},ux_i)_{x_i}$, whence the result.
\end{proof}

\begin{lem}\label{L: stable points 3} If $x_i$ is not a quasi-centre, then, for all $u\in U_{x_i}(x_{i+1}^{+100 \delta})$,  
$$|ux_0 -x_0 | \geq |{x_{i+1}}-ux_{i+1}|  + 10^3(i+1) \delta .$$
\end{lem}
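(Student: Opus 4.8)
The plan is to apply Lemma~\ref{L: discrete quasi-geodesics}(\ref{enu: discrete quasi-geodesics - dist}) to the out-and-back sequence of $2i+4$ points
\begin{equation*}
	x_0,\ x_1,\ \dots,\ x_i,\ x_{i+1},\ ux_{i+1},\ ux_i,\ \dots,\ ux_1,\ ux_0,
\end{equation*}
whose extreme points are $x_0$ and $ux_0$. That lemma then bounds $|ux_0 - x_0|$ from below by the sum of its $2i+3$ consecutive edge lengths, minus twice the sum of its $2i+2$ consecutive Gromov products, minus the error term $2(2i+1)\delta$; so the whole argument reduces to controlling these three quantities.

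For the edges: by construction $|x_j - x_{j+1}| = 10^3\delta$ for $0 \le j \le i$, hence $|ux_{j+1} - ux_j| = 10^3\delta$ as well since $u$ is an isometry; and because $u \in U_{x_i}(x_{i+1}^{+100\delta})$ forces $|x_i - ux_i| \ge 4\cdot 10^3\delta$ while $x_{i+1} \in S(x_i,10^3\delta)$, the triangle inequality gives $|x_{i+1} - ux_{i+1}| \ge 2\cdot 10^3\delta$. So every edge has length at least $10^3\delta$, which is what the hypothesis of Lemma~\ref{L: discrete quasi-geodesics} needs room for. For the Gromov products I would bound each consecutive product by $103\delta$, so that $103\delta + 103\delta = 206\delta < 10^3\delta - 3\delta$. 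Along the initial half $x_0,\dots,x_{i+1}$ this uses that none of $x_1,\dots,x_i$ is a quasi-centre — indeed $x_1,\dots,x_{i-1}$ are not, by the stopping rule of the construction, and $x_i$ is not, by hypothesis — so Lemma~\ref{L: stable points 2} gives $(x_{j-1},x_{j+1})_{x_j}\le 103\delta$; applying the isometry $u$ gives the same bound for all triples lying in the terminal half. The two triples straddling the central edge involve, besides such terms, only the products $(x_i,ux_{i+1})_{x_{i+1}}$ and $(x_{i+1},ux_i)_{ux_{i+1}}$, each at most $102\delta$ by Lemma~\ref{L: stable points 1} applied to the pair $(x_i,x_{i+1})$. (When $i=0$ the sequence has only four points; this degenerate case is covered by the remark following Lemma~\ref{L: discrete quasi-geodesics}.)

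It then remains to assemble the numerics. The $2i+3$ edges sum to $2\cdot 10^3(i+1)\delta + |x_{i+1}-ux_{i+1}|$, the $2i+2$ consecutive Gromov products sum to at most $(2i+2)\cdot 103\delta$, and the error term is $2(2i+1)\delta$; hence Lemma~\ref{L: discrete quasi-geodesics}(\ref{enu: discrete quasi-geodesics - dist}) yields
\begin{equation*}
	|ux_0 - x_0| \ \ge\ |x_{i+1}-ux_{i+1}| + \bigl[\,2\cdot 10^3(i+1) - 2\cdot 103(2i+2) - 2(2i+1)\,\bigr]\delta \ =\ |x_{i+1}-ux_{i+1}| + (1584\,i + 1586)\delta,
\end{equation*}
and $1584\,i + 1586 \ge 10^3(i+1)$ for every $i \ge 0$, which is precisely the claimed inequality.

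I expect the only obstacle here to be bookkeeping rather than anything conceptual: keeping straight which of Lemmas~\ref{L: stable points 1} and~\ref{L: stable points 2} is invoked at each consecutive triple, verifying that the non-quasi-centre hypothesis is genuinely available for every $x_j$ with $1 \le j \le i$ (and therefore, via the isometry $u$, for the mirrored triples in the returning half), and checking that the comfortable numerical slack survives subtraction of the linear error term $2(n-3)\delta$ — which, as the computation above shows, it does with plenty to spare.
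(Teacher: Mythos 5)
Your proof is correct and follows exactly the paper's argument: apply Lemma~\ref{L: discrete quasi-geodesics}(\ref{enu: discrete quasi-geodesics - dist}) to the out-and-back sequence $x_0,\dots,x_{i+1},ux_{i+1},\dots,ux_0$, bounding the interior Gromov products via Lemma~\ref{L: stable points 2} (and its image under the isometry $u$) on each half and via Lemma~\ref{L: stable points 1} at the two central triples. The paper leaves the final numerics implicit, whereas you carry them out explicitly; your bookkeeping (edge sums, the $(2i+2)\cdot 103\delta$ bound, the $2(2i+1)\delta$ error term, and the resulting $1584i+1586 \geq 10^3(i+1)$) checks out.
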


\begin{proof}
Let $u$ be in $U_{x_i}(x_{i+1}^{+100 \delta})$. 
By Lemma \ref{L: stable points 1},
 \begin{equation*}
 x_i,ux_{i+1})_{x_{i+1}} \leq 102\delta \text{ and } (x_{i+1},ux_i)_{ux_{i+1}} \leq 102\delta.
 \end{equation*}
On the other hand, by Lemma \ref{L: stable points 2}, we have  
\begin{equation*}
(x_{j-1},x_{j+1})_{x_{j}} \leqslant 103\delta \text{ and } (ux_{j-1},ux_{j+1})_{ux_{j}}\leqslant 103\delta\text{, for all $0< j \leqslant i$}.
\end{equation*}
The claim follows from Lemma \ref{L: discrete quasi-geodesics} applied to the sequence of points $x_0,$ $x_1,$ $\dots,$ $x_i,$ $x_{i+1},$ $ux_{i+1}$, $ ux_i,$ $\dots,$ $ux_1,$ $ux_0$.
\end{proof}

Suppose that $x_i$ is not a quasi-centre. 
Fix $u\in U_{x_i}(x_{i+1}^{+100 \delta})$.
By construction we have $|{x_{i+1}}-ux_{i+1}|\geq 4\cdot 10^3\delta$.
Recall that $x_0 = q$  almost-minimises the energy.
By Lemma \ref{L: stable points 3}, we get
$$\lambda(U) \geq  10^3(i+5) \delta. $$
This means that the induction used to build the sequence $(x_i)$ stops after finitely many steps.
Moreover, when the process stops we have $x_i = p$ and $\lambda(U) \geq  10^3(i+5) \delta$.
For every $j \leq i-1$ we have $| x_j - x_{j+1}| \leq 10^3\delta$, thus $|p - q| \leq \lambda(U)$.
\end{proof}

\section{Sets of diffuse energy} 
In this section we assume that the action of $G$ on $X$ is $(N, \kappa)$-acylindrical, with $\kappa > 50\cdot 10^3\delta$.
Let $U \subset G$ be a finite subset.
Let $p$ be a quasi-centre of $U$.
In this section we assume that $U$ is of  \emph{diffuse energy} (at $p$) that is for at least $\frac{99}{100}$ of the elements of $U\subset G$, we have $ |up-p|> 2\kappa$.

  \subsection{Reduction lemma}
 
 We first prove the following variant of the reduction lemmas in \cite{delzant_product_2020}.   
 
 \begin{prop}[Reduction]\label{L: reduction} 
There is $v\in U$, and $U_1\subset U$ of cardinality $|U_1|\geqslant \frac{1}{100} |U|$ such that for all $u_1\in U_1$, 
 \begin{itemize}
 \item $(u_1^{-1}p,vp)_p\leqslant 10^3 \delta$ and $(v^{-1}p,u_1p)_p\leqslant 10^3 \delta$, and 
 \item $2\kappa \leqslant |u_1p-p|\leqslant |vp-p|$. 
 \end{itemize}
 \end{prop}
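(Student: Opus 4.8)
The plan is to analyse the combinatorics of the sets $U_x(A,B)$ at the quasi-centre $p$, exploiting the fact that $U$ has diffuse energy so that nearly all elements move $p$ by more than $2\kappa \geq 10^5\delta$, in particular by more than $4\cdot 10^3\delta$. First I would note that since $p$ is a quasi-centre, for every $y \in S(p,10^3\delta)$ we have $|U_p(y^{+100\delta})| \leq \tfrac34|U|$; the goal is essentially the opposite of a quasi-centre statement — we want \emph{two} directions $v^{-1}p$ and (a common direction for) the $u_1p$ that are aligned with each other — so the strategy is to first pin down a single element $v$ whose ``outgoing direction'' $vp$ and ``incoming direction'' $v^{-1}p$ are reasonably separated, and then find a positive proportion of elements $u_1$ whose outgoing direction $u_1p$ lies near $v^{-1}p$ and whose incoming direction $u_1^{-1}p$ lies near $vp$.

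The key device is a covering/pigeonhole argument on the sphere $S(p,10^3\delta)$. For each $u \in U$ with $|up-p| \geq 4\cdot10^3\delta$ (which, by diffuse energy, is at least $\tfrac{99}{100}|U|$ of them), pick the point $a_u$ on a geodesic $[p,up]$ at distance $10^3\delta$ from $p$, and similarly the point on $[p,u^{-1}p]$ at distance $10^3\delta$; these record the outgoing and incoming directions. One then uses a Vitali-type / maximal argument: cover the relevant directions by balls of radius $100\delta$ (or $50\delta$) in $S(p,10^3\delta)$; since $p$ is a quasi-centre, no such ball can capture the outgoing directions of more than $\tfrac34|U|$ elements, so at least a definite proportion of elements have outgoing directions spread out. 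Choosing $v$ to be an element of maximal translation length $|vp-p|$ among those whose outgoing direction is ``popular'', and then looking at the elements $u_1$ whose \emph{outgoing} direction $u_1p$ is $100\delta$-close to the \emph{incoming} direction $v^{-1}p$ of $v$, one gets, after discarding a $\tfrac1{100}$ fraction for the diffuse-energy failures and balancing constants, a set $U_1$ of size $\geq \tfrac1{100}|U|$. The alignment conditions $(u_1^{-1}p,vp)_p \leq 10^3\delta$ and $(v^{-1}p,u_1p)_p \leq 10^3\delta$ then follow from Lemma~\ref{res: metric inequalities} and the definitions of $U_x(A,B)$, since being $100\delta$-close on $S(p,10^3\delta)$ to a common direction forces the Gromov products at $p$ to be $O(10^3\delta)$; one checks the numerology gives exactly $10^3\delta$. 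The ordering $2\kappa \leq |u_1p-p| \leq |vp-p|$ comes from diffuse energy together with the choice of $v$ as a maximiser (or a near-maximiser up to the fixed constant) of $|up-p|$ within the selected subset.

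More concretely, I would decompose $S(p,10^3\delta)$ into finitely many ``cells'' of diameter at most, say, $10\delta$, assign to each $u$ (with large $|up-p|$) the cell containing its outgoing direction-point and the cell containing its incoming direction-point, and count pairs of cells. The quasi-centre hypothesis bounds how concentrated the outgoing assignment can be; a counting argument then produces a cell $\mathcal C_{\mathrm{out}}$ and a cell $\mathcal C_{\mathrm{in}}$ such that a positive proportion of elements have outgoing direction in $\mathcal C_{\mathrm{in}}$ and incoming direction in $\mathcal C_{\mathrm{out}}$, \emph{and} such that some element $v$ has outgoing direction in $\mathcal C_{\mathrm{out}}$ and incoming direction in $\mathcal C_{\mathrm{in}}$ — i.e. $v$ ``links'' the two cells. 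Taking $v$ of largest $|vp-p|$ among linking elements and $U_1$ the set of elements of $|u_1p-p| \leq |vp-p|$ with the matching cell data gives the proposition. The alignment Gromov-product bounds are then routine estimates from the definition of $U_p(A,B)$ plus the thin-triangle inequalities of Lemma~\ref{res: metric inequalities}.

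The main obstacle I expect is the bookkeeping of constants: making the radius of the covering balls, the $100\delta$ in the definition of $U_x(y^{+100\delta})$, the $10^3\delta$ radius of the sphere, and the threshold $4\cdot10^3\delta$ all fit together so that the quasi-centre bound $\tfrac34|U|$ and the diffuse-energy bound $\tfrac{99}{100}|U|$ combine to leave a set of size $\geq\tfrac1{100}|U|$ while simultaneously the final Gromov products come out bounded by \emph{exactly} $10^3\delta$ (and not some larger multiple). Getting the incoming/outgoing matching to be genuinely symmetric — so that the same $v$ serves as the target for the $u_1$'s incoming directions and its own incoming direction is where the $u_1$'s outgoing directions cluster — is the conceptual crux, and is where I would need to be careful that the pigeonhole really can be run on pairs of cells rather than just on single cells. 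Everything else (choosing geodesic representatives, applying the four-point inequality, comparing $|up-p|$ to $2\kappa$) is standard $\delta$-hyperbolic manipulation.
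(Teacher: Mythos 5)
Your proposal takes a genuinely different route from the paper, and it has two gaps that I do not see how to repair. First, the covering/pigeonhole step is not available: decomposing $S(p,10^3\delta)$ into cells of diameter $10\delta$ (or covering the direction points by $100\delta$-balls) and counting pairs of cells yields a proportion of order one over the square of the number of cells, and that number is unbounded --- $X$ is only assumed to be a $\delta$-hyperbolic length space, with no properness or doubling hypothesis, and even in the later application the covering number of spheres in $X_i$ is not uniform in $i$. This is precisely the dependence on ``the cardinality of balls in $X$'' that the remark following the proposition says the present proof is designed to eliminate (your scheme is essentially that of \cite[Lemma~6.3]{delzant_product_2020}). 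The paper avoids any covering: it fixes a single element $u_0$ of maximal displacement in $U'=\{u : |up-p|\geq 2\kappa\}$, records only its two direction points $y_0$ (towards $u_0p$) and $z_0$ (towards $u_0^{-1}p$) on $S(p,10^3\delta)$, and runs a trichotomy on whether the elements of $U'$ have direction points avoiding or concentrating near $y_0$ and $z_0$; the quasi-centre hypothesis enters only to cap each of the two concentration sets at $\tfrac34|U|$ and to force $|y_0-z_0|>30\delta$.

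Second, your alignment condition is inverted. The required estimate $(v^{-1}p,u_1p)_p\leq 10^3\delta$ is an \emph{upper} bound on a Gromov product, and it holds precisely when the direction point of $u_1p$ and the direction point of $v^{-1}p$ on $S(p,10^3\delta)$ are \emph{far apart} (more than $6\delta$, by \cite[Lemma~6.1]{delzant_product_2020}). If those two points are $100\delta$-close, the four point inequality gives only a \emph{lower} bound of about $950\delta$ on the product, which can otherwise be arbitrarily large; so selecting the $u_1$ whose outgoing direction is close to $v$'s incoming direction picks out exactly the elements for which the required estimate may fail. Relatedly, the quasi-centre condition bounds $|U_p(y^{+100\delta},y^{+100\delta})|$, i.e.\ the set of elements whose outgoing \emph{and} incoming direction points both lie near $y$; it does not bound the set of elements whose outgoing direction alone lies in a given ball, so the ``directions are spread out'' step does not follow as stated either. (Your derivation of $2\kappa\leq |u_1p-p|\leq |vp-p|$ from diffuse energy and maximality of $v$ is fine and matches the paper.)
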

 
 If $U$ has only one element, then $U_1=U=\{v\}$, the isometry $v$ is loxodromic
and $p$ is in a small neighbourhood of the axis of $v$.

\begin{rem} In the case of trees, Proposition \ref{L: reduction} follows directly from \cite[Lemma 6.4]{delzant_product_2020}, and the proof of this lemma is due to Button \cite{button_explicit_2013}. The situation is different in the case of hyperbolic spaces. Indeed, in contrast to the reduction lemmas in \cite[Section 6.1]{delzant_product_2020}, the cardinality of $U_1$ in Proposition  \ref{L:  reduction} does not depend the cardinality of balls in $X$, as in \cite[Lemma 6.3]{delzant_product_2020}, and the estimates on the Gromov products do not depend on the logarithm of the cardinality of $U$, as in \cite[Lemma 6.8]{delzant_product_2020}. 
\end{rem}

\begin{proof}
For simplicity we let $\eta = 1/100$.
Let  $U'=\{u\in U\mid |up-p|\geqslant 2\kappa\}$. 
As the energy of $U$ is diffuse at $p$, we have $|U'|\geq (1-\eta) |U|$. 
Let us fix $u_0\in U'$ such that $|u_0p-p|$ is maximal.
We claim the following result.

\begin{lem}\label{L: main technical lemma reduction} 
At least one of the following holds:
\begin{enumerate}
\item there is $U_1\subset U'$ of cardinality $|U_1|>\eta |U|$ such that for all $u_1\in U_1$,
 $$ (u_1^{-1}p,u_0p)_p\leqslant 10^3 \delta \hbox{ and } (u_0^{-1}p,u_1p)_p\leqslant 10^3 \delta;$$
\item there are  $U_1$, $U_2 \subset U'$ of cardinalities $|U_1|>\eta |U|$  and $|U_2|>\eta |U|$ such that for all $u_1\in U_1$, $u_2\in U_2$, 
 $$ (u_1^{-1}p,u_2p)_p\leqslant 10^3 \delta \hbox{ and } (u_2^{-1}p,u_1p)_p\leqslant 10^3 \delta.$$
\end{enumerate}

\end{lem}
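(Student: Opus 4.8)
The plan is to translate the Gromov-product conditions in (1) and (2) into statements about \emph{directions seen from $p$}. For each $u\in U'$ fix a point $a_u$ on a geodesic $[p,up]$ with $|p-a_u|=10^{3}\delta$ and a point $b_u$ on a geodesic $[p,u^{-1}p]$ with $|p-b_u|=10^{3}\delta$ (these exist since $|up-p|\geq 2\kappa>10^{3}\delta$), and set $\xi=a_{u_0}$, $\zeta=b_{u_0}$. The key geometric input, read off from the four point inequality and Lemma~\ref{res: metric inequalities}, is a fellow-travelling dictionary: if $g,h\in G$ move $p$ by more than $10^{3}\delta$ and $(g^{-1}p,hp)_p>10^{3}\delta$, then the radius-$10^{3}\delta$ points of $[p,g^{-1}p]$ and $[p,hp]$ lie within $O(\delta)$ of one another. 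Applied to $(u_1,u_0)$ and $(u_0,u_1)$ this says that $u_1$ can fail the first inequality of (1) only if $b_{u_1}$ is $O(\delta)$-close to $\xi$, and the second only if $a_{u_1}$ is $O(\delta)$-close to $\zeta$; every inequality occurring in (1) and (2) has this shape, so the whole argument can be run on the positions of the $a_u$, $b_u$ in $S(p,10^{3}\delta)$.

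Put $B=\{u\in U' : b_u \text{ is }O(\delta)\text{-close to }\xi\}$ and $A=\{u\in U' : a_u \text{ is }O(\delta)\text{-close to }\zeta\}$. By the previous paragraph, every $u\in U'\setminus(A\cup B)$ meets both demands of (1); so if $|U'\setminus(A\cup B)|>\eta|U|$ we take $U_1=U'\setminus(A\cup B)$ and (1) holds. Assume then $|A\cup B|\geq|U'|-\eta|U|\geq(1-2\eta)|U|$. Split $B=B_{=}\sqcup B_{\ne}$ according to whether $a_u$ is (loosely) close to $\xi$ or far from it, and define $A_{=},A_{\ne}\subseteq A$ symmetrically, swapping $a\leftrightarrow b$ and $\xi\leftrightarrow\zeta$. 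Choosing the thresholds suitably, $B_{=}\subseteq U_p(\xi^{+100\delta})$ and $A_{=}\subseteq U_p(\zeta^{+100\delta})$, so the quasi-centre hypothesis gives $|B_{=}|,|A_{=}|\leq\tfrac34|U|$.

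If $|B_{\ne}|>\eta|U|$, then (2) holds with $U_1=U_2=B_{\ne}$: for $u_1,u_2\in B_{\ne}$, $b_{u_1}$ is close to $\xi$ while $a_{u_2}$ is far from $\xi$, so they are $O(\delta)$-separated and the contrapositive of the dictionary yields $(u_1^{-1}p,u_2p)_p\leq10^{3}\delta$; exchanging $u_1,u_2$ gives the other inequality. Symmetrically $|A_{\ne}|>\eta|U|$ gives (2) with $U_1=U_2=A_{\ne}$. Otherwise $|B_{\ne}|,|A_{\ne}|\leq\eta|U|$, so $|A_{=}\cup B_{=}|\geq(1-4\eta)|U|>\tfrac34|U|$. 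If $\xi$ and $\zeta$ were $O(\delta)$-close, then $A_{=}$ and $B_{=}$ would both sit inside $U_p(\xi^{+100\delta})$, contradicting the quasi-centre bound; hence $\xi$ and $\zeta$ are far apart. Consequently no $u$ has $a_u$ close to both $\zeta$ and $\xi$, so $A_{=}\cap B_{=}=\emptyset$; then $|A_{=}|+|B_{=}|\geq(1-4\eta)|U|$ together with $|A_{=}|,|B_{=}|\leq\tfrac34|U|$ forces $|A_{=}|,|B_{=}|\geq(\tfrac14-4\eta)|U|>\eta|U|$. Finally, for $u_1\in A_{=}$ and $u_2\in B_{=}$ the directions $b_{u_1}\approx\zeta$ and $a_{u_2}\approx\xi$ are far apart, and so are $b_{u_2}\approx\xi$ and $a_{u_1}\approx\zeta$; by the dictionary $(u_1^{-1}p,u_2p)_p,(u_2^{-1}p,u_1p)_p\leq10^{3}\delta$, so (2) holds with $U_1=A_{=}$, $U_2=B_{=}$.

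The real work is not any individual step but the joint calibration of the neighbourhood radii appearing throughout (all bounded multiples of $\delta$): one must pick the ``close'' and ``far'' thresholds so that, simultaneously, (i) $B_{=}$ and $A_{=}$ land inside the $100\delta$-neighbourhoods built into the definition of $U_p(\cdot)$; (ii) ``far from $\xi$'' beats ``close to $\xi$'' by more than the $O(\delta)$ slack of the fellow-travelling dictionary; and (iii) when $\xi,\zeta$ are declared far apart, their separation exceeds twice the slack tolerated by $A_{=}$ and $B_{=}$. A direct computation shows these requirements are compatible, with room to spare when $\eta=1/100$ (for instance $\tfrac14-4\eta=\tfrac{21}{100}>\eta$).
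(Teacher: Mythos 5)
Your proof is correct and follows essentially the same route as the paper: the paper's sets $U'(A,B)$ encode exactly your direction points $a_u,b_u$ at radius $10^3\delta$, your ``dictionary'' is Lemma~6.1 of \cite{delzant_product_2020}, and your case split ($U'\setminus(A\cup B)$ large; a ``$\ne$'' part large; both ``$=$'' parts large via the quasi-centre bound and separation of $\xi,\zeta$) matches the paper's decomposition with thresholds $6\delta$, $12\delta$, $30\delta$. The only cosmetic caveat is that $X$ is merely a length space, so one should take $a_u$ with $(p,up)_{a_u}\leq\delta$ rather than on an actual geodesic, which is exactly how $U_p(A,B)$ is defined.
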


We postpone for the moment the proof of this lemma and complete first the demonstration of Proposition~\ref{L: reduction}.
In case (1) of Lemma \ref{L: main technical lemma reduction}, we set $v=u_0$. In case (2) of Lemma \ref{L: main technical lemma reduction}, we may assume, up to exchanging the roles of $U_1$ and $U_2$, that there is $v\in U_2$ such that for all $u_1\in U_1$, $|u_1p-p|\leqslant |vp-p|$. This yields Proposition~\ref{L: reduction}. 
 \end{proof}

\begin{proof}[Proof of Lemma \ref{L: main technical lemma reduction}] 
	We write $S=S(p,10^3 \delta)$ for short.
	For simplicity, in this proof we write $$U'(A,B) = U' \cap U_p(A,B).$$ See Section~\ref{sec: quasi-centre} for the definition or $U_p(A,B)$.
	
	The definition of hyperbolicity implies the following useful lemma.

	\begin{lem}[Lemma 6.1 of \cite{delzant_product_2020}]
	\label{L: reduction 1} 
		Let $y_1,z_1,y_2,z_2\in S$. 
		If $|z_1-y_2|>6\delta$, then for every $u_1 \in U(y_1, z_1)$ and $u_2 \in U(y_2,z_2)$, we have $ (u_1^{-1}p,u_2p)_p\leqslant 10^3 \delta$. \qed
	\end{lem}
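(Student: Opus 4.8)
The plan is to prove the statement in contrapositive form: assuming $(u_1^{-1}p,u_2p)_p > 10^3\delta$, I would derive $|z_1-y_2|\leq 6\delta$, which contradicts the hypothesis. The two membership conditions supply the geometric input. Since $u_2\in U(y_2,z_2)$, the defining condition at $y_2$ reads $(p,u_2p)_{y_2}\leq\delta$, so $y_2$ lies essentially on a geodesic from $p$ to $u_2p$; since $u_1\in U(y_1,z_1)$, the condition at $z_1$ reads $(u_1^{-1}p,p)_{z_1}\leq\delta$, so $z_1$ lies essentially on a geodesic from $p$ to $u_1^{-1}p$. Moreover $z_1,y_2\in S=S(p,10^3\delta)$, so $|p-z_1|=|p-y_2|=10^3\delta$. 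The underlying picture is that if $u_1^{-1}p$ and $u_2p$ fellow-travelled out of $p$ for more than $10^3\delta$, then $z_1$ would be trapped inside that common initial segment, hence forced to coincide up to $O(\delta)$ with the point $y_2$ of $[p,u_2p]$ at the same distance $10^3\delta$ from $p$. Note that the conditions at $y_1$ and $z_2$, and the lower bound $|p-up|\geq 4\cdot10^3\delta$, play no role.

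The execution is short. First I would turn the alignment condition at $z_1$ into a lower bound for a Gromov product based at $p$, using the elementary identity $(y,z)_x+(x,y)_z=|x-z|$ (immediate from the definition of the Gromov product). Applied at $(x,y,z)=(p,u_1^{-1}p,z_1)$ and using $|p-z_1|=10^3\delta$ it gives $(z_1,u_1^{-1}p)_p = 10^3\delta-(u_1^{-1}p,p)_{z_1}\geq 10^3\delta-\delta$. Now assume, for contradiction, $(u_1^{-1}p,u_2p)_p>10^3\delta$. The four point inequality $(z_1,u_2p)_p\geq\min\{(z_1,u_1^{-1}p)_p,(u_1^{-1}p,u_2p)_p\}-\delta$ then forces $(z_1,u_2p)_p\geq 10^3\delta-2\delta$, and applying the same identity at $(x,y,z)=(p,u_2p,z_1)$ gives $(p,u_2p)_{z_1}=10^3\delta-(z_1,u_2p)_p\leq 2\delta$.

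To conclude, I would apply Lemma~\ref{res: metric inequalities}(\ref{enu: metric inequalities - two points close to a geodesic}) with basepoint $p$, auxiliary point $u_2p$, and the two points $z_1$ and $y_2$; since $|p-z_1|=|p-y_2|=10^3\delta$, it gives $|z_1-y_2|\leq 2\max\{(p,u_2p)_{z_1},(p,u_2p)_{y_2}\}+2\delta\leq 2\cdot 2\delta+2\delta=6\delta$, contradicting $|z_1-y_2|>6\delta$. Hence $(u_1^{-1}p,u_2p)_p\leq 10^3\delta$.

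I do not expect a genuine obstacle: the argument is one use of the elementary identity (twice), one application of the four point inequality, and one application of Lemma~\ref{res: metric inequalities}. The only non-mechanical point is to recognise that the statement should be attacked contrapositively — estimating $|z_1-y_2|$ rather than the Gromov product directly — and that $u_2p$ (not $u_1^{-1}p$, nor a point on a geodesic joining the two) is the correct vertex through which $z_1$ and $y_2$ are to be compared. One should also keep an eye on constants: in the tree model one gets $|z_1-y_2|\leq 6\delta$ with equality only in degenerate configurations, so the hyperbolicity errors are exactly what is absorbed, and the final bound indeed comes out as $10^3\delta$ on the nose, with no additive loss to track.
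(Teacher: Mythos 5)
Your argument is correct: the identity $(y,z)_x+(x,y)_z=|x-z|$, the four point inequality, and Lemma~\ref{res: metric inequalities}(\ref{enu: metric inequalities - two points close to a geodesic}) are all applied with the right constants, and the contrapositive $|z_1-y_2|\leq 2\cdot 2\delta+2\delta=6\delta$ does contradict the hypothesis. The paper itself gives no proof here (it only cites Lemma~6.1 of the reference), so there is nothing to compare against; your verification stands on its own, and your observation that the conditions at $y_1$ and $z_2$ and the lower bound $|p-up|\geq 4\cdot 10^3\delta$ are not needed for this particular statement is accurate.
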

	
	By construction $|u_0p - p| \geq 4\cdot 10^3\delta$.
	Thus there exists $y_0$ and $z_0 \in S$ such that $(u_0p,p)_{y_0}\leqslant \delta$ and $(p,u_0^ {-1}p)_{z_0}\leq \delta$, so that $u_0 \in U'(y_0, z_0)$.
	Assume first that $|U'(S\setminus z_0^{+6\delta},S\setminus y_0^ {+6\delta})|>\eta|U|$.
	Then we let $U_1 = U'(S\setminus z_0^{+6\delta},S\setminus y_0^ {+6\delta})$.
	Using Lemma~\ref{L: reduction 1} we conclude that (1) holds.
	
	Observe that the complement in $U'$ of the previous set is the union of $U'(z_0^{+6\delta}, S)$ and $U'(S,y_0^ {+6\delta})$.
	Recall that $|U'| > (1 -\eta)|U|$.
	Thus we can now assume that 
	\begin{equation}
	\label{eqn: main technical lemma reduction}
	 \left| U'(z_0^{+6\delta}, S) \cup U'(S,y_0^ {+6\delta}) \right| > (1-2\eta)|U|.
	\end{equation}
	Let us now assume that $|U'(z_0^{+6\delta}, S\setminus z_0^{+12\delta}) > \eta |U|$.
	In this case we let $U_1 = U_2 = U'(z_0^{+6\delta}, S\setminus z_0^{+12\delta})$.
	Using Lemma~\ref{L: reduction 1} we conclude that (2) holds.
	The same argument works if $|U'(S\setminus y_0^ {+12\delta},y_0^ {+6\delta})| > \eta|U|$.
	Suppose now that the cardinality of $U'(z_0^{+6\delta}, S\setminus z_0^{+12\delta})$ and $U'(S\setminus y_0^ {+12\delta},y_0^ {+6\delta})$ are both bounded above by $\eta |U|$.
	It follows from (\ref{eqn: main technical lemma reduction}) that 
	\begin{equation}
	\label{eqn: main technical lemma reduction bis}
	 \left| U_1 \cup U_2 \right| > (1-4\eta)|U|, \quad
	 \text{where}\ U_1 = U'(z_0^{+6\delta}, z_0^{+12\delta})\ \text{and}\ U_2 = U'(y_0^ {+12\delta},y_0^ {+6\delta}).
	\end{equation}
	Since $p$ is a quasi-centre, the cardinality of both $U_1$ and $U_2$ is bounded above by $3|U|/4$.
	It follows from (\ref{eqn: main technical lemma reduction bis}) that each of them contains at least $(1/4 - 4\eta)|U|$ elements.
	Observe also that $|y_0 - z_0| > 30\delta$.
	Indeed otherwise both $U_1$ and $U_2$ are contained in $U(y_0^{+100\delta})$.
	Hence (\ref{eqn: main technical lemma reduction bis}) contradicts the fact that $p$ is a quasi-centre.
	Applying Lemma~\ref{L: reduction 1} we conclude that $U_1$ and $U_2$ satisfy (2).
\end{proof}

\subsection{Construction of free sub-semi-groups}

 We recall that $\lambda(U)$ denotes the $\ell^\infty$-energy of the finite subset $U \subset G$. 
By Proposition \ref{P: quasi-centre}, we can assume that the quasi-centre $p$, which we fixed at the beginning of this section, is at distance at most $\lambda (U)$ from a point almost-minimising the $\ell^\infty$-energy of $U$.
We still assume that the energy of $U$ is diffuse (at $p$).
We treat $p$ as the base point of $X$.

\begin{rem}
	According to the triangle inequality, we have $|up - p| \leq 3\lambda(U) + \delta$, for every $u \in U$.
	Since the energy of $U$ is diffuse at $p$, there is an element $u \in U$ that moves $p$ by a large distance.
	As a consequence $\lambda(U) \geq \delta$, and thus $|up - p| \leq 4\lambda(U)$, for every $u \in U$.
	This estimates are far from being optimal, but sharp enough for our purpose.
\end{rem}

\begin{prop}
\label{P: construction semi-group}
	There exists $v \in U$ and a subset $W \subset Uv$ such that $W$ is $1002\delta$-strongly reduced and 
	\begin{equation*}
	    	|W| \geq \frac 1{10^6N}\frac \delta{\lambda(U)} |U|.
	\end{equation*}
\end{prop}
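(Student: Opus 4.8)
The plan is to feed the output of the Reduction Proposition~\ref{L: reduction} into a bucketing-plus-acylindricity argument. Reduction gives $v\in U$ and $U_1\subseteq U$ with $|U_1|\geq |U|/100$ such that for all $u_1\in U_1$ we have $(u_1^{-1}p,vp)_p\leq 10^3\delta$, $(v^{-1}p,u_1p)_p\leq 10^3\delta$ and $2\kappa\leq |u_1p-p|\leq |vp-p|$. Set $\ell_0=|vp-p|$; recall $\delta\leq\lambda(U)$ and $\ell_0\leq 4\lambda(U)$, and that every candidate element lies in $U_1v\subseteq Uv$. First I would record the elementary consequences of reduction: applying $u_1$ to $(u_1^{-1}p,vp)_p\leq 10^3\delta$ gives $(p,u_1vp)_{u_1p}\leq 10^3\delta$, hence $d(u_1p,[p,u_1vp])\leq 1002\delta$ by quasi-convexity of geodesics; moreover $|u_1vp-u_1p|=\ell_0$, and from the four point inequality $|u_1vp-p|=|u_1p-p|+\ell_0-2(u_1^{-1}p,vp)_p\geq 4\kappa-2000\delta$, which (as $\kappa>50\cdot 10^3\delta$) exceeds $2\cdot 1002\delta+300\delta$. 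Pushing $(v^{-1}p,u_1'p)_p\leq 10^3\delta$ around shows $v^{-1}p$ is $1002\delta$-close to $[p,(u_1v)^{-1}p]$ and $u_1'p$ is $1002\delta$-close to $[p,u_1'vp]$; since both reference points sit at distance $\geq 2\kappa$ from $p$, a routine application of the four point inequality and Lemma~\ref{res: metric inequalities} gives $((u_1v)^{-1}p,u_1'vp)_p\leq (v^{-1}p,u_1'p)_p+2\delta\leq 1002\delta$. Thus $U_1v$ is already $1002\delta$-reduced, and it remains to thin it out to kill the strongly-reduced defect.

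Call a pair $u_1v\neq u_1'v$ \emph{bad} if $(u_1vp,u_1'vp)_p\geq \min\{|u_1vp-p|,|u_1'vp-p|\}-1152\delta$, so that any subset of $U_1v$ with no bad pair is $1002\delta$-strongly reduced. I would bucket $U_1$ by $b(u_1)=\lfloor |u_1vp-p|/100\delta\rfloor$; as $|u_1vp-p|$ ranges in $[2\kappa,2\ell_0]\subseteq[2\kappa,8\lambda(U)]$ there are at most $8\lambda(U)/100\delta+1$ buckets. The crucial point is that $v$ is fixed, so $|u_1p-p|=|u_1vp-p|-\ell_0+2(u_1^{-1}p,vp)_p$ is pinned down by $b(u_1)$ up to $\sim 100\delta$ as well. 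A fellow-traveling estimate then yields: if $u_1v,u_1'v$ is a bad pair inside one bucket, say with $|u_1vp-p|\leq |u_1'vp-p|$, then $|u_1vp-u_1'vp|\leq 2404\delta$ and $|u_1p-u_1'p|=O(\delta)$. Indeed, expanding the Gromov product gives $|u_1vp-u_1'vp|\leq \bigl(|u_1'vp-p|-|u_1vp-p|\bigr)+2304\delta$; and both $u_1p$ and $u_1'p$ lie $\sim 1002\delta$-close to the same geodesic $[p,u_1'vp]$ (using that $|u_1p-p|+1002\delta$ is far below the fellow-traveling length $|u_1vp-p|-1152\delta$, because $2\kappa$ is huge) at almost equal distances from $p$.

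Now acylindricity enters. Fix a bucket and $u_1$ in it; any two bad neighbours $u_1',u_1''$ of $u_1$ inside this bucket are $O(\delta)$-close to $u_1$ in the above sense, hence $O(\delta)$-close to each other, so $g=u_1''(u_1')^{-1}$ satisfies $|u_1'p-gu_1'p|=|u_1'p-u_1''p|=O(\delta)$ and $|u_1'vp-gu_1'vp|=|u_1'vp-u_1''vp|=O(\delta)$, while $|u_1'p-u_1'vp|=\ell_0\geq 2\kappa$. By the local-to-global form of acylindricity~\eqref{eqn: local-global acyl} (valid since $2\kappa>\kappa+4\cdot O(\delta)+100\delta$) there are $O(N)$ such $g$, and $u_1''\mapsto g$ is injective, so $u_1$ has $O(N)$ bad neighbours inside its bucket. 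Hence the bad graph restricted to each bucket has maximum degree $O(N)$; colouring it properly with $O(N)$ colours per bucket and tagging every element by the pair (its bucket, its within-bucket colour) produces a proper colouring of the whole bad graph with $O(N)\cdot(\text{number of buckets})=O\!\left(N\lambda(U)/\delta\right)$ colours. Taking $W$ to be a largest colour class gives a bad-pair-free subset of $U_1v\subseteq Uv$ with $|W|\geq |U_1|/O(N\lambda(U)/\delta)\geq |U|/\bigl(100\cdot O(N\lambda(U)/\delta)\bigr)$; tracking the constants through the hyperbolic estimates and \eqref{eqn: local-global acyl} gives $|W|\geq \tfrac{1}{10^6N}\tfrac{\delta}{\lambda(U)}|U|$. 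The case $|U|=1$, where $U=\{v\}$ with $v$ loxodromic and $p$ near its axis, is handled separately with $W=\{v^2\}$, using that $\lambda(U)\geq 2\kappa$ forces the required lower bound below $1$.

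The step I expect to be the real work is the fellow-traveling estimate of the second paragraph with honest constants --- in particular making the displacement bounds on $g$ small enough (some explicit multiple of $\delta$, uniform in everything) that \eqref{eqn: local-global acyl} applies with the given $\kappa$ while keeping the final constant under $10^6$ --- together with the bookkeeping forced by the fact that a bad pair may straddle two buckets, which is precisely why one must colour rather than simply pick one representative per bucket.
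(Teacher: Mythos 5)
Your proposal is correct in substance and shares its geometric core with the paper's proof, but the combinatorial extraction is genuinely different. The paper also starts from Proposition~\ref{L: reduction}, sets $T=U_1v$, and proves exactly your first paragraph (Lemma~\ref{res: basics on T}). It then extracts $W$ greedily: for each $w\in T$ it defines the non-symmetric ``shadow'' $A_w=\{t\in T:|p-tp|\leq|p-wp|,\ (wp,p)_{tp}\leq\alpha+150\delta\}$, repeatedly picks a farthest uncovered element, and bounds $|A_w|\leq 2065N\lambda(U)/\delta$ by a two-step argument: elements of $A_w$ whose $U_1$-parts are pairwise $6\cdot10^3\delta$-separated project onto a $(1,\delta)$-quasi-geodesic $[p,wp]$ at well-separated points, giving at most $\lambda(U)/\delta$ of them, and each $6\cdot10^3\delta$-cluster has size at most $2065N$ by acylindricity applied to $g=u_t^{-1}u_{t'}$ at the \emph{fixed} pair $(p,vp)$. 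You replace the quasi-geodesic/sparse-subset step by bucketing $|u_1vp-p|$ in intervals of width $100\delta$, and the greedy covering by a proper colouring of the symmetric ``bad graph''; the number of buckets plays the role of the sparse-point count, and the within-bucket degree bound plays the role of the cluster bound. Both give $O(N\lambda(U)/\delta)$ colours/shadow-size and hence the same final estimate. Your version is arguably more elementary (no projection onto a quasi-geodesic), at the cost of the straddling-pairs bookkeeping you correctly identify; the paper's version avoids that because the covering sets $A_w$ are defined without any quantization.

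One concrete pressure point, which you flagged but which deserves emphasis: your degree bound compares \emph{two} bad neighbours $u_1',u_1''$ of a fixed $u_1$, which doubles the displacement. Chasing constants, a within-bucket bad pair gives $|u_1p-u_1'p|\leq 6606\delta$ (from $\bigl||u_1p-p|-|u_1'p-p|\bigr|\leq 2100\delta$ plus Lemma~\ref{res: metric inequalities}(\ref{enu: metric inequalities - two points close to a geodesic}) with Gromov products up to $2252\delta$), so your $g$ moves $u_1'p$ by up to about $1.3\cdot10^4\delta$, and then $4r+100\delta\approx 5.3\cdot10^4\delta$ can exceed $\kappa$, which is only guaranteed to be $>5\cdot10^4\delta$; the hypothesis $|u_1'p-u_1'vp|=|vp-p|\geq 2\kappa\geq\kappa(r)$ in (\ref{eqn: local-global acyl}) then just fails. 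The fix is to drop the doubling: bound the degree of $u_1$ directly by mapping each bad neighbour $u_1'$ to $g=u_1^{-1}u_1'$, which moves $p$ by $|u_1p-u_1'p|\leq 6606\delta$ and $vp$ by $|u_1vp-u_1'vp|\leq 2404\delta$, so $r=6606\delta$ and $\kappa(r)\leq\kappa+26524\delta\leq 2\kappa$ comfortably, with $N(r)\leq 1325N$ per bucket. With that adjustment your colour count is roughly $1.7\cdot10^4\,N\lambda(U)/\delta$ after the factor $100$ from $|U_1|\geq|U|/100$, safely inside $10^6N\lambda(U)/\delta$.
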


\begin{proof}
For simplicity we let $\alpha = 1002\delta$.
We fix $U_1$ and $v$ given by Proposition~\ref{L: reduction}. 
We set $T=U_1v$.

\begin{lem}
\label{res: basics on T}
	For every $t, t' \in T$, we have $(t^{-1}p,t'p)_p\leq \alpha$ and $|tp - p| > 2\alpha + 300\delta$.
\end{lem}

\begin{proof}
	We write $t = uv$ and $t' = u'v$ with $u,u' \in U_1$.
	Applying twice the four point inequality (\ref{eqn: four point inequality}) we have
	\begin{equation}
	\label{eqn: basics on T}
		\min\left\{ (v^{-1}p, t^{-1}p)_p, (t^{-1}p, t'p)_p, (t'p, u'p)_p \right\} \leq (v^{-1}p, u'p)_p + 2\delta \leq \alpha
	\end{equation}
	Observe that 
	\begin{equation*}
		(v^{-1}p, t^{-1}p)_p 
		= |p - v^{-1}p| - (p, t^{-1}p)_{v^{-1}p}
		= |p - vp| - (vp, u^{-1}p)_p
		\geq 2\kappa - 1000\delta > \alpha.
	\end{equation*}
	Similarly we prove that $(t'p, u'p)_p > \alpha$.
	Hence the minimum in (\ref{eqn: basics on T}) is achieved by $(t^{-1}p, t'p)_p$ which proves the first point.
	By definition of Gromov products we have
	\begin{equation*}
		|tp - p| = |up - p| + |vp - p| - 2 (u^{-1}p,vp)_p
		\geq 4\kappa - 2000\delta. \qedhere
	\end{equation*}
\end{proof}

For every $w \in T$, we set 
\begin{equation*}
	A_w = \left\{ t \in T \mid |p - tp| \leq |p -wp|\ \text{and}\ (wp,p)_{tp} \leq \alpha + 150\delta \right\}.
\end{equation*}
Note that $w \in A_w$.

In order to define $W$, we construct by induction an increasing sequence $(W_i)$ of subsets of $T$.
We first let $W_0 = \emptyset$.
Assume that now that $W_i$ has been defined for some integer $i \geq 0$.
If the set 
\begin{equation*}
	T \setminus \bigcup_{w \in W_i} A_w
\end{equation*}
is empty, then the process stops and we let $W = W_i$ (note that this will ineluctably happen as $T$ is finite).
Otherwise, we choose an element $w_{i+1}$ in this set for which $|p - w_{i+1}p|$ is maximal and let $W_{i+1} = W_i \cup \{w_{i+1}\}$.

\begin{lem}\label{L: construction of sub-semi-group 4}
	The set $W$ is $\alpha$-strongly reduced.
\end{lem}

\begin{proof}
	By Lemma~\ref{res: basics on T}, the set $T$ (hence $W$) is $\alpha$-reduced. 
	It suffices to prove that for every distinct $w,w' \in W$ we have
	\begin{equation*}
		(wp,w'p)_p \leq \min \{ |wp - p|, |w'p-p|\} - \alpha - 150\delta.
	\end{equation*}
	Using the notation above, we write, $w_1, w_2, \dots, w_n$ for the elements $W$ in the order they have been constructed.
	Let $i,j \in \{1, \dots, n\}$ such that $|p - w_jp| \leq |p - w_ip|$.
	If $i < j$, then $w_j$ does not belong to $A_{w_i}$, thus 
	\begin{align*}
		(w_ip, w_jp)_p 
		& = |w_jp-p| - (w_ip, p)_{w_jp} < |w_jp-p| - \alpha - 150\delta \\
		&  \leq \min \{ |w_ip - p|, |w_jp-p|\} - \alpha - 150\delta.
	\end{align*}
	Assume now that $j < i$.
	Note that the sequence $\{|p - w_kp|\}$ is non-increasing, hence $|p - w_jp| = |p -w_ip|$.
	Since $w_i$ does not belong to $A_{w_j}$, thus 
	\begin{align*}
		(w_ip, w_jp)_p 
		& = |w_ip-p| - (w_jp, p)_{w_ip}
		< |w_ip-p| - \alpha - 150\delta \\
		& \leq \min \{ |w_ip - p|, |w_jp-p|\} - \alpha - 150\delta. \qedhere
	\end{align*}
\end{proof}

\begin{lem}\label{L: construction of sub-semi-group 3}
	For every $w \in T$, we have 
	\begin{equation*}
		|A_w| \leq \frac{2065N}{\delta} \lambda(U).
	\end{equation*}
\end{lem}

\begin{proof}
	Let $w \in T$.
	The proof goes in two steps.
	First we give an upper bound for subsets of sparse elements in $A_w$.
	Let $m \geq 0$ be an integer.
	Let $t_0 = u_0v$, $t_1 = u_1v$, \dots, $t_m = u_mv$ be $m$ pairwise distinct elements in $A_w$.
	We assume in addition that $|u_ip - u_jp| > 6\cdot 10^3\delta$, for every distinct $i,j \in \{0, \dots,m\}$.
	Let $\gamma \colon [a,b] \to X$ be a $(1, \delta)$-quasi-geodesic from $p$ to $wp$.
	We are going to give an upper bound for $m$.
	To that end we claim that the points $u_0p, \dots, u_mp$ lie close to $\gamma$.
	Since the points $u_ip$ are sparse, this will roughly say that $m \lesssim |wp-p| / \max\{ |u_ip - u_jp|\}$.
	More precisely, the argument goes as follows.
	For every $i \in \{0, \dots, m\}$, we write $p_i$ for a projection of $u_ip$ onto $\gamma$.
	Up to reindexing the elements we can suppose that the points $p, p_0, p_1, \dots, p_m, wp$ are aligned in this order along $\gamma$.
	
	Since $t_i$ belongs to $A_w$, we have
	\begin{equation*}
		(wp,p)_{t_ip}  \leq \alpha + 150\delta \leq 1152\delta.
	\end{equation*}
	On the other hand, we know by construction of $U_1$ and $v$ that $(p,t_ip)_{u_ip} = (u_i^{-1}p,vp)_p$ is at most $10^3\delta$, see Proposition~\ref{L: reduction}.
	Hence the triangle inequality yields, see (\ref{eqn: triangle inequality gromov product})
	\begin{equation*}
		(p,wp)_{u_ip} \leq  (wp,p)_{t_ip} + (p,t_ip)_{u_ip} \leq 2152\delta.
	\end{equation*}
	Since $\gamma$ is $(1, \delta)$-quasi-geodesic, it is $9\delta$-quasi-convex, see \cite[Corollary~2.7(2)]{coulon_geometry_2014}.
	It follows  that $|u_ip - p_i| = d(u_ip,\gamma)$ is at most $2161\delta$.
	According to the triangle inequality we get
	\begin{equation*}
		|p_i - p_j| > 1678\delta,\quad \forall i \neq j
	\end{equation*}
	Observe now that 
	\begin{equation*}
		1678m\delta \leq \sum_{i = 0}^{m-1} |p_i - p_{i+1}|
		\leq {\rm Length}(\gamma) \leq |p - wp| + \delta
	\end{equation*}
	Recall that $w$ is a two letter word in $U$, while $\lambda(U)$ is very large compare to $\delta$.
	Hence $1678m\delta \leq 9\lambda(U)$.
	To simply the rest of the computations, we will use the following generous estimate
	\begin{equation*}
		m \leq \frac{ \lambda(U)}\delta.
	\end{equation*}

	We now start the second step of the proof.
	Using acylindricity we reduce the counting of elements in $A_w$ to the case of a sparse subset.
	Any element $t \in A_w$ can be written $t = u_tv$ with $u_t \in U_1$.
	Consider now $t,t' \in A_w$. 
	
	We claim that $|u_tvp - u_{t'}vp| \leq |u_tp -u_{t'}p| +4306\delta$. 
	Indeed, by definition of $A_w$, we have $(wp,p)_{u_tvp} \leq  \alpha + 150\delta$ and $(wp,p)_{u_{t'}vp} \leq  \alpha + 150\delta$.
	By Lemma~\ref{res: metric inequalities}(\ref{enu: metric inequalities - two points close to a geodesic}) we have
	\begin{equation}
	\label{eqn: construction of sub-semi-group 3}
	\begin{split}
	|u_tvp - u_{t'}vp|
	& \leq ||p - u_tvp| - |p-u_{t'}vp|| + 2 \max\left\{ (wp,p)_{u_tvp}, (wp,p)_{u_{t'}vp}\right\} + 2\delta \\
	& \leq ||p - u_tvp| - |p-u_{t'}vp|| + 2306\delta.
	\end{split}
	\end{equation}
	Note that 
	\begin{equation*}
		|p-u_tp| +|vp-p| - 2\cdot 10^3\delta \leq |p - u_tvp|\leq |p-u_tp| +|vp-p|. 
	\end{equation*}
	Indeed the second inequality is just the triangle inequality, while the first one is equivalent to the following known fact $(u_t^{-1}p,vp)_p\leqslant 10^3\delta$.
	Similarly we have 
	\begin{equation*}
		|p-u_{t'}p| +|vp-p| - 2\cdot 10^3\delta \leq |p - u_{t'}vp|\leq |p-u_{t'}p| +|vp-p|. 
	\end{equation*}
	The difference of the previous two inequalities yields 
	\begin{equation*}
		||p - u_tvp| - |p-u_{t'}vp|| \leq | |p - u_tp| - |p - u_{t'}p|| + 2\cdot 10^3\delta.
	\end{equation*}
	Plugin this inequality in (\ref{eqn: construction of sub-semi-group 3}) we obtain
\begin{equation*}
		|u_tvp - u_{t'}vp|  \leq \left | |p - u_tp| - |p - u_{t'}p|\right| + 4306 \delta.
	\end{equation*}
Finally, by the triangle inequality 
$
		\left | |p - u_tp| - |p - u_{t'}p|\right| \leq  |u_tp-u_{t'}p| .
$
	This implies the claim. 
	
	We can now take advantage of acylindricity.
	Recall that $|vp - p| \geq 2\kappa$, with $\kappa > 50 \cdot 10^3\delta$.
	In particular, 
	\begin{equation*}
		|vp - p| \geq \kappa + 41324\delta.
	\end{equation*}
	We let $M = 2065N$.
	According to acylindricity -- see (\ref{eqn: local-global acyl}) applied with $r = 10306\delta$ -- the set
	\begin{equation*}
		F = \left\{ g \in G \mid |gp - p| \leq 6000\delta\ \text{and} \ |gvp - vp| \leq 10306\delta \right\}
	\end{equation*}
	contains at most $M$ elements.
	It follows that for every $t \in A_w$, there are at most $M$ elements $t' \in A_w$ such that $|u_tp - u_{t'}p| \leq 6\cdot 10^3\delta$.
	Indeed, if $|u_tp - u_{t'}p| \leq 6\cdot 10^3\delta$, our previous claim implies that $u_t^{-1}u_{t'}$ belongs to $F$.
	
	So we can extract a subset $B \subset A_w$ containing $m  \geq |A_w| / M$ elements such that for every distinct $t,t' \in B$ we have $|u_tp - u_{t'}p| > 6\cdot 10^3\delta$.
	It follows from the previous discussion that $m \leq \lambda(U) / \delta$.
	Consequently,
	\begin{equation*}
		|A_w| \leq \frac{2065N}{\delta} \lambda(U). \qedhere
	\end{equation*}
\end{proof}

\begin{lem}\label{L: construction of sub-semi-group 6}
	The cardinality of $W$ is bounded from below as follows:
	 $$|W|\geqslant \frac 1{2065N}\frac \delta{\lambda(U)} |T|.$$
\end{lem}

\begin{proof}
	Recall that $w \in A_w$ for every $w \in T$.
	Thus, by construction, the collection of sets $\{A_w\}_{w \in W}$ covers $T$.
	We have seen in Lemma~\ref{L: construction of sub-semi-group 3} that the cardinality of each of them is at most $2065N\lambda(U)/\delta$.
	Hence the result.
\end{proof}

The previous lemma completes the proof of Proposition~\ref{P: construction semi-group}.
\end{proof}

\section{Sets of concentrated energy}
We still assume here that the action of $G$ on $X$ is $(N, \kappa)$-acylindrical, with $\kappa > 50\cdot 10^3\delta$.
Let $U\subset G$ be a finite subset and $p \in X$ a base point.
In this section we also assume that $U$ has \emph{concentrated energy} (at $p$) that is, there exists $U_1 \subset U$ with $|U_1| \geq |U|/100$ such that $|up-p|\leqslant 2\kappa$, for all $u\in U_1$.
The goal of the section is to prove the following statement.

\begin{prop}\label{P: main concentrated}
Let $M =2\kappa N/ \delta$.
If $\lambda(U,p)> 100\kappa$, then one of the following holds:
\begin{enumerate}
	\item either $ |U| \leq 100M$;
	\item or there exists $v \in U$ and a subset $W \subset Uv$ such that $W$ is $25\kappa$-strongly reduced and $$|W| \geq |U|/100M - 1.$$ 
\end{enumerate}
\end{prop}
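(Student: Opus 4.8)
The plan is to reproduce the strategy of Proposition~\ref{P: construction semi-group} but adapted to the ``concentrated'' situation, where a definite proportion of elements of $U$ have small displacement. First I would isolate the subset $U_1 \subset U$ with $|U_1| \geq |U|/100$ on which $|up-p| \leq 2\kappa$. Since $\lambda(U,p) > 100\kappa$, there is some $u^* \in U$ with $|u^*p - p| > 100\kappa$, and more generally the hypothesis forces the existence of a ``long'' element. The key idea is that the displacements of elements of $U_1$ all being bounded by $2\kappa$ means that there are few directions they can point in: after picking $v \in U$ with $|vp-p|$ large, the product set $U_1 v$ consists of elements whose displacements are roughly $|vp-p|$, and whose geodesics $[p, uvp]$ all fellow-travel $[p, vp]$ for a long initial segment (of length comparable to $|vp-p| - 2\kappa$, still large). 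This is precisely the mechanism that makes $T = U_1 v$ an $\alpha$-reduced set, with $\alpha$ of order $\kappa$ (the stated value $25\kappa$ leaving room for the various $\delta$-corrections). So the first block of the argument is a lemma, analogous to Lemma~\ref{res: basics on T}, establishing that for a suitable choice of $v$, the set $T = U_1 v$ is $\alpha$-reduced at $p$ with $\alpha = 25\kappa$, using that $|vp - p|$ dominates $2\kappa$ and the Gromov products $(u^{-1}p, v p)_p$ are controlled by $|up-p|/2 \leq \kappa$.

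Next I would run the same greedy ``pruning'' construction as in Proposition~\ref{P: construction semi-group}: define, for $w \in T$,
\begin{equation*}
	A_w = \left\{ t \in T \mid |p - tp| \leq |p - wp|\ \text{and}\ (wp,p)_{tp} \leq \alpha + 150\delta \right\},
\end{equation*}
build an increasing sequence $W_0 \subset W_1 \subset \cdots$ by repeatedly adding an element $w_{i+1}$ of maximal displacement not yet covered by $\bigcup_{w \in W_i} A_w$, and set $W$ to be the terminal set. The proof that $W$ is $\alpha$-strongly reduced is verbatim the argument of Lemma~\ref{L: construction of sub-semi-group 4}: if $t \notin A_w$ and $|tp-p| \leq |wp-p|$ then $(wp,tp)_p$ is small enough, and the greedy order handles the two cases $i<j$ and $j<i$. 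So this part costs nothing new. The only genuinely new estimate is the analogue of Lemma~\ref{L: construction of sub-semi-group 3}: a bound on $|A_w|$. This is where the acylindricity with parameter $\kappa$ enters, and this is the main obstacle. In the diffuse case one had $|vp-p| \geq 2\kappa$ and used acylindricity relative to the pair $(p, vp)$; here the analogous control must come out to $|A_w| \leq 100M = 200\kappa N/\delta$ (so that the final count gives $|W| \geq |T|/(100M) \geq |U|/(10^4 M)$, whence after absorbing constants the stated $|U|/100M - 1$).

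To bound $|A_w|$: elements $t = u_t v \in A_w$ have $u_t \in U_1$, so $|u_t p - p| \leq 2\kappa$; thus all the points $\{u_t p : t \in A_w\}$ lie in the ball $S(p, 2\kappa)^{+0}$, a set of bounded diameter $4\kappa$. The membership in $A_w$ forces $u_t v p$ to lie near the geodesic $[p, wp]$ (the Gromov product condition), and combining with the fellow-travelling of $[p, u_t v p]$ and $[p, vp]$, two elements $t, t'$ of $A_w$ with $u_t p$ and $u_{t'} p$ close produce an element $u_t^{-1} u_{t'}$ moving both $p$ and $vp$ a bounded amount — here $|vp - p|$ is large (it exceeds $\kappa$ plus the relevant error), so acylindricity bounds the number of such elements by $N$ (or a constant multiple, via the local-to-global estimate~\eqref{eqn: local-global acyl}). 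On the other hand, a ``$6\cdot 10^3\delta$-separated'' subset of $\{u_t p\}$ inside a ball of radius $2\kappa$ has at most $O(\kappa/\delta)$ elements — \emph{this} is the step that replaces the ``$m \leq \lambda(U)/\delta$'' bound of the diffuse case, and it is why the $\kappa$ in ``$M = 2\kappa N/\delta$'' appears instead of $\lambda(U)$. Multiplying the two bounds gives $|A_w| \leq O(\kappa N/\delta) = O(M)$; tracking the constants to land exactly at the claimed figures is routine but fiddly. Finally, since $w \in A_w$ and $\{A_w\}_{w \in W}$ covers $T$, we get $|W| \geq |T|/\max_w|A_w| \geq (|U|/100)/(100M)$, which after the $-1$ slack (to absorb the possibility that $U_1 v$ double-counts, or that $T$ is slightly smaller than $|U|/100$) yields alternative (2) — unless $|T|$ was too small to begin with, i.e. $|U| \leq 100M$, which is alternative (1). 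The hypothesis $\lambda(U,p) > 100\kappa$ is used precisely to guarantee there is a $v$ with $|vp-p|$ large enough for the reduction step and the acylindricity input to both go through.
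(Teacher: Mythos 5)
Your proposal has a genuine gap at its foundation: the claim that $T=U_1v$ is $25\kappa$-reduced. Being $\alpha$-reduced requires $(t^{-1}p,t'p)_p\leq\alpha$ for all $t=uv$, $t'=u'v$ in $T$, and by the triangle inequality this Gromov product is within $4\kappa$ of $(v^{-1}p,vp)_p$ (since $u$ and $u'$ move $p$ by at most $2\kappa$). The quantity $(v^{-1}p,vp)_p=|vp-p|-\tfrac12|v^2p-p|$ is \emph{not} controlled by the hypotheses: if for instance $v$ is hyperbolic and $p$ lies at distance $40\kappa$ from $A_v$, then $(v^{-1}p,vp)_p\approx 40\kappa$, so $T$ is not $25\kappa$-reduced. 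The bounds you invoke, $(u^{-1}p,vp)_p\leq|up-p|\leq 2\kappa$, only control the overlap at the junction inside the word $uv$; they say nothing about the alignment of the backward direction $v^{-1}p$ with the forward direction $u'vp$, which is exactly what reducedness requires (and note that the greedy $A_w$ pruning only delivers the strong-reducedness inequality on $(wp,w'p)_p$ — the reducedness inequality must already hold for $T$). The paper's proof is organized around precisely this difficulty: it defines $B_u=\{u'\in U_1\mid (uvp,u'vp)_p\geq 23\kappa-\delta\}$, bounds $|B_u|\leq M$ by applying acylindricity to the pair of points $up$ and $um$ (with $m$ an auxiliary point at distance $21\kappa-\delta$ from $p$ in the direction of $vp$), extracts by maximality a subset $U_2$ with $|U_2|\geq|U_1|/M$ no two of whose elements lie in a common $B_u$, and then observes that at most \emph{one} element $u\in U_2$ can have $(v^{-1}p,uvp)_p>23\kappa$ (two such would force $(uvp,u'vp)_p>23\kappa-\delta$, contradicting the choice of $U_2$). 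Removing that element is what produces the $-1$ in the statement — it is not slack for double-counting — and is what makes $W=U_3v$ reduced.

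A second, independent problem is your count of a $6\cdot10^3\delta$-separated subset of $\{u_tp\}$ inside $B(p,2\kappa)$: the bound $O(\kappa/\delta)$ is false in a hyperbolic space, where a ball of radius $2\kappa$ can contain exponentially many $\delta$-separated points. The linear-in-length count in Lemma~\ref{L: construction of sub-semi-group 3} works only because there the points project onto, and are ordered along, a quasi-geodesic from $p$ to $wp$; the points $u_tp$ in your setting have no such alignment (it is $u_tvp$, not $u_tp$, that lies near $[p,wp]$). This particular step could likely be repaired — all the points $u_tvp$ with $t\in A_w$ lie near $[p,wp]$ within an annulus of width $4\kappa$, hence within $O(\kappa)$ of one another, so a single application of acylindricity to the pair $(p,vp)$ bounds $|A_w|$ by $O(\kappa N/\delta)$ — but the first gap cannot be closed without an argument like the paper's removal of the exceptional element.
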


\begin{proof}
We assume that $|U| > 100M$, so that $|U_1| > M$.
The proof follows the exact same ideas as Lemmas~5.2 and 5.3 of \cite{delzant_product_2020}. 
Since the energy $\lambda(U,p)$ at $p$ is larger than $100\kappa$, there exists $v \in U$ satisfying $|vp - p| > 100\kappa$.
For every $u \in U_1$, we let 
\begin{equation*}
	B_u = \left\{ u' \in U_1 \mid (uvp, u'vp)_p \geq 23\kappa - \delta\right\}.
\end{equation*}
Note that by the triangle inequality, $|uvp - p| > |vp - p| - |up - p| \geq 98\kappa$, for every $u \in U_1$.
Hence $u \in B_u$.

Let us fix first an element $u \in U_1$.
We claim that the cardinality of $B_u$ is at most $M$.
Recall that $X$ is a a length space, hence there is a point $m$ in $X$ such that $|p - m | = 21\kappa - \delta$ and $(p,vp)_m \leq \delta$.
Let $u' \in B_u$.
The element $u'u^{-1}$ moves the point $up$ by at most $4\kappa$.
We now show that $u'u^{-1}$ moves $um$ by at most $4\kappa + 8\delta$.
By Lemma~\ref{res: metric inequalities}(\ref{enu: metric inequalities - thin triangle}) we have
\begin{equation}
	\begin{split}
	(p, uvp)_{um} 
	& \leq \max\left\{|uvp - um| - (p,up)_{uvp}, (up,uvp)_{um}\right\} + \delta \\
	& \leq \max\left\{|vp - m| - (p,up)_{uvp}, (p,vp)_{m}\right\} + \delta
	\end{split}
	\label{eqn: main concentrated - eqn1}
\end{equation}
On the one hand, we have
\begin{equation*}
	|vp - m| = |p - vp| - |p - m| + 2 (p,vp)_m \leq |p - vp| - 21 \kappa + 3\delta.
\end{equation*}
On the other hand, the triangle inequality yields
\begin{equation*}
	(p,up)_{uvp} \geq |up - uvp| - |p - up| \geq |p - vp| - 2\kappa.
\end{equation*}
If we plug in the last two inequalities in (\ref{eqn: main concentrated - eqn1}) we get $(p, uvp)_{um} \leq 2 \delta$.
Now observe that
\begin{equation*}
	\left||p-um| - |p - m| \right| = \left||p-um| - |up - um| \right| \leq |p- up| \leq 2\kappa.
\end{equation*}
Similarly $(p, u'vp)_{u'm} \leq 2 \delta$ and $\left||p-u'm| - |p - m| \right| \leq 2\kappa$.
In particular both $|p - um|$ and $|p - u'm|$ are at most $(uvp,u'vp)_p$.
By Lemma~\ref{res: metric inequalities}(\ref{enu: metric inequalities - comparison tripod}) we have
\begin{equation*}
	|um - u'm| \leq \max \left\{ \left||p-um| - |p - u'm| \right| + 4 \delta, 0 \right\}+ 4\delta 
	\leq 4\kappa + 8\delta,
\end{equation*}
which corresponds to our announcement.

Note that the point $up$ and $um$, which are ``hardly'' moved by $u'u^{-1}$, are far away.
More precisely
\begin{equation*}
		|up- um| = |p - m|= 21\kappa - \delta.
\end{equation*}
Recall that $M = 2\kappa N /\delta$.
Using acylindricity -- see (\ref{eqn: local-global acyl}) with $r = 4\kappa + 8\delta$ -- we get that $B_u$ contains at most $M$ elements, which completes the proof of our claim.

Recall that $u \in B_u$, for every $u \in U_1$.
We now fix a maximal subset $U_2 \subset U_1$ such that for every $u \in U_1$, any two distinct $u_1, u_2 \in U_2$ never belong to the same subset $B_u$.
The cardinality of $U_2$ is at least $|U_2| \geq |U_1|/M$.
Indeed by maximality of $U_2$, the $U_1$ is covered by the collection $(B_u)_{u \in U_2}$.

We claim that there is at most one element $u \in U_2$ such that $(v^{-1}p, uvp)_p > 23\kappa$.
Assume on the contrary that it is not the case.
We can find two distinct element $u,u' \in U_2$ such that 
\begin{equation*}
	(uvp,u'vp)_p \geq \min \left\{ (v^{-1}p, uvp)_p, (v^{-1}p, u'vp)_p\right\} - \delta > 23\kappa - \delta.
\end{equation*}
Thus $u'$ belongs to $B_u$ which contradicts the definition of $U_2$.
Recall that $|U_1| > M$, hence $U_2$ contains at least $2$ elements.
We define then $U_3$ from $U_2$ by removing if necessary the element $u \in U_2$ such that $(v^{-1}p, uvp)_p > 23\kappa$.
Note that 
\begin{equation*}
	|U_3| \geq \frac{|U_1|}M - 1 \geq \frac{|U|}{100M} - 1.
\end{equation*}

We now let $W = U_3v$.
We are going to prove that $W$ is $25\kappa$-strongly reduced.
Note first that 
\begin{equation*}
	|wp - p| \geq |vp - p| - 2\kappa > 98 \kappa > 50\kappa + 300\delta
\end{equation*}
for every $w \in W$.
Let $w = uv$ and $w' = u'v$ be two elements in $W$.
It follows from the triangle inequality that 
\begin{equation*}
	(w^{-1}p, w'p) _p \leq (v^{-1}p, w'p)_p + |up - p| \leq  (v^{-1}p, w'p)_p + 2\kappa.
\end{equation*}
By construction of $U_3$, no element $w' \in W$ has a large Gromov product with $v^{-1}$.
Hence $(w^{-1}p, w'p) _p \leq 25\kappa$.
Thus the set $W$ is $25\kappa$-reduced.
By choice of $U_2$ we also have $(wp,w'p)_p < 23\kappa - \delta$ for every distinct $w,w' \in W$.
Recall that 
\begin{equation*}
	\min\left\{ |wp-p|, |w'p-p| \right\} \geq |vp - p| - 2\kappa > 98\kappa.
\end{equation*}
Consequently $W$ is $25\kappa$-strongly reduced.
\end{proof}

\section{Growth in groups acting on hyperbolic spaces}\label{S: basepoint}

As a warmup for the study of Burnside groups we first prove the following statement.

\begin{thm}\label{T: main acylindrical}
Let $\delta >0$, $\kappa\geqslant 50\cdot 10^3\delta$, and $N>0$.
Assume that the group $G$ acts $(N,\kappa)$-acylindrically on a $\delta$-hyperbolic length space.
For  every finite $U\subset G$ such that $\lambda(U)> 100\kappa$, one of the following holds.
\begin{enumerate}
	\item $\displaystyle |U| \leq  400\kappa N/\delta$.
	\item There exists $v \in U$ and a subset $W \subset Uv$ such that $W$ is $\alpha$-strongly reduced, with $\alpha \leq 25\kappa$, and  
	\begin{equation*}
		|W| \geq \frac 1{10^6N}\frac \delta{\lambda(U)} |U|.
	\end{equation*} 
\end{enumerate}

\end{thm}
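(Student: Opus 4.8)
The plan is to dichotomize on how the energy of $U$ is distributed at a well-chosen base point. First I would invoke Proposition~\ref{P: quasi-centre} to replace any point almost-minimising the $\ell^\infty$-energy by a quasi-centre $p$ for $U$ lying within distance $\lambda(U)$ of it; by the remark following that proposition, every $u \in U$ then satisfies $|up - p| \leq 4\lambda(U)$. With $p$ fixed, count the elements $u \in U$ with $|up - p| > 2\kappa$. Either at least $\tfrac{99}{100}|U|$ of them satisfy this (the \emph{diffuse} case), or at least $\tfrac1{100}|U|$ fail it, i.e. $|up - p| \leq 2\kappa$ for at least $|U|/100$ elements (the \emph{concentrated} case). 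These are exactly the hypotheses of the two preceding sections, so the bulk of the work is already done and the proof is essentially an assembly.

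In the diffuse case I would apply Proposition~\ref{P: construction semi-group}: it produces $v \in U$ and $W \subseteq Uv$ that is $1002\delta$-strongly reduced with $|W| \geq \frac1{10^6 N}\frac{\delta}{\lambda(U)}|U|$. Since $1002\delta \leq 25\kappa$ (as $\kappa \geq 50\cdot 10^3\delta$), this is conclusion~(2) with $\alpha = 1002\delta$. In the concentrated case I would instead apply Proposition~\ref{P: main concentrated}, whose hypothesis $\lambda(U,p) > 100\kappa$ must be checked: since $p$ is within $\lambda(U)$ of an energy-minimiser and $\lambda(U) > 100\kappa$, the triangle inequality gives $\lambda(U,p) \leq \lambda(U) + 2\lambda(U)$... more carefully, one uses $\lambda(U,p) \geq \lambda(U) > 100\kappa$ directly since $\lambda(U)$ is the infimum. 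Proposition~\ref{P: main concentrated} then gives either $|U| \leq 100M$ with $M = 2\kappa N/\delta$, i.e. $|U| \leq 200\kappa N/\delta \leq 400\kappa N/\delta$, which is conclusion~(1); or $v \in U$ and $W \subseteq Uv$ that is $25\kappa$-strongly reduced with $|W| \geq |U|/100M - 1$. In this last sub-case I must still reconcile the bound $|U|/(100M) - 1$ with the target $\frac1{10^6 N}\frac{\delta}{\lambda(U)}|U|$: here $\frac1{100M} = \frac{\delta}{200\kappa N}$, and since $\lambda(U) \leq 4\lambda(U)$ is not quite what is needed, I would argue that if $|U| \leq 400\kappa N/\delta$ we are in conclusion~(1), and otherwise $|U|/(200\kappa N/\delta)$ dominates $1$ by enough margin that $|U|/(100M) - 1 \geq \frac{\delta}{400\kappa N}|U| \geq \frac1{10^6 N}\frac{\delta}{\lambda(U)}|U|$, using $\lambda(U) > 100\kappa$ so that $\frac{1}{400\kappa} \geq \frac{1}{4\cdot 10^4 \lambda(U)} \geq \frac{1}{10^6\lambda(U)}$.

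The main obstacle is purely bookkeeping: making sure the various explicit constants ($1002\delta$ versus $25\kappa$, $100M$ versus $400\kappa N/\delta$, and the factor $10^6$) line up so that both branches land inside the single clean statement of the theorem, and in particular handling the ``$-1$'' additive defect in Proposition~\ref{P: main concentrated} by absorbing it into case~(1) via a case split on whether $|U|$ is large compared to $\kappa N/\delta$. There is no deep step here — the geometric content lives entirely in Propositions~\ref{P: construction semi-group} and~\ref{P: main concentrated} and in the quasi-centre construction — so I would keep the write-up short, stating the diffuse/concentrated dichotomy, citing the two propositions, and then doing the constant-chasing to merge the conclusions.
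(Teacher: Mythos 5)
Your proposal is correct and follows essentially the same route as the paper: fix a quasi-centre via Proposition~\ref{P: quasi-centre}, split into the diffuse and concentrated energy cases, and invoke Propositions~\ref{P: construction semi-group} and~\ref{P: main concentrated} respectively, absorbing the additive $-1$ by assuming $|U| > 400\kappa N/\delta$. The constant-chasing you outline matches the paper's, and your observation that $\lambda(U,p)\geq\lambda(U)>100\kappa$ (since $\lambda(U)$ is an infimum) is the right way to verify the hypothesis of Proposition~\ref{P: main concentrated}.
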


\begin{proof}[Proof of Theorem \ref{T: main acylindrical}]
Let $U\subset G$ be a finite subset such that $\lambda(U)>100\kappa$. 
 
\textbf{Choice of the base-point.} Let $q$ be a point almost-minimising the $\ell^{\infty}$-energy of $U$. We now fix the base-point $p$ to be a quasi-centre for $U$. By Proposition \ref{P: quasi-centre}, we can assume that $|p-q|\leqslant \lambda(U)$. 

{\textbf{Case 1: diffuse energy.}} Let us first assume that $U$ is of diffuse energy at $p$. That is, there is a subset $U'\subset U$ such that $|U'|\geqslant 99|U|/100$ and such that for all $u'\in U'$ we have $|u'p-p|>2\kappa$. 
Then, by Proposition \ref{P: construction semi-group}, there exists $v \in U$ and a subset $W \subset Uv$ such that $W$ is $\alpha$-strongly reduced (with $\alpha = 1002\delta$) and whose cardinality satisfies 
\begin{equation*}
	|W| \geq \frac 1{10^6N}\frac \delta{\lambda(U)} |U|.
\end{equation*}

{\textbf{Case 2: concentrated energy.}}  Otherwise $U$ is of concentrated energy at $p$. Indeed, there is a subset $U'\subset U$ of cardinality $|U'|\geqslant |U|/100$ such that $|u'p-p|\leqslant 2\kappa$, for all $u'\in U'$.
Recall that $\lambda(U) > 100\kappa$.
Assume that $|U| > 400\kappa N /\delta$.
By Proposition \ref{P: main concentrated}, there exists $v \in U$ and a subset $W \subset Uv$ such that $W$ is $\alpha$-strongly reduced (with $\alpha = 25\kappa$) and whose cardinality satisfies
\begin{equation*}
	|W| \geq \frac 1{200N}\frac \delta{\kappa} |U|-1 \geq \frac 1{10^6N}\frac \delta{\lambda(U)} |U|.
\end{equation*}
This completes the proof of Theorem \ref{T: main acylindrical}. \end{proof}

\begin{cor}\label{T: acylindrical}
	Let $\delta >0$, $\kappa\geqslant 50\cdot 10^3\delta$, and $N>0$.
Assume that the group $G$ acts $(N,\kappa)$-acylindrically on a $\delta$-hyperbolic length space.
	For  every finite $U\subset G$ such that $\lambda(U)> 100\kappa$ and for all integers $r\geqslant 0$, we have
 $$ |U^r|\geqslant \left( \frac 1{10^6N}\frac \delta{\lambda(U)} |U| \right)^{[(r+1)/2]} .$$
\end{cor}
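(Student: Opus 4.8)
The plan is to deduce Corollary~\ref{T: acylindrical} directly from Theorem~\ref{T: main acylindrical} and the free sub-semigroup property of strongly reduced sets (Lemma~\ref{res: free-sub-semigroup}). Write $\beta = \tfrac{1}{10^6 N}\tfrac{\delta}{\lambda(U)}$, so that the goal becomes $|U^r| \geq (\beta|U|)^{[(r+1)/2]}$ for every $r \geq 0$. First I would apply Theorem~\ref{T: main acylindrical} to $U$ and dispose of its first alternative: if $|U| \leq 400\kappa N/\delta$, then a one-line estimate using $\lambda(U) > 100\kappa$ gives $\beta|U| < 1$, so the right-hand side is at most $1 \leq |U^r|$ and there is nothing to prove. (Here I use that $U \neq \emptyset$, which is forced by $\lambda(U) > 0$.)

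In the remaining case the theorem furnishes $v \in U$ and an $\alpha$-strongly reduced subset $W \subseteq Uv \subseteq U^2$ with $|W| \geq \beta|U|$. By Lemma~\ref{res: free-sub-semigroup}, $W$ freely generates a free sub-semigroup, so for every integer $s \geq 0$ each element of $W^s$ has a unique factorization into $s$ elements of $W$; in particular $W^s$ consists of exactly $|W|^s$ distinct elements of $G$. Since $W \subseteq U^2$, we have $W^s \subseteq U^{2s}$, which immediately yields $|U^{2s}| \geq |W|^s \geq (\beta|U|)^s$; as $[(2s+1)/2] = s$, this settles all even radii $r = 2s$.

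For odd radii $r = 2s-1$ (for which $[(r+1)/2] = s$ as well) I would exploit that every element of $W$ ends, as a word over $U$, with the fixed letter $v$. Writing a typical element of $W^s$ uniquely as $w = (u_1v)\cdots(u_sv)$, I delete the trailing $v$ to obtain $g_w = (u_1v)\cdots(u_{s-1}v)u_s$, a product of $2s-1$ letters of $U$, hence an element of $U^{2s-1}$. Since $g_w v = w$ and right multiplication by $v$ is a bijection of $G$, the assignment $w \mapsto g_w$ is injective, so $U^{2s-1}$ already contains $|W|^s \geq (\beta|U|)^s$ distinct elements. Combining the two parities completes the argument.

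I do not expect a serious obstacle: the proof is essentially bookkeeping once Theorem~\ref{T: main acylindrical} is available. The only point requiring care is matching the exponent $[(r+1)/2]$ in both parities --- in particular noticing that an odd radius $r = 2s-1$ still calls for $|W|^s$ elements, and that the ``missing'' factor of $|W|$ is recovered by the \emph{peel-off-the-last-$v$} trick rather than by taking a higher power of $W$.
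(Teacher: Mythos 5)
Your argument is correct and follows essentially the same route as the paper: dispose of the small-cardinality alternative of Theorem~\ref{T: main acylindrical} by noting the bound is then trivial, and otherwise use that the strongly reduced set $W\subseteq Uv$ freely generates a free sub-semigroup, with $(Uv)^s\subseteq U^{2s}$ for even radii and right multiplication by $v$ (your ``peel off the trailing $v$'' map is exactly this bijection) for odd radii. No gaps.
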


\begin{proof}
	Without loss of generality we can assume that $|U| > 400\kappa N /\delta$.
	Indeed otherwise the base of the exponential function on the right hand side of the stated inequality is less than one, hence the statement is void.
	According to Theorem~\ref{T: main acylindrical}, there exists $v \in U$ and a subset $W \subset Uv$ such that $W$ is $\alpha$-strongly reduced and  
	\begin{equation*}
		|W| \geq \frac 1{10^6N}\frac \delta{\lambda(U)} |U|.
	\end{equation*} 
	Let $s \geq 0$ be an integer.
	On the one hand, $(Uv)^s$ is contained in $U^{2s}$, hence $|U^{2s}| \geq |(Uv)^s|$.
	On the other hand $(Uv)^sU$ is contained in $U^{2s+1}$.
	Right multiplication by $v$ induces a bijection from $G$ to itself.
	Hence
	\begin{equation*}
		|U^{2s+1}| \geq |(Uv)^sU| = |(Uv)^{s+1}|.
	\end{equation*}
	Recall that $W$ is contained in $Uv$ and freely generates a free-sub-semigroup of $G$ by Lemma~\ref{res: free-sub-semigroup}.
	It follows that for every integer $r \geq 0$,
	\begin{equation*}
		|U^r| \geq |(Uv)^{[(r+1)/2]}| \geq |W|^{[(r+1)/2]} \geq \left( \frac 1{10^6N}\frac \delta{\lambda(U)} |U| \right)^{[(r+1)/2]}.\qedhere
	\end{equation*}
\end{proof}

We now combine Theorem \ref{T: main acylindrical} with our estimates on the growth of aperiodic words, see Proposition \ref{P: aperiodic counting}. If we use Proposition \ref{P: proper power vs strong period} to compare the notion of aperiodic words and power-free elements we obtain the following useful growth estimate.

\begin{cor}\label{T: main acylindrical power free}
Let $\delta >0$, $\kappa\geqslant 50\cdot 10^3\delta$, $N>0$ and $\lambda_0 \geq 0$.
There exists a parameter $m_2 > 0$ with the following properties.
Assume that the group $G$ acts $(N,\kappa)$-acylindrically on a $\delta$-hyperbolic geodesic space.
Let $U\subset G$ such that $100\kappa <\lambda(U) \leq \lambda_0$.
One of the following holds.
\begin{enumerate}
	\item $\displaystyle |U| \leq  \max\{4\kappa N/\delta, 4\cdot 10^6 N \lambda(U)/\delta\}$.
	\item There is $v \in U$ with the following property.
	For every $r > 0$ and $m \geq m_2$, denote by $K(m,r)$ the set of all $m$-power-free elements in $(Uv)^r$.  
	Then, 
$$|K(m,r)| \geqslant \left( \frac{1}{4\cdot 10^6 N}\frac{\delta}{\lambda(U)} |U|\right)^{r}. $$ 
\end{enumerate}

\end{cor}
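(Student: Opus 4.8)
The strategy is to combine the semigroup construction from Theorem~\ref{T: main acylindrical} with the counting of aperiodic words from Proposition~\ref{P: aperiodic counting}, using Proposition~\ref{P: proper power vs strong period} to pass from power-free \emph{elements} of $G$ to aperiodic \emph{words} over the alphabet $Uv$. First I would apply Theorem~\ref{T: main acylindrical}: assuming $|U|$ exceeds the threshold in alternative~(1) of that theorem, we obtain $v \in U$ and a subset $W \subset Uv$ which is $\alpha$-strongly reduced with $\alpha \leq 25\kappa$ and with $|W| \geq \tfrac{1}{10^6 N}\tfrac{\delta}{\lambda(U)}|U|$. Set $T = Uv$ and take $p$ to be the quasi-centre produced in the proof of Theorem~\ref{T: main acylindrical}; note $W$ is strongly reduced at $p$, so by Lemma~\ref{res: free-sub-semigroup} $W$ freely generates a free sub-semigroup, and elements of $W^*$ can be identified with their images in $G$.

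Next I would fix the periodicity parameter. Let $\lambda = \max_{w \in W}|wp-p|$; by the remark following Proposition~\ref{P: quasi-centre} we have $\lambda \leq 4\lambda(U)$, and since $W$ is $\alpha$-strongly reduced with $\alpha \leq 25\kappa$, the quantities $\alpha$, $\lambda$ (hence also $A$, $\nu$, $\tau$) are all controlled by $\delta$, $\kappa$, $N$, and $\lambda_0$. Let $m_1$ be the parameter from Proposition~\ref{P: aperiodic counting} applied to the strongly reduced set $W$; it depends only on these controlled quantities. Set $m_2 = m_1 + (2\lambda + 20\delta)/\tau$, which again depends only on $\delta$, $\kappa$, $N$, $\lambda_0$. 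Now fix $m \geq m_2$ and put $m' = m - (2\lambda+20\delta)/\tau \geq m_1$. The point of Proposition~\ref{P: proper power vs strong period} (applied to the $\alpha$-reduced set $W$, which requires $m \geq (2\lambda+20\delta)/\tau$, satisfied since $m \geq m_2$) is that any word $w \in W^*$ whose image in $G$ contains an $m$-power must contain an $m'$-period as a word over $W$; contrapositively, every $m'$-aperiodic word in $W^*$ has an $m$-power-free image in $G$.

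Then I would run the count. By Proposition~\ref{P: aperiodic counting} applied with parameter $m' \geq m_1$, for every integer $r>0$ we have $|W^*_{m'} \cap B(r+1)| \geq \tfrac{|W|}{2}|W^*_{m'} \cap B(r)|$, where $B(r) \subset W^*$ is the ball of word-radius $r$. Since $W \subseteq W^*_{m'}$ when $m' \geq \lambda/\tau$ (Example~\ref{E: proper power}; one absorbs this into the definition of $m_1$, or notes it follows from $m' \geq m_1$), iterating from $r=0$ gives $|W^*_{m'} \cap B(r)| \geq (|W|/2)^r$. Each such aperiodic word lies in $W^r \subseteq (Uv)^r$ and, by the previous paragraph, is $m$-power-free as an element of $G$; distinct words give distinct elements since $W$ freely generates a free sub-semigroup. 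Hence $|K(m,r)| \geq (|W|/2)^r \geq \bigl(\tfrac{1}{2\cdot 10^6 N}\tfrac{\delta}{\lambda(U)}|U|\bigr)^r$, and since $2\cdot 10^6 < 4\cdot 10^6$ this gives alternative~(2). The threshold in alternative~(1) must absorb both the $|U| \leq 400\kappa N/\delta$ of Theorem~\ref{T: main acylindrical} and the requirement that $|W| \geq 2$ (so that $(|W|/2)^r$ is a genuine lower bound and Proposition~\ref{P: aperiodic counting} applies with $|W| \geq 2$), i.e. $|U| \geq 4\cdot 10^6 N\lambda(U)/\delta$; taking the maximum of the two bounds yields the stated form of~(1).

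\textbf{Main obstacle.} The only delicate point is bookkeeping the dependence of $m_2$: one must check that $m_1$ (from Proposition~\ref{P: aperiodic counting}) really depends only on $\delta,\kappa,N,\lambda_0$ and not on $U$ itself. This hinges on (i) $\alpha \leq 25\kappa$ being uniform, (ii) $\lambda \leq 4\lambda(U) \leq 4\lambda_0$ being uniform, and (iii) $\tau(G,X) \geq \delta/N$ together with $A(G,X), \nu(G,X)$ being bounded in terms of the acylindricity data — all of which are recorded earlier in the excerpt. Everything else is a routine assembly of the quoted results.
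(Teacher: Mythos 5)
Your overall architecture is exactly the paper's: apply Theorem~\ref{T: main acylindrical} to get the strongly reduced set $W\subset Uv$, set $m_2=m_1+(2\lambda+20\delta)/\tau$, use Proposition~\ref{P: proper power vs strong period} to convert aperiodic words into power-free elements, and count via Proposition~\ref{P: aperiodic counting}. However, there is one genuine gap in the counting step. You iterate the ball inequality to get $|W^*_{m'}\cap B(r)|\geq (|W|/2)^r$ and then assert that ``each such aperiodic word lies in $W^r\subseteq (Uv)^r$.'' That is false: $B(r)$ contains words of length $s<r$, and by the paper's convention $V^r$ is the set of elements represented by a product of \emph{exactly} $r$ elements of $V$, so a word of length $s<r$ lies in $(Uv)^s$, not in $(Uv)^r$. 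Hence the inequality $|K(m,r)|\geq |W^*_{m'}\cap B(r)|$ does not follow; you may only lower-bound $|K(m,r)|$ by the number of aperiodic words on the \emph{sphere} $S(r)$.

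The fix is the extra step the paper carries out: since $W$ freely generates a free sub-semigroup, $|W^*_{m'}\cap S(r+1)|=|W^*_{m'}\cap B(r+1)|-|W^*_{m'}\cap B(r)|\geq (\tfrac{|W|}{2}-1)|W^*_{m'}\cap B(r)|$, and using $|W|\geq 4$ (which your threshold in alternative~(1) already guarantees) this is at least $\tfrac{|W|}{4}|W^*_{m'}\cap S(r)|$, whence $|W^*_{m'}\cap S(r)|\geq (|W|/4)^r$. This loss of a factor $2$ is precisely why the corollary's constant is $4\cdot 10^6 N$ rather than the $2\cdot 10^6 N$ you obtain; your stronger claimed bound should have been a warning sign. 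With this ball-to-sphere conversion inserted, the rest of your argument (the uniformity bookkeeping for $m_1$ and $m_2$, and the identification of distinct words with distinct group elements) is correct and matches the paper.
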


\begin{proof} Let  $U\subset G$ be a finite subset such that $\lambda(U)>100 \kappa$. 
Without loss of generality we can assume that $|U| > \max\{4\kappa N/\delta, 4\cdot 10^6 N \lambda(U)/\delta\}$.
By Theorem \ref{T: main acylindrical} there exists $v \in U$ and a subset $W \subset Uv$ such that $W$ is $\alpha$-strongly reduced with $\alpha \leq 25\kappa$ and  
	\begin{equation*}
		|W| \geq \frac 1{10^6N}\frac \delta{\lambda(U)} |U|.
	\end{equation*}
It follows from our choice that $|W| \geq 4$ and $\lambda(W) \leq 2 \lambda(U)$.

Before moving on, let us recall some notations from Section~\ref{sec: periodic and aperiodic words}.
For every integer $m$, the set $W^*_m$ stands for the collection of $m$-aperiodic words in $W^*$.
In addition $S(r)$ and $B(r)$ are respectively the sphere and the ball of radius $r$ in $W^*$ (for the word metric with respect to $W$).

In view of Proposition~\ref{P: aperiodic counting}, there exists $m_1 > 0$, which only depends on $\delta$, $N$, $\kappa$ and $\lambda_0$ such that for every $m \geq m_1$, for every $r \geq 0$, we have,
\begin{equation}
	|W^*_m \cap B(r+1)| \geqslant \frac{|W|}2 |W^*_m \cap B(r)|. \label{eq: growth aperiodic 1} 
\end{equation}
Let us now focus on the cardinality of \emph{spheres}.
As $W$ is $\alpha$-strongly reduced, it generates a free sub-semi-group (Lemma \ref{res: free-sub-semigroup}). Thus 
$$
|W_m^* \cap S(r+1)| = |W^*_m\cap B(r+1)| -|W_m^* \cap B(r)|.
$$
If we combine this inequality with \eqref{eq: growth aperiodic 1} and the fact that $|W|/4\geqslant 1$, we obtain that 
\begin{align*}
	|W^*_m \cap S(r+1)| 
	\geqslant \frac{|W|}2 |W^*_m \cap B(r)|  -|W_m^* \cap B(r)| 
	&\geqslant \left(\frac{|W|}{2} -1\right) |W^*_m \cap B(r)| \\
	& \geqslant \frac{|W|}4 |W^*_m \cap B(r)| \\
	& \geqslant \frac{|W|}4 |W^*_m \cap S(r)| .
\end{align*}
By an inductive argument, we obtain that, for all $r\geqslant 0$,    
\begin{equation*}
|W^*_m \cap S(r)| \geqslant  \left(\frac{|W|}{4}\right)^r.
\end{equation*}
Now let $m_2=m_1+(2\lambda(S) + 20\delta)/\tau$ and let $m\geqslant m_2$. 
Then, by Proposition \ref{P: proper power vs strong period}, every element in $W^*_{m'}$ (seen as an element of $G$) is $m$-power-free, where $m'=m-(2\lambda(S) + 20\delta)/\tau$ is larger than $m_1$. 
Thus, 
\begin{equation*}
|K(m,r)|\geqslant |W^*_{m'} \cap S(r)|  \geqslant \left(\frac{|W|}{4}\right)^{r}.
\end{equation*}
 This completes the proof. 
\end{proof}

\section{Small cancellation groups}
In this section we recall the necessary background on small cancellation theory with a special attention on acylindricity, see Proposition~\ref{P: small cancellation acylindricity}.
The presentation follows \cite{coulon_geometry_2014} in content and notations.

\subsection{Cones}

Let $Y$ be a metric length space and let $\rho>0$. 
The \emph{cone of radius $\rho$ over $Y$} is the set 
$$Z(Y)= Y\times [0,\rho]/\sim,$$
where $\sim$ is the equivalence relation which identifies all the points of the form $(y,0)$ for $y \in Y$.
If $x\in Z (Y)$, we write $x=(y,r)$ to say that $(y,r)$ represents $x$. We let $v=(y,0)$ be the \emph{apex} of the cone.

If $y$, $y'$ are in $Y$, we let $\theta(y,y')=\min\left\{ \pi,\, |y-y'|/\sinh\rho\right\}$ be their angle at $v$. There is a metric on $Z(Y)$ that is characterised as follows, see \cite[Chapter~I.5]{bridson_metric_1999}. 
Let $x=(y,r)$ and $x'=(y',r')$ in $Z(Y)$. Then 
$$ \cosh |x-x'| = \cosh r \cosh r' -\sinh r \sinh r' \cos \theta (y,y').$$ 
It turns out that $Z(Y)$ is a hyperbolic space \cite[Proposition~4.6]{coulon_geometry_2014}.

We let $\iota: Y \to Z(Y)$ be the embedding defined as $\iota(y)=(y,\rho)$. The metric distortion of $\iota$ is  controlled by a function $\mu: \mathbb{R}_+ \to  [0,2\rho]$ that is characterised as follows: for every $ t\in \mathbb{R}_+$, 
$$ \cosh \mu(t) = \cosh^2 \rho - \sinh^2 \rho \cos\left( \min\left\{\pi, \frac t{\sinh \rho}\right\}\right).$$ 
For all $y,\, y' \in Y$, we have 
\begin{equation}
\label{L: distortion}  |\iota(y)-\iota(y')|_{Z(Y)}=\mu\left(|y-y'|_Y\right).
\end{equation}
Let us mention some properties of $\mu$ for later use.

\begin{prop}[Proposition 4.4 of \cite{coulon_geometry_2014}]\label{P: mu} The map $\mu$ is continuous, concave, non-decreasing. Moreover, if $\mu(t)< 2\rho$, then $t \leqslant \pi \sinh (\mu(t)/2)$. \qed
\end{prop}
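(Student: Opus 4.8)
The plan is to write $\mu = F\circ\vartheta$, where $\vartheta\colon\mathbb R_+\to[0,\pi]$ is the function $\vartheta(t)=\min\{\pi,\,t/\sinh\rho\}$ and $F\colon[0,\pi]\to[0,2\rho]$ is given by $F(s)=\operatorname{arccosh}\bigl(\cosh^2\rho-\sinh^2\rho\cos s\bigr)$, so that every assertion about $\mu$ reduces to an elementary one‑variable statement about $\vartheta$ and $F$. First I would check that $F$ is well defined: for $s\in[0,\pi]$ we have $\cos s\in[-1,1]$, and since $\cosh^2\rho-\sinh^2\rho=1$ and $\cosh^2\rho+\sinh^2\rho=\cosh 2\rho$, the argument of $\operatorname{arccosh}$ ranges over $[1,\cosh 2\rho]$; hence $F$ takes values in $[0,2\rho]$, is continuous (as $\operatorname{arccosh}$ is continuous on $[1,\infty)$), and is non‑decreasing because $s\mapsto -\cos s$ is non‑decreasing on $[0,\pi]$. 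Since $\vartheta$ is visibly continuous and non‑decreasing, so is $\mu=F\circ\vartheta$; moreover $\mu$ has range $[0,2\rho]$, with $\mu(t)=2\rho$ precisely when $t\ge\pi\sinh\rho$.

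For concavity, note first that $\vartheta$ is concave on $\mathbb R_+$, being the minimum of the affine function $t\mapsto t/\sinh\rho$ and a constant. The substance is concavity of $F$ on $[0,\pi]$. Setting $g(s)=\cosh F(s)=\cosh^2\rho-\sinh^2\rho\cos s$, one gets $\sinh F=\sqrt{g^2-1}$, hence $F'=g'/\sqrt{g^2-1}$ and $F''=(g^2-1)^{-3/2}\bigl(g''(g^2-1)-g(g')^2\bigr)$, so it suffices to show $g''(g^2-1)\le g(g')^2$ on $(0,\pi)$. With $w=1-\cos s\ge 0$ and $\sigma^2=\sinh^2\rho$ a direct computation gives $g=1+\sigma^2 w$, $g^2-1=\sigma^2 w(2+\sigma^2 w)$, $g''=\sigma^2(1-w)$ and $(g')^2=\sigma^4 w(2-w)$; dividing the desired inequality by $\sigma^4 w\ge 0$ it becomes $(1-w)(2+\sigma^2 w)\le(2-w)(1+\sigma^2 w)$, and the difference of the two sides equals exactly $w(1+\sigma^2)=(1-\cos s)\cosh^2\rho\ge 0$. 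This short computation is the one genuinely unavoidable calculation, and the only mild obstacle in the proof. Once it is in hand, $F''\le 0$ on $(0,\pi)$, so $F$ is concave on $[0,\pi]$ by continuity; and since $F$ is concave and non‑decreasing while $\vartheta$ is concave, the composite $\mu=F\circ\vartheta$ is concave.

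Finally, for the inequality $t\le\pi\sinh(\mu(t)/2)$ I would first establish the sharper identity
\begin{equation*}
	\sinh\!\bigl(\mu(t)/2\bigr)=\sinh\rho\,\sin\!\bigl(\vartheta(t)/2\bigr),\qquad t\in\mathbb R_+.
\end{equation*}
Indeed, using $\cosh x=1+2\sinh^2(x/2)$ and $1-\cos s=2\sin^2(s/2)$, the defining relation $\cosh\mu(t)=1+\sinh^2\rho\,(1-\cos\vartheta(t))$ rewrites as $\sinh^2(\mu(t)/2)=\sinh^2\rho\,\sin^2(\vartheta(t)/2)$, and all quantities involved are non‑negative (note $\vartheta(t)/2\in[0,\pi/2]$), which yields the identity. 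Now suppose $\mu(t)<2\rho$; then $\vartheta(t)<\pi$, hence $\vartheta(t)=t/\sinh\rho$, and after dividing the target inequality by $\sinh\rho>0$ and invoking the identity, it becomes equivalent to $\vartheta(t)\le\pi\sin(\vartheta(t)/2)$, that is, with $x=\vartheta(t)/2\in[0,\pi/2)$, to $\sin x\ge\tfrac{2}{\pi}x$. This is Jordan's inequality, immediate from the concavity of $\sin$ on $[0,\pi/2]$: the graph of $\sin$ lies above the chord joining $(0,0)$ to $(\pi/2,1)$, which is the line $x\mapsto\tfrac{2}{\pi}x$. This completes the proof.
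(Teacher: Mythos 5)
Your proof is correct, and all the computations check out: the reduction of $F''\le 0$ to $(1-w)(2+\sigma^2 w)\le(2-w)(1+\sigma^2 w)$ with difference $w\cosh^2\rho$, the composition rule for a concave non-decreasing outer function with a concave inner function, and the half-angle identity $\sinh(\mu(t)/2)=\sinh\rho\,\sin(\vartheta(t)/2)$ combined with Jordan's inequality are all sound. The paper itself gives no argument here — it simply cites Proposition~4.4 of the reference — and your derivation via the half-angle identity is the natural one for this cone metric, so there is nothing to reconcile.
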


Let $H$ be a group that acts by isometries on $Y$. Then $H$ acts by isometries on $Z(Y)$ by $hx=(hy,r)$. We note that $H$ fixes the apex of the cone. 

\subsection{The cone off space}
From now, we assume that $X$ is a proper, geodesic, $\delta$-hyperbolic space, where $\delta >0$.
We fix a parameter $\rho > 0$, whose value will be made precise later.
In addition, we consider a group $G$ that acts properly co-compactly by isometries on $X$.
We assume that this action is $(N,\kappa)$-acylindrical.

We let $\mathcal Q$ be a collection of  pairs $(H, Y) $ such that $Y$ is closed strongly-quasi-convex 
 in $X$ and $H$ is a subgroup of $\stab Y$ acting co-compactly on $Y$. 
Suppose that $\mathcal Q$ is closed under the action of $G$ given by the rule $g(H,Y)=(gHg^{-1}, gY)$.
In addition we assume that $\mathcal Q/G$ is finite.
Furthermore, we let 
\begin{equation*}
	\Delta(\mathcal Q) = \sup \left\{ \diam \left( Y_1^{+5\delta} \cap Y_2^{+5\delta} \right) \mid (H_1, Y_1) \neq (H_2,Y_2) \in \mathcal Q\right\}
\end{equation*}
and
\begin{equation*}
	\mathcal{T}(\mathcal Q)=\inf \{\|  h \|\mid h\in H \setminus \{1\},\, (H,Y) \in Q\}.
\end{equation*}
Observe that if $\Delta(\mathcal Q)$ is finite, then $H$ is normal in $\stab Y$, for every $(H,Y) \in \mathcal Q$.

Let $(H,Y) \in \mathcal Q$. We denote by $|\cdot|_Y$ the length metric on $Y$ induced by the restriction of $|\cdot|$ to $Y$.
As $Y$ is strongly quasi-convex, for all $y,y' \in Y$, 
$$|y-y'|_X\leqslant |y-y'|_Y\leqslant |y-y'|_X+ 8\delta.$$  
We write $Z(Y)$ for the cone of radius $\rho$ over the metric space $(Y, |\cdot|_Y)$.

We let the \emph{cone-off space} $\cX= \cX (Y,\rho)$  be the space obtained by gluing, for each pair $(H,Y)\in \mathcal Q$, the cone $Z(Y)$ on $Y$ along the natural embedding $\iota: Y \to Z(Y)$. 
We let $\mathcal V$ denote the set of apices of $\cX$.  
We endow $\cX$ with the largest  metric $|\cdot|_{\cX}$ such that the map $X\to \cX$  and the maps $Z(Y)\to \cX$ are $1$-Lipschitz,  see \cite[Section 5.1]{coulon_geometry_2014}. It has the following properties. 

\begin{lem}[Lemma 5.7 of \cite{coulon_geometry_2014}]\label{L: metric cone off 2}
Let $(H,Y) \in \mathcal Q$.
Let $x \in Z(Y)$ and $x' \in \cX$.
Let $d(x,Y)$ be the distance from $x$ to $\iota(Y)$ computed in $Z(Y)$.
If $|x-x'|_{\cX} < d(x, Y)$, then $x' \in Z(Y)$ and $|x-x'|_{\cX} = |x-x'|_{Z(Y)}$. 
\end{lem}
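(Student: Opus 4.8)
The plan is to argue directly from the description of the cone-off metric as a gluing metric. Since $\cX$ is built by gluing the \emph{pieces} $X$ and $Z(Y')$, $(H',Y')\in\mathcal Q$, along the subsets $Y'=\iota(Y')$, the largest metric making every inclusion $1$-Lipschitz is the usual chain metric (see \cite[Section~5.1]{coulon_geometry_2014} and \cite[Chapter~I.5]{bridson_metric_1999}): for $a,b\in\cX$,
\[
	|a-b|_{\cX}=\inf\ \sum_{i=1}^{n}d_{P_i}(z_{i-1},z_i),
\]
the infimum being over finite chains $a=z_0,z_1,\dots,z_n=b$ ($n\geq 0$) with $z_{i-1},z_i$ lying in a common piece $P_i\in\{X\}\cup\{Z(Y'):(H',Y')\in\mathcal Q\}$, where $d_{P_i}$ is the intrinsic metric of $P_i$. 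The single geometric fact I would use is that two distinct pieces overlap only along their gluing loci; in particular, a point belonging both to $Z(Y)$ and to a piece other than $Z(Y)$ must lie on the circle $\iota(Y)$.

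Concretely I would proceed as follows. We may assume $x\notin\iota(Y)$, since otherwise $d(x,Y)=0$ and the hypothesis is vacuous. I would prove, by induction on $n$, the statement: for every $w\in Z(Y)\setminus\iota(Y)$ and every chain $w=z_0,\dots,z_n=w'$ with $\sum_{i}d_{P_i}(z_{i-1},z_i)<d(w,Y)$, one has $w'\in Z(Y)$ and $|w-w'|_{Z(Y)}\leq\sum_{i}d_{P_i}(z_{i-1},z_i)$. Because $w\in Z(Y)\setminus\iota(Y)$ lies in no piece other than $Z(Y)$, necessarily $P_1=Z(Y)$, hence $z_1\in Z(Y)$ and $d_{P_1}(z_0,z_1)=|w-z_1|_{Z(Y)}$; this settles the cases $n\leq 1$. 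For $n\geq 2$, first $z_1\notin\iota(Y)$: otherwise $|w-z_1|_{Z(Y)}\geq d(w,Y)$, contradicting $|w-z_1|_{Z(Y)}\leq\sum_{i}d_{P_i}(z_{i-1},z_i)<d(w,Y)$. The triangle inequality for the distance to $\iota(Y)$ inside $Z(Y)$ then gives $d(z_1,Y)\geq d(w,Y)-|w-z_1|_{Z(Y)}>\sum_{i=2}^{n}d_{P_i}(z_{i-1},z_i)$, so the induction hypothesis applies to the chain $z_1,\dots,z_n$ based at $z_1$, yielding $w'\in Z(Y)$ and $|z_1-w'|_{Z(Y)}\leq\sum_{i=2}^{n}d_{P_i}(z_{i-1},z_i)$; adding $|w-z_1|_{Z(Y)}$ closes the induction. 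Applying this with $w=x$ to every chain from $x$ to $x'$ of total length $<d(x,Y)$ — such chains exist since $|x-x'|_{\cX}<d(x,Y)$ — gives $x'\in Z(Y)$ and, passing to the infimum over such chains, $|x-x'|_{Z(Y)}\leq|x-x'|_{\cX}$. The reverse inequality is immediate since $Z(Y)\hookrightarrow\cX$ is $1$-Lipschitz, whence $|x-x'|_{\cX}=|x-x'|_{Z(Y)}$.

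I do not expect a genuine obstacle here. The only point requiring care is to set up the induction so that the ``protection radius'' $d(\cdot,Y)$ (distance to $\iota(Y)$ computed inside the single cone $Z(Y)$) stays strictly larger than the length of the not-yet-traversed part of the chain; this is exactly the triangle inequality for distance to a subset, and it guarantees that such a chain can never reach $\iota(Y)$, let alone leave $Z(Y)$. The secondary bookkeeping — checking which piece each transition point of a chain lies in, and dismissing the degenerate case $x\in\iota(Y)$ — is routine, and the chain description of $|\cdot|_{\cX}$ may simply be quoted from \cite[Section~5.1]{coulon_geometry_2014}.
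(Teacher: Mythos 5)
Your proof is correct and takes essentially the same route as the source: the paper does not reprove this statement but cites Lemma~5.7 of \cite{coulon_geometry_2014}, whose argument is exactly your induction along an approximating chain, showing that a chain of total length less than $d(x,Y)$ can never reach $\iota(Y)$ and hence stays inside $Z(Y)$, so that the chain infimum is computed entirely in the cone. The only points needing care --- the dichotomy that a point of $Z(Y)\setminus\iota(Y)$ lies in no other piece, and the ``protection radius'' inequality $d(z_1,Y)\geq d(w,Y)-|w-z_1|_{Z(Y)}$ --- are both handled correctly.
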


We recall that $\mu$ is the map that controls the distortion of the embedding $\iota$ of $Y$ in its cone, see (\ref{L: distortion}). 
 It also controls the distortion of the map $X\to \cX$. 
\begin{lem}[Lemma 5.8 of \cite{coulon_geometry_2014}]\label{L: metric cone off} 
For all $x,\, x' \in X$, we have $\mu(|x-x'|_X)\leqslant |x-x'|_{\cX}\leqslant |x-x'|_X$. \qed
\end{lem}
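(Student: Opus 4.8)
The statement to prove is Lemma 5.8 of \cite{coulon_geometry_2014}, namely that for all $x, x' \in X$,
\begin{equation*}
	\mu(|x-x'|_X) \leq |x-x'|_{\cX} \leq |x-x'|_X.
\end{equation*}
The plan is to unwind the definition of the cone-off metric $|\cdot|_{\cX}$ as the largest metric making the inclusion $X \to \cX$ and each inclusion $Z(Y) \to \cX$ into $1$-Lipschitz maps. The upper bound $|x-x'|_{\cX} \leq |x-x'|_X$ is immediate from this definition, since $X \to \cX$ is $1$-Lipschitz. The content is the lower bound.

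For the lower bound I would argue as follows. Because $|\cdot|_{\cX}$ is the supremum of all (pseudo-)metrics for which the structural maps are $1$-Lipschitz, a concrete description is available: $|x-x'|_{\cX}$ is the infimum over all \emph{chains} $x = z_0, z_1, \dots, z_k = x'$ where each consecutive pair $z_{j-1}, z_j$ lies together in $X$ or in a common cone $Z(Y)$, of the sum $\sum_j |z_{j-1}-z_j|_{\ast}$, the distance being measured in whichever piece contains both points. So it suffices to show that for any such chain joining $x, x' \in X$, the total length is at least $\mu(|x-x'|_X)$. The key geometric input is that each cone $Z(Y)$ is glued along $\iota(Y)$ where $Y$ is strongly quasi-convex in $X$, and the embedding $\iota$ distorts distances exactly by $\mu$, i.e. $|\iota(y)-\iota(y')|_{Z(Y)} = \mu(|y-y'|_Y)$ by (\ref{L: distortion}).

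The heart of the argument is a \emph{replacement} step: given a chain, whenever a segment $[z_{j-1}, z_j]$ runs through a cone $Z(Y)$ with both endpoints in $X$ (hence on $\iota(Y)$, or one can first project endpoints of cone-segments onto $\iota(Y)$ at controlled cost), one replaces it by a path staying in $X$ between the corresponding points of $Y$. One must check this replacement does not increase total length by more than the factor relating the $Z(Y)$-distance to the $X$-distance, and here concavity and monotonicity of $\mu$ (Proposition~\ref{P: mu}) are essential: since $\mu$ is concave with $\mu(0) = 0$, it is subadditive, so $\sum_j \mu(t_j) \geq \mu(\sum_j t_j)$, which lets one collapse a chain of cone-crossings into a single application of $\mu$ to the total $X$-displacement. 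Combining the in-$X$ portions (which have length $\geq$ their $X$-distance, and $\mu(t) \leq t$) with the cone portions (each bounded below using $|\cdot|_{Z(Y)} = \mu(|\cdot|_Y) \geq \mu(|\cdot|_X)$ on $\iota(Y)$), and using subadditivity of $\mu$ together with the triangle inequality in $X$, yields that the total chain length is at least $\mu(|x-x'|_X)$.

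The main obstacle is the bookkeeping in the replacement step: a chain may weave in and out of several cones and through $X$ in an arbitrary interleaved pattern, and one must handle the case where a cone-segment has endpoints that are interior points of a cone (not on $\iota(Y)$) by splitting it using Lemma~\ref{L: metric cone off 2} — a short excursion into a cone and back is governed by the local isometry statement there — so that effectively only boundary-to-boundary cone crossings contribute, at which point $|\cdot|_{Z(Y)} \geq \mu(|\cdot|_X)$ on $Y$ applies. Once every segment is classified as either ``within $X$'' (length $\geq \mu$ of its $X$-length, by $\mu(t)\leq t$ is the wrong direction — rather one uses that $X$-length $\geq \mu(X\text{-length})$) or ``a $Y$-to-$Y$ cone crossing'' (length $\geq \mu$ of its $X$-length), summing and applying subadditivity of $\mu$ finishes the proof. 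I would present this cleanly by first reducing to chains whose cone-segments have endpoints on the respective $\iota(Y)$, then invoking concavity of $\mu$ once at the end.
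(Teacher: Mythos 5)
The paper does not actually prove this lemma: it is imported verbatim from \cite[Lemma~5.8]{coulon_geometry_2014} (note the \qed directly after the statement), so there is no in-text argument to compare against. Your reconstruction is essentially the standard proof and is correct. The upper bound is immediate from the requirement that $X \to \cX$ be $1$-Lipschitz. For the lower bound, the chain description of the largest metric making the structural maps $1$-Lipschitz is the right tool: each maximal excursion of a chain through a cone $Z(Y)$ is bounded below, via the triangle inequality in $Z(Y)$, by the $Z(Y)$-distance between its entry and exit points on $\iota(Y)$, which equals $\mu(|\cdot|_Y) \geq \mu(|\cdot|_X)$ by (\ref{L: distortion}) and monotonicity of $\mu$; each segment lying in $X$ contributes its $X$-length, which dominates $\mu$ of that length since $\mu(t) \leq t$; and subadditivity of $\mu$ (concave with $\mu(0)=0$, by Proposition~\ref{P: mu}) together with the triangle inequality in $X$ collapses the resulting sum to $\mu(|x-x'|_X)$. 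One correction to your bookkeeping: you should not invoke Lemma~\ref{L: metric cone off 2} to handle chain points interior to a cone --- that lemma is itself a statement about the metric $|\cdot|_{\cX}$ you are in the middle of bounding, so appealing to it is circular. It is also unnecessary: since an interior point of $Z(Y)$ lies in no other piece, any maximal run of consecutive chain segments inside $Z(Y)$ automatically begins and ends on $\iota(Y)$ (the chain starts and ends in $X$), and the triangle inequality within $Z(Y)$ alone reduces the run to a single $\iota(Y)$-to-$\iota(Y)$ crossing.
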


The action of $G$ on $X$ then extends to an action by isometries on $\cX$: given any $g \in G$, a point $x= (y,r)$ in $Z(Y)$ is sent to the point $gx=(gy,r)$ in $Z(gY)$. 
We denote by $K$ the normal subgroup generated by the subgroups $H$ such that $(H,Y) \in \mathcal Q$. 

\subsection{The quotient space} \label{sec: quotient space}

We let $\q X= \cX/K$ and $\qG = G/K$. 
 We denote by $\zeta$ the projection of $\cX$ onto $\q X$ and write $\q x$ for $\zeta(x)$ for short. Furthermore, we denote by $\q {\mathcal V}$ the image in $\q X$ of the apices $\mathcal V$. 
We consider $\q X$ as a metric space equipped with the quotient metric, that is for every $x,x' \in \cX$ 
$$ |\q x- \q x'|_{\q X} = \inf_{h\in K} |hx-x'|_{\cX}.$$
We note that the action of $G$ on $\cX$ induces an action by isometries of $\q G$ on $\q X$.
The following theorem summarises Proposition~3.15 and Theorem~6.11 of \cite{coulon_geometry_2014}.

\begin{thm}[Small Cancellation Theorem \cite{coulon_geometry_2014}]\label{P: hyperbolicity quotient} 
There are distances $\delta_0$, $\delta_1$, $\Delta_0$ and $\rho_0$ (that do not depend on $X$ or $\mathcal Q$) such that, 
if  $\delta \leq \delta_0$, $\rho > \rho_0$, $\Delta(\mathcal Q) \leq \Delta_0$, and $\mathcal T(\mathcal Q) > 4\pi \sinh\rho$, then the following holds:
\begin{enumerate}
\item $\q X$ is a proper geodesic $\delta_1$-hyperbolic space on which $\bar G$ acts properly co-compactly.
\item Let $r\in (0,\rho/20]$. If for all $v\in \mathcal V$, the distance $|x-v|\geqslant 2r$ then the projection $\zeta \colon \cX \to \qX$ induces an isometry from $B(x,r)$ onto $B(\q x,r)$.
\item Let $(H,Y) \in \mathcal Q$. 
If $v \in \mathcal V$ stands for the apex of the cone $Z(Y)$, then the projection from $G$ onto $\q G$ induces an isomorphism from $\stab Y/H$ onto $\stab{\q v}$.  \qed
\end{enumerate}
\end{thm}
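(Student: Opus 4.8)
Since this theorem is a repackaging of \cite[Proposition~3.15 and Theorem~6.11]{coulon_geometry_2014}, the proof consists strictly speaking in quoting those statements; we nonetheless describe the structure of the argument behind them. The plan is to proceed in two stages: first analyse the cone-off space $\cX$, and then pass to the quotient $\qX = \cX/K$, keeping all estimates independent of $\mathcal Q$.

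\emph{Stage 1 (the cone-off space).} First I would check that $\cX$ is $\delta'$-hyperbolic for a constant $\delta'$ depending only on $\rho$ and $\delta$. The input is that each cone $Z(Y)$ is hyperbolic with a constant independent of $Y$ (\cite[Proposition~4.6]{coulon_geometry_2014}, for the metric recalled above via $\cosh|x-x'| = \cosh r\cosh r' - \sinh r\sinh r'\cos\theta(y,y')$), together with the fact that gluing uniformly hyperbolic spaces along uniformly quasi-convex subsets yields a uniformly hyperbolic space; the strong quasi-convexity inequality $|y-y'|_X \leq |y-y'|_Y \leq |y-y'|_X + 8\delta$ for $(H,Y)\in\mathcal Q$ is exactly what makes the gluing controlled. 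Along the way I would record the distortion bounds $\mu(|x-x'|_X) \leq |x-x'|_{\cX} \leq |x-x'|_X$ (Lemma~\ref{L: metric cone off}) and the confinement statement (Lemma~\ref{L: metric cone off 2}), since these are the tools that transport geometric information between $X$, the cones, and $\cX$.

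\emph{Stage 2 (the small-cancellation quotient).} This is the heart of the matter and the step I expect to be the main obstacle. The normal subgroup $K$ acts on $\cX$ fixing the apices $\mathcal V$, and the two small-cancellation hypotheses are calibrated precisely so that $(K,\mathcal V)$ is a \emph{very rotating family} in the sense of Dahmani--Guirardel--Osin \cite{dahmani_hyperbolically_2017}: the bound $\Delta(\mathcal Q)\leq\Delta_0$ forces distinct cones (hence distinct ``rotation axes'') to overlap in bounded diameter, while $\mathcal T(\mathcal Q) > 4\pi\sinh\rho$ forces every nontrivial $h\in H$ to rotate by the full angle $\pi$ around the apex of $Z(Y)$, using $\theta(y,y')=\min\{\pi,|y-y'|/\sinh\rho\}$ and the estimates of Proposition~\ref{P: mu}. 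Granting this, the hyperbolic Cartan--Hadamard theorem for rotating families yields that $\qX$ is $\delta_1$-hyperbolic with $\delta_1$ universal — this is item~(1) apart from the $\qG$-action — and the same machinery shows that $\zeta$ restricted to a ball of radius $r\le\rho/20$ avoiding the $2r$-neighbourhoods of $\mathcal V$ is an isometry onto its image (item~(2)), because away from every apex no geodesic of length $\leq 2r$ gets shortened by $K$. Reproducing this requires the full surgery-on-geodesics argument for very rotating families, which is why one cites it rather than redoing it.

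\emph{Stage 3 (action and stabilisers).} Cocompactness of $\qG \curvearrowright \qX$ descends from cocompactness of $G \curvearrowright X$; properness follows from item~(2) away from $\q{\mathcal V}$, and at an apex $\q v$ from item~(3) together with properness of the action of $\stab Y$ on $X$. For item~(3), the projection $\stab Y/H \to \stab{\q v}$ is surjective by the very construction of $\qX$; injectivity — that is, no element of $K$ outside $H$ fixes the apex $v$ of $Z(Y)$ — is once more a consequence of the very-rotating-family condition, which is set up so that the only relations introduced in passing from $G$ to $\qG$ are the intended ones $H=1$. This last point I would extract from the structural analysis of apex stabilisers in \cite[Sections~5--6]{coulon_geometry_2014}.
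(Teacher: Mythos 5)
The paper offers no proof of this statement beyond the citation — it explicitly presents the theorem as a summary of Proposition~3.15 and Theorem~6.11 of \cite{coulon_geometry_2014} — and your proposal does exactly the same, correctly identifying the citation as the proof and then accurately sketching the internal structure of the cited argument (hyperbolicity of the cone-off, the large-rotation/bounded-overlap hypotheses feeding a Cartan--Hadamard-type theorem for the quotient, and the apex-stabiliser analysis). The only cosmetic remark is that \cite{coulon_geometry_2014} itself runs the quotient step through the Delzant--Gromov cone-off and Cartan--Hadamard machinery rather than literally through the Dahmani--Guirardel--Osin rotating-family formalism you invoke, but the two are interchangeable here and your description of the role of each hypothesis is correct.
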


Let us now fix $\delta_0$, $\delta_1$, $\Delta_0$ and $\rho_0$ as in Theorem \ref{P: hyperbolicity quotient}. We assume that $\delta \leq \delta_0$, $\Delta(\mathcal Q) \leq \Delta_0$,  $\mathcal T(\mathcal Q) > 4\pi \sinh\rho$, and $\rho > \rho_0$, so that  $\q X$ is  $\q \delta$-hyperbolic, with $\q \delta \leq \delta_1$.

We use point (2) of Theorem \ref{P: hyperbolicity quotient} to compare the local geometry of $\cX $ and $\q X$. To compare the global geometry, we use the following proposition.

\begin{prop}[Proposition 3.21 of \cite{coulon_geometry_2014}]\label{L: lifts of quasi-convex} Let $\q Z \subset \q X$ be $10 \bar\delta$-quasi-convex and $d\geqslant 10 \bar\delta$.
If, for all $\q v \in \q {\mathcal V}$, we have $\q Z \cap B(\q v, \rho/5 +d+ 1210 \q \delta) =\emptyset$, then there is a pre-image $Z \subset \cX$ such that the projection $\zeta$ induces an isometry from $Z$ onto $\q Z$. 

In addition, if $\q S\subset \q G$ such that $\q S \, \q Z \subseteq {\q Z}^{+d}$, then there is a pre-image $S\subset G$ such that for every $g \in S$, $z,z' \in Z$
, we have $|\q g\, \q z -\q z '| =|g z-z'|_{\cX}$. 
 \qed
\end{prop}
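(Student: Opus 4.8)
The plan is to prove the statement by a path-lifting argument, using that away from the apices the projection $\zeta\colon\cX\to\qX$ is an isometry on balls of radius $\rho/20$ (Theorem~\ref{P: hyperbolicity quotient}(2)), and that quasi-convexity confines every geodesic we shall use to the region where this holds. Fix a base point $\q z_0\in\q Z$ and a preimage $z_0\in\zeta^{-1}(\q z_0)$. Given $\q z\in\q Z$, choose a geodesic $[\q z_0,\q z]$ of $\qX$. Since $\q Z$ is $10\bar\delta$-quasi-convex, this geodesic lies in $\q Z^{+10\bar\delta}$, hence at distance more than $\rho/5+d+1200\bar\delta$ from every apex $\q v\in\q{\mathcal V}$; every preimage in $\cX$ of a point of this geodesic is therefore at distance more than $\rho/10$ from every apex of $\cX$, so by Theorem~\ref{P: hyperbolicity quotient}(2) the geodesic lifts, starting from $z_0$, to a path in $\cX$ of the same length. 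Let $z$ be its terminal point, a preimage of $\q z$, and define $Z$ to be the set of all such terminal points (over $\q z\in\q Z$), which contains $z_0$.

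The main obstacle is to show that $z$ depends only on $\q z$, not on the chosen geodesic; this yields at once that $\zeta$ restricted to $Z$ is a bijection onto $\q Z$ (injectivity included). Over a neighbourhood of $\q Z^{+C\bar\delta}$ (for a universal constant $C$) the map $\zeta$ is, by Theorem~\ref{P: hyperbolicity quotient}(2), a genuine covering — preimages of small balls are disjoint unions of balls mapping isometrically — so lifts of paths contained in this region are uniquely determined by their initial point. To compare the lifts of two geodesics $\gamma_0,\gamma_1$ from $\q z_0$ to $\q z$: both lie in $\q Z^{+10\bar\delta}$ and, by hyperbolicity of $\qX$, fellow-travel within a universal multiple of $\bar\delta$, which is negligible compared with the injectivity radius $\rho/20$ (guaranteed by the choice of the absolute constants $\rho_0$, $\delta_1$). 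One then interpolates between $\gamma_0$ and $\gamma_1$ by finitely many paths lying in $\q Z^{+C\bar\delta}$, consecutive ones being uniformly closer than $\rho/20$; along such a discrete homotopy the terminal point of the lift is locally constant, hence constant, so the lifts of $\gamma_0$ and $\gamma_1$ end at the same point.

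That $\zeta\colon Z\to\q Z$ is an isometry now follows. The map $\zeta$ is $1$-Lipschitz (Lemma~\ref{L: metric cone off} together with the definition of the quotient metric), so $|\q z-\q z'|_{\qX}\le|z-z'|_{\cX}$. Conversely, lifting a geodesic $[\q z,\q z']$ — which again lies in $\q Z^{+10\bar\delta}$ — starting from $z$ produces a path of length $|\q z-\q z'|_{\qX}$, and by the path-independence just established, applied to the concatenation of the defining path of $z$ with this lift, its terminal point is exactly $z'$; hence $|z-z'|_{\cX}\le|\q z-\q z'|_{\qX}$.

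For the ``in addition'' part, observe that $\q Z^{+d}$ is again quasi-convex and, since $d\ge10\bar\delta$ and $\q Z$ avoids $B(\q v,\rho/5+d+1210\bar\delta)$, any geodesic between two points of $\q Z^{+d}$ stays within $\q Z^{+(d+C\bar\delta)}$ (thinness of geodesic quadrilaterals) and hence at distance more than $\rho/10$ from every apex; so the same construction produces a lift $\tilde Z$ of $\q Z^{+d}$ based at $z_0$, with $\zeta$ restricting to an isometry $\tilde Z\to\q Z^{+d}$, and uniqueness of lifts gives $Z\subseteq\tilde Z$. Given $\q g\in\q S$ we have $\q g\,\q z_0\in\q g\,\q Z\subseteq\q Z^{+d}$; picking $\q w\in\q Z$ with $|\q g\,\q z_0-\q w|\le d$ and lifting the short geodesic $[\q w,\q g\,\q z_0]$ from the point of $Z$ over $\q w$ yields the $\tilde Z$-representative $z_1$ of $\q g\,\q z_0$. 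Since $\zeta^{-1}(\q g\,\q z_0)$ is a single $K$-orbit, which contains $g'z_0$ for any lift $g'$ of $\q g$, there is a lift $g\in G$ of $\q g$ with $g z_0=z_1$; let $S$ be the collection of these $g$. Finally, for $z,z'\in Z$, lifting from $z_0$ the path $[\q z_0,\q w]\cup[\q w,\q g\,\q z_0]\cup\q g[\q z_0,\q z]$ — which lies in $\q Z^{+(d+C\bar\delta)}$ — shows that its terminal point is $gz$, so $gz$ is the $\tilde Z$-representative of $\q g\,\q z$ and therefore $gz\in\tilde Z$; as also $z'\in Z\subseteq\tilde Z$ and $\zeta$ is isometric on $\tilde Z$, we obtain $|gz-z'|_{\cX}=|\q g\,\q z-\q z'|_{\qX}$, which is the claim.
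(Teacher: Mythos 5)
The paper offers no proof of this statement: it is imported verbatim from \cite{coulon_geometry_2014} (Proposition~3.21) with a reference in lieu of an argument, so there is nothing in the paper itself to compare your proof against. On its own terms, your covering-space argument is sound and rests on the right two facts: Theorem~\ref{P: hyperbolicity quotient}(2) makes $\zeta$ a bijective isometry on each ball $B(x,\rho/20)$ whose centre is at distance at least $\rho/10$ from every apex, and the quasi-convexity of $\q Z$ together with the hypothesis on $B(\q v, \rho/5+d+1210\q\delta)$ confines every geodesic, every thin triangle, and every translate $\q g[\q z_0,\q z]$ that you use to that region. Your quantitative checks (for instance that geodesics between points of $\q Z^{+d}$ stay in $\q Z^{+(d+O(\q\delta))}$ because $d\geq 10\q\delta$, hence at distance more than $\rho/5$ from the apices) come out right.

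Two points need more care in the write-up, though neither breaks the argument. First, Theorem~\ref{P: hyperbolicity quotient}(2) gives bijectivity of $\zeta$ on a single ball, not literally that $\zeta^{-1}\bigl(B(\q x,\rho/20)\bigr)$ is a \emph{disjoint} union of isometric balls, so ``genuine covering'' is an overstatement. This is harmless: unique path lifting follows from local injectivity, and your homotopy step can be run by tracking, for each parameter $s$, the unique preimage of the second path's point inside $B(\alpha(s),\rho/20)$, which uses only single-ball bijectivity. Second, the path-independence you actually establish is for two \emph{geodesics} with the same endpoints (these fellow-travel within $O(\q\delta)\ll\rho/20$, so no interpolation is even needed there), but you then invoke it for a broken geodesic versus the geodesic joining its endpoints --- both when proving $|z-z'|\leq|\q z-\q z'|$ and in the equivariance step at the end. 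Those two paths are not uniformly close: there is a spike of length $(\q z_0,\q z')_{\q z}$. You must additionally homotope across the thin triangle, sweeping the spike down to the opposite side through paths made of two geodesic segments joined by a bridge of length at most $4\q\delta$; all of these stay in $\q Z^{+(d+O(\q\delta))}$, so the comparison argument you already made applies to each consecutive pair. With these routine repairs the proof is complete.
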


\subsection{Group action on \texorpdfstring{$\qX$}{X}}

We collect some properties of the action of $\q G$.

\begin{lem}[Lemma 6.8 of \cite{coulon_geometry_2014}]\label{L: rotation quotient space} If $v\in \mathcal V$ and $g\in \q G \setminus \stab{\q v}$, then for every $\q x\in \qX$ we have
\begin{equation*}
|\q g \,\q x -\q x|\geqslant 2(\rho - |\q x -\q v|). 
\end{equation*}  \qed
\end{lem}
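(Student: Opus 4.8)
The plan is to reduce everything to one geometric fact about the cone-off construction: \emph{in $\qX$, any two distinct points of $\q{\mathcal V}$ are at distance at least $2\rho$}. Granting this, the lemma follows in two lines. If $|\q x - \q v| \geq \rho$ the right-hand side is non-positive and there is nothing to prove, so assume $|\q x - \q v| < \rho$. Since $\q g \notin \stab{\q v}$ we have $\q g \q v \neq \q v$, and $\q g \q v$ is again the image of an apex (the action of $\qG$ on $\qX$ permutes $\q{\mathcal V}$, because the action of $G$ on $\cX$ permutes the cones and their apices). Hence $|\q v - \q g\q v| \geq 2\rho$, and the triangle inequality together with the fact that $\q g$ acts by isometries gives
\begin{equation*}
	2\rho \leq |\q v - \q g \q v| \leq |\q v - \q x| + |\q x - \q g\q x| + |\q g\q x - \q g\q v| = 2\,|\q v - \q x| + |\q x - \q g\q x|,
\end{equation*}
which rearranges to the claimed inequality.

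The real content, and where I would spend all the effort, is the separation statement for apices. I would first prove it upstairs in $\cX$: if $v_1$ and $v_2$ are the apices of two distinct cones $Z(Y_1)$ and $Z(Y_2)$ (with $(H_1,Y_1)\neq(H_2,Y_2)\in\mathcal Q$), then $|v_1 - v_2|_{\cX} \geq 2\rho$. Let $\gamma\colon[0,L]\to\cX$ be a geodesic from $v_1$ to $v_2$, with $L = |v_1-v_2|_{\cX}$. For every $t < \rho$ we have $|\gamma(t) - v_1|_{\cX} = t < \rho = d(v_1, Y_1)$ (the distance from the apex to $\iota(Y_1)$ measured inside $Z(Y_1)$), so Lemma~\ref{L: metric cone off 2} forces $\gamma(t)\in Z(Y_1)$; moreover, since $d_{\cX}(v_1,\iota(Y_1)) = \rho$, we have $\gamma(t)\notin\iota(Y_1)$, so $\gamma(t)$ lies in $Z(Y_1)\setminus\iota(Y_1)$, a region which meets no other cone (distinct cones are glued along $X$ and so intersect only inside $X = \iota(Y_1)$). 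In particular $\gamma(t)\notin Z(Y_2)$ for all $t < \rho$. By the symmetric argument, $\gamma(t)\in Z(Y_2)$ for all $t > L - \rho$. These two conditions are incompatible unless the intervals $[0,\rho)$ and $(L-\rho,L]$ are disjoint, i.e. $L \geq 2\rho$.

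To pass to the quotient, recall that $|\q x - \q x'|_{\qX} = \inf_{h\in K}|hx-x'|_{\cX}$. Given distinct $\q v,\q w \in \q{\mathcal V}$ with lifts $v,w\in\mathcal V$, and any $h\in K$, the point $hv$ is again an apex (as $h\in K\subseteq G$ permutes the cones) and $hv\neq w$, for otherwise $\q v = \zeta(v) = \zeta(hv) = \zeta(w) = \q w$; hence $|hv-w|_{\cX}\geq 2\rho$ by the previous paragraph, and taking the infimum over $h$ yields $|\q v - \q w|_{\qX}\geq 2\rho$. The main obstacle is really the bookkeeping in this $\cX$-level argument: one must be sure that the ``collar'' $\{\gamma(t): t<\rho\}$ genuinely lies in $Z(Y_1)\setminus\iota(Y_1)$ (and symmetrically near $v_2$), and this is precisely the content of Lemma~\ref{L: metric cone off 2}; once that is in place, the triangle inequality does the rest. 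Note that no small cancellation hypothesis is needed for the separation estimate — it is a feature of the cone-off itself — which is why the constant in the statement is the clean value $2\rho$.
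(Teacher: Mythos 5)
Your proof is correct. The paper itself gives no argument for this lemma --- it is quoted verbatim from \cite{coulon_geometry_2014} with a \qed --- so there is nothing internal to compare against; but your route is exactly the standard one behind that reference: observe that $\q g\,\q v$ is an apex distinct from $\q v$, reduce the claim to the $2\rho$-separation of distinct points of $\q{\mathcal V}$ via the triangle inequality, prove that separation upstairs in $\cX$ using Lemma~\ref{L: metric cone off 2} (a path leaving an apex stays in the open cone for time $<\rho$, and distinct open cones are disjoint), and push it down to $\qX$ through the formula $|\q v-\q w|_{\qX}=\inf_{h\in K}|hv-w|_{\cX}$, using that $K$ permutes $\mathcal V$ and that $hv\neq w$ whenever $\q v\neq\q w$. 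The only cosmetic caveat is your appeal to a genuine geodesic from $v_1$ to $v_2$ in $\cX$; this is harmless (the cone-off is proper and geodesic in this setting, and in any case running the same collar argument along a path of length $\leq L+\epsilon$ and letting $\epsilon\to 0$ gives the same bound).
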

 
In combination with assertion (2) of Theorem \ref{P: hyperbolicity quotient}, the previous lemma implies that local properties of the action are often inherited from the  action of $G$ on the cone-off space. For example, if $\q F$ is an elliptic subgroup of $\q G$, then either $\q F \subseteq \stab{\q v}$ for some $v\in \mathcal V$ or it is the image of an elliptic subgroup of $G$, see \cite[Proposition~6.12]{coulon_geometry_2014}.

There is a lower bound on the injectivity radius of the action on $\q X$, and an upper bound on the acylindricity parameter.

\begin{prop}[Proposition 6.13 of \cite{coulon_geometry_2014}]\label{P: injectivity radius quotient}
Let $\ell=\inf \{\|  g \|^{\infty}\mid g\not\in \stab Y,\, (H,Y)\in \mathcal Q\}$. Then 
$$\tau(\q G, \q X)\geqslant \min\left\{\frac{\rho\ell}{4\pi \sinh \rho }, \q \delta\right\}.$$ \qed
\end{prop}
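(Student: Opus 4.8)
The plan is to bound from below the stable translation length of an arbitrary hyperbolic isometry $\q g$ of $\qX$; since $\tau(\qG,\qX)$ is by definition the infimum of these lengths (and is $\infty$ if there is no hyperbolic isometry), this is enough. Write $\q\ell=\|\q g\|^\infty$. We may assume $\q\ell<\q\delta$, as otherwise $\q\ell\geq\min\{\rho\ell/(4\pi\sinh\rho),\q\delta\}$ already. The first thing I would record is that $\q g$ fixes no apex: a hyperbolic isometry has unbounded orbits, whereas every element of an apex stabiliser $\stab{\q v}$ fixes the point $\q v$ and hence has bounded orbits.

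Next I would push the quasi-axis of $\q g$ away from all apices. The set $A_{\q g}=\Fix(\q g,\|\q g\|+8\q\delta)$ is non-empty, $\q g$-invariant, unbounded (because $\q g$ is hyperbolic), and $10\q\delta$-quasi-convex by Lemma~\ref{L: convexity fixpoints}. Since $\|\q g\|\leq\q\ell+16\q\delta<17\q\delta$, every point of $A_{\q g}$ is moved by $\q g$ by less than $25\q\delta$; as $\q g$ fixes no apex, Lemma~\ref{L: rotation quotient space} then gives, for every apex $\q v$ and every $\q x\in A_{\q g}$, that $25\q\delta>|\q g\q x-\q x|\geq 2(\rho-|\q x-\q v|)$, i.e. $|\q x-\q v|>\rho-13\q\delta$. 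Using that $\rho$ is large compared to $\delta_1\geq\q\delta$, this shows $A_{\q g}\cap B(\q v,\rho/5+1220\q\delta)=\emptyset$ for every apex $\q v$.

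I would then lift everything to the cone-off space. Apply Proposition~\ref{L: lifts of quasi-convex} with $\q Z=A_{\q g}$, $d=10\q\delta$ and $\q S=\{\q g\}$ (note $\q g A_{\q g}=A_{\q g}\subseteq A_{\q g}^{+d}$): it provides a pre-image $Z\subset\cX$ on which $\zeta$ restricts to an isometry onto $A_{\q g}$, together with an element $g\in G$ over $\q g$ such that $|gz-z'|_{\cX}=|\q g\q z-\q z'|_{\qX}$ for all $z,z'\in Z$. Taking for $z'$ the point of $Z$ above $\q g\q z$ forces $gz=z'$, so $g$ preserves $Z$ and acts there exactly as $\q g$ acts on $A_{\q g}$; iterating, $|g^nz-z|_{\cX}=|\q g^n\q z-\q z|_{\qX}$, whence $g$ is a hyperbolic isometry of $\cX$ with $\|g\|^\infty_{\cX}=\q\ell$. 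Moreover $g$ has infinite order (else $g^k=1$ would give $\q g^k=1$), so — $G$ being a hyperbolic group acting properly cocompactly on $X$ — $g$ is also a hyperbolic isometry of $X$; and $g$ stabilises no $Y$ with $(H,Y)\in\mathcal Q$, since by Theorem~\ref{P: hyperbolicity quotient}(3) the projection $G\to\qG$ carries $\stab Y$ onto $\stab{\q v_Y}$, so $g\in\stab Y$ would put $\q g$ in $\stab{\q v_Y}$, contradicting the first step. By the definition of $\ell$ we conclude $\|g\|^\infty_X\geq\ell$.

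What remains — and the step I expect to be the main obstacle — is the comparison $\|g\|^\infty_{\cX}\geq\rho\,\|g\|^\infty_X/(4\pi\sinh\rho)$, i.e. a quantitative bound on how much coning off can shorten the axis of $g$. The ingredients I would use are: since $g$ stabilises no $Y$, its $X$-axis meets each quasi-convex $Y$ in a segment of length at most $\|g\|+\Delta(\mathcal Q)$ up to a bounded additive error (a longer overlap would make two distinct $G$-translates of some $Y$ overlap by more than $\Delta_0$, contradicting $\Delta(\mathcal Q)\leq\Delta_0$, while $\mathcal T(\mathcal Q)>4\pi\sinh\rho$ keeps the cones from interfering); the embedding $X\hookrightarrow\cX$ distorts distances exactly by the concave function $\mu$ (Lemma~\ref{L: metric cone off}), which on $[0,\pi\sinh\rho]$ lies above the chord joining $(0,0)$ to $(\pi\sinh\rho,2\rho)$, so that a geodesic of $Y$-length $t$ survives in $\cX$ with length at least $\tfrac{2\rho}{\pi\sinh\rho}\min\{t,\pi\sinh\rho\}$; and Proposition~\ref{P: mu} to invert $\mu$ on lengths below $2\rho$. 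Writing a $\cX$-geodesic from $y_0$ to $g^ny_0$ as alternating $X$-stretches and cone-excursions, bounding the $Y$-length each excursion can absorb, and using that the axis crosses on the order of $n$ distinct cones, one bounds $n\,\|g\|^\infty_X$ above by a multiple $4\pi\sinh\rho/\rho$ of $|g^ny_0-y_0|_{\cX}$; dividing by $n$ and letting $n\to\infty$ gives $\|g\|^\infty_X\leq(4\pi\sinh\rho/\rho)\,\|g\|^\infty_{\cX}$, and combined with $\|g\|^\infty_X\geq\ell$ and $\|g\|^\infty_{\cX}=\q\ell$ this closes the argument. Everything before this comparison is routine given Lemma~\ref{L: rotation quotient space}, Proposition~\ref{L: lifts of quasi-convex} and Theorem~\ref{P: hyperbolicity quotient}; the real work, including the care needed to keep the constants independent of $X$ and $\mathcal Q$, is in this last metric estimate.
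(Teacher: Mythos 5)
The proposition carries a \qed{} in the paper because it is quoted from \cite{coulon_geometry_2014}; there is no in-paper proof to compare with, so I judge your argument on its own. Your first five steps are correct and are the right way to set things up: reducing to $\|\q g\|^\infty<\q\delta$, observing that a hyperbolic $\q g$ fixes no apex, pushing $A_{\q g}$ out of the balls $B(\q v,\rho/5+1220\q\delta)$ via Lemma~\ref{L: rotation quotient space}, lifting via Proposition~\ref{L: lifts of quasi-convex} to obtain a \emph{single} $g\in G$ with $|g^mz-z|_{\cX}=|\q g^m\q z-\q z|$ for all $m$, and concluding that $g$ is hyperbolic in $X$, lies in no $\stab Y$, hence satisfies $\|g\|^\infty_X\geq\ell$. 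Getting a fixed lift whose powers compute $|\q g^m\q z-\q z|$ is essential (a preimage of $\q g^m$ chosen separately for each $m$ would only give $\|k_m\|^\infty_X\geq\ell$, which does not scale with $m$), and you do this correctly.

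The gap is exactly where you place it, but it is a genuine gap rather than a routine verification: the comparison $\|g\|^\infty_{\cX}\gtrsim\tfrac{\rho}{\pi\sinh\rho}\|g\|^\infty_X$ is only asserted, and the decomposition you sketch does not close as described. Concretely: (i) your per-excursion bound is $\|g\|_X+\Delta(\mathcal Q)$ up to error, but $\|g\|_X$ is not controlled from above, and once an excursion absorbs more than $\pi\sinh\rho$ of $Y$-length it costs only $2\rho$ (the geodesic passes through the apex), so the factor $2\rho/\pi\sinh\rho$ is lost; (ii) to invoke the overlap bound at all you need the entry and exit points of each excursion to lie near the $X$-axis of $g$, whereas they only lie on a $\cX$-geodesic, and relating $\cX$-geodesics to $X$-geodesics is precisely the hard content of the cone-off machinery, not a consequence of the lemmas quoted here; (iii) the count ``order of $n$ distinct cones'' is unjustified. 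Fortunately no such decomposition is needed. Choose $\q z\in A_{\q g}$ within $52\q\delta$ of the cylinder $C_{E(\q g)}$ (Lemma~\ref{L: invariant cylinder}), so that $|\q g^m\q z-\q z|\leq m\|\q g\|^\infty+300\q\delta$ for every $m$, let $z\in Z$ be its lift, and note that $z$ is within $13\q\delta$ of $X$ by your second step; pick $x\in X$ with $|x-z|_{\cX}\leq 13\q\delta$. Lemma~\ref{L: metric cone off} then gives, for every $m$,
\begin{equation*}
\mu\left(m\|g\|^\infty_X\right)\;\leq\;\mu\left(|g^mx-x|_X\right)\;\leq\;|g^mx-x|_{\cX}\;\leq\;|\q g^m\q z-\q z|+26\q\delta\;\leq\; m\|\q g\|^\infty+326\q\delta .
\end{equation*}
If $\|g\|^\infty_X\geq\pi\sinh\rho$, take $m=1$: since $\mu(t)=2\rho$ for $t\geq\pi\sinh\rho$, this forces $\|\q g\|^\infty\geq 2\rho-326\q\delta\geq\q\delta$. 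Otherwise take $m=\lceil\pi\sinh\rho/\|g\|^\infty_X\rceil\leq 2\pi\sinh\rho/\ell$; again the left-hand side equals $2\rho$, whence $m\|\q g\|^\infty\geq 2\rho-326\q\delta\geq\rho$ and $\|\q g\|^\infty\geq\rho/m\geq\rho\ell/(2\pi\sinh\rho)$. This single, well-chosen power exploits only the saturation of $\mu$ at $2\rho$ and needs neither $\Delta(\mathcal Q)$ nor any analysis of where $\cX$-geodesics enter the cones.
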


We recall that $L_0$ is the number fixed in Section~\ref{sec: quasi-geodesics} using stability of quasi-geodesics.

\begin{prop}[Corollary 6.15 of \cite{coulon_geometry_2014}]\label{P: diameter thin part quotient}
Assume that all elementary subgroups of $G$ are cyclic infinite or finite with odd order.
If $\stab Y$ is elementary for every $(H,Y) \in \mathcal Q$, then 
$A(\q G, \q X)\leqslant A(G,X) + 5 \pi \sinh (2L_0\q  \delta).$ 
 \qed
\end{prop}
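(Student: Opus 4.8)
The plan is to transport the quasi-fixed-point set from $\bar X$ down through the cone-off space $\cX$ to $X$ and compare it with $A(G,X)$. Fix a subset $\bar U\subseteq\bar G$ not generating an elementary subgroup, set $\bar Z=\Fix(\bar U,2L_0\bar\delta)$, and note it suffices to bound $\diam\bar Z$; we may assume $\bar Z\neq\emptyset$. The first step is to show that $\bar Z$ stays far from every apex. By Theorem~\ref{P: hyperbolicity quotient}(3) each $\stab{\bar v}$, $\bar v\in\bar{\mathcal V}$, is the image of $\stab Y$ for some $(H,Y)\in\mathcal Q$; since $\stab Y$ is elementary and, by the hypothesis on elementary subgroups of $G$, infinite cyclic or finite of odd order, its quotient $\stab Y/H$ is finite or infinite cyclic, hence still elementary in $\bar G$. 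Thus $\bar U\not\subseteq\stab{\bar v}$; choosing $\bar u\in\bar U\setminus\stab{\bar v}$, Lemma~\ref{L: rotation quotient space} gives $2(\rho-|\bar x-\bar v|)\leq|\bar u\bar x-\bar x|<2L_0\bar\delta$ for every $\bar x\in\bar Z$, so $|\bar x-\bar v|>\rho-L_0\bar\delta$. Since $\rho>\rho_0$ is large compared with $\bar\delta$, this puts $\bar Z$ outside $B(\bar v,\rho/5+2L_0\bar\delta+1210\bar\delta)$ for every apex $\bar v$.

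Next I would lift. The set $\bar Z$ is $10\bar\delta$-quasi-convex by Lemma~\ref{L: convexity fixpoints} (recall $2L_0\bar\delta>7\bar\delta$), and $\bar U\bar Z\subseteq\bar Z^{+2L_0\bar\delta}$, so Proposition~\ref{L: lifts of quasi-convex} applied with $d=2L_0\bar\delta$ yields a preimage $Z\subseteq\cX$ on which $\zeta$ restricts to an isometry onto $\bar Z$, together with a preimage $S\subseteq G$ of $\bar U$ with $|gz-z'|_{\cX}=|\bar g\bar z-\bar z'|_{\bar X}$ for all $g\in S$, $z,z'\in Z$. Hence $\diam_{\cX}Z=\diam_{\bar X}\bar Z$ and $Z\subseteq\Fix_{\cX}(S,2L_0\bar\delta)$. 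Moreover $S$ generates a non-elementary subgroup of $G$: the maps $X\to\cX\to\bar X$ are $1$-Lipschitz, so the image of an elementary (in particular loxodromic or elliptic) subgroup of $G$ acts on $\bar X$ with a limit set of at most two points; if $\langle S\rangle$ were elementary, then $\langle\bar U\rangle$ would be elementary too, contradicting the choice of $\bar U$.

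It remains to push $Z$ into $X$. Because $Z$ avoids the $(\rho-L_0\bar\delta)$-neighborhood of every apex, a point $z=(y,r)\in Z$ lying in the interior of a cone $Z(Y)$ must have $r>\rho-L_0\bar\delta$ (its distance to the apex in the cone metric is exactly $r$); by Lemma~\ref{L: metric cone off 2} it then lies within $\rho-r<L_0\bar\delta$ of $\iota(y)\in X$, so setting $z^{\#}=\iota(y)$ (and $z^{\#}=z$ when $z\in X$) we get $|z-z^{\#}|_{\cX}<L_0\bar\delta$ and, by equivariance, $gz^{\#}=\iota(gy)$ with $|gz-gz^{\#}|_{\cX}<L_0\bar\delta$. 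For $g\in S$ the triangle inequality gives $|gz^{\#}-z^{\#}|_{\cX}<4L_0\bar\delta<2\rho$, whence by Lemma~\ref{L: metric cone off} and Proposition~\ref{P: mu} we obtain $|gz^{\#}-z^{\#}|_X<\pi\sinh(2L_0\bar\delta)$, so every shadow $z^{\#}$ lies in $\Fix_X(S,\pi\sinh(2L_0\bar\delta))$. Using the convexity estimates of Lemma~\ref{L: convexity fixpoints}, each $z^{\#}$ is within $\frac12\pi\sinh(2L_0\bar\delta)+7\delta$ of $\Fix_X(S,2L_0\delta)$, which has diameter at most $A(G,X)$ since $S$ is non-elementary. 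Combining these estimates, for all $z_1,z_2\in Z$
\[
|z_1-z_2|_{\cX}\leq|z_1-z_1^{\#}|_{\cX}+|z_1^{\#}-z_2^{\#}|_X+|z_2^{\#}-z_2|_{\cX}\leq A(G,X)+\pi\sinh(2L_0\bar\delta)+2L_0\bar\delta+14\delta,
\]
and absorbing $2L_0\bar\delta+14\delta$ into a multiple of $\pi\sinh(2L_0\bar\delta)$ gives $\diam\bar Z=\diam_{\cX}Z\leq A(G,X)+5\pi\sinh(2L_0\bar\delta)$.

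The main obstacle is the coordination across the three spaces rather than any isolated inequality: one must guarantee that $\bar Z$ (hence its lift) genuinely misses the apex neighborhoods required by Proposition~\ref{L: lifts of quasi-convex}, that the lifted set $S$ remains non-elementary in $G$, and --- the crux --- that the distortion map $\mu$ enlarges a displacement of at most $4L_0\bar\delta$ in $\cX$ into one of at most $\pi\sinh(2L_0\bar\delta)$ in $X$, which is exactly where the correction term $5\pi\sinh(2L_0\bar\delta)$ originates. A secondary delicate point is that this cone distortion forces one to compare with quasi-fixed-point sets of radius slightly larger than $2L_0\delta$, so one needs the convexity of quasi-fixed-point sets (Lemma~\ref{L: convexity fixpoints}) to bring the bound back to $A(G,X)$.
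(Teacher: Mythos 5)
The paper does not actually prove this proposition; it is quoted verbatim from Corollary~6.15 of \cite{coulon_geometry_2014} with only a remark on where the correction term comes from. Your argument reconstructs the intended proof, and its architecture is exactly the one the paper itself uses for the analogous statement on acylindricity (Proposition~\ref{P: small cancellation acylindricity}, Lemma~\ref{L: acylindric quotient 4}): show the quasi-fixed set avoids the apices via Lemma~\ref{L: rotation quotient space}, lift it isometrically to $\cX$ with Proposition~\ref{L: lifts of quasi-convex}, project radially to $X$, and control the distortion with $\mu$ and Proposition~\ref{P: mu}. All of those steps check out, including the verification that $\stab{\q v}$ is elementary (a quotient of a cyclic group is cyclic, hence elementary) and that the lifted set $S$ cannot generate an elementary subgroup of $G$.

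There is one genuine, though easily repaired, gap in the last step. You invoke Lemma~\ref{L: convexity fixpoints}(2) to place each shadow $z^{\#}$ within $\tfrac12\pi\sinh(2L_0\q\delta)+7\delta$ of $\Fix(S,2L_0\delta)$, but that part of the lemma is stated under the assumption that $\Fix(S,2L_0\delta)$ is \emph{non-empty}, and nothing in your construction guarantees this: you only know that every $z^{\#}$ lies in $\Fix\bigl(S,\pi\sinh(2L_0\q\delta)\bigr)$, and $\pi\sinh(2L_0\q\delta)$ may well exceed $2L_0\delta$ (for instance $S$ could contain a hyperbolic element of translation length larger than $2L_0\delta$, making the smaller quasi-fixed set empty while the larger one is not). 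The correct tool here is Proposition~\ref{P: margulis 2}, which bounds $\diam\Fix(S,d)$ for \emph{any} $d>0$ without a non-emptiness hypothesis: since $S$ generates a non-elementary subgroup and $\nu(G,X)=1$ (all elementary subgroups of $G$ are cyclic), taking $d=\pi\sinh(2L_0\q\delta)$ gives
\begin{equation*}
	\diam\left(\Fix\bigl(S,\pi\sinh(2L_0\q\delta)\bigr)\right) \leq 4\pi\sinh(2L_0\q\delta) + A(G,X) + 209\delta,
\end{equation*}
and adding the $2L_0\q\delta$ lost in passing from $Z$ to the shadows still lands comfortably below $A(G,X)+5\pi\sinh(2L_0\q\delta)$. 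With that substitution your proof is complete.
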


Note that the proposition actually does not require that finite subgroups of $G$ have odd order.
This assumption in \cite[Propositions 6.15]{coulon_geometry_2014} was mainly made to simplify the overall exposition in this paper.
The error of the order of $\pi \sinh (2L_0 \q\delta)$ in the above estimates is reminiscent of the distortion of the embedding of $X$ into $\cX$, measured by the map $\mu$, see Proposition~\ref{P: mu}.

\subsection{Acylindricity}

Let us assume that all elementary subgroups of $G$ are cyclic (finite or infinite).   
In particular, it follows that $\nu(G,X) = 1$, see for instance \cite[Lemma~2.40]{coulon_geometry_2014}.
Moreover, we assume that for every pair $(H,Y)\in \mathcal Q$, there is a primitive hyperbolic element $h\in G$ and a number $n$ such that $H=\langle h^n \rangle$ and $Y$ is the cylinder $C_H$ of $H$.

\begin{prop}\label{P: small cancellation acylindricity}
The action of $\q G$ on $\q X$ is $(\q N,\q \kappa)$-acylindrical, where 
\begin{equation*}
\q N \leq \max\left\{N, \frac{3\pi\sinh\rho}{\tau(G,X)}+1\right\}
\quad \text{and} \quad
\q \kappa = \max \{A(G,X),\kappa\} + 5\pi \sinh (150\bar \delta).
\end{equation*}
\end{prop}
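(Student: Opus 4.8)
The plan is to deduce acylindricity of the action of $\bar G$ on $\bar X$ from the local-to-global principle recorded in~\eqref{eqn: local-global acyl}, so that it suffices to bound the number of elements of $\bar G$ that move two sufficiently distant points $\bar x,\bar y\in\bar X$ by at most $100\bar\delta$. First I would fix such a pair $\bar x,\bar y$ with $|\bar x-\bar y|\geq\bar\kappa$, where $\bar\kappa$ is chosen large enough (in terms of $A(G,X)$, $\kappa$, $\rho$ and $\bar\delta$) that one can separate two regimes: either the relevant points lie far from every apex of $\bar{\mathcal V}$, in which case we can lift the configuration to the cone-off space $\dot X$ using Theorem~\ref{P: hyperbolicity quotient}(2) (or Proposition~\ref{L: lifts of quasi-convex} applied to a geodesic $[\bar x,\bar y]$), or one of them is close to an apex $\bar v$, in which case Lemma~\ref{L: rotation quotient space} forces any element moving that point by at most $100\bar\delta$ to lie in $\stab{\bar v}$.

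In the first regime, after lifting $\bar x,\bar y$ to points $x,y\in\dot X$ with the same pairwise distances and the same displacements under the lifted elements, I would invoke Lemma~\ref{L: metric cone off} to transfer the displacement bounds back to $X$: if $g\in G$ moves $x$ and $y$ by at most $100\bar\delta$ in $\dot X$, then it moves the corresponding points of $X$ by at most $\mu^{-1}(100\bar\delta)$, which by Proposition~\ref{P: mu} is bounded by $\pi\sinh(50\bar\delta)$, hence by something comparable to $150\bar\delta$ once $\bar\delta$ is small. Since $|x-y|_X\geq|x-y|_{\dot X}\geq\bar\kappa$ is still large, $(N,\kappa)$-acylindricity of the action of $G$ on $X$ — together with the local-to-global estimate~\eqref{eqn: local-global acyl} to upgrade from $100\delta$-displacements to $\mu^{-1}(100\bar\delta)$-displacements — bounds the number of such $g$ by a function of $N$, $\kappa$, $\delta$, yielding the first alternative $\bar N\leq N$ (after possibly absorbing constants). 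The precise value $\bar\kappa=\max\{A(G,X),\kappa\}+5\pi\sinh(150\bar\delta)$ is dictated by making sure the separation between the two regimes, and the quasi-convexity error terms in the lifting proposition, all fit.

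In the second regime, where $\bar x$ (say) lies within $\rho/2$ of an apex $\bar v$, Lemma~\ref{L: rotation quotient space} shows that every $g\in\bar G$ with $|g\bar x-\bar x|\leq 100\bar\delta$ satisfies $2(\rho-|\bar x-\bar v|)\leq 100\bar\delta$ unless $g\in\stab{\bar v}$; since $\rho$ is huge compared to $\bar\delta$, this forces $g\in\stab{\bar v}$. By Theorem~\ref{P: hyperbolicity quotient}(3), $\stab{\bar v}\cong\stab Y/H$ for the pair $(H,Y)\in\mathcal Q$ with apex $v$, and since we are assuming $(H,Y)$ has the form $H=\langle h^n\rangle$ with $Y=C_H$ and all elementary subgroups of $G$ cyclic, $\stab Y$ is a virtually cyclic loxodromic subgroup containing $h$; the quotient $\stab Y/H=E(h)/\langle h^n\rangle$ is then finite, of cardinality controlled by $n$, which in turn is controlled because $\mathcal T(\mathcal Q)>4\pi\sinh\rho$ forces $\|h^n\|$ large, hence $n\geq 4\pi\sinh\rho/\|h\|\geq 4\pi\sinh\rho/\tau(G,X)$ is bounded below but also the finite group $E(h)/\langle h^n\rangle$ has order at most roughly $n$-ish — more precisely one bounds $|\stab{\bar v}|$ by the number of translates of a fundamental segment, giving the $3\pi\sinh\rho/\tau(G,X)+1$ term. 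Combining the two regimes gives $\bar N\leq\max\{N,3\pi\sinh\rho/\tau(G,X)+1\}$.

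The main obstacle I expect is bookkeeping the constants so that a \emph{single} $\bar\kappa$ works for both regimes simultaneously: in the apex regime one needs $\bar x,\bar y$ far enough from all apices to even be in the cone-off lifting situation of Proposition~\ref{L: lifts of quasi-convex}, whose hypothesis asks the quasi-convex set (a geodesic $[\bar x,\bar y]$) to avoid balls of radius $\rho/5+d+1210\bar\delta$ around apices; reconciling this with the need for $\bar\kappa$ to only involve $A(G,X)$, $\kappa$ and $\pi\sinh(150\bar\delta)$ — and not $\rho$ — requires noticing that if a geodesic between two far apart points passes near an apex, then one of its endpoints is within $A(\bar G,\bar X)$-type distance of that apex, and then falls into the rotation regime directly. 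Controlling $A(\bar G,\bar X)$ via Proposition~\ref{P: diameter thin part quotient} closes this loop. The conversion between $100\bar\delta$-displacement bounds in $\bar X$ and $100\delta$-displacement bounds in $X$ through $\mu$ and~\eqref{eqn: local-global acyl} is routine but must be done carefully to keep the final $\bar N$ depending only on $N$ (and not on $\rho$) in the first regime.
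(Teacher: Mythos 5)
Your two-case decomposition (apex stabilizer versus not) is exactly the paper's structure, but both halves of your execution contain genuine gaps that would prevent you from reaching the stated bounds.

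In the non-apex regime, after lifting you obtain elements of $G$ that move two far-apart points of $X$ by at most $\mu^{-1}(O(\bar\delta))\leq\pi\sinh(150\bar\delta)$. This quantity is \emph{not} ``comparable to $150\bar\delta$'' in the relevant sense: in the setting where the proposition is applied, $X$ is a rescaled space whose hyperbolicity constant $\delta\leq\delta_0$ is much smaller than $\bar\delta\sim\delta_1$ (and tends to $0$ along the iteration), so $\pi\sinh(150\bar\delta)/\delta$ is huge. Upgrading the displacement threshold via~\eqref{eqn: local-global acyl} therefore multiplies $N$ by a factor of order $\pi\sinh(150\bar\delta)/(5\delta)$, destroying the conclusion $\bar N\leq N$ --- which is precisely the point of the proposition (see the remark following it). The paper avoids this by first invoking the Margulis-type Proposition~\ref{P: margulis 2}: since $\diam\Fix(S,\pi\sinh(150\bar\delta))$ exceeds $A(G,X)+4\pi\sinh(150\bar\delta)+209\delta$ (this is what dictates the value of $\bar\kappa$), the lifted set $S$ generates an elementary, hence cyclic, subgroup; in the loxodromic case one counts by translation length and gets the $3\pi\sinh\rho/\tau(G,X)+1$ term, while in the elliptic case $\Fix(S,14\delta)$ is non-empty with diameter larger than $\kappa$, so the original $(N,\kappa)$-acylindricity applies with genuine $100\delta$-displacements and yields $|S|\leq N$.

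In the apex regime, you attempt to bound $|\stab{\bar v}|=|\stab Y/H|=|E(h)/\langle h^n\rangle|$ directly, but this group has order roughly $n\cdot[E(h):\langle h\rangle]$, and the condition $\mathcal T(\mathcal Q)>4\pi\sinh\rho$ only bounds $n$ from \emph{below}; in the Burnside application $n$ is the exponent and is arbitrarily large, so $|\stab{\bar v}|$ is not bounded by $3\pi\sinh\rho/\tau(G,X)+1$. The correct argument must use the second point: an element that almost-fixes both $\bar v$ and a point at distance at least $\bar\kappa-\bar\delta$ from $\bar v$ also almost-fixes (by quasi-convexity of the quasi-fixed-point set) a point $\bar z$ at distance exactly $100\bar\delta$ from $\bar v$ inside the cone; lifting and projecting radially onto $Y$, its pre-image moves a point of $Y$ by less than $\pi\sinh\rho$, and only then does counting inside the infinite cyclic group $\stab Y$ give $2(\pi\sinh\rho+112\delta)/\tau(G,X)+1\leq 3\pi\sinh\rho/\tau(G,X)+1$.
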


\begin{rem}
	It is already known that if $G$ acts acylindrically on $X$, then so does $\q G$ on $\q X$, see Dahmani-Guirardel-Osin \cite[Proposition 2.17, 5.33]{dahmani_hyperbolically_2017}.
	However in their proof $\q\kappa$ is much larger than $\rho$.
	For our purpose we need a sharper control on the acylindricity parameters.
	With our statement, we will be able to ensure that $\q\kappa \ll \rho$.
	
	Later we will use this statement during an induction process for which we also need to control \emph{uniformly} the value of $N$.
	Unlike in \cite{dahmani_hyperbolically_2017}, if $N$ is very large, our estimates tells us that $\bar N \leq N$.
\end{rem}

\begin{proof}
Let $\q S \subset \q G$, let 
$$\q Z= \Fix(\q S, 100\bar\delta)$$ 
and let us assume that $\diam \q Z \geqslant \q \kappa$.
We are going to prove that $\q S$ contains at most $\q N$ elements.
We distinguish two cases: either $\q S$ fixes an apex $\q v \in \q {\mathcal V}$ or not.

\begin{lem}\label{L: acylindric quotient 2} If there is $\q v \in \q {\mathcal V}$, such that $\q S \subset \stab{\q v}$, then $|\q S|\leqslant 3\pi \sinh\rho/\tau(G,X) +1$.
\end{lem}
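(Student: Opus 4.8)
The plan is to reduce the question to the action of $\stab Y / H$ on the link of the apex $\q v$, which by Theorem~\ref{P: hyperbolicity quotient}(3) is isomorphic to $\stab{\q v}$. Write $v \in \mathcal V$ for the apex over a pair $(H, Y) \in \mathcal Q$, so that $\q v = \zeta(v)$ and, by hypothesis, $Y = C_H$ is the cylinder of $H = \langle h^n \rangle$ for a primitive hyperbolic $h \in G$. Since $\diam \q Z \geq \q\kappa$ and $\q\kappa > \rho$ (recall $\q\kappa = \max\{A(G,X),\kappa\} + 5\pi\sinh(150\bar\delta)$ while $\rho$ is chosen much larger than $\bar\delta$ — or, more honestly, one argues via Lemma~\ref{L: rotation quotient space}), we first locate a point $\q z \in \q Z$ that is far from $\q v$, say with $|\q z - \q v| \geq \rho/2$ or so. For any $\q g \in \q S \subset \stab{\q v}$ we then have $|\q g \q z - \q z| \leq 100\bar\delta$, which is tiny compared to $\rho$; but $\q g$ fixes $\q v$, so $\q g$ essentially acts as a ``rotation'' about $\q v$. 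The key geometric input is that a small displacement at a point of radius $\approx \rho$ from the apex forces the corresponding element of $\stab Y/H$ to move points of $Y$ by a bounded amount — quantitatively, the cone metric formula $\cosh|x - x'| = \cosh^2 r - \sinh^2 r \cos\theta(y,y')$ shows that if $x = (y,r)$, $x' = (y', r)$ with $r \approx \rho$ and $|x - x'| \leq 100\bar\delta$, then the angle $\theta(y,y')$ is small, hence $|y - y'|_Y \leq (\sinh\rho)\theta(y,y')$ is at most roughly $100\bar\delta \sinh\rho / \sinh\rho \cdot(\text{const})$, i.e.\ a bounded multiple of $\bar\delta \cdot (\sinh\rho/\sinh\rho)$; the upshot is a displacement bound of the form $\lesssim \pi\sinh\rho \cdot$ (something), which I would package as: the elements of $\q S$ lift to a subset $S \subset \stab Y$ moving some point $y \in Y$ by at most, say, $\|h^n\|$ up to controlled error.

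Next I would invoke that $\stab Y$ is elementary (it is the stabilizer of a cylinder of a loxodromic subgroup, hence loxodromic), so $\stab Y$ is virtually cyclic and $\stab Y / H \cong \stab{\q v}$ is virtually cyclic with $H = \langle h^n\rangle$ of translation length $n\|h\|^\infty \geq n\,\tau(G,X)$ sitting inside it as a finite-index subgroup. Concretely $\stab{\q v}$ is a finite extension of a cyclic group of order $n$ (coming from $\langle h \rangle / \langle h^n\rangle$), so its order is controlled once we know $n$. Since $H \leq \stab Y$ and $\mathcal T(\mathcal Q) > 4\pi\sinh\rho$, we have $\|h^n\| \geq \mathcal T(\mathcal Q) > 4\pi\sinh\rho$, hence $n \geq \mathcal T(\mathcal Q)/\|h\| \geq \mathcal T(\mathcal Q)/(\tau(G,X) + 16\delta)$, but this is a lower bound — what I actually need is an \emph{upper} bound on $|\stab{\q v}| = |\stab Y / H|$ in terms of $\rho$ and $\tau(G,X)$. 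That upper bound comes the other way: an element of $\stab{\q v}$ that moves a radius-$\rho$ point by at most $100\bar\delta$ corresponds to an element of $\langle h \rangle / \langle h^n \rangle$ (modulo the finite part, which one handles by the odd-order / cyclic hypothesis making $\stab Y$ itself cyclic) whose representative power $h^k$ with $0 \le k < n$ satisfies $k\|h\|^\infty \lesssim \pi\sinh\rho$ (the ``angular width'' estimate above), so there are at most $\pi\sinh\rho/\|h\|^\infty + 1 \leq \pi\sinh\rho/\tau(G,X) + 1$ such $k$, and rescaling the constant in the displacement bound gives the claimed $3\pi\sinh\rho/\tau(G,X) + 1$.

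So the key steps, in order: (i) find $\q z \in \q Z$ with $|\q z - \q v|$ close to $\rho$, using $\diam \q Z \geq \q\kappa$ together with Lemma~\ref{L: rotation quotient space} (which forbids elements outside $\stab{\q v}$ from having small displacement near the apex, but here is used rather to ensure $\q Z$ reaches out to radius $\approx \rho$); (ii) pass to the cone $Z(Y)$ via Lemma~\ref{L: metric cone off 2} so that the displacement $|\q g \q z - \q z| \le 100\bar\delta$ is computed in the cone metric; (iii) use the cone metric formula and Proposition~\ref{P: mu} to convert the small cone-displacement at radius $\approx\rho$ into a bound $|y - gy|_Y \lesssim \pi\sinh\rho \cdot (\text{const}\cdot\bar\delta/\rho)$, hence into an upper bound on which power of $h$ the element $g$ represents mod $H$; (iv) use Theorem~\ref{P: hyperbolicity quotient}(3) and the hypothesis that $\stab Y$ is (infinite) cyclic generated over $H = \langle h^n\rangle$ to count: $|\stab{\q v}| \le n$ and the surviving elements are those with small representative power, giving $\le 3\pi\sinh\rho/\tau(G,X)+1$. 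The main obstacle I anticipate is step (iii): carefully tracking the constants through the cone metric — in particular making sure the ``small displacement $\Rightarrow$ small angle $\Rightarrow$ bounded number of admissible powers'' chain yields exactly the constant $3\pi$ and not something worse — and making sure the point $\q z$ found in step (i) genuinely lies in the region ($|x - v| \le \rho$, and bounded away from $0$) where Lemma~\ref{L: metric cone off 2} and the cone formula apply cleanly.
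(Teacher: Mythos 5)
Your step (i) is where the argument breaks down, and it cannot be repaired in the form you propose. You want a quasi-fixed point $\q z \in \q Z$ at distance comparable to $\rho$ from the apex $\q v$, so that a displacement of order $\q\delta$ at radius $\approx\rho$ forces a small angle at the apex. But the only lower bound available on $\diam \q Z$ is $\q\kappa = \max\{A(G,X),\kappa\} + 5\pi\sinh(150\q\delta)$, and in the intended applications $\q\kappa \ll \rho$ --- the remark following Proposition~\ref{P: small cancellation acylindricity} stresses that the whole point of the statement is to keep $\q\kappa$ small compared to $\rho$, so your parenthetical ``$\q\kappa > \rho$'' is false. Nor can Lemma~\ref{L: rotation quotient space} supply such a point: it constrains elements \emph{outside} $\stab{\q v}$, whereas here $\q S \subset \stab{\q v}$, and an element of $\stab{\q v}$ representing a large power of $h$ modulo $H$ moves every point at radius $\approx \rho$ by a large amount, so $\Fix(\q S, 100\q\delta)$ has no reason to reach out that far. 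Everything downstream (your angular estimate in step (iii) and the count in step (iv)) depends on this nonexistent point.

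The paper makes the opposite choice, and this is the idea you are missing: the test point is taken \emph{near} the apex. Since $\q v \in \q Z$ and $\diam\q Z \geq \q\kappa > 100\q\delta$, one picks $\q z$ on a geodesic $[\q v, \q x]$ at distance exactly $100\q\delta$ from $\q v$; quasi-convexity of $\q Z$ gives $|\q s\,\q z - \q z| \leq 120\q\delta$ for all $\q s \in \q S$. One then lifts $z$ to the cone $Z(Y)$ and each $\q s$ to some $s \in G$ with $|sz - z|_{\cX} \leq 120\q\delta$; the triangle inequality forces $|sv - v|_{\cX} < 2\rho$, and since distinct apices of $\cX$ are at distance at least $2\rho$, the lifts lie in $\stab v = \stab Y$. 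The decisive observation is the strict inequality $|sz - z|_{Z(Y)} \leq 120\q\delta < |z - v| + |sz - v|$ (the right-hand side is at least $200\q\delta$): the cone geodesic from $z$ to $sz$ does not pass through the apex, so by the cone metric formula the angle $\theta(y,sy)$ is less than $\pi$, and the radial projection $y$ of $z$ on $Y$ satisfies $|sy - y|_Y < \pi\sinh\rho$. Since $\stab Y$ is cyclic with primitive hyperbolic generator $h$ and $|h^k y - y| \geq \| h^k \|^{\infty} \geq |k|\,\tau(G,X)$, at most $2\pi\sinh\rho/\tau(G,X) + 1$ (plus the small correction from Lemma~\ref{L: invariant cylinder}) powers can occur, giving $|S| \leq 3\pi\sinh\rho/\tau(G,X) + 1$. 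Your steps (ii) and (iv) are in the right spirit, but without the near-apex test point the quantitative bridge from $|\q s\,\q z - \q z| \leq 120\q\delta$ to $|sy - y|_Y < \pi\sinh\rho$ is not established.
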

\begin{proof} If $\q S \subset \stab{\q v}$, then $\q v\in \q Z$. 
As $\diam (\q Z)\geqslant \q \kappa$, there is a point $\q x \in \q Z$ such that $|\q v - \q x| \geq \bar \kappa - \bar \delta$.
Recall that $\bar \kappa > 100 \bar \delta$.
Denote by $\q z$ the point on the geodesic $[\q v, \q x]$ at distance $100\q \delta$ from $\q v$, so that $\q z \in B(\q v, \rho/2)$.
Since $\q Z$ is $10\bar \delta$-quasi-convex, $\q z$ lies in the the $10\bar \delta$-neighborhood of $\q Z$.
In particular, for all  $\q s \in \q S$, we have $|\q s \,\q z - \q z|\leqslant 120\q \delta$. 
Let $v$ be a pre-image of $\q v$ and $z$ a pre-image of $\q z$ in the ball $B(v, \rho/2)$.
For every $\q s \in \q S$, we choose a pre-image $s \in G$ such that $|sz-z|_{\cX}\leqslant120 \q \delta$ and write $S$ for the set of all pre-images obtained in this way. 
Observe that by the triangle inequality, $|sv - v|_{\cX} \leq \rho + 120\q\delta$, for every $s \in S$.
However any two distinct apices in $\cX$ are at a distance at least $2\rho$.
Thus $S$ is contained in $\stab v$.
If $(H,Y) \in \mathcal Q$ is such that $v$ is the apex of the cone $Z(Y)$, then, by Lemma \ref{L: metric cone off 2}, $|sz-z|_{Z(Y)}\leqslant 120\q \delta < |z - v|_{Z(Y)} + |sz - v|_{Z(Y)}$. 
Let $y$ be a radial projection of $z$ on $Y$. 
By the very definition of the metric on $Z(Y)$, we get that $|sy-y|< \pi \sinh\rho$. 
Recall that every elementary subgroup is cyclic, in particular so is $\stab Y$.
Consequently the number of elements $g \in \stab Y$ such that $|gy - y| \leq r$ is linear in $r$.
More precisely, using Lemma~\ref{L: invariant cylinder}, we have
\begin{equation*}
	|S|\leqslant \frac{2(\pi \sinh\rho +112\delta)}{\tau(G,X)} + 1\leqslant \frac{3 \pi \sinh\rho}{\tau(G,X)}+1,
\end{equation*}
which yields the claim. 
\end{proof}

\begin{lem}\label{L: acylindric quotient 4}  If  $\q S$ does not  stabilise any $\q v \in \q{\mathcal V}$, then $|\q S|\leqslant \q N$.
\end{lem}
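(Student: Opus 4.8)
The plan is to reduce the count of elements of $\bar S$ acting almost-fixing a large-diameter set $\bar Z$ in $\bar X$ to the acylindricity of the \emph{original} action of $G$ on $X$, using Proposition~\ref{L: lifts of quasi-convex} to lift the quasi-convex set $\bar Z$ and the moving set $\bar S$ simultaneously to the cone-off space $\cX$, and then pushing down from $\cX$ to $X$ via the distortion control of Lemma~\ref{L: metric cone off}. First I would record that, since $\bar S$ does not stabilise any apex $\bar v\in\bar{\mathcal V}$, Lemma~\ref{L: rotation quotient space} forces $\bar Z=\Fix(\bar S,100\bar\delta)$ to stay away from the apices: if some $\bar z\in\bar Z$ were within, say, $\rho/5+1210\bar\delta$ of an apex $\bar v$, then every $\bar s\in\bar S$ would be in $\stab{\bar v}$ (because $|\bar s\bar z-\bar z|\leq 120\bar\delta$ is much smaller than $2(\rho-|\bar z-\bar v|)$, once $\rho\gg\bar\delta$), contradicting the hypothesis of this case. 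Hence $\bar Z\cap B(\bar v,\rho/5+d+1210\bar\delta)=\emptyset$ for all $\bar v$, where we take $d=100\bar\delta$ (note $\bar S\,\bar Z\subseteq\bar Z^{+100\bar\delta}$ by definition of $\bar Z$, and $\bar Z$ is $10\bar\delta$-quasi-convex).

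Next I would apply Proposition~\ref{L: lifts of quasi-convex} with $\bar Z$ and $\bar S$: it yields a lift $Z\subset\cX$ on which $\zeta$ restricts to an isometry onto $\bar Z$, and a lift $S\subset G$ (one element per element of $\bar S$) such that $|\bar g\,\bar z-\bar z'|_{\bar X}=|gz-z'|_{\cX}$ for all $g\in S$, $z,z'\in Z$. In particular, picking two points $z_1,z_2\in Z$ realising (up to a bounded error) the diameter of $Z$, which equals $\diam\bar Z\geq\bar\kappa=\max\{A(G,X),\kappa\}+5\pi\sinh(150\bar\delta)$, each $g\in S$ satisfies $|gz_i-z_i|_{\cX}\leq 100\bar\delta$ for $i=1,2$. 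Now I would translate these $\cX$-inequalities into $X$-inequalities: one has to move the points $z_i$ (which a priori live in $\cX$, possibly inside a cone) to nearby points of $X$. Since $Z$ avoids the apices by a definite amount and $Z$ lies in $\cX$, standard facts about the cone-off (Lemma~\ref{L: metric cone off 2} together with Lemma~\ref{L: metric cone off}) let one replace each $z_i$ by a point $x_i\in X$ within a bounded distance (of order $\pi\sinh(\text{const}\cdot\bar\delta)$, i.e. absorbed into the slack already built into $\bar\kappa$), so that $|gx_i-x_i|_X\leq 100\delta$ for $i=1,2$, and $|x_1-x_2|_X\geq\bar\kappa-(\text{bounded error})\geq\kappa$, because the $5\pi\sinh(150\bar\delta)$ term in $\bar\kappa$ was inserted precisely to cover this distortion. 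Then $(N,\kappa)$-acylindricity of $G$ on $X$ (Definition~\ref{D: acylindrical}) says there are at most $N$ such $g$, so $|S|\leq N$, hence $|\bar S|=|S|\leq N\leq\bar N$.

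The main obstacle is the bookkeeping in this last translation step: one must check carefully that the two points $z_1,z_2\in Z\subset\cX$ can simultaneously be pushed to points of $X$ with all four quantities ($|gx_i-x_i|_X$ small, $|x_1-x_2|_X$ large) controlled, keeping every error term below the explicit thresholds — in particular ensuring the $100\bar\delta$ quasi-fix bound in $\bar X$ translates to the $100\delta$ bound in $X$ required by Definition~\ref{D: acylindrical}, and that the loss in diameter is exactly what the $5\pi\sinh(150\bar\delta)$ cushion in $\bar\kappa$ allows. This is where the constants $150\bar\delta$ and the various $1210\bar\delta$, $1210\bar\delta$-type margins in Proposition~\ref{L: lifts of quasi-convex} have to be matched up; the geometric content is routine given Lemmas~\ref{L: metric cone off 2} and~\ref{L: metric cone off}, but the inequalities must be tracked explicitly. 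Combining Lemma~\ref{L: acylindric quotient 2} and Lemma~\ref{L: acylindric quotient 4} then gives $|\bar S|\leq\max\{N,\,3\pi\sinh\rho/\tau(G,X)+1\}=\bar N$ in all cases, which is the assertion of Proposition~\ref{P: small cancellation acylindricity}.
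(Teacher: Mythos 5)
There is a genuine gap at the step you yourself flag as ``routine bookkeeping'': pushing the displacement bound from $\cX$ down to $X$. After lifting, you have $|gz_i-z_i|_{\cX}\leq 100\bar\delta$, and after replacing $z_i$ by a projection $x_i\in X$ you get $|gx_i-x_i|_{\cX}\leq 300\bar\delta$; but the only way to convert this into an $X$-distance is Lemma~\ref{L: metric cone off} together with Proposition~\ref{P: mu}, which yield $|gx_i-x_i|_X\leq \pi\sinh(150\bar\delta)$, \emph{not} $|gx_i-x_i|_X\leq 100\delta$. These two quantities are of completely different orders: in the intended application $X$ is a rescaled space with $\delta\leq\delta_0\ll\delta_1$ while $\bar\delta$ is of order $\delta_1$, so $\pi\sinh(150\bar\delta)\gg 100\delta$. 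The $5\pi\sinh(150\bar\delta)$ cushion built into $\q\kappa$ compensates for the loss of \emph{separation} $|x_1-x_2|_X$, but no enlargement of $\kappa$ can repair the \emph{displacement} bound, and Definition~\ref{D: acylindrical} genuinely requires displacement at most $100\delta$. So the direct appeal to $(N,\kappa)$-acylindricity fails.

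The paper's proof takes a necessary detour. Setting $d=\pi\sinh(150\bar\delta)$, it shows $\diam\Fix(S,d)\geq\q\kappa-200\bar\delta$ and first invokes the Margulis-type Proposition~\ref{P: margulis 2} to conclude that $S$ generates an \emph{elementary} subgroup $E$. Only then does it split into cases. If $E$ is elliptic, Lemma~\ref{L: convexity fixpoints}(2) upgrades the large $d$-quasi-fixed set to a $14\delta$-quasi-fixed set of diameter still exceeding $\kappa$; since $14\delta<100\delta$, acylindricity now applies and gives $|S|\leq N$. If $E$ is loxodromic (hence infinite cyclic), one does \emph{not} get $|S|\leq N$: one counts elements of translation length at most $d$ in an infinite cyclic group and obtains $|S|\leq 3\pi\sinh\rho/\tau(G,X)+1$. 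This also shows your closing remark is off: the second term in $\q N=\max\{N,3\pi\sinh\rho/\tau(G,X)+1\}$ arises from this lemma as well, not only from the apex-stabiliser case of Lemma~\ref{L: acylindric quotient 2}.
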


\begin{proof}
	By Lemma \ref{L: rotation quotient space},  $\q Z \cap  B(\q v, \rho- 100 \bar \delta)= \emptyset$, for every $\q v \in \q {\mathcal V}$.
 	By Lemma \ref{L: convexity fixpoints}, $\q Z$ is $10\bar \delta$-quasi-convex.  
	By Lemma \ref{L: lifts of quasi-convex},  there exists pre-images ${Z} \subset \cX$ and $S\subset G$ such that $\diam({Z})>\q \kappa$ and for all $s\in S$ and all $z\in Z$, we have $|sz-z|_{\cX}\leqslant 100 \bar \delta$. 
	
	Let us write $d=\pi \sinh (150\bar \delta)$.
	We now focus on the subset $\Fix(S,d) \subset X$.
	Let $x,y \in Z$ such that $|x-y|>\q \kappa$. 
	Let $p$, $q$ be projections of $x$, $y$ in $X$. Then, as $|p-x|_{\cX}\leqslant 100 \bar \delta$ and $ |q-y|_{\cX} \leqslant 100 \bar \delta$, $|p-q|_{\cX}\geqslant \q \kappa - 200 \bar \delta$. As $|p-q|_X\geqslant |p-q|_{\cX},$
 the distance $ |p-q|_X \geqslant\q \kappa - 200 \bar \delta.$ 
	On the other hand, $\mu (|sp-p|_X)\leqslant |sp-p|_{\cX} < 300 \bar \delta <2\rho$. Thus, by Proposition  \ref{P: mu}, $  |sp-p|_X< d.$ Similarly,  $|sq-q|_X < d$.
	This means that the diameter of $\Fix(S,d) \subset X$ is at least $\q\kappa -200 \bar \delta$, hence, larger than $A(G,X) +4d+209\delta$. 
	It follows by Proposition \ref{P: margulis 2} that $S$ generates an elementary subgroup $E$. 

	Suppose first that this subgroup $E$ is loxodromic.
	It is infinite cyclic by assumption.
	Recall that the translation length of any element in $S$ is at most $d$.
	Hence, as previously we get
	\begin{equation*}
	|S|\leqslant \frac {2(d+112\delta)}{\tau(G, X)} +1 \leqslant  \frac{3 \pi \sinh\rho}{\tau(G, X)}+1.
	\end{equation*}
	Suppose now that $E$ is an elliptic subgroup.
	In particular, the set $\Fix(S,14\delta) \subset X$ is non-empty, and by Lemma~\ref{L: convexity fixpoints}, $\Fix(S,d)$ is contained in the $d/2$-neighborhood of $\Fix(S, 14\delta)$.
In particular the diameter of $\Fix(S, 14\delta)$ is larger that $\q\kappa -200 \bar\delta-d$, hence, larger than $\kappa$. 
Consequently by acylindricity, $|S| \leq N$.
\end{proof}

This completes the proof of Proposition~\ref{P: small cancellation acylindricity}. 
\end{proof}

\subsection{\texorpdfstring{$\ell^{\infty}$-energy}{Energy}}

In this section we compare the $\ell^{\infty}$-energy of finite subset $U\subset G$ and its image $\q U\subset \q G$ respectively.

\begin{prop}\label{P: energy lift} 
Let $\q U \subset \qG$ be a finite set such that $\lambda(\q U)\leqslant \rho/5$.
If, for all $\q v\in \q {\mathcal V}$, the set $\q U$ is not contained in $\stab{\q v}$, then there is a pre-image $U\subset G $ of $\q U$ of energy 
$\lambda(U) \leqslant \pi \sinh \lambda(\q U).$
\end{prop}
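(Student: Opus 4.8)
The plan is to choose a base point realizing the $\ell^\infty$-energy of $\q U$ in $\q X$, and then lift it to $\cX$ together with a carefully chosen set of pre-images of the elements of $\q U$, controlling the displacement through the local isometry of Theorem~\ref{P: hyperbolicity quotient}(2) and the distortion map $\mu$ of Proposition~\ref{P: mu}.

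More concretely, first I would pick a point $\q x \in \q X$ such that $\lambda(\q U, \q x) \leq \lambda(\q U) + \varepsilon$ for small $\varepsilon$ (or argue with an almost-minimizing point). The key observation is that since $\lambda(\q U) \leq \rho/5$, we have $|\q u \q x - \q x| \leq \rho/5$ for all $\q u \in \q U$. The next step is to establish that $\q x$ stays away from the apices $\q{\mathcal V}$: if $|\q x - \q v| < \rho - \rho/5$ for some apex $\q v$, then by Lemma~\ref{L: rotation quotient space}, any $\q u \notin \stab{\q v}$ would move $\q x$ by at least $2(\rho - |\q x - \q v|) > 2\rho/5 > \lambda(\q U)$, a contradiction — so every $\q u \in \q U$ either fixes $\q v$ or this cannot happen. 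Since $\q U$ is not contained in any $\stab{\q v}$, there is at least one $\q u_0 \in \q U$ not fixing $\q v$, forcing $|\q x - \q v| \geq 4\rho/5$ for every apex $\q v$, in particular $|\q x - \q v| > 2 \cdot \rho/5$. Hence by Theorem~\ref{P: hyperbolicity quotient}(2) applied with $r = \rho/5$, the projection $\zeta$ restricts to an isometry from $B(x, \rho/5)$ onto $B(\q x, \rho/5)$ for a suitable pre-image $x \in \cX$.

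Then, using this local isometry, for each $\q u \in \q U$ I would choose the pre-image $u \in G$ of $\q u$ such that $\zeta(ux) = \q u \q x$; this is possible because $\q u \q x$ lies in $B(\q x, \rho/5)$, so there is a unique point in $B(x, \rho/5) \subset \cX$ mapping to it, and then a pre-image $u \in G$ with $ux$ equal to that point. By the isometry we get $|ux - x|_{\cX} = |\q u \q x - \q x|_{\q X} \leq \lambda(\q U)$. Now I would invoke Lemma~\ref{L: metric cone off}: since $X \to \cX$ is $1$-Lipschitz in one direction and $\mu(|ux - x|_X) \leq |ux - x|_{\cX}$, and provided $|ux - x|_{\cX} < 2\rho$ (which holds since $\lambda(\q U) \leq \rho/5 < 2\rho$), Proposition~\ref{P: mu} gives $|ux - x|_X \leq \pi \sinh(|ux - x|_{\cX}/2) \leq \pi \sinh(\lambda(\q U)/2)$. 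Actually the statement claims the bound $\pi \sinh \lambda(\q U)$, which follows a fortiori since $\sinh(\lambda(\q U)/2) \leq \sinh \lambda(\q U)$. Taking $U = \{u : \q u \in \q U\}$ the set of pre-images obtained this way, we get $\lambda(U) \leq \lambda(U, x) = \max_u |ux - x|_X \leq \pi \sinh \lambda(\q U)$ (letting $\varepsilon \to 0$ or handling the almost-minimizer carefully).

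The main obstacle I anticipate is the bookkeeping around the point $x$ being in $X$ versus in $\cX$: the pre-image $x$ of $\q x$ furnished by Theorem~\ref{P: hyperbolicity quotient}(2) lives in $\cX$, not a priori in $X$, and the distortion estimate relating $\cX$-distances to $X$-distances via $\mu$ only applies to points of $X$. One needs to argue that $x$ can be taken in $X$ — this should follow from the fact that $\q x$ itself can be chosen away from the apices in $\q X$, so its pre-image lies outside the cones, i.e. in (the image of) $X$; alternatively one replaces $\q x$ by a nearby point of $\q X \setminus \q{\mathcal V}$ at negligible cost. A secondary technical point is making sure the chosen pre-images $u$ really form a set of pre-images of $\q U$ (one per element), which is automatic since $\zeta$ is a bijection on the relevant balls and $\q U \to U$ is then forced to be injective. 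The rest is routine.
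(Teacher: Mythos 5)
Your overall strategy is the same as the paper's (almost-minimise the energy at a point $\q q$, use Lemma~\ref{L: rotation quotient space} to push $\q q$ away from the apices, lift, and convert $\cX$-displacements to $X$-displacements via $\mu$), but two of the steps as you carry them out do not go through. First, Theorem~\ref{P: hyperbolicity quotient}(2) only gives a local isometry on balls of radius $r\leq \rho/20$, whereas you need $r=\rho/5$ to capture the points $\q u\,\q x$; so the lemma does not apply in the range you invoke it. This is fixable, but not by that lemma: the paper instead chooses the pre-images directly from the definition of the quotient metric, $|\q x-\q x'|_{\q X}=\inf_{h\in K}|hx-x'|_{\cX}$, picking for each $\q u$ a representative $u$ with $|uq-q|_{\cX}=|\q u\,\q q-\q q|$.

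Second, the obstacle you flag at the end --- that the lifted base point lives in $\cX$ rather than $X$ --- is real, and your proposed resolution is wrong. Being far from the apices does not place a point outside the cones: a point $(y,r)$ of $Z(Y)$ with $r$ close to $\rho$ is far from the apex yet still at distance $\rho-r>0$ from $\iota(Y)\subset X$. The correct move is to replace the lift $q$ by a projection $x\in X$; since $|q-v|>\rho-(\lambda(\q U)+\epsilon)/2$ for every apex $v$, one has $d(q,X)<(\lambda(\q U)+\epsilon)/2$, and the triangle inequality then gives $|ux-x|_{\cX}\leq |uq-q|_{\cX}+2d(q,X)\leq 2(\lambda(\q U)+\epsilon)$. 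This cost is \emph{not} negligible --- it doubles the displacement --- and it is precisely what turns your claimed bound $\pi\sinh(\lambda(\q U)/2)$ into the bound $\pi\sinh\lambda(\q U)$ of the statement, via the inequality $t\leq\pi\sinh(\mu(t)/2)$ of Proposition~\ref{P: mu}. So your intermediate estimate $|ux-x|_{\cX}\leq\lambda(\q U)$ rests on an unjustified (and in general false) assumption, even though the final bound you are asked to prove survives once the projection step is done properly.
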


\begin{proof}
Let $\epsilon > 0$.
Let $\q q \in \q X$ such that $\lambda(\q U, \q q) \leq \lambda(\q U) + \epsilon$. 
By Lemma \ref{L: rotation quotient space}, $|\q q - \q v|> \rho - (\lambda(\q U)+\epsilon)/2>4\rho/5$, for all $v \in \mathcal V$.
Let $q$ be a pre-image of $\q q$ in $\cX$. 
We choose  a pre-image $U \subset G$ of $\q U$ such that for every $u \in U$, we have $|uq-q|_{\cX}=|\q u \q q -\q q|$. 
Let $x\in X$ be a projection of $q$ onto $X$. We note that $\mu (|ux-x|_X)\leqslant |ux-x|_{\cX}\leqslant 2(\lambda(\q U) +\epsilon)<2\rho$.
Thus $|ux-x|_X\leqslant \pi \sinh (\lambda(\q U)+\epsilon)$, see Proposition \ref{P: mu}.
We just proved that $\lambda(U) \leq \pi \sinh (\lambda(\q U)+\epsilon)$ for every $\epsilon > 0$, whence the result.
\end{proof}

\section{Product set growth in Burnside groups of odd exponent}

We finally prove Theorem \ref{T: intro - large powers free burnside}. 

\subsection{The induction step} 

We will use the following.

\begin{prop}[cf. Proposition 6.18 of \cite{coulon_geometry_2014}]\label{P: induction step} 
There are distances  $\rho_0,\delta_1> 0$, and $A_0 \in  [50 \cdot 10^3\delta_1, \rho_0/500]$, as well as natural numbers $L_0$ and $n_0$ such that the following holds. 

Let $n_1 \geq n_0$ and $n \geq n_1$ be an odd integer.
Let $G$ act properly co-compactly by isometries on a proper geodesic $\delta_1$-hyperbolic space $X$ such that 
\begin{enumerate}
\item \label{enu: induction step - elem sg}
the elementary subgroups of $G$ are cyclic or finite of odd order $n$, 
\item \label{enu: induction step - params}
 $A(G,X)\leqslant A_0$ and $\tau(G,X)\geqslant \sqrt{\rho_0 L_0 \delta_1/4n_1}$, and
\item \label{enu: induction step - acyl}
 the action of $G$ is $(N,A_0)$-acylindrical, for some integer $N$.
\end{enumerate}
Let $P$ be the set of primitive hyperbolic elements $h$ of translation length $\|h\|\leqslant L_0 \delta_1$. Let $K$ be the normal closure of the set $\{h^n\mid h\in P\}$ in $G$. 

Then there is proper geodesic $\delta_1$-hyperbolic space $\overline{X}$ on which $\overline{G}=G/K$ acts properly co-compactly by isometries. Moreover,
\begin{itemize}
	\item (\ref{enu: induction step - elem sg}) and (\ref{enu: induction step - params}) hold for the action of $\overline{G}$ on $\overline{X}$;
	\item the action of $\q G$ on $\q X$ is $(\q N, A_0)$-acylindrical where $\q N = \max\left\{N, n_1\right\}$;
	\item if $\q U$ is a subset of $\q G$ with $\lambda(\q U) \leq \rho_0/5$ that does not generated a finite subgroup, then there exists a pre-image $U \subset G$ of $\q U$ such that $\lambda(U) \leq \sqrt{n_1} \sinh \lambda(\q U)$.
\end{itemize}
\end{prop}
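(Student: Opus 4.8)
The plan is to obtain $\q G=G/K$ and $\q X$ from the small cancellation construction of Section~\ref{sec: quotient space}, using as data the invariant cylinders of the powers $h^n$, $h\in P$; the statement is then essentially Proposition~6.18 of \cite{coulon_geometry_2014}, with two additions proper to this paper, namely the \emph{uniform} acylindricity bound $\q N=\max\{N,n_1\}$ and the energy-lifting estimate, which I would extract from Propositions~\ref{P: small cancellation acylindricity} and~\ref{P: energy lift}. Concretely: fix $\delta_0,\delta_1,\Delta_0,\rho_0$ as provided by the Small Cancellation Theorem~\ref{P: hyperbolicity quotient}, $L_0$ as in Section~\ref{sec: quasi-geodesics}, a cone radius $\rho>\rho_0$, a constant $A_0\in[50\cdot10^3\delta_1,\rho_0/500]$ small relative to $\rho_0$, and postpone the (large) choice of $n_0$. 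For $h\in P$ let $\mathcal Q$ collect the pairs $(\langle h^n\rangle,C_{\langle h^n\rangle})$: by Lemma~\ref{L: invariant cylinder} each cylinder is closed, strongly quasi-convex ($X$ is proper geodesic), and $\langle h^n\rangle$-invariant with cocompact action, while $\stab Y$ is elementary; the family $\mathcal Q$ is $G$-invariant, $\mathcal Q/G$ is finite (the action $G\curvearrowright X$ is proper and cocompact and translation lengths of elements of $P$ are bounded by $L_0\delta_1$), and $K$ is precisely the normal subgroup generated by the subgroups occurring in $\mathcal Q$. Thus $\mathcal Q$ meets the standing assumptions of Section~\ref{sec: quotient space}.

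Next I would check the two small cancellation conditions; since $\delta_1\leq\delta_0$, only $\Delta(\mathcal Q)\leq\Delta_0$ and $\mathcal T(\mathcal Q)>4\pi\sinh\rho$ remain. For the first: two distinct cylinders $C_{\langle h_1^n\rangle}$, $C_{\langle h_2^n\rangle}$ arise from non-commensurable primitive hyperbolic elements $h_1,h_2$ with $\|h_i\|\leq L_0\delta_1$; since $C_{\langle h_i^n\rangle}\subseteq A_{h_i}^{+52\delta_1}$ and $\{h_1,h_2\}$ is non-elementary, Proposition~\ref{P: margulis 2} — together with the comparison between intersections of axes and quasi-fixpoint sets used in the proof of Proposition~\ref{res: unicity period} — bounds $\diam(C_{\langle h_1^n\rangle}^{+5\delta_1}\cap C_{\langle h_2^n\rangle}^{+5\delta_1})$ by a quantity of the form $O(L_0\delta_1)+A(G,X)\leq O(L_0\delta_1)+A_0\leq\Delta_0$. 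For the second: a nontrivial element of $\langle h^n\rangle$ is $h^{nk}$ with $\|h^{nk}\|\geq\|h^{nk}\|^\infty=|nk|\,\|h\|^\infty\geq n\,\tau(G,X)\geq\tfrac12\sqrt{n_1\rho_0L_0\delta_1}$ by hypothesis~(\ref{enu: induction step - params}), which exceeds $4\pi\sinh\rho$ once $n_0$ is large. Theorem~\ref{P: hyperbolicity quotient} then produces the proper geodesic $\q\delta$-hyperbolic space $\q X$ with $\q\delta\leq\delta_1$ (hence $\delta_1$-hyperbolic), a proper cocompact action of $\q G$, and apex stabilisers isomorphic to $\stab Y/\langle h^n\rangle$, which are finite since $\stab Y$ is virtually cyclic.

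It remains to transfer the invariants. Persistence of~(\ref{enu: induction step - elem sg}) — elementary subgroups of $\q G$ either descend from elementary subgroups of $G$ or embed into an apex stabiliser, and the odd exponent keeps them cyclic or finite of order $n$ — is the heart of the Burnside machinery and I would quote it from \cite{coulon_geometry_2014,delzant_courbure_2008}. For~(\ref{enu: induction step - params}): Proposition~\ref{P: injectivity radius quotient} gives $\tau(\q G,\q X)\geq\sqrt{\rho_0L_0\delta_1/(4n_1)}$ (the relevant infimum is controlled below in terms of $\mathcal T(\mathcal Q)$), and Proposition~\ref{P: diameter thin part quotient} gives $A(\q G,\q X)\leq A(G,X)+5\pi\sinh(2L_0\q\delta)\leq A_0$ by the choice of $A_0$ against $L_0$ and $\delta_1$ (every $\stab Y$ being elementary). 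Proposition~\ref{P: small cancellation acylindricity} then yields a $(\q N,\q\kappa)$-acylindrical action with $\q N\leq\max\{N,\,3\pi\sinh\rho/\tau(G,X)+1\}$ and $\q\kappa=A_0+O(\sinh\q\delta)$; a further enlargement of $n_0$ forces $3\pi\sinh\rho/\tau(G,X)+1\leq n_1$, giving $\q N\leq\max\{N,n_1\}$, and one concludes the $(\q N,A_0)$-acylindricity as in Proposition~6.18 of \cite{coulon_geometry_2014}. Finally, if $\q U\subseteq\q G$ is finite with $\lambda(\q U)\leq\rho_0/5\leq\rho/5$ and does not generate a finite subgroup, then $\q U$ lies in no apex stabiliser $\stab{\q v}$ (these are finite), so Proposition~\ref{P: energy lift} produces a pre-image $U\subset G$ of $\q U$ with $\lambda(U)\leq\pi\sinh\lambda(\q U)\leq\sqrt{n_1}\sinh\lambda(\q U)$, using $n_1\geq n_0\geq10>\pi^2$.

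The hard part is not any single step but the \emph{uniform} bookkeeping of all constants across the infinitely many quotients to come: $\rho$ must be small enough that the hypothesis $\lambda(\q U)\leq\rho_0/5$ stays usable, yet large enough for the small cancellation inequality $\mathcal T(\mathcal Q)>4\pi\sinh\rho$; and, crucially, one must obtain the acylindricity multiplicity $\q N=\max\{N,n_1\}$ \emph{independent of $n$}, which is exactly what Proposition~\ref{P: small cancellation acylindricity} was proved to deliver, in contrast with the estimate of \cite{dahmani_hyperbolically_2017} which would grow with $n$.
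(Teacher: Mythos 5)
Your outline (cone off the cylinders of the $E(h)$, $h\in P$, apply Theorem~\ref{P: hyperbolicity quotient}, then transfer the invariants via Propositions~\ref{P: injectivity radius quotient}, \ref{P: diameter thin part quotient}, \ref{P: small cancellation acylindricity} and \ref{P: energy lift}) is the right skeleton, but there is a genuine gap: you apply the small cancellation machinery to $G$ acting on $X$ itself, whereas the paper applies it to $G$ acting on the \emph{rescaled} space $\varepsilon X$, with $\varepsilon=\varepsilon_{n_1}\sim 1/\sqrt{n_1}$. This rescaling is not cosmetic; without it several of your claimed inequalities are false. First, you write ``since $\delta_1\leq\delta_0$\dots'', but the constants go the other way: the paper arranges $\delta_0,\Delta_0\ll\delta_1$, so a $\delta_1$-hyperbolic space does \emph{not} satisfy the hypothesis $\delta\leq\delta_0$ of Theorem~\ref{P: hyperbolicity quotient}; one needs $\varepsilon\delta_1\leq\delta_0$, which is the first condition imposed on $n_0$. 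Second, your bound $\Delta(\mathcal Q)\leq O(L_0\delta_1)+A_0\leq\Delta_0$ cannot hold, since $A_0\geq 50\cdot10^3\delta_1\gg\Delta_0$; the correct statement is $\varepsilon\bigl(A_0+O(\delta_1)\bigr)\leq\Delta_0$, again a condition on $n_0$. Third, the invariant $A(\cdot,\cdot)\leq A_0$ would not propagate: Proposition~\ref{P: diameter thin part quotient} gives $A(\q G,\q X)\leq A(G,\cdot)+5\pi\sinh(2L_0\q\delta)$, and starting from $A(G,X)\leq A_0$ this exceeds $A_0$ by an additive constant at every step unless one first contracts $A(G,X)$ to $\varepsilon A_0$; the paper's choice $A_0=\max\{6\pi\sinh(2L_0\delta_1),50\cdot10^3\delta_1\}$ together with $\varepsilon A_0+5\pi\sinh(150\delta_1)\leq A_0$ is exactly what closes this loop (and likewise gives $\q\kappa\leq A_0$ in Proposition~\ref{P: small cancellation acylindricity}).

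The rescaling is also where the constant $\sqrt{n_1}$ in the lifting estimate comes from: Proposition~\ref{P: energy lift} applied to $G\curvearrowright\varepsilon X$ gives a pre-image with energy at most $\pi\sinh\lambda(\q U)$ \emph{measured in $\varepsilon X$}, hence at most $\varepsilon^{-1}\pi\sinh\lambda(\q U)<\sqrt{n_1}\sinh\lambda(\q U)$ in $X$. Your derivation, which gets $\pi\sinh\lambda(\q U)$ directly and then observes $\pi\leq\sqrt{n_1}$, only appears to work because you skipped the rescaling; in the actual construction the factor $\varepsilon^{-1}$ is unavoidable. Finally, the small cancellation condition $\mathcal T(\mathcal Q)>4\pi\sinh\rho$ is checked in $\varepsilon X$: the precise formula $\varepsilon_n=8\pi\sinh\rho_0/\sqrt{\rho_0L_0\delta_1 n}$ is calibrated so that $\varepsilon\, n\,\tau(G,X)\geq 4\pi n\sinh\rho_0/n_1\geq 4\pi\sinh\rho_0$ using hypothesis~(\ref{enu: induction step - params}). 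So the fix is to insert the rescaling $X\to\varepsilon_{n_1}X$ before coning off, and to impose on $n_0$ the finitely many inequalities $\varepsilon_n\delta_1\leq\delta_0$, $\varepsilon_n(A_0+118\delta_1)\leq\min\{\Delta_0,\pi\sinh(2L_0\delta_1)\}$, etc., as in the paper; everything else in your argument (the identification of $\mathcal Q$, the use of Propositions~\ref{P: small cancellation acylindricity} and \ref{P: energy lift}, and the remark that apex stabilisers are finite so $\q U$ is not contained in any $\stab{\q v}$) then goes through as you describe.
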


\begin{rem}
	Note that Assumptions~(\ref{enu: induction step - params}) and (\ref{enu: induction step - acyl}) are somewhat redundant.
	Indeed, if the action of $G$ on $X$ is $(N,\kappa)$-acylindrical, then the parameters $A(G,X)$ and $\tau(G,X)$ can be estimated in terms of $\delta$, $N$ and $\kappa$ only.
	However, we chose to keep them both, to make it easier to apply existing results in the literature.
\end{rem}

\begin{proof}
This  is essentially Proposition~7.1 of \cite{coulon_geometry_2014}.  The only additional observation is point (\ref{enu: induction step - acyl}).  For details of the proof, we refer the reader to \cite{coulon_geometry_2014}.
Here, we only give a rough idea of the proof and fix some notation for later use. 

We choose for $\delta_0$, $\Delta_0$, $\delta_1$, and $\rho_0$ the constants given by the Small Cancellation Theorem, see Theorem~\ref{P: hyperbolicity quotient}.
We fix 
\begin{equation*}
	A_0 = \max \left\{6\pi \sinh(2L_0\delta_1), 50\cdot 10^3\delta_1\right\}.
\end{equation*}
Without loss of generality we can assume that $\delta_0, \Delta_0 \ll \delta_1$ while $\rho_0 \gg L_0\delta_1$.
In particular $A_0 \leq \rho_0/500$.
Following \cite[page 319]{coulon_geometry_2014}, we define a rescaling constant as follows.  Let  $$\varepsilon_n= \frac{8 \pi \sinh \rho_0 }{\sqrt{\rho_0 L_0 \delta_1}} \frac{1}{\sqrt{n}}.$$
We note for later use that if $\rho_0$ is sufficiently large (which we assume here) we have $\varepsilon_n \geq 1/\sqrt n$, for every $n > 0$.
We then choose $n_0$  such that for all $n \geq n_0$, the following holds
\begin{align}
	\varepsilon_n \delta_1 & \leq \delta_0, \\
	\label{eqn: def epsilon}
	\varepsilon_n (A_0 + 118\delta_1)  & \leq \min\{ \Delta_0, \pi \sinh(2 L_0\delta_1)\}, \\
	\frac{\varepsilon_n \rho_0L_0 \delta_1}{16 \pi \sinh \rho_0} & \leq \delta_1,\\
	\varepsilon_n &< 1.
\end{align}
These are the same conditions as in \cite[page 319]{coulon_geometry_2014} (in this reference, $\varepsilon$ is denoted by $\lambda$).  
We now fix $n_1 \geq n_0$ and an odd integer $n \geq n_1$.
For simplicity we let $\varepsilon = \varepsilon_{n_1}$.
Moreover, let 
\begin{equation*}
	\mathcal Q=\left\{\left(\left<h^n\right>,C_{E(h)}\right)\mid h\in P\right\}.
\end{equation*}
As explained in \cite[Lemma~7.2]{coulon_geometry_2014}, the small cancellation hypothesis needed to apply Theorem~\ref{P: hyperbolicity quotient} are satisfied by $\mathcal Q$ for the action of $G$ on $\varepsilon X$. 
We let $\q G$ and $\q {X}$ as in Section~\ref{sec: quotient space} (applied to $G$ acting on $\varepsilon X$).
Observe, for later use, that the map 
\begin{equation*}
	X \to \varepsilon X \xrightarrow\zeta  \q X
\end{equation*}
is $\varepsilon$-Lipschitz.
Assertions (\ref{enu: induction step - elem sg}) and (\ref{enu: induction step - params}) follows from Lemmas~7.3 and 7.4 in \cite{coulon_geometry_2014}.
By Proposition~\ref{P: small cancellation acylindricity} the action of $\q G$ on $\q X$ is $(\bar N, \bar \kappa)$-acylindrical where
\begin{equation*}
\q N \leq \max\left\{N, \frac{3\pi\sinh\rho}{\tau(G,\varepsilon X)}+1\right\}
\quad \text{and} \quad
\q \kappa = \max \{A(G, \varepsilon X), \varepsilon A_0\} + 5\pi \sinh (150\delta_1).
\end{equation*}
It follows from the definition of $\varepsilon$ and our hypothesis on $\tau(G,X)$ that $\bar N \leq \max\{ N, n_1\}$.
On the other hand by (\ref{eqn: def epsilon}) we have
\begin{equation*}
	\q \kappa \leq \varepsilon A_0 + 5\pi \sinh (150\delta_1) \leq A_0.
\end{equation*}
Hence the action of the $\q G$ on $\q X$ is $(\bar N, A_0)$-acylindrical as we announced.

Consider now a subset $\q U$ of $\q G$ such that $\lambda(\q U) \leq \rho_0 /5$ and $\q U$ does not generate a finite subgroup.
Hence, applying Proposition~\ref{P: energy lift}, we see that there exists a pre-image $U \subset G$ of $\q U$ such that the $\ell^{\infty}$-energy of $U$ for the action of $G$ on $\varepsilon X$ is bounded above by $\pi \sinh \lambda(\q U)$. 
Thus, for the action of $G$ on $X$, we obtain that 
$$\lambda (U)\leqslant \varepsilon^{-1} \pi \sinh \lambda(\q U) < \sqrt{n_1} \sinh \lambda(\q U). $$ 
 This is the lifting property stated at the end of Proposition~\ref{P: induction step}. 
\end{proof}

Assume now that $G$ is a non-elementary, torsion-free hyperbolic group.
Proposition~\ref{P: induction step} can be used as the induction step to build from $G$ a sequence of hyperbolic groups $(G_i)$ that converges to the infinite periodic quotient $G/G^n$, provided $n$ is a sufficiently large odd exponent.
For our purpose, we need a sufficient condition to detect whenever an element $g \in G$ has a trivial image in $G/G^n$.
This is the goal of the next statement, see \cite[Theorem~4.13]{coulon_detecting_2018}.
The result is reminiscence of the key argument used by Ol'shanski\u\i\ in \cite[§10]{olshanskii_periodic_1991}.
Recall that the definition of containing a (large) power (Definition~\ref{def: power-free}) involves the choice of a basepoint $p \in X$.

\begin{thm}
\label{T: non-triviality of Burnside groups} 
	Let $G$ be a non-elementary torsion-free group acting properly co-compactly by isometries on a hyperbolic geodesic space $X$.
	We fix a basepoint $p \in X$.
	There are $n_0$ and $\xi$ such that for all odd integers $n\geqslant n_0$ the following holds.
	If $g_1$ and $g_2$ are two elements of $G$ whose images in $G/G^n$ coincide, then one of them contains a $(n/2-\xi)$-power. \qed
\end{thm}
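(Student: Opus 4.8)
The plan is to run the Delzant--Gromov tower approximating $G/G^n$ and combine it with a Greendlinger-type argument, the only real work being to keep all the quantitative constants uniform. Starting from $G$ acting on $X$, rescale the metric and pick $n_1$ large (in terms of the hyperbolicity constant, $\tau(G,X)$ and $A(G,X)$) so that Proposition~\ref{P: induction step} applies; iterating it yields groups $G_0 = G \to G_1 \to G_2 \to \cdots$ acting on $\delta_1$-hyperbolic spaces $X_0 = X, X_1, \dots$ with uniformly bounded acylindricity parameters and injectivity radius, and with $G/G^n = \varinjlim_i G_i$, where $G_{i+1} = G_i/K_i$ and $K_i$ is the normal closure of $\{h^n : h \in P_i\}$, $P_i$ denoting the primitive hyperbolic elements of $G_i$ of translation length at most $L_0\delta_1$. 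Given $g_1, g_2 \in G$ with the same image in $G/G^n$ we may assume $g_1 \neq g_2$ in $G$; then there is a least index $j \geq 1$ for which the images $\bar g_1, \bar g_2 \in G_j$ coincide, so that $w := \bar g_1\bar g_2^{-1}$ is a \emph{non-trivial} element of $K_{j-1} \leq G_{j-1}$.

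Next I would apply the Greendlinger lemma for the cone-off construction of Section~\ref{sec: quotient space}. Because the small-cancellation parameter is taken very large ($\mathcal T(\mathcal Q) \gg \rho$) and the overlaps between distinct relators are pieces of bounded length $\leq \Delta_0$, a non-trivial element $w$ of the kernel is far from geodesic in $X_{j-1}$: a geodesic $[p_{j-1}, w p_{j-1}]$ must fellow-travel the cylinder $C_{E(h)}$ of some relator $(\langle h^n\rangle, C_{E(h)}) \in \mathcal Q$ along a sub-segment whose length is at least the relator length $n\|h\|^\infty$ minus a constant $c_0$ depending only on $\rho_0$ and $\delta_1$ (and \emph{not} on $n$). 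Applied to $w = \bar g_1\bar g_2^{-1}$, and using $\tau(E(h)) = \|h\|^\infty$ since $h$ is primitive, this produces a maximal loxodromic $E \leq G_{j-1}$ such that $[p_{j-1},\bar g_1\bar g_2^{-1}p_{j-1}]$ fellow-travels $C_E$ along a segment of length at least $n\,\tau(E) - c_0$.

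Then comes the splitting between $g_1$ and $g_2$. The geodesic $[p_{j-1},\bar g_1\bar g_2^{-1}p_{j-1}]$ lies $\delta_1$-close to the broken path $[p_{j-1},\bar g_1 p_{j-1}] \cup \bar g_1[p_{j-1},\bar g_2^{-1}p_{j-1}]$, whose two legs are, respectively, a geodesic reading $\bar g_1$ and the $\bar g_1$-translate of a geodesic reading $\bar g_2^{-1}$; hence the long segment near $C_E$ splits, up to an $O(\delta_1)$ overlap near $\bar g_1 p_{j-1}$, into two sub-segments carried by the two legs, and one of them has length at least $\tfrac12(n\tau(E) - c_0) - O(\delta_1)$. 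Translating back by $\bar g_1$ or $\bar g_2$, this exhibits the cylinder of a $G_{j-1}$-conjugate of $E$ fellow-travelling a geodesic $[p_{j-1},\bar g_i p_{j-1}]$, so one of $\bar g_1, \bar g_2$ contains a power of exponent at least $n/2 - \xi_0$ for $G_{j-1} \curvearrowright X_{j-1}$, where $\xi_0$ depends only on the tower data (via the lower bound on $\tau(G_{j-1})$, hence only on $G$). Finally I would transport this to level $0$: since $C_E$ and the relevant fellow-travelling geodesic stay far from the apices, Proposition~\ref{L: lifts of quasi-convex} lets them lift isometrically through one cone-off step; iterating $j-1$ times carries $C_E$ to the cylinder of a maximal loxodromic $E_0 \leq G = G_0$ and preserves the exponent of the power (the rescalings being compensated along the way), so that $g_1$ or $g_2$ contains an $(n/2 - \xi)$-power for $G \curvearrowright X$, with $\xi$ depending only on $G$, $X$ and $p$.

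The main obstacle is precisely this final control of errors. One needs the Greendlinger estimate to extract \emph{almost the entire} relator loop, in the strong form "relator length minus an absolute constant" (a mere fixed \emph{fraction} of the relator would not do, since halving it would leave $n/2 - \Omega(n)$); and one must bound, independently of $j$ and of $n$, the additive losses incurred at each stage -- thin-triangle defects, the distortion of the cone-off comparison map $\mu$ of Proposition~\ref{P: mu}, and the short overlaps with the other relators encountered -- both when applying Greendlinger and when pulling the fellow-travelling segment back down the tower, so that after halving one still reads off $n/2 - O(1)$. Carrying out this bookkeeping uniformly along the whole tower is the technical heart of the argument; it is the content of~\cite{coulon_detecting_2018} (Theorem~4.13), and it follows the strategy of Ol'shanski\u\i's analysis of periodic words in \cite[\S10]{olshanskii_periodic_1991}.
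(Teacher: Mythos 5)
The paper does not actually prove this theorem: it is stated with a \qed and a pointer to \cite[Theorem~4.13]{coulon_detecting_2018}, and your proposal correctly reconstructs the strategy of that reference (the tower with uniformly controlled constants, an additive Greendlinger-type estimate applied to the kernel element $\bar g_1\bar g_2^{-1}$ at the first level where the images coincide, the splitting of the long fellow-travelling segment between the two legs of the triangle, and the lifting of the power back down the tower) before deferring the quantitative bookkeeping to that same source. Since you also accurately identify the decisive point — that both the Greendlinger estimate and the per-level lifting losses must be additive constants, uniform in $n$ and in the level $j$, rather than fixed fractions — your treatment is consistent with, and more informative than, the paper's bare citation.
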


Here, we need a stronger result.
Indeed we will have to apply this criterion for any group $(G_i)$ approximating $G/G^n$.
In particular we need to make sure that the critical exponent $n_0$ appearing in Theorem~\ref{T: non-triviality of Burnside groups} does not depend on $i$.
For this reason, we use instead the following statement.

\begin{thm}
\label{T: uniform non-triviality of Burnside groups} 
	There are distances  $\rho_0,\delta_1> 0$, and $A_0 \in  [50 \cdot 10^3\delta_1, \rho_0/500]$, as well as natural numbers $L_0$, $n_0$ such that the following holds. 

	Let $n_1 \geq n_0$ and set $\xi = n_1+ 1$.
	Fix an odd integer $n \geq \max\{100, 50n_1\}$.
	Let $G$ be a group acting properly, co-compactly by isometries on a proper, geodesic, $\delta_1$-hyperbolic space $X$ with a basepoint $p \in X$, such that 
	\begin{enumerate}
	\item 
	the elementary subgroups of $G$ are cyclic or finite of odd order $n$, 
	\item 
	 $A(G,X)\leqslant A_0$ and $\tau(G,X)\geqslant \sqrt{\rho_0 L_0 \delta_1/4n_1}$.
	\end{enumerate}
	If $g_1$ and $g_2$ are two elements of $G$ whose images in $G/G^n$ coincide, then one of them contains a $(n/2-\xi)$-power.
\end{thm}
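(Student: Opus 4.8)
The plan is to realise $G/G^n$ as an iterated small-cancellation quotient of $G$ and to invoke Proposition~\ref{P: induction step} at every stage so that all the geometric parameters stay under uniform control; the large power is then produced by the small-cancellation analysis of a minimal van Kampen diagram witnessing that $g_1g_2^{-1}$ lies in $G^n$.

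First I would fix $\rho_0$, $\delta_1$, $A_0$, $L_0$ and $n_0$ as produced by Proposition~\ref{P: induction step}, enlarging $n_0$ if necessary so that the Small Cancellation Theorem~\ref{P: hyperbolicity quotient} applies along the whole sequence and so that, together with $n\geq\max\{100,50n_1\}$, the threshold $n/2-\xi=n/2-n_1-1$ is positive. We may assume $G$ is non-elementary: if $G$ is elementary it is cyclic by hypothesis~(1), and the statement is then vacuous in the finite case, while if $G=\langle h\rangle$ is infinite cyclic one has $g_1g_2^{-1}=h^{kn}$ with $k\neq 0$, so one of $g_1,g_2$ equals $h^{\pm\ell}$ with $\ell\geq n/2$ and hence contains an $(n/2-\xi)$-power. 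Since the action of $G$ on $X$ is proper and cocompact, it is $(N,A_0)$-acylindrical for some integer $N$ (using $A(G,X)\leq A_0$). Now I would iterate Proposition~\ref{P: induction step}: put $(G_0,X_0,p_0)=(G,X,p)$ and, for $i\geq 0$, let $G_{i+1}=G_i/K_i$ act on $X_{i+1}$, where $K_i$ is the normal closure in $G_i$ of the $n$-th powers of the primitive hyperbolic elements of translation length at most $L_0\delta_1$, with basepoint $p_{i+1}$ the image of $p_i$. By Proposition~\ref{P: induction step} every $(G_i,X_i)$ again satisfies hypotheses~(1)--(2) with the \emph{same} constants $\delta_1$ and $A_0$ and the same lower bound on $\tau$, and the action stays $(\max\{N,n_1\},A_0)$-acylindrical. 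As in the Delzant--Gromov/Ol'shanski\u\i\ construction of periodic quotients, $G/G^n=\varinjlim_i G_i$.

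Suppose now $g_1\neq g_2$ in $G$ but $g_1=g_2$ in $G/G^n$. Since $G/G^n=\varinjlim G_i$, there is a least $j\geq 1$ with $g_1=g_2$ in $G_j$, so $g_1\neq g_2$ in $G_{j-1}$. I would then apply the small-cancellation Greendlinger-type lemma to a minimal van Kampen diagram over the graded presentation of the tower $G_0\to\dots\to G_j$, exactly as in the proof of \cite[Theorem~4.13]{coulon_detecting_2018} (itself modelled on Ol'shanski\u\i's argument \cite[\S10]{olshanskii_periodic_1991}): a reduced word representing $g_1g_2^{-1}$, which vanishes in $G_j$ but not in $G_0$, must contain a subword equal to more than half of one of the relators $h^{\pm n}$, for some primitive hyperbolic element $h$ appearing in the tower. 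Reading this back into $X$, one of the two geodesics $[p,g_kp]$ fellow-travels a cylinder $C_E$ with $E$ a maximal loxodromic subgroup of $G$ along a segment of length at least $(n/2)\,\tau(E)-c\,\delta_1$, where $c$ depends only on $\rho_0$ and $L_0$; equivalently, $g_k$ contains an $\bigl(n/2-c\delta_1/\tau(E)\bigr)$-power.

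The crucial point, and the main obstacle, is the uniformity: each constant in the diagram argument must be controlled by $\delta_1$, $A_0$, $L_0$ and $n_1$ alone, independently of $j$ and of $G$. This is exactly what Proposition~\ref{P: induction step} secures, by keeping the hyperbolicity constant $\delta_1$, the acylindricity parameter $A_0$, the injectivity-radius lower bound and the cone-off radius $\rho_0$ fixed along the whole sequence. It then only remains to absorb the error term into $\xi$: by hypothesis~(2) one has $\tau(E)\geq\tau(G,X)\geq\sqrt{\rho_0L_0\delta_1/4n_1}$, and since $\rho_0\gg L_0\delta_1$ this gives $c\delta_1/\tau(E)\leq c\delta_1\big/\sqrt{\rho_0L_0\delta_1/4n_1}<n_1$, so $g_k$ contains an $(n/2-\xi)$-power with $\xi=n_1+1$, as required.
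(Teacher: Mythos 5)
Your overall route is the paper's: fix the constants of Proposition~\ref{P: induction step}, build the approximating tower $(G_i,X_i)$ so that the hyperbolicity constant $\delta_1$, the acylindricity parameters, the injectivity radius bound and the cone-off radius stay uniform along the sequence, and then run the trivial-element-detection argument of \cite{coulon_detecting_2018} by induction on $i$, observing that its critical exponent and the value $\xi=n_1+1$ depend only on these uniform parameters and that torsion-freeness of $G$ is never used, only hypotheses (1)--(2). The one substantive deviation is your description of the detection step as a Greendlinger-type analysis of a minimal van Kampen diagram over the graded presentation of the tower. That is Ol'shanski\u{\i}'s mechanism \cite{olshanskii_periodic_1991}, not the one used in \cite{coulon_detecting_2018}, and the remark following Theorem~\ref{T: non-triviality of Burnside groups} explains exactly why it cannot be invoked here as a black box: the constants in the combinatorial approach are calibrated to the hyperbolicity constant of the \emph{Cayley graph} of $G_i$, which blows up along the sequence; only the geometry of the auxiliary spaces $X_i$ is uniformly controlled. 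The argument that actually closes the proof is the geometric induction of \cite[Proposition~4.6]{coulon_detecting_2018}, carried out in the cone-off spaces $X_i$. Since you do cite that result and correctly identify the uniformity supplied by Proposition~\ref{P: induction step} as the crux, this reads as a misdescription of the black box rather than a fatal gap, but taken literally the van Kampen route runs into precisely the obstruction the theorem is designed to circumvent. Two minor points: the elementary case of $G$ need not be treated separately (the paper simply runs the tower under hypotheses (1)--(2)); and the final absorption of the error into $\xi$ is where the genuine bookkeeping of \cite{coulon_detecting_2018} lives, so asserting that the error constant $c$ depends only on $\rho_0$ and $L_0$ and then deducing $c\delta_1/\tau(E)<n_1$ is not automatic --- distortion terms of size $\pi\sinh(L_0\delta_1)$ and the rescaling by $\varepsilon_{n_1}\sim 1/\sqrt{n_1}$ both enter, and the value $\xi=n_1+1$ is the output of that computation rather than an a priori bound.
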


\begin{rem}
	The ``novelty'' of Theorem~\ref{T: uniform non-triviality of Burnside groups} compared to Theorem~\ref{T: non-triviality of Burnside groups} is that the critical exponent $n_0$ does not depend on $G$ but only on the parameters of the action of $G$ on $X$ (acylindricity, injectivity radius, etc).
	Note that the critical exponent given by Ol'shanski\u\i\ in \cite{olshanskii_periodic_1991} only depends on the hyperbolicity constant of the \emph{Cayley graph} of $G$.
However this parameter will explode along the sequence $(G_i)$.
Thus we cannot formaly apply this result.
Although it is certainly possible to adapt Ol'shanski\u\i's method, we rely here on the material of \cite{coulon_detecting_2018}.
\end{rem}

\begin{proof}[Sketch of proof]
	The arguments follow verbatim the ones of \cite[Section~4]{coulon_detecting_2018}.
	Observe first that the parameters $\delta_1$, $L_0$, $\rho_0$, $A_0$ and $n_0$ in \cite[p.~797]{coulon_detecting_2018} are chosen in a similar way as we did in the proof of Proposition~\ref{P: induction step} (note that the rescaling parameter that denote $\varepsilon_n$ is called $\lambda_n$ there).
	Once $n_1 \geq n_0$ has been fixed, we set, exactly as in \cite[p.~797]{coulon_detecting_2018},  $\xi = n_1 + 1$ and $n_2 = \max\{100, 50n_1\}$.
	We now fix an odd integer $n \geq n_2$.
	At this point in the proof of \cite{coulon_detecting_2018} one chooses a non-elementary torsion-free group $G$ acting properly co-compactly on a  hyperbolic space $X$ with a basepoint $p \in X$.
	Note in particular that the base point $p$ is chosen after fixing all the other parameters.
	Next one uses an analogue of Proposition~\ref{P: induction step} to build a sequence of hyperbolic groups $(G_i)$ converging to $G/G^n$.
	The final statement, that is Theorem~\ref{T: non-triviality of Burnside groups}, is then proved using an induction on $i$, see \cite[Proposition~4.6]{coulon_detecting_2018}.
	
	Observe that the fact that $G$ is torsion-free is not necessary here.
	We only need that the initial group $G$ satisfies the induction hypothesis, that is:
	\begin{enumerate}
		\item $X$ is a geodesic $\delta_1$-hyperbolic space on which $G$ acts properly co-compactly by isometries.
		\item the elementary subgroups of $G$ are cyclic or finite of odd order $n$, 
		\item $A(G,X)\leqslant A_0$ and $\tau(G,X)\geqslant \sqrt{\rho_0 L_0 \delta_1/4n_1}$.
	\end{enumerate}
	These are exactly the assumptions stated in Theorem~\ref{T: uniform non-triviality of Burnside groups}.
	In particular, we can build as in \cite{coulon_detecting_2018} a sequence of hyperbolic $(G_i)$ converging to $G/G^n$.
	The theorem is proved using an induction on $i$ just as in \cite{coulon_detecting_2018}.
	Actually the proof is even easier, since we only need a sufficient condition to detect elements of $G$ which are not trivial in $G/G^n$, while \cite{coulon_detecting_2018} provides a sufficient and necessary condition for this property.
\end{proof}

\subsection{The approximating sequence}
Let $G$ be a non-elementary torsion-free hyperbolic group.
The periodic quotient $G/G^n$ is the direct limit of a sequence of infinite hyperbolic groups $G_i$ that can be recursively constructed as follows.  
We let $\delta_1$, $\rho_0$, $L_0$, $n_0$, and $A_0 \geq 50\cdot 10^3\delta_1$ be the parameters given by Proposition \ref{P: induction step}. 

Let $G_0=G$ and let $X_0$ be its Cayley graph.
Up to rescaling $X_0$ we can assume that $X_0$ is a $\delta_1$-hyperbolic metric geodesic space and $A(G_0, X_0) \leq A_0$. 
We choose $n_1 \geq n_0$ such that  
\begin{equation*}
	\tau(G_0, X_0) \geq  \sqrt{\frac{\rho_0 L_0 \delta_1}{4n_1}}.
\end{equation*}
Recall that the action of $G_0$ on $X_0$ is proper and co-compact.
Thus there exists $N \geq n_1$, such that every subset $S \subset G_0$ for which $\Fix(S,100\delta_1)$ is non-empty contains at most $N$ elements.
Consequently the action is $(N, A_0)$-acylindrical. 
For simplicity we let $\lambda_0 = \sqrt{n_1} \pi \sinh\left( 100A_0\right)$ and denote by $m_2 = m_2(\delta_1, N, A_0, \lambda_0)$ the parameter given by Corollary~\ref{T: main acylindrical power free}.
In addition, we set $\xi = n_1 + 1$ and 
\begin{equation*}
	n_2 = \max\{ 100, 50n_1, 2(m_2 + \xi)\}.
\end{equation*}

Let $n \geq n_2$ be an odd integer.
It follows from our choices that the assumptions of Proposition~\ref{P: induction step} are then satisfied for the action of $G_0$ on $X_0$. 
 
Let us suppose that $G_i$ is already given, and acts on a $\delta_1$-hyperbolic space $X_i$ such that the assumptions of Proposition \ref{P: induction step} are satisfied. Then $G_{i+1}=\q {G_{i}}$ and $X_{i+1}= \q {X_i}$ are given by Proposition  \ref{P: induction step}.
In particular, the action of $G_{i+1}$ on $X_{i+1}$ is $(\q N, A_0)$-acylindrical, with $\q N = \max\{ N , n_1\}$.
However we chose $N \geq n_1$.
Hence the action of  $G_{i+1}$ on $X_{i+1}$ is $(N,A_0)$-acylindrical.
It follows from the construction that $G/G^n$ is the direct limit of the sequence $(G_i)$. Compare with \cite[Theorem~7.7]{coulon_geometry_2014}.

\begin{rem} As the quotient $G/G^n$ is a direct limit of non-elementary hyperbolic groups, it is an infinite group itself. In fact, for the same reason, it is not finitely presented either, see \cite[Theorem~7.7]{coulon_geometry_2014}.
\end{rem}

\subsection{Growth estimates}
\label{sec: growth estimates}
As before, we write $\varepsilon = \varepsilon_{n_1}$ for the renormalisation parameter that we used in the proof of Proposition~\ref{P: induction step}.
The action of $G_0$ on $X_0$ is proper and co-compact, hence there exists an integer $M_0$ such that for every $x \in X_0$,
\begin{equation*}
	\left| \left\{ g \in G_0 \mid \ |gx - x| \leq100 A_0\right\}\right| \leq M_0.
\end{equation*}
We now let 
\begin{equation*}
	M = \max \left\{ 
		M_0, 
		\frac{4A_0N}{\delta_1}, 
		\frac{4\cdot 10^6 N \lambda_0}{\delta_1} 
	\right\},
	\quad \text{and} \quad
	a = \frac 1M.
\end{equation*}

Let $V\subset G/G^n$ be finite and not contained in a finite subgroup. 
Recall that if $U_i \subset G_i$ is a pre-image of $V$, its energy measured in $X_i$ is defined by 
\begin{equation*}
	\lambda(U_i)= \inf_{x\in X_i} \max_{u\in U_i} |ux-x|_{X_i}.
\end{equation*}
We now let
\begin{equation}
\label{eqn: choose j}
j=\inf\left\{ i \in \mathbb N \mid \hbox{ there is a pre-image $U \subset G_i$ of $V$ such that } \lambda(U)\leqslant 100 A_0\right\}. 
\end{equation}
Recall that the map $X_i \to X_{i+1}$ is $\varepsilon$-Lipschitz.
Hence, if $U_{i+1} \subset G_{i+1}$ is the image of a subset $U_i \subset G_i$, we have $\lambda(U_{i+1}) \leqslant \varepsilon \lambda (U_i)$.
Since $\varepsilon < 1$, the index $j$ is well-defined.
Let us fix a pre-image $U_j$ of $V$ in $G_j$ such that $\lambda(U_j)\leqslant 100 A_0$.
We now distinguish two cases.

{\bfseries Case 1.} 
Assume that $j = 0$.
It follows from our choice of $M_0$, that $|V | \leq |U_0| \leq M_0$.
Thus for every $r \geq 0$ we have
\begin{equation*}
	|V^r| 
	\geq 1 
	\geq \left(\frac 1{M_0} |V|\right)^{[(r+1)/2]}
	\geq \left(a |V|\right)^{[(r+1)/2]}.
\end{equation*}

{\bfseries Case 2.} 
Assume that $j > 0$.
Note that $U_j$ cannot generate a finite subgroup $G_j$, otherwise so would $V$ in $G/G^n$.
Recall that $100 A_0 \leq \rho_0/5$.
By Proposition~\ref{P: induction step}, there exists a pre-image $U_{j-1}\subset G_{j-1}$ of $U_j$ such that the energy of $U_{j-1}$ satisfies $\lambda(U_{j-1}) \leqslant \lambda_0$.
By definition of $j$, we also have $\lambda(U_{j-1})> 100A_0$.
For simplicity we let $m = n/2 - \xi$.
It follows from our choice of $n$ that $m \geq m_2$.
Hence we can apply Corollary~\ref{T: main acylindrical power free} so that one of the following holds.
\begin{itemize}
	\item The cardinality of $U_{j-1}$ is at most $\max\{4A_0N/\delta_1,4\cdot 10^6 N \lambda_0/\delta_1\}$, which is by definition bounded above by $M$.
	In particular, the same holds for $V$ and we prove as in Case~1 that for every $r \geq 0$,
	\begin{equation*}
		|V^r| \geq \left(a |V|\right)^{[(r+1)/2]}.
	\end{equation*}
	\item There is $v \in U_{j-1}$ such that if $K(m,r)$ stands for the set of all $m$-power elements in $(U_{j-1}v)^r$, then 
	\begin{equation*}
		|K(m,r)| 
		\geq \left( \frac{1}{4\cdot 10^6 N}\frac{\delta_1}{\lambda(U_{j-1})} |U_{j-1}|\right)^r.
	\end{equation*}
	On the one hand $\lambda(U_{j-1}) \leqslant \lambda_0$.
	On the other hand, $M \geq 4\cdot 10^6 N \lambda_0/\delta_1$, while $a = 1/M$.
	Consequently
	\begin{equation*}
		|K(m,r)| 
		\geq \left( \frac{1}{4\cdot 10^6 N}\frac{\delta_1}{\lambda_0} |U_{j-1}|\right)^r
		\geq \left(a|V|\right)^r.
	\end{equation*}
	According to our choice of $n$, we have $n \geq \max\{100, 50n_1\}$.
	Moreover, by construction $G_{j-1}$ satisfies the assumptions of Theorem~\ref{T: uniform non-triviality of Burnside groups}.
	Hence, the projection $\pi \colon G_{j-1} \to G/G^n$ induces an embedding from $K(m,r)$ into $(V \pi(v))^r$.
	Consequently $|(V \pi(v))^r| \geq \left(a |V|\right)^{r}.$
	
	The proof now goes as in Corollary~\ref{T: acylindrical}.
	Let $s \geq 0$ be an integer.
	On the one hand, $(V\pi(v))^s$ is contained in $V^{2s}$, hence $|V^{2s}| \geq |(V\pi(v))^s|$.
	On the other hand $(V\pi(v))^sV$ is contained in $V^{2s+1}$.
	Right multiplication by $\pi(v)$ induces a bijection from $G/G^n$ to itself.
	Hence
	\begin{equation*}
		|V^{2s+1}| \geq |(V\pi(v))^sV| = |(V\pi(v))^{s+1}| \geq \left(a |V|\right)^{s+1}.
	\end{equation*}
	It follows that $|V^r| \geq \left(a |V|\right)^{[(r+1)/2]}$, for every integer $r \geq 0$.
\end{itemize}	
This completes the proof of Theorem \ref{T: intro - large powers free burnside}.

\begin{proof}[Proof of Corollary~\ref{C: intro - large powers free burnside}]
Let $n_0>0$ and $a>0$ be the constants given by Theorem~\ref{T: intro - large powers free burnside}.  
We fix $N$ such that $a^3N>1$.
Let $n\geqslant n_0$.
Let us take a subset $V\subset G/G^n$ that is not contained in a finite subgroup and that contains the identity. Then, for all $k\geqslant 1$, we have $V^{k-1} \subseteq V^k$. As $V$ is not contained in a finite subgroup, this implies that $|V^k|>|V^{k-1}|$. Thus $a^3|V^ N|>1$.
We now apply twice Theorem~\ref{T: intro - large powers free burnside},
first with the set $V^{3N}$, and second with $V^N$.
For every integer $r \geq 0$, we have
\begin{equation*}
	\left|V^{3rN} \right| 
	\geq \left(a \left|V^{3N}\right| \right)^{[(r+1)/2]}
	\geq \left(a \left(a\left|V^N\right|\right)^2 \right)^{[(r+1)/2]}
	\geq \left(a^3|V^N| \cdot |V^N| \right)^{[(r+1)/2]}
\end{equation*}
Recall that $a^3|V^ N|>1$.
Hence, for every integer $r \geq 0$,
\begin{equation*}
	\left|V^{3rN} \right| 
	\geq |V^N| ^{[(r+1)/2]}
	\geq |V|^{[(r+1)/2]}.
\end{equation*}
Taking the logarithm and passing to the limit we get
\begin{equation*}
	h(V)
	\geq \frac 1{6N} \ln(|V|).
\end{equation*}
Since $V$ does not lie in a cyclic subgroup and contains the identity, it has at least three elements, whence the second inequality in our statement.
\end{proof} 


\addtocontents{toc}{\setcounter{tocdepth}{-10}}
\bibliographystyle{alpha}

\bibliography{product}

\end{document}